\numberwithin{equation}{section}
\newtheorem{theorem}{Theorem}[section]
\newtheorem{lemma}[theorem]{Lemma}
\newtheorem{definition}[theorem]{Definition}
\newtheorem{corollary}[theorem]{Corollary}
\newtheorem{proposition}[theorem]{Proposition}
\newtheorem{remark}[theorem]{Remark}
\newtheorem{example}[theorem]{Example}
\newcommand{\Ent}{\mbox{\rm Ent}}
\newcommand{\supp}{\mbox{\rm supp}}
\newcommand{\Gaps}{\mbox{\rm gaps}}
\def\eI{[0,1]}
\def\1{\,{\makebox[0pt][c]{\normalfont    1}
\makebox[2.5pt][c]{\raisebox{3.5pt}{\tiny {$\|$}}}
\makebox[-2.5pt][c]{\raisebox{1.7pt}{\tiny {$\|$}}}
\makebox[2.5pt][c]{} }}
\def\one{\1 }
\newcommand{\Dom}{{\it{Dom}}}
\newcommand{\R}{{\mathbb{R}}}
\newcommand{\G}{{\mathcal{G}}}
\newcommand{\E}{{\mathcal{E}}}
\newcommand{\C}{{\mathcal{C}}}
\newcommand{\varh}{{h}}
\newcommand{\Pe}{{\mathcal{P}}}
\DeclareSymbolFont{AMSb}{U}{msb}{m}{n}
\newcommand{\N}{{\mathbb{N}}}
\newcommand{\Z}{{\mathbb{Z}}}
\newcommand{\Q}{{\mathbb{Q}}}
\newcommand{\Pp}{{\mathbb{P}}}
\newcommand{\D}{{\mathbb{D}}}
\newcommand{\T}{{\mathbb{T}}}
\newcommand{\DD}{\overline{\mathbb{D}}}
\newcommand{\LL}{{\mathbb{L}}}
\newcommand{\EE}{{\mathbb{E}}}
\newcommand{\leb}{{\mbox{Leb}}}
\newcommand{\Cyl}{{\mathfrak{C}}}
\newcommand{\Syl}{{\mathfrak{S}}}
\newcommand{\Zyl}{{\mathfrak{Z}}}
\def\smint{{\mbox{$\int$}}}
\begin{document}

\title{Entropic Measure and Wasserstein Diffusion}

\author{Max-K. von Renesse, Karl-Theodor Sturm}

\date{}

\maketitle

\begin{abstract}
We construct a new random probability measure on the sphere and on the unit
interval which in both cases has a Gibbs structure with the relative entropy
functional as Hamiltonian. It satisfies a quasi-invariance formula with
respect to  the action of smooth diffeomorphism of the sphere and the
interval respectively. The associated integration by parts formula is used to
construct two classes of diffusion processes on  probability measures (on the sphere or the
unit interval) by Dirichlet form methods. The first one is
closely related to Malliavin's Brownian motion on the homeomorphism group.
The second one is a
probability valued stochastic perturbation of
the heat flow, whose intrinsic metric is  the quadratic Wasserstein
distance. It may be regarded as the canonical diffusion process on the Wasserstein space.
\end{abstract}

\begin{section}{Introduction}
{\bf (a)}\ Equipped with the $L^2$-Wasserstein distance $d_W$ (cf. (\ref{dw})),
the space $\Pe(M)$ of probability measures on an Euclidean or Riemannian space
$M$ is itself a rich object of geometric interest. Due to
the fundamental works of Y.~Brenier, R.~McCann, F.~Otto, C.~Villani
and many others (see e.g. \cite{MR1100809,MR98e:82003,MR2002k:58038,MR2002j:35180,MR2001k:58076,MR1964483})  there are well
understood and powerful concepts of geodesics, exponential maps,
tangent spaces $T_\mu\Pe(M)$ and gradients $Du(\mu)$ of functions  on this space. In a
certain sense,  $\Pe(M)$ can be regarded as an infinite dimensional
Riemannian manifold, or at least as an infinite dimensional Alexandrov space with nonnegative lower curvature bound if the base manifold $(M,d)$ has nonnegative sectional curvature.

A central role is played by the {\em relative entropy} $:
\Pe(M)\to\R\cup\{+\infty\}$ with respect to the Riemannian volume
measure $dx$ on $M$
$$\Ent(\mu)=\left\{\begin{array}{ll}\int_M \rho\log\rho\, dx,\qquad&\mbox{if }d\mu(x)\ll dx \ \mbox{ with }\rho(x)=\frac{d\mu(x)}{dx}\\
+\infty,&\mbox{else}.\end{array}\right.$$
The relative entropy as a function on the geodesic space
$(\Pe(M),d_W)$ is $K$-convex for a given number $K\in\R$ if and only
if the {\em Ricci curvature} of the underlying manifold $M$ is
bounded from below by $K$, \cite{MR2142879,MR2237206}. The gradient flow for the
relative entropy in the geodesic space $(\Pe(M),d_W)$ is given by
the  {\em heat
equation} $\frac{\partial}{\partial t}\mu=\Delta\mu$ on $M$,
\cite{MR1617171}.
More generally, a large class of evolution equations can be treated as gradient flows for suitable free energy functionals
$S:\Pe(M) \to \R$, \cite{MR1964483}.

\medskip

What is missing until now, is a natural 'Riemannian volume measure'
$\Pp$ on $\Pe(M)$. The basic requirement will be an {\em integration by parts formula}
for the gradient. This will imply the {\em closability} of the pre-Dirichlet form
$$\EE(u,v)=\int_{\Pe(M)} \langle Du(\mu),Dv(\mu)\rangle_{T_\mu} \, d\Pp(\mu)$$
in $L^2(\Pe(M),\Pp)$, -- which in turn will be the key tool in order
to develop an analytic and stochastic calculus on $\Pe(M)$. In
particular, it will allow us to construct a kind of {\em Laplacian}
and a kind of {\em Brownian motion} on $\Pe(M)$.
Among others, we intend to use the powerful machinery
of Dirichlet forms to study stochastically perturbed gradient flows
on $\Pe(M)$ which -- on the level of the underlying spaces $M$ --
will lead to a new concept of SPDEs (preserving probability by
construction).

Instead of constructing a  'uniform distribution' $\Pp$ on
$\Pe(M)$, for various reasons, we prefer to construct a probability measure $\Pp^\beta$ on
$\Pe(M)$ formally given as
\begin{equation}\label{formal}d\Pp^\beta(\mu)=\frac1{Z_\beta} \,
e^{-\beta\cdot \Ent(\mu)}\, d\Pp(\mu) \end{equation} for $\beta>0$ and  some normalization constant  $Z_\beta$. (In the language of
statistical mechanics, $\beta$ is the 'inverse temperature' and
$Z_\beta$ the 'partition function' whereas the entropy plays the
role of a Hamiltonian.)

\medskip\bigskip

{\bf (b)}\  One of the basic results of this paper is the rigorous construction
of such a {\em entropic measure} $\Pp^\beta$ in the one-dimensional case, i.e.
$M=S^1$ or $M=\eI$. We will essentially make use of the
representation of probability measures by their inverse distributions function $g_\mu$.
It allows to transfer the problem of
constructing a measure $\Pp^\beta$ on the space of probability measures
$\Pe(\eI)$ (or $\Pe(S^1)$) into the problem of constructing a measure
$\Q^\beta_0$ (or $\Q^\beta$) on the space $\G_0$ (or $\G$, resp.) of nondecreasing functions from $\eI$ (or $S^1$, resp.) into itself.

In terms of the measure $\Q^\beta_0$ on $\G_0$, for instance, the
formal characterization (\ref{formal}) then reads as follows
\begin{equation}\label{formal-q-intro}
d\Q_0^\beta(g)=\frac1{Z_\beta} \, e^{-\beta\cdot S(g)}\, d\Q_0(g).
\end{equation}
Here $\Q_0$ denotes some 'uniform distribution' on $\G_0\subset
L^2(\eI)$  and $S:\G_0\to [0,\infty]$ is the {\em entropy functional}
$$S(g):=\Ent(g_*\leb)=-\int_0^1 \log g'(t)\,dt.$$
This representation  is reminiscent
of Feynman's heuristic picture of the Wiener measure, --- now with the energy  $$H(g)=\int_0^1 g'(t)^2dt$$ of a path replaced by its entropy.
$\Q_0^\beta$  will turn out to be (the law of) the {\em Dirichlet process} or {\em normalized Gamma process}.

\medskip\bigskip

{\bf (c)}\  The key result here is the {\em quasi-invariance}  -- or in other words a {\em change of variable formula} -- for the measure $\Pp^\beta$ (or $\Pp^\beta_0$)
under push-forwards $\mu\mapsto h_*\mu$ by means of smooth diffeomorphisms $h$ of $S^1$ (or $\eI$, resp.).
This is equivalent to the quasi-invariance of the measure $\Q^\beta$ under translations $g\mapsto h\circ g$ of the
semigroup $\G$ by smooth
$h\in \G$.
The density
$$\frac{d\Pp^\beta(h_*\mu)}{d\Pp^\beta(\mu)}=X_h^\beta\cdot Y_h^0(\mu)$$
consists of two terms. The first one
$$X_h^\beta (\mu)=\exp\left(\beta\int_{S^1}\log h'(t)d\mu(t)\right)$$
can be interpreted as $\exp(-\beta\Ent(h_*\mu)) / \exp(-\beta\Ent(\mu))$ in accordance with our formal interpretation
(\ref{formal}).
The second one
$$Y_h^0(\mu)=
\prod_{I\in\Gaps(\mu)}\frac{\sqrt{h'(I_-)\cdot h'(I_+)}}{|h(I)|/|I|}
$$
can be interpreted as the change of variable formula for the (non-existing) measure $\Pp$.
Here $\Gaps(\mu)$ denotes the set of intervals $I=]I_-,I_+[\subset S^1$ of maximal length with $\mu(I)=0$.
Note that $\Pp^\beta$ is concentrated on the set of $\mu$ which have no atoms and not absolutely continuous parts and whose
supports have Lebesgue measure 0.

\medskip\bigskip

{\bf (d)}\
The tangent space at a given point $\mu$ in $\Pe=\Pe(S^1)$ (or in $\Pe_0=\Pe(\eI)$)
will be an appropriate
completion of the space $\C^\infty(S^1,\R)$ (or $\C^\infty(\eI,\R)$, resp.).  The action of a tangent vector $\varphi$
on $\mu$ ('exponential map') is given by the push forward
$\varphi_*
\mu$.
This leads to the notion of the directional derivative
$$D_\varphi u(\mu)=\lim_{t\to0}\frac1t\left[
u((Id+t\varphi)_*\mu)-u(\mu)\right]$$
for functions $u:\Pe\to\R$.
The quasi-invariance of the measure $\Pp^\beta$ implies an integration by parts formula (and thus the closability)
$$D^*_\varphi u=-D_\varphi u- V_\varphi\cdot u$$
with drift $V_\varphi=\lim_{t\to0}\frac1t(Y^\beta_{Id+t\varphi}-1)$.

The subsequent construction
will strongly depend on the choice of the norm on the tangent spaces
$T_\mu\Pe$. Basically, we will encounter two important cases.

\medskip\bigskip

{\bf (e)}\
Choosing
 $T_\mu\Pe=H^s(S^1,\leb)$ for
some $s>1/2$ --- independent of $\mu$ --- leads to
a regular, local, recurrent Dirichlet form $\E$ on $L^2(\Pe,\Pp^\beta)$ by
$$\E(u,u)=\int_{\Pe}\sum_{k=1}^\infty |D_{\varphi_k} u(\mu)|^2\, d\Pp^\beta(\mu).$$
where $\{\varphi_k\}_{k\in\N}$ denotes some complete orthonormal system in  the Sobolev space $H^s(S^1)$.
According to the theory of Dirichlet forms on locally compact spaces \cite{MR96f:60126}, this form is   associated with a continuous Markov process on $\Pe(S^1)$
which is reversible with respect to the measure $\Pp^\beta$. Its generator is given by
\begin{equation}\label{gener}
\frac12\sum_k D_{\varphi_k}D_{\varphi_k}+\frac12\sum_k V_{\varphi_k}\cdot D_{\varphi_k}.
\end{equation}

\medskip

This process
$(g_t)_{t\ge0}$ is closely related to the stochastic processes on the diffeomorphism group of $S^1$ and to the
'Brownian motion' on the homeomorphism group of $S^1$, studied by Airault, Fang, Malliavin, Ren, Thalmaier and others \cite{MR2082490,AM06, AR02, Fang02,MR2091355,MR1713340}.
These are processes
with generator $\frac12\sum_k D_{\varphi_k}D_{\varphi_k}$.
Hence, one advantage of our approach is to identify a probability measure $\Pp^\beta$ such that these processes --- after adding a suitable drift --- are reversible.

Moreover,  previous approaches are restricted to $s\ge 3/2$ whereas our construction applies to all  cases $s>1/2$.

\medskip\bigskip

{\bf (f)}\
Choosing
 $T_\mu\G=L^2(\eI,\mu)$
 leads to the {\em Wasserstein Dirichlet form}
$$\EE(u,u)=\int_{\Pe_0} \|D u(\mu)\|_{L^2(\mu)}^2\, d\Pp_0^\beta(\mu)$$
on $L^2(\Pe_0,\Pp_0^\beta)$.
Its square field operator is the squared norm of the Wasserstein gradient and its intrinsic distance (which governs the short time asymptotic of the process)
coincides with the $L^2$-Wasserstein metric.
The associated  continuous Markov process $(\mu_t)_{t\ge0}$ on $\Pe(\eI)$, which we shall call  {\em Wasserstein diffusion}, is reversible w.r.t. the entropic measure $\Pp_0^\beta$. It can be regarded as a stochastic
perturbation of the Neumann heat flow on $\Pe(\eI)$ with small time Gaussian behaviour measured in terms of kinetic   energy.
\end{section}

\begin{section}{Spaces of Probability Measures and Monotone Maps}

The goal of this paper is to study stochastic dynamics on spaces
$\Pe(M)$ in case  $M$ is the unit interval
$[0,1]$ or the unit circle $S^1$.

\begin{subsection}{The Spaces $\Pe_0=\Pe(\eI)$ and $\G_0$}

Let us collect some basic facts for the space $\Pe_0=\Pe([0,1])$  of
probability measures on
 the unit interval $[0,1]$ the proofs of which
can be found in
 the monograph \cite{MR1964483}.
Equipped with the {\em $L^2$-Wasserstein distance} $d_W$, it is
 a compact metric space. Recall that
\begin{equation}\label{dw}
d_W(\mu,\nu) := \inf_{\gamma} \left(\iint_{\eI^2} |x-y|^2
\gamma(dx,dy)\right)^{1/2}, \end{equation}
 where the infimum is
taken over all probability measures $\gamma \in \Pe(\eI^2)$ having
marginals $\mu$ and $\nu$ \ (i.e. $\gamma(A\times M)=\mu(A)$ and
$\gamma(M\times B)=\nu(B)$ for all $A,B\subset M$).

Let $\G_0$ denote the space of all right continuous nondecreasing
maps $g: [0,1[\,\to\eI$
 equipped with the $L^2$-distance
$$\|g_1-g_2\|_{L^2}=\left(\int_0^1 |g_1(t)-g_2(t)|^2dt\right)^{1/2}.$$
Moreover, for notational convenience each  $g \in \G_0$ is extended to the full interval $\eI$ by  $g(1):=1$.  The map
$$\chi: \ \G_0\to\Pe_0, \ g\mapsto
g_*\leb$$ (= push forward of the Lebesgue measure on $\eI$ under the
map $g$) establishes an isometry between $(\G_0, \|.\|_{L^2})$ and
$(\Pe_0,d_W)$. The inverse map $\chi^{-1}: \ \Pe_0\to\G_0, \
\mu\mapsto g_\mu$ assigns to each probability measure  $\mu\in\Pe_0$
its {\em inverse distribution function} defined by
\begin{equation}\label{invdist}g_\mu(t):= \inf \{ s\in\eI: \ \mu[0,s]  > t\}
\end{equation}
with $\inf\emptyset:=1$. In particular, for all $\mu,\nu\in\Pe_0$
\begin{equation}\label{L2equ}d_W(\mu,\nu)=\|g_\mu-g_\nu\|_{L^2}.\end{equation}

\medskip

For each $g\in\G_0$  the {\em generalized inverse}  $g^{-1}\in\G_0$ is
defined by $g^{-1}(t)=\inf\{s\ge0: \  g(s) >t\}$. Obviously,
\begin{equation}\label{L1equ}\|g_1-g_2\|_{L^1}=\|g_1^{-1}-g_2^{-1}\|_{L^1}
\end{equation} (being simply
the area between the graphs) and $(g^{-1})^{-1} =g$. Moreover,
$g^{-1}( g(t))= t$ for all $t$ provided  $g^{-1}$ is continuous.
(Note that under the measure $\Q_0^\beta$ to be constructed below the
latter will be satisfied for a.e. $g\in\G_0$.)

On $\G_0$, there exist various canonical topologies: the
$L^2$-topology of $\G_0$ regarded as subset of $L^2(\eI,\R$); the image
of the weak topology on $\Pe_0$ under the map $\chi^{-1}:\
\mu\mapsto g_\mu$ (= inverse distribution function); the image of
the weak topology on $\Pe_0$ under the map $ \mu\mapsto g_\mu^{-1}$
(= distribution function). All these -- and several other --
topologies coincide.

\begin{proposition}\label{g-top}
For each sequence $(g_n)_n\subset\G_0$, each $g\in\G_0$ and each
$p\in[1,\infty[$ the following are equivalent:
 \begin{enumerate}
 \item
$g_n(t)\to g(t)$ for each $t\in\eI$ in which $g$ is continuous;
\item
$g_n\to g$ in $L^p(\eI)$;
\item
$g_n^{-1}\to g^{-1}$ in $L^p(\eI)$;
\item
$\mu_{g_n}\to\mu_g$ weakly;
\item
$\mu_{g_n}\to\mu_g$ in $d_W$.
\end{enumerate}
In particular, $\G_0$ is  compact.
\end{proposition}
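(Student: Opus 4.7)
\emph{Strategy.} The plan is to prove (i) $\Leftrightarrow$ (ii), (ii) $\Leftrightarrow$ (iii), and (i) $\Leftrightarrow$ (iv) $\Leftrightarrow$ (v), then deduce compactness from Helly's selection principle. The five conditions all encode the same notion of convergence, refracted through the isometry $\chi: g\mapsto g_*\leb$ between $\G_0$ and $\Pe_0$; in particular, since $g_n=g_{\mu_{g_n}}$, (i) is just convergence of inverse distribution functions at continuity points, and (v) is just $L^2$-convergence of them by (\ref{L2equ}).

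For (i) $\Rightarrow$ (ii), I would observe that the monotone $g$ has at most countably many discontinuities, so (i) amounts to a.e.\ pointwise convergence of the uniformly bounded family $\{g_n\}$, and dominated convergence upgrades this to $L^p$. For the converse, fix a continuity point $t$ of $g$ and $\delta>0$: monotonicity of $g_n$ gives $g_n(t)\geq \delta^{-1}\int_{t-\delta}^{t} g_n(s)\,ds$, and combining $L^1$-convergence (which follows from $L^p$ since the integrands are bounded by $1$) with continuity of $g$ at $t$ pins down $\liminf g_n(t)\geq g(t-\delta)$; a symmetric argument on $[t,t+\delta]$ handles $\limsup$, and letting $\delta\to 0$ finishes. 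For (ii) $\Leftrightarrow$ (iii), the bound $|g_n-g|^p\leq |g_n-g|$ makes $L^p$- and $L^1$-convergence equivalent for every $p\in [1,\infty[$, after which the area identity (\ref{L1equ}) yields the conclusion immediately.

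The equivalences (i) $\Leftrightarrow$ (iv) $\Leftrightarrow$ (v) are the classical Helly-Skorokhod correspondence together with the fact that on the compact space $\eI$ the Wasserstein distance metrizes weak convergence: weak convergence of $\mu_{g_n}$ to $\mu_g$ is equivalent to pointwise convergence of their inverse distribution functions at continuity points of $g_\mu$, i.e.\ to (i); and $d_W$-convergence is literally (ii) with $p=2$ via (\ref{L2equ}), which closes the cycle. For compactness, I would invoke Helly's selection principle to extract, from any sequence in $\G_0$, a subsequence converging at every continuity point of some non-decreasing limit $g^*:\eI\to\eI$; its right-continuous modification lies in $\G_0$ and agrees with $g^*$ off a countable set, so (i) holds along the subsequence and $L^p$-compactness follows via (ii).

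The proof is technically light; no single step should be hard. The main place requiring care is the bookkeeping between the several incarnations of $g$ (element of $\G_0$, inverse distribution function of $\mu_g$, $L^p$-function) and the verification in the compactness step that the Helly limit, after right-continuous modification, genuinely belongs to $\G_0$ without disturbing any of the four equivalent convergence statements.
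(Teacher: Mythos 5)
Your proof is correct, and while it reaches the same destination via the same landmarks (the $L^1$ area identity (\ref{L1equ}), the $L^2$ isometry (\ref{L2equ}), and the fact that $d_W$ metrizes weak convergence on $\eI$), two of your steps genuinely diverge from the paper's argument. First, the paper links (i) to the chain through weak convergence: it invokes the classical portmanteau characterization to get (iv) $\Leftrightarrow$ (i$'$) (pointwise convergence of the distribution functions $g_n^{-1}$ at continuity points of $g^{-1}$), and then transfers (i$'$) to (i) by applying the already-established (ii) $\Leftrightarrow$ (iii) to the inverses. You instead prove (i) $\Leftrightarrow$ (ii) directly and by hand: dominated convergence in one direction (using that a monotone $g$ has only countably many discontinuities), and a monotonicity/averaging argument $g_n(t)\ge \delta^{-1}\int_{t-\delta}^t g_n$ in the other. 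This is more self-contained -- it never needs the CDF characterization of weak convergence -- and note that once you have (i) $\Leftrightarrow$ (ii) $\Leftrightarrow$ (iii) and (ii) $\Leftrightarrow$ (v) $\Leftrightarrow$ (iv), your separate appeal to the quantile-function (Helly--Skorokhod) characterization of weak convergence is actually redundant; the cycle already closes. Second, for compactness the paper simply pulls back the weak compactness of $\Pe_0$ through the isometry, whereas you argue intrinsically on $\G_0$ via Helly's selection principle plus right-continuous modification. Both are sound; the Helly route is more elementary but requires exactly the bookkeeping you flag (the modification stays in $\G_0$ and is only altered on a countable set, so condition (i) survives), while the paper's route outsources that work to Prokhorov's theorem. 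The only cosmetic caveat in your write-up is the boundary: the averaging interval $[t-\delta,t]$ must be taken one-sided at $t=0$ and $t=1$ (where $g_n(1)=1$ by convention anyway), but this does not affect the argument.
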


Let us briefly sketch the main arguments of the
\begin{proof}
Since all the functions $g_n$ and $g_n^{-1}$ are bounded, properties
(ii) and (iii) obviously are independent of $p$. The equivalence of
(ii) and (iii) for $p=1$ was already stated in (\ref{L1equ}) and the
equivalence between (ii) for $p=2$ and (v) was stated in
(\ref{L2equ}). The equivalence of (iv) and (v) is the well known
fact that the Wasserstein distance metrizes the weak topology.
Another well known characterization of weak convergence states that
(iv) is equivalent to (i'): $g_n^{-1}(t)\to g^{-1}(t)$ for each
$t\in\eI$ in which $g^{-1}$ is continuous. Finally,
$(i')\Leftrightarrow(i)$ according to the equivalence
$(ii)\Leftrightarrow(iii)$ which allows to pass from convergence of
distribution functions $g_n^{-1}$ to convergence of inverse
distribution functions $g_n$. The last assertion follows from the compactness of $\Pe_0$ in the weak topology.
\end{proof}
\end{subsection}

\begin{subsection}{The Spaces $\G$, $\G_1$ and $\Pe=\Pe(S^1)$}

 Throughout this paper, $S^1=\R/\Z$ will always
denote the circle of length 1. It inherits the group operation +
from $\R$ with neutral element 0. For each $x,y\in S^1$  the
positively oriented segment from $x$ to $y$ will be denoted by
$[x,y]$ and its length by $|[x,y]|$. If no ambiguity is possible,
the latter will also be denoted by $y-x$. In contrast to that,
$|x-y|$ will denote the $S^1$-distance between $x$ and $y$. Hence,
in particular, $|[y,x]|=1-|[x,y]|$ and
$|x-y|=\min\{|[y,x]|,|[x,y]|\}$.
 A family of points $t_1,\ldots, t_N\in S^1$ is called an
'ordered family' if $\sum_{i=1}^N |[t_{i},t_{i+1}]|=1$ with
$t_{N+1}:=t_1$ (or in other words if all the open segments $]t_{i},
t_{i+1}[$ are disjoint).

Put
$$\G(\R)=\{g:\R\to\R \ \mbox{right continuous nondecreasing with }
g(x+1)=g(x)+1 \ \mbox{for all }x\in\R\}.$$  Due to the required
equivariance with respect to the group action of $\Z$,
 each map $g\in\G(\R)$ induces uniquely a map $\pi(g): S^1\to S^1$. Put $\G:=\pi(\G(\R))$.
The monotonicity of the functions in $\G(\R)$ induces also a kind of
monotonicity of maps in $\G$: each continuous $g\in\G$ will be order
preserving and homotopic to the identity map. In the sequel,
however, we often will have to deal with discontinuous $g\in\G$. The
elements $g\in\G$ will be called {\em monotone maps} of $S^1$. $\G$
is a compact subspace of the $L^2$-space of maps from $S^1$ to $S^1$
with metric $\|g_1-g_2\|_{L^2}=\left(\int_ {S^1}
|g_1(t)-g_2(t)|^2dt\right)^{1/2}$.

With the composition $\circ$ of maps, $\G$ is a semigroup. Its
neutral element $e$ is the identity map.  Of particular interest in
the sequel will be the  semigroup $\G_1=\G/S^1$  where functions
$g,h\in\G$ will be identified if
  $g(.)=h(.+a)$ for some $a\in S^1$.

\begin{proposition}
The map
 $$\chi:\G_1\to\Pe, \ g\mapsto
g_*\leb$$  (= push forward of the Lebesgue measure on $S^1$
under the map $g$) and its inverse
$\chi^{-1}:\Pe\to\G_1, \ \mu\mapsto g_\mu$ (with $g_\mu$ as
defined  in (\ref{invdist}))
 establish an isometry between
 the space $\G_1$ equipped with the induced
$L^2$-distance
$$\|g_1-g_2\|_{\G_1}=\left(\inf_{s\in S^1}\int_ {S^1} |g_1(t)-g_2(t+s)|^2dt\right)^{1/2}$$
and
 the space $\Pe$ of
probability measures on $S^1$ equipped with the $L^2$-Wasserstein
distance.
In particular,  $\G_1$ is compact.
\end{proposition}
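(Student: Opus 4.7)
The plan is to mirror the interval argument from Proposition~\ref{g-top} while carefully accounting for the extra $S^1$-ambiguity. First I would check that $\chi$ descends to $\G_1$: since Lebesgue measure on $S^1$ is invariant under the rotations $\tau_a(t)=t+a$, for any lift $g\in\G(\R)$ one has $(g\circ\tau_a)_*\leb=g_*\leb$, so $\chi(g)$ depends only on the equivalence class $[g]\in\G_1$. Next I would produce the inverse. Given $\mu\in\Pe(S^1)$, cut $S^1$ at $0$, view $\mu$ as a probability on $[0,1)$, and define $g_\mu$ on $[0,1)$ by the formula \eqref{invdist}; then extend equivariantly to $\R$ by $g_\mu(x+1)=g_\mu(x)+1$, and project to $\G$. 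A standard computation (identical to the interval case) yields $(g_\mu)_*\leb=\mu$, so $\chi$ is surjective onto $\Pe$. Injectivity of $\chi$ on $\G_1$ is the assertion that two equivariant monotone maps pushing $\leb$ to the same measure differ by a translation of the argument; this again follows by pulling back to the interval and using that the monotone rearrangement there is unique.

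The heart of the proposition is the isometry. For the easy inequality, fix representatives $g_\mu,g_\nu\in\G$ of $[g_\mu],[g_\nu]$ and any $s\in S^1$. Because $(g_\nu\circ\tau_s)_*\leb=\nu$, the map $t\mapsto(g_\mu(t),g_\nu(t+s))$ pushes $\leb$ to a coupling $\gamma_s\in\Pe(S^1\times S^1)$ of $\mu$ and $\nu$. Plugging $\gamma_s$ into \eqref{dw} gives
\[
d_W(\mu,\nu)^2\le\int_{S^1}|g_\mu(t)-g_\nu(t+s)|^2\,dt,
\]
and taking the infimum over $s\in S^1$ yields $d_W(\mu,\nu)\le\|g_\mu-g_\nu\|_{\G_1}$.

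For the matching lower bound, one invokes the classical fact that optimal transport on $S^1$ is \emph{cyclically monotone}: there exists a cut point $s^*\in S^1$ such that, after cutting $S^1$ at $s^*$, an optimal coupling between $\mu$ and $\nu$ is realised by the monotone rearrangement of the two resulting measures on the interval. In the language of inverse distribution functions this says that the optimal coupling is of the form $t\mapsto(g_\mu(t),g_\nu(t+s^*))$, so
\[
d_W(\mu,\nu)^2=\int_{S^1}|g_\mu(t)-g_\nu(t+s^*)|^2\,dt\ge\|g_\mu-g_\nu\|_{\G_1}^2.
\]
Combining the two inequalities gives the isometry. Finally, compactness of $\G_1$ is immediate: $(\Pe,d_W)$ is compact (the Wasserstein distance metrises the weak topology on the compact space $S^1$), and an isometric image of a compact space is compact.

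The main obstacle is the lower Wasserstein bound, i.e.\ justifying that the optimal transport on $S^1$ always comes from cutting the circle at some point and using the monotone rearrangement on the interval. This is the one step where the $S^1$-geometry genuinely departs from the interval case and where the quotient by $S^1$ in the definition of $\G_1$ becomes essential; everything else reduces, via a choice of cut point, to the one-dimensional machinery already established in Proposition~\ref{g-top}.
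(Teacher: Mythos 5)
Your argument is correct and follows essentially the same route as the paper: the upper bound comes from the explicit couplings $(g_\mu(t),g_\nu(t+s))_*\leb$, and the lower bound from the structural fact that an optimal transport on $S^1$ is obtained by cutting the circle at some point and taking the monotone rearrangement on the resulting interval, which reduces everything to the interval case. The only cosmetic difference is that the paper first restricts to absolutely continuous measures (dense in $\Pe$) so that an optimal transport map $F$ exists and the cut point can be taken as $s=F(0)$, whereas you invoke cyclical monotonicity for general measures directly; both versions lean on the same unproved classical input about non-crossing transport on the circle.
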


\begin{proof} The bijectivity of $\chi$ and $\chi^{-1}$ is clear.
It remains to prove that
\begin{equation}\label{L2-S1}d_W(\mu,\nu)=\|g_\mu-g_\nu\|_{\G_1}\end{equation}
for all $\mu,\nu\in\Pe$. Obviously, it suffices to prove this for
all absolutely continuous $\mu,\nu$ (or equivalently for strictly
increasing $g_\mu, g_\nu$) since the latter  are dense in $\Pe$ (or
in $\G_1$, resp.). For such a pair of measures, there exists a map
$F:S^1\to S^1$ ('transport map') which minimizes the transportation
costs \cite{MR1964483}. Fix any point in $S^1$, say 0, and put
$s=F(0)$. Then the map $F$ is a transport map for the mass $\mu$ on
the segment $]0,1[$ onto the mass $\nu$ on the segment $]s,s+1[$.
Since these segments are isometric to the interval $]0,1[$, the
results from the previous subsection imply that the minimal  cost
for such a transport is given by $\int_ {S^1}
|g_1(t)-g_2(t+s)|^2dt$. Varying over $s$ finally proves the claim.
\end{proof}

\end{subsection}

\end{section}

\begin{section}{Dirichlet Process and Entropic Measure}

\begin{subsection}{Gibbsean Interpretation and Heuristic Derivation of the Entropic Measure}

One of the basic results of this paper is the rigorous construction
of a measure $\Pp^\beta$ formally given as (\ref{formal}) in the one-dimensional case, i.e.
$M=S^1$ or $M=\eI$. We will essentially make use of the isometries
$\chi:\G_1\to\Pe=\Pe(S^1), g\mapsto g_*\leb$ and
$\chi:\G_0\to\Pe_0=\Pe(\eI)$. They allow to transfer the problem of
constructing measures $\Pp^\beta$ on  spaces of probability measures
$\Pe$ (or $\Pe_0$) into the problem of constructing  measures
$\Q^\beta$ (or $\Q^\beta_0$) on  spaces of functions $\G_1$ (or
$\G_0$, resp.).
In terms of the measure $\Q^\beta_0$ on $\G_0$, for instance, the
formal characterization (\ref{formal}) then reads as follows
\begin{equation}\label{formal-q}
\Q_0^\beta(dg)=\frac1{Z_\beta} \, e^{-\beta\cdot S(g)}\, \Q_0(dg).
\end{equation}
Here $\Q_0$ denotes some 'uniform distribution' on $\G_0\subset
L^2(\eI)$  and $S:\G_0\to [0,\infty]$ is the {\em entropy
functional} $S(g):=\Ent(g_*\leb)$. If $g$ is absolutely continuous
then $S(g)$ can be expressed explicitly as
$$S(g)=-\int_0^1 \log g'(t)\,dt.$$
 The representation (\ref{formal-q}) is reminiscent
of Feynman's heuristic picture of the Wiener measure. Let us briefly
recall the latter and try to use it as a guideline for our
construction of the measure $\Q^\beta_0$.

\medskip

According to this heuristic picture, the Wiener measure
$\mathbf{P}^\beta$ with diffusion constant $\sigma^2=1/\beta$ should
be interpreted (and could be constructed) as
\begin{equation}\label{Feynman}
\mathbf{P}^\beta(dg)=\frac1{Z_\beta}\, e^{-\beta\cdot H(g)}\,
\mathbf{P}(dg)
\end{equation} with the energy functional $H(g)=\frac12\int_0^1
g'(t)^2dt$. Here $\mathbf{P}(dg)$ is assumed to be the 'uniform
distribution' on the space $\G^*$ of all continuous paths
$g:\eI\to\R$ with $g(0)=0$. Even if such a uniform distribution existed, typically almost all paths $g$ would have infinite energy. Nevertheless, one can overcome this difficulty as follows.

Given any finite partition $\{0=t_0<t_1< \dots< t_N=1\}$ of $\eI$,
one should replace the energy $H(g)$ of the path $g$ by the energy
of the piecewise linear interpolation of $g$
$$H_N(g)=\inf\left\{ H(\tilde g): \ \tilde g\in\G^*, \ \tilde g(t_i)=g(t_i) \
\forall i\right\}=
\sum_{i=1}^N\frac{|g(t_i)-g(t_{i-1})|^2}{2(t_i-t_{i-1})} .
$$
Then (\ref{Feynman}) leads to the following explicit representation
for the finite dimensional distributions
\begin{equation}\label{Feynman2}
\mathbf{P}^\beta\left(g_{t_1}\in dx_1,\ldots,g_{t_N}\in dx_N\right)=
\frac1{Z_{\beta,N}}\exp\left(-\frac\beta2
\sum_{i=1}^N\frac{|x_i-x_{i-1}|^2}{t_i-t_{i-1}} \right)\, p_N(dx_1,
\ldots,x_N).
\end{equation}
Here $p_N(dx_1, \ldots,x_N)=\mathbf{P}\left(g_{t_1}\in
dx_1,\ldots,g_{t_N}\in dx_N\right)$ should be a 'uniform
distribution' on $\R^N$ and $Z_{\beta,N}$ a normalization constant.
Choosing $p_N$ to be the $N$-dimensional Lebesgue measure makes the
RHS of (\ref{Feynman2})  a projective family of probability
measures. According to Kolmogorov's extension theorem this family
has a unique projective limit, the Wiener measure $\mathbf{P}^\beta$
on $\G^*$ with diffusion constant $\sigma^2=1/\beta$.

\medskip

Now let us try to follow this procedure with the entropy functional
$S(g)$ replacing the energy functional $H(g)$. Given any finite
partition $\{0=t_0<t_1< \dots< t_N<t_{N+1}=1\}$ of $\eI$, we will
replace the entropy $S(g)$ of the path $g$ by the entropy of the
piecewise linear interpolation of $g$
$$S_N(g)=
\inf\left\{ S(\tilde g): \ \tilde g\in\G_0, \ \tilde g(t_i)=g(t_i) \
\forall i\right\}=
-\sum_{i=1}^{N+1}\log\frac{g(t_i)-g(t_{i-1})}{t_i-t_{i-1}}\cdot
(t_i-t_{i-1}).
$$
This leads to the following expression for the finite dimensional
distributions
\begin{eqnarray}\label{q-approx} \nonumber\lefteqn{
\Q_0^\beta\left( g_{t_1}\in dx_1,\ldots, g_{t_N}\in
dx_N\right)}\\
&=&\frac1{Z_{\beta,N}}\exp\left(\beta\sum_{i=1}^{N+1}\log\frac{x_i-x_{i-1}}{t_i-t_{i-1}}\cdot
(t_i-t_{i-1}) \right)\ q_N(dx_1\ldots dx_N)
\end{eqnarray}
where $q_N(dx_1, \ldots,x_N)=\Q_0\left(g_{t_1}\in
dx_1,\ldots,g_{t_N}\in dx_N\right)$ is a 'uniform distribution' on
the simplex $\Sigma_N=\left\{(x_1,\ldots,x_N)\in \eI^N: \
0<x_1<x_2\ldots<x_N<1\right\}$ and $x_0:=0$, $x_{N+1}:=1$.

\medskip

What is a 'canonical' candidate for $q_N$? A natural requirement
will be the invariance property
\begin{eqnarray}\label{q-inv}\nonumber
q_N(dx_1,\ldots, dx_N)&=&\left[(\Xi^{x_{i-1},x_{i+k}})_*\,
q_k(dx_i,\ldots, dx_{i+k-1})\right]\\
&&\qquad\quad dq_{N-k}(dx_1,\ldots,dx_{i-1},dx_{i+k},\ldots, dx_N)
\end{eqnarray}  for all $1\le k\le N$ and all $1\le i\le
N-k+1$ with the convention $x_0=0, x_{N+1}=1$ and the rescaling map
$\Xi^{a,b}:\,]0,1[^{k}\to\,\R^k, y_j \mapsto  y_j ({b-a})+a$ for $j=1, \cdots, {k}$.

If the $q_N$, $N\in\N$, were probability measures then the
invariance property  admits the following interpretation: under
$q_N$, the distribution of the $(N-k)$-tuple
$(x_1,\ldots,x_{i-1},x_{i+k},\ldots, x_N)$ is nothing but $q_{N-k}$;
and under $q_N$, the distribution of the $k$-tuple $(x_i,\ldots,
x_{i+k-1})$ of points in the interval $]x_{i-1},x_k[$ coincides ---
after rescaling of this interval --- with $q_k$.
 Unfortunately, no family of {\em probability} measures $q_N, N\in\N$ with
 property (\ref{q-inv}) exists. However, there is a family of measures
 with this property.

 By iteration of the invariance property (\ref{q-inv}), the choice of
the measure $q_1$ on the interval $\Sigma_1=\,]0,1[$ will determine
all the measures $q_N$, $N\in\N$. Moreover, applying (\ref{q-inv})
for $N=2$, $k=1$ and both choices of $i$ yields
\begin{equation}\label{q1-inv}
\left[(\Xi^{0,x_1})_*\, q_1(dx_2)\right]\, dq_{1}(dx_1)=
\left[(\Xi^{x_2,1})_*\, q_1(dx_1)\right]\, dq_{1}(dx_2)
\end{equation}
for all $0<x_1<x_2<1$. This reflects the intuitive requirement that
there should be no difference whether we first choose randomly
$x_1\in\,]0,1[$ and then $x_2\in\,]x_1,1[$ or the other way round,
first $x_2\in\,]0,1[$ and then $x_1\in\,]0,x_2[$.

\begin{lemma}
A family of measures $q_N, N\in\N$, with continuous densities
satisfies property (\ref{q-inv}) if and only if
\begin{equation}\label{qn}
q_N(dx_1,\ldots, dx_N)=C^N \frac{dx_1\ldots
dx_N}{x_1\cdot(x_2-x_1)\cdot\ldots\cdot(x_N-x_{N-1})\cdot (1-x_N)}
\end{equation}
for some constant $C\in\R_+$.
\end{lemma}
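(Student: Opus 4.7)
My plan is to prove the two implications separately, since they require quite different arguments.

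For the "if" direction, I would verify (\ref{q-inv}) by direct substitution of (\ref{qn}). The pushforward $(\Xi^{x_{i-1},x_{i+k}})_* q_k$ contributes a Jacobian factor $(x_{i+k}-x_{i-1})^{-k}$, while each of the $k+1$ denominator factors $y_1$, $y_{j+1}-y_j$, $1-y_k$ in $q_k$ becomes a ratio with denominator $(x_{i+k}-x_{i-1})$. The net effect is one extra power of $(x_{i+k}-x_{i-1})$ in the numerator, which precisely cancels the single factor $(x_{i+k}-x_{i-1})$ appearing in the denominator of $q_{N-k}$ (coming from the gap between $x_{i-1}$ and $x_{i+k}$ in the reduced tuple), recovering (\ref{qn}).

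For the converse I would first pin down the density $f$ of $q_1$. The case $N=2$, $k=1$ of (\ref{q-inv}) applied with $i=1$ and $i=2$ gives two factorizations of the density of $q_2$; after the substitution $g(x):=x(1-x)f(x)$, they equate to the scalar identity
\begin{equation*}
g(x_1/x_2)\,g(x_2) \;=\; g\!\left(\tfrac{x_2-x_1}{1-x_1}\right) g(x_1), \qquad 0<x_1<x_2<1.
\end{equation*}
I claim this forces $g$ to be constant. Letting $\phi:=\log g$, differentiating in $x_2$ and sending $x_2\to x_1^{+}$ yields the ODE $\phi'(x)=\phi'(0^+)/(1-x)+\phi'(1^-)/x$, hence $g(x) = C\,x^A(1-x)^B$ for constants $A,B$; substituting this ansatz back into the identity forces $A=B=0$ and therefore $f(x)=C/(x(1-x))$. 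The formula for general $N$ then follows by induction: (\ref{q-inv}) with $k=1$, $i=1$ gives $\rho_N(x_1,\ldots,x_N)=\frac{1}{x_2}f(x_1/x_2)\,\rho_{N-1}(x_2,\ldots,x_N)$, and the identity $\frac{1}{x_2}f(x_1/x_2)=\frac{Cx_2}{x_1(x_2-x_1)}$ combined with the leading $x_2$ in the denominator of $\rho_{N-1}$ telescopes to give (\ref{qn}).

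I expect the main technical obstacle to be the uniqueness step for $g$ under only continuity: the derivation via the ODE presupposes the existence of one-sided derivatives of $\phi$ at $0$ and $1$. In a careful write-up this would need either a smoothing argument that first upgrades continuous solutions of the functional equation to smooth ones, or a direct iteration of the relation that reduces $g$ to its value on a dense subset of $(0,1)$ without invoking endpoint derivatives.
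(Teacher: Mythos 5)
Your proposal is correct and follows essentially the same route as the paper: reduce to the $N=2$, $k=1$ case to obtain the functional equation for the density $\rho$ of $q_1$, solve it under the continuity assumption, and then iterate (\ref{q-inv}) to reconstruct all $q_N$ (your Jacobian bookkeeping for the ``if'' direction and the telescoping induction for the ``only if'' direction both check out). You in fact supply more detail than the paper on the crucial uniqueness step --- the paper simply asserts that continuity forces $\rho(x)=C/(x(1-x))$ without argument --- and the regularity caveat you flag (your ODE derivation presupposes differentiability of $\log g$, which mere continuity does not give) is a genuine point that the paper's one-line assertion glosses over as well.
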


\begin{proof}
If $q_1(dx)=\rho(x)dx$ then (\ref{q1-inv}) is equivalent to
\begin{equation*}\label{q2-inv}
\rho(y)\cdot\rho\left(\frac{x}{y}\right)\cdot\frac1y=\rho(x)\cdot\rho\left(\frac{y-x}{1-x}\right)\cdot\frac1{1-x}
\end{equation*}
for all $0<x<y<1$. For continuous $\rho$ this implies that there
exists a constant $C\in\R_+$ such that $\rho(x)= \frac C{x(1-x)}$
 for all $0<x<1$. Iterated inserting this into
(\ref{q-inv}) yields the claim.
\end{proof}

Let us come back to our attempt to give a meaning to the heuristic
formula (\ref{formal-q}). Combining (\ref{q-approx}) with the choice
(\ref{qn}) of the measure $q_N$ finally yields
\begin{eqnarray} \nonumber\lefteqn{
\Q_0^\beta\left( g_{t_1}\in dx_1,\ldots, g_{t_N}\in
dx_N\right)}\\
&=&\frac1{Z_{\beta,N}}\prod_{i=1}^{N+1}(x_i-x_{i-1})^{\beta(t_i-t_{i_1})}\
\frac{dx_1\ldots dx_N}{x_1\cdot(x_2-x_1)\cdot\ldots\cdot (1-x_N)}
\end{eqnarray}
with appropriate normalization constants $Z_{\beta,N}$. Now the RHS
of this formula indeed turns out to define a consistent family of
probability measures. Hence, by Kolmogorov's extension theorem it
admits a projective limit $\Q_0^\beta$ on the space $\G_0$. The push
forward of this measure under the canonical identification
$\chi:\G_0\to\Pe_0, g\mapsto g_*\leb$  will be the {\em entropic
measure} $\Pp^\beta_0$ which we were looking for.

The details of the {\em rigorous construction} of this measure as
well as various properties of it will be presented in the following
sections.
\end{subsection}

\begin{subsection}{The Measures $\Q^\beta$ and $\mathbb{P}^\beta$ }
The basic object to be studied in this section is the probability
measure $\Q^\beta$ on the space $\G$.

\begin{proposition}
For each real number $\beta>0$ there exists a unique probability
measure $\Q^\beta$ on $\G$, called {\em Dirichlet process}, with the
property that for each $N\in\N$ and for each ordered family of
points $t_1,t_2,\ldots,t_N\in S^1$
\begin{equation}\label{Dir-S1}
\Q^\beta\left(g_{t_1}\in dx_1,\ldots,g_{t_N}\in dx_N\right)=
\frac{\Gamma(\beta)}{\prod_{i=1}^N\Gamma(\beta (t_{i+1}-t_i))}
\prod_{i=1}^N (x_{i+1}-x_i)^{\beta(t_{i+1}-t_i)-1} dx_1\ldots dx_N.
\end{equation}
\end{proposition}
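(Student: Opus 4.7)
The plan is to apply Kolmogorov's extension theorem to the family (\ref{Dir-S1}) and then transfer the resulting measure onto $\G$ via right-continuous extension from a countable dense subset. First I verify that each formula (\ref{Dir-S1}) is a probability measure: the change of variables $(x_1,\ldots,x_N)\mapsto(x_1,y_1,\ldots,y_{N-1})$ with $y_i=x_{i+1}-x_i$ reduces the density to a uniform marginal in $x_1\in S^1$ times a standard normalised Dirichlet density on the $(N-1)$-simplex with parameters $\beta(t_{i+1}-t_i)$, which integrates to $1$. Consistency under deletion of an intermediate point $t_j$ corresponds to integrating out $x_j$; using the beta-function identity
\[
\int_{x_{j-1}}^{x_{j+1}}(x_j-x_{j-1})^{\alpha-1}(x_{j+1}-x_j)^{\alpha'-1}\,dx_j=\frac{\Gamma(\alpha)\Gamma(\alpha')}{\Gamma(\alpha+\alpha')}(x_{j+1}-x_{j-1})^{\alpha+\alpha'-1}
\]
with $\alpha=\beta(t_j-t_{j-1})$ and $\alpha'=\beta(t_{j+1}-t_j)$, the Gamma factors rearrange exactly into the prefactor of (\ref{Dir-S1}) for the smaller family with the merged gap $t_{j+1}-t_{j-1}$.

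Next I fix a countable dense $D\subset S^1$ and apply Kolmogorov's theorem to obtain a measure $\tilde\Q^\beta$ on $(S^1)^D$ with the finite-dimensional marginals (\ref{Dir-S1}). The cyclically ordered support of each density forces $\tilde g|_D$ to be $\tilde\Q^\beta$-a.s.\ monotone in the cyclic order on $S^1$, and since each one-dimensional marginal is uniform on $S^1$, no discontinuity occurs at any deterministic $t$. The right-continuous $\Z$-equivariant extension of $\tilde g|_D$ thus almost surely produces an element of $\G$, and I define $\Q^\beta$ as the push-forward of $\tilde\Q^\beta$ under this extension map; its finite-dimensional distributions are then (\ref{Dir-S1}) by construction. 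Uniqueness follows from the compactness of $\G$ in $L^2(S^1,S^1)$ (the $S^1$-analogue of Proposition~\ref{g-top}) together with the observation that a right-continuous monotone map is determined by its restriction to any countable dense subset: the Borel $\sigma$-algebra on $\G$ is generated by the evaluations at $D$, on which any measure satisfying (\ref{Dir-S1}) is uniquely determined.

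The principal obstacle is verifying that the projective-limit measure really concentrates on (the canonical embedding of) $\G$ inside $(S^1)^D$. A cleaner, self-contained alternative is an explicit realisation as a normalised Gamma process: let $(\gamma_t)_{t\in[0,1]}$ be a standard Gamma subordinator of shape $\beta$ per unit time and $U$ an independent uniform variable on $S^1$; set $g(t):=\gamma_t/\gamma_1+U$ (sum in $S^1$) for $t\in[0,1)$ and extend $\Z$-equivariantly. The classical Gamma-to-Dirichlet reduction shows that the arc-length vector $\bigl(g(t_{i+1})-g(t_i)\bigr)_i$ is Dirichlet-distributed with the correct parameters (independence of Gamma increments on disjoint arcs, plus aggregation to absorb the two components that straddle the base point $0$), while the independent uniform rotation $U$ restores the full joint density (\ref{Dir-S1}). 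Since $\gamma$ has càdlàg paths and no fixed jump times, $g$ almost surely lies in $\G$.
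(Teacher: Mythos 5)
Your core argument coincides with the paper's: the paper's proof consists exactly of the consistency check via Euler's beta integral followed by an appeal to Kolmogorov's extension theorem, which is your first two paragraphs. Where you go beyond the paper is in making explicit two points it glosses over, namely that the projective limit actually concentrates on (an embedded copy of) $\G$, and uniqueness; both of your arguments there are sound, with one small imprecision: the absence of a discontinuity at a fixed $t$ does not follow from the one-dimensional marginal being uniform, but from stochastic continuity, i.e.\ from the two-dimensional marginals, under which $|[g(t),g(s)]|$ is Beta-distributed with first parameter $\beta|[t,s]|\to 0$, hence tends to $0$ in probability as $s\downarrow t$; combined with monotonicity this identifies the right-limit along $D$ with $\tilde g(t)$ almost surely. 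Your alternative realisation as a normalised Gamma subordinator composed with an independent uniform rotation is correct and is in fact the route the paper itself takes later for the interval case (Proposition \ref{dir=gamma}); it buys an explicit pathwise construction that makes membership in $\G$ immediate, at the cost of importing the Gamma-to-Dirichlet reduction, whereas the paper's purely projective argument is self-contained but leaves the support question implicit.
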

The precise meaning of (\ref{Dir-S1}) is that for all bounded
measurable $u: (S^1)^N\to\R$
\begin{eqnarray*}\label{Dir-S1-prec}\lefteqn{\int_\G u\left(g_{t_1},\ldots,g_{t_N}\right)\, d\Q^\beta(g)}&&\\
&=&\frac{\Gamma(\beta)}{\prod_{i=1}^N\Gamma(\beta\cdot
|[t_i,t_{i+1}]|)}\int_{\Sigma_N} u(x_1,\ldots,x_N) \prod_{i=1}^N
|[x_i,x_{i+1}]|^{\beta\cdot |[t_i,t_{i+1}]|-1} dx_1\ldots dx_N.
\end{eqnarray*}
with $\Sigma_N=\left\{(x_1,\ldots,x_N)\in (S^1)^N: \ \sum_{i=1}^N
|[x_i,x_{i+1}]|=1\right\}$ and $x_{N+1}:=x_1$, $t_{N+1}:=t_1$.
In particular, with $N=1$ this means $\int_{\G}u(g_t)d\Q^\beta(g)=\int_{S^1}u(x)dx$ for each $t\in S^1$.

\begin{proof}
It suffices to prove that (\ref{Dir-S1}) defines a consistent family
of finite dimensional distributions. The existence of $\Q^\beta$ (as
a 'projective limit') then follows from Kolmogorov's extension
theorem. The required  consistency means that
\begin{eqnarray*}\lefteqn{\frac{\Gamma(\beta)}
{\prod_{i=1}^N\Gamma(\beta\cdot |[t_i,t_{i+1}]|)} \int_{\Sigma_N}
 \prod_{i=1}^N
|[x_i,x_{i+1}]|^{\beta\cdot |[t_i,t_{i+1}]|-1} u(x_1,\ldots,x_N)\, dx_1\ldots dx_N}\\
&=& \frac{\Gamma(\beta)}{\Gamma(\beta\cdot
|[t_1,t_{2}]|)\cdot\ldots\cdot\Gamma(\beta\cdot
|[t_{k-1},t_{k+1}]|)\cdot \ldots\cdot\Gamma(\beta\cdot|[t_N,t_{1}]|)
}\\
&\cdot& \int_{\Sigma_{N-1}} |[x_1,x_{2}]|^{\beta\cdot
|[t_1,t_{2}]|-1}\cdot\ldots\cdot |[x_{k-1},x_{k+1}]|^{\beta\cdot
|[t_{k-1},t_{k+1}]|-1}\cdot\ldots\cdot |[x_{N},x_{1}]|^{\beta\cdot
|[t_{N},t_{1}]|-1}\\
&&\qquad\qquad\cdot v(x_1,\ldots,x_{k-1},x_k\ldots,x_N)\,
 dx_1\ldots dx_{k-1}dx_{k+1}\ldots
dx_N
\end{eqnarray*}
whenever $u(x_1,\ldots,x_N)=v(x_1,\ldots,x_{k-1},x_k\ldots,x_N)$ for
all $(x_1,\ldots x_N)\in\Sigma_N$. The latter is an immediate
consequence of the well-known fact ({\it Euler's beta integral})
that
\begin{eqnarray*}\lefteqn{\int_{[x_{k-1},x_k+1]}|[x_{k-1},x_{k}]|^{\beta\cdot
|[t_{k-1},t_{k}]|-1}\cdot|[x_{k},x_{k+1}]|^{\beta\cdot
|[t_{k},t_{k+1}]|-1}\, dx_k}\\
& =& \frac{ \Gamma(\beta\cdot |[t_{k-1},t_{k}]|)\Gamma(\beta\cdot
|[t_{k},t_{k+1}]|)}{\Gamma(\beta\cdot |[t_{k-1},t_{k+1}]|)}
|[x_{k-1},x_{k+1}]|^{\beta\cdot
|[t_{k-1},t_{k+1}]|-1}.\end{eqnarray*}
\end{proof}

For $s\in S^1$ let $\hat\theta_s:\G\to\G, g\mapsto g\circ \theta_s$
be the isomorphism of $\G$ induced by the rotation $\theta_s: S^1\to
S^1, t\mapsto t+s$. Obviously, the measure $\Q^\beta$ on $\G$ is
invariant under each of the maps $\hat\theta_s$. Hence, $\Q^\beta$
induces a probability measure $\Q^\beta_1$ on the quotient spaces
$\G_1=\G/S^1$.

Recall the definition of the map $\chi:\G\to\Pe, g\mapsto g_*\leb$. Since
$(g\circ\theta_s)_*\leb=g_*\leb$ this canonically extends to a map
$\chi:\G_1\to\Pe$. (As mentioned before, the latter is even an
isometry.)

\begin{definition} The {\em entropic measure} $\mathbb{P}^\beta$ on $\Pe$ is
defined as the push forward of the Dirichlet process $\Q^\beta$ on
$\G$ (or equivalently, of the measure $\Q_1^\beta$ on $\G_1$) under
the map $\chi$. That is, for all bounded measurable $u:\Pe\to\R$
$$\int_\Pe u(\mu)\, d\mathbb{P}^\beta(\mu)=\int_\G u(g_*\leb)\,
d\Q^\beta(g).$$
\end{definition}

\end{subsection}

\begin{subsection}{The Measures $\Q_0^\beta$ and $\mathbb{P}_0^\beta$ }
The subspaces  $\{g\in\G: \,
g(0)=0\}$ and  $\{g\in\G_0: \,
g(0)=0\}$ can obviously be identified.
Conditioning the probability measure $\Q^\beta$ onto this event thus will
define a probability measure $\Q_0^\beta$ on $\G_0$.
However, we  prefer to give the direct construction of $\Q^\beta_0$.

\begin{proposition}
For each real number $\beta>0$ there exists a unique probability
measure $\Q_0^\beta$ on $\G_0$, called {\em Dirichlet process}, with
the property that for each $N\in\N$ and each  family
$0=t_0<t_1<t_2<\ldots<t_N<t_{N+1}=1$
\begin{equation}\label{Dir-I}
\Q^\beta_0\left(g_{t_1}\in dx_1,\ldots,g_{t_N}\in dx_N\right)=
\frac{\Gamma(\beta)}{\prod_i\Gamma(\beta\cdot (t_{i+1}-t_i))}
\prod_i (x_{i+1}-x_i)^{\beta\cdot (t_{i+1}-t_i)-1} dx_1\ldots dx_N.
\end{equation}

\end{proposition}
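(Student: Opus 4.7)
The plan is to mimic the proof of the previous proposition for $\Q^\beta$ on $\G$: construct the measure as the projective limit of the proposed finite-dimensional distributions via Kolmogorov's extension theorem, then upgrade from the cylindrical $\sigma$-algebra on $[0,1]^{[0,1)}$ to a genuine probability measure on $\G_0$.

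First, I would check that the right-hand side of (\ref{Dir-I}), with the boundary conventions $x_0=0,\ x_{N+1}=1,\ t_0=0,\ t_{N+1}=1$, is indeed a probability measure on the ordered simplex $\Sigma_N=\{0<x_1<\cdots<x_N<1\}$. This is just the Dirichlet distribution with parameters $(\beta(t_1-t_0),\ldots,\beta(t_{N+1}-t_N))$ applied to the increments $x_{i+1}-x_i$, whose total mass equals one by the classical Liouville--Dirichlet formula.

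Second, the Kolmogorov consistency condition reduces to checking, for each interior index $1\le k\le N$, that integrating out $x_k$ in (\ref{Dir-I}) yields the $(N-1)$-point marginal on $(t_1,\ldots,t_{k-1},t_{k+1},\ldots,t_N)$. Exactly as in the $\G$ case this is a single application of Euler's beta integral,
\[
\int_{x_{k-1}}^{x_{k+1}}(x_k-x_{k-1})^{\beta(t_k-t_{k-1})-1}(x_{k+1}-x_k)^{\beta(t_{k+1}-t_k)-1}\,dx_k
=\frac{\Gamma(\beta(t_k-t_{k-1}))\Gamma(\beta(t_{k+1}-t_k))}{\Gamma(\beta(t_{k+1}-t_{k-1}))}\,(x_{k+1}-x_{k-1})^{\beta(t_{k+1}-t_{k-1})-1},
\]
which combines the two Gamma factors in the denominator of (\ref{Dir-I}) into the single factor $\Gamma(\beta(t_{k+1}-t_{k-1}))$. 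The boundary cases $k=1$ and $k=N$ use the conventions $x_0=0,\ x_{N+1}=1$ without any change. By Kolmogorov's extension theorem this produces a unique probability measure $\tilde\Q_0^\beta$ on the product space $([0,1]^{[0,1)},\mathrm{cyl})$ with the prescribed finite-dimensional distributions.

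Third, I would pass from $\tilde\Q_0^\beta$ to a measure on $\G_0$. Since every finite-dimensional marginal is supported on an ordered simplex, the coordinate process $t\mapsto g(t)$ is nondecreasing along any countable set $\tilde\Q_0^\beta$-almost surely; fix a countable dense set $D\subset[0,1)$ containing $0$ and define the right-continuous modification $\bar g(t):=\inf\{g(s): s\in D,\ s>t\}$. Since the one-dimensional marginals are absolutely continuous, each fixed $t$ is almost surely a point of continuity of $g$ on $D$, so $\bar g$ has the same finite-dimensional distributions as $g$. Pushing $\tilde\Q_0^\beta$ forward under $g\mapsto\bar g$ then gives the desired probability measure $\Q_0^\beta$ on $\G_0$, and uniqueness is immediate because finite-dimensional distributions separate Borel measures on the Polish space $\G_0$.

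The main obstacle I expect is the last step, namely the passage from the cylindrical measure to a Borel measure on the function space $\G_0$; but in contrast to the construction of Brownian motion no Kolmogorov continuity theorem is required here, because monotonicity on a countable dense set is built into the marginals and a nondecreasing function on $D$ admits a canonical right-continuous extension to $[0,1)$. The rest is bookkeeping with Gamma functions and the beta integral.
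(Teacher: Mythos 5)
Your proof is correct and follows essentially the same route as the paper's (which is given for the sphere case and carried over verbatim to the interval): normalization via the Dirichlet distribution, Kolmogorov consistency via Euler's beta integral, and the extension theorem. Your third step, the explicit right-continuous modification landing in $\G_0$, is a welcome piece of care that the paper leaves implicit; the only imprecision there is that ``each fixed $t$ is a.s.\ a continuity point'' does not follow from absolute continuity of the one-dimensional marginal alone, but rather from monotonicity along $D$ together with $\int g(s)\,d\tilde\Q_0^\beta = s$, which forces $\bar g(t)=g(t)$ a.s.\ for each fixed $t$.
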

The precise meaning of (\ref{Dir-I}) is that for all bounded
measurable $u: \eI^N\to\R$
\begin{eqnarray*}\label{Dir-I-prec}\lefteqn{\int_{\G_0} u\left(g_{t_1},\ldots,g_{t_N}\right)\, d\Q_0^\beta(g)}&&\\
&=&\frac{\Gamma(\beta)}{\prod_{i=1}^N\Gamma(\beta\cdot
(t_{i+1}-t_i))}\int_{\Sigma_N} u(x_1,\ldots,x_N) \prod_{i=1}^N
(x_{i+1}-x_i)^{\beta\cdot(t_{i+1}-t_i)-1} dx_1\ldots dx_N.
\end{eqnarray*}
with $\Sigma_N=\left\{(x_1,\ldots,x_N)\in \eI^N: \
0<x_1<x_2\ldots<x_n<1\right\}$ and $x_{N+1}:=x_1$, $t_{N+1}:=t_1$.

\begin{remark}\rm According to these explicit formulae, it is easy to
calculate the moments of the Dirichlet process. For instance,
$$\mathbb{E}_0^\beta(g_t):=\int_{\G_0} g_t\, d\Q^\beta_0(g)=t$$
and
$$\mbox{Var}_0^\beta(g_t):=\int_{\G_0} (g_t-t)^2\, d\Q^\beta_0(g)=\frac1{1+\beta}t(1-t)$$
for all $\beta>0$ and all $t\in\eI$. \end{remark}

\begin{definition}
The {\em entropic measure} $\mathbb{P}_0^\beta$ on
$\Pe_0=\Pe(\eI)$ is defined as the push forward of the Dirichlet
process $\Q_0^\beta$ on $\G_0$ under the map $\chi$. That is, for
all bounded measurable $u:\Pe_0\to\R$
$$\int_{\Pe_0} u(\mu)\, d\mathbb{P}_0^\beta(\mu)=\int_{\G_0} u(g_*\leb)\,
d\Q_0^\beta(g).$$
\end{definition}

\begin{remark}\rm\label{q-q0}
{\bf (i)} According to the above construction $\Q_0^\beta(\, .
\,)=\Q^\beta(\ .  \   |g(0)=0)$ and
$$\int_{\G_0}u(g)\,d\Q^\beta_0(g)=\int_\G u(g-g(0))\, d\Q^\beta(g),$$
$$\int_\G u(g)\, d\Q^\beta(g)=\int_0^1\int_{\G_0}u(g+x)\,
d\Q^\beta_0(g)\, dx.$$

{\bf (ii)} Analogously, the entropic measures on the sphere and on
the are linked as follows
$$\int_\Pe u(\mu)\, d\Pp^\beta(\mu)=\int_0^1 \int_{\Pe_0} u((\theta_x)_*\mu)d\Pp_0^\beta(\mu)\, dx$$
or briefly
$$d\Pp^\beta=\int_0^1\left[ (\hat\theta_x)_*d\Pp^\beta_0\right]dx$$
where $\theta_x:S^1\to S^1, y\mapsto x+y$ and
$\hat\theta_x:\Pe\to\Pe: \mu\mapsto (\theta_x)_*\mu$. We would like
to emphasize, however, that $\Pp^\beta\not=\Pp^\beta_0$. For
instance, consider $u(\mu):=\int f\,d\mu$ for some  $f: S^1\to\R$
(which may be identified with $f: \eI\to\R$). Then
$$\int_{\Pe(S^1)} u(\mu)\, d\Pp^\beta(\mu)=\int_{S^1} f(x)\, dx$$
whereas
$$\int_{\Pe(\eI)} u(\mu)\, d\Pp_0^\beta(\mu)=\int_{\eI} f(x)\rho_\beta(x)\, dx$$
with $\rho_\beta(x)=\frac{\Gamma(\beta)}{\Gamma(\beta
t)\Gamma(\beta(1-t))}\int_0^1 x^{\beta t-1}(1-x)^{\beta(1-t)-1}\,
dt$.
\end{remark}

\medskip

According to the last remark, it suffices to study in detail one of
the four measures $\Q^\beta$, $\Q^\beta_0$,  $\Pp^\beta$, and
$\Pp_0^\beta$ . We will concentrate in the rest of this chapter on
the measure $\Q^\beta_0$ which seems to admit the most easy
interpretations.

\end{subsection}

\begin{subsection}{The Dirichlet Process as Normalized Gamma
Process}

We start recalling some basic facts about the real valued Gamma
processes. For $\alpha>0$ denote by  $G(\alpha)$ the absolutely
continuous probability measure on $\R_+$ with density   $\frac 1
{\Gamma(\alpha)} x^{\alpha -1} e^ {-x}$.

\begin{definition}A real valued Markov process
$(\gamma_t)_{t\geq 0}$ starting in zero is called standard Gamma
process  if its increments $\gamma_t - \gamma_s$ are independent and
distributed according to    $G(t-s)$ for $0\leq s< t$. Without loss
of generality we may assume that almost surely the function $t\to
\gamma_t$ is right continuous and nondecreasing.
\end{definition}

\smallskip

\smallskip
Alternatively the   Gamma-Process may be defined as the unique pure
jump Levy process with Levy measure $\Lambda (dx) =\one_{x>0}
\frac{e^{-x}}{x} dx $. The connection between pure jump Levy and
Poisson point processes gives rise to several other equivalent
representations of the Gamma process \cite{MR1207584, MR1746300}.
For instance, let $\Pi=\{p=(p_x,p_y)\in \R^2\}$ be the Poisson point
process on $\R_+\times \R_+$ with intensity measure $dx \times
\Lambda (dy)$ with $\Lambda$ as above, then a Gamma process is
obtained by
\begin{equation}
\gamma_t:= \sum\limits_{p\in \Pi: p_x\leq t} p_y. \label{poissonrep}
\end{equation}

For  $\beta >0$ the process $\gamma_{t\cdot \beta}$ is a Levy
process with Levy measure $\Lambda_\beta(dx) = \beta \cdot
\one_{x>0} \frac{e^{-x}}{x} dx$. Its increments are distributed
according to
\begin{equation}
P( \gamma_{\beta\cdot t } -\gamma_{\beta \cdot s} \in dx ) = \frac
1{\Gamma(\beta\cdot (t-s))} x^{\beta \cdot (t-s) -1} e^ {-x} dx.
\tag*{$\Box$}
\end{equation}

\begin{proposition}\label{dir=gamma}
For each $\beta >0$, the law of the process   $(\frac{\gamma_{t\cdot
\beta}}{\gamma_\beta})_{t \in \eI}$  is the Dirichlet process
$\Q^\beta_0$.
\end{proposition}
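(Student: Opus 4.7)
The strategy is to verify that the finite-dimensional distributions of $(\gamma_{t\beta}/\gamma_\beta)_{t\in\eI}$ coincide with the finite-dimensional distributions (\ref{Dir-I}) defining $\Q_0^\beta$. Since both processes have almost surely right continuous nondecreasing paths starting at $0$ and ending at $1$, they take values in $\G_0$, and Kolmogorov's uniqueness argument (the same one used to produce $\Q_0^\beta$) will identify the laws.

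The starting point is the independent increments property of the time-changed Gamma process: for any $0=t_0<t_1<\cdots<t_N<t_{N+1}=1$ the increments
\[
Y_i := \gamma_{\beta t_i} - \gamma_{\beta t_{i-1}}, \qquad i=1,\ldots,N+1,
\]
are mutually independent with $Y_i\sim G(\beta(t_i-t_{i-1}))$. Hence their joint density on $\R_+^{N+1}$ is the product
\[
\prod_{i=1}^{N+1}\frac{1}{\Gamma(\beta(t_i-t_{i-1}))}\, y_i^{\beta(t_i-t_{i-1})-1}e^{-y_i}\, dy_i.
\]

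Next I would perform the change of variables $(Y_1,\ldots,Y_{N+1})\mapsto (X_1,\ldots,X_N, S)$ given by $S=\sum_{j=1}^{N+1}Y_j=\gamma_\beta$ and $X_i=(Y_1+\cdots+Y_i)/S$. Writing $x_0=0$, $x_{N+1}=1$ one has $Y_i=S(x_i-x_{i-1})$; a direct computation shows the Jacobian is $S^N$, so the joint density of $(X_1,\ldots,X_N,S)$ is
\[
\frac{\Gamma(\beta)}{\prod_{i=1}^{N+1}\Gamma(\beta(t_i-t_{i-1}))}\prod_{i=1}^{N+1}(x_i-x_{i-1})^{\beta(t_i-t_{i-1})-1}\ \times\ \frac{1}{\Gamma(\beta)}s^{\beta-1}e^{-s},
\]
where the factor $s^{\beta\sum(t_i-t_{i-1})-(N+1)}\cdot s^N = s^{\beta-1}$ collects all the $S$-dependence (using $\sum(t_i-t_{i-1})=1$). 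Thus $(X_1,\ldots,X_N)$ is independent of $S=\gamma_\beta$, and marginalizing over $S\in\R_+$ (whose density integrates to one) yields exactly the Dirichlet density on the simplex $\Sigma_N$ prescribed by (\ref{Dir-I}).

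Since $(\gamma_{\beta t_1}/\gamma_\beta,\ldots,\gamma_{\beta t_N}/\gamma_\beta)=(X_1,\ldots,X_N)$, this reproduces the stated finite dimensional distributions of $\Q_0^\beta$ for every ordered tuple $0<t_1<\cdots<t_N<1$. Measurable right continuity in $t$ and the a.s. monotonicity inherited from the Gamma process ensure that the random map $t\mapsto \gamma_{\beta t}/\gamma_\beta$ lives in $\G_0$, and the uniqueness clause of the proposition defining $\Q_0^\beta$ finishes the proof. The only step requiring genuine care is the Jacobian/normalization bookkeeping in the change of variables; no topological subtleties intervene because the Gamma representation automatically delivers paths in $\G_0$.
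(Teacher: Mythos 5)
Your proof is correct. The paper itself gives no computation here: it simply remarks that the statement "is easily obtained from Lukacs' characterization of the Gamma distribution" and cites the literature, i.e.\ it treats the independence of the normalized increment vector from the total mass $\gamma_\beta$ (the beta--gamma algebra) as a known fact. What you supply is the standard self-contained proof of that known fact: the product form of the joint density of the independent increments $Y_i\sim G(\beta(t_i-t_{i-1}))$, the change of variables $(y_1,\dots,y_{N+1})\mapsto(x_1,\dots,x_N,s)$ with Jacobian $s^N$, and the observation that the $s$-dependence collects into $s^{\beta-1}e^{-s}$ so that $(X_1,\dots,X_N)$ is independent of $S$ and Dirichlet distributed with exactly the parameters in (\ref{Dir-I}). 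Your bookkeeping is right ($\sum_i(\beta(t_i-t_{i-1})-1)+N=\beta-1$), and your closing remarks --- that the finite-dimensional distributions determine the law on $\G_0$ by the same Kolmogorov uniqueness used to construct $\Q_0^\beta$, and that the normalized paths automatically lie in $\G_0$ --- are the right way to finish. The only thing your route buys over the paper's is self-containedness; the only thing the citation buys is brevity.
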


\begin{proof}This well-known fact is easily obtained from  Lukacs' characterization of the Gamma distribution \cite{MR2074696}.\end{proof}

\end{subsection}

\begin{subsection}{Support Properties}

\begin{proposition}{\bf (i)} For each $\beta>0$, the measure $\Q^\beta_0$ has full support on $\G_0$.

{\bf (ii)}
 $\Q^\beta_0$-almost surely the function
$t\mapsto g(t)$ is strictly increasing but increases only by jumps
(that is, the jumps heights add up to 1 and the jump locations are
dense in $\eI$).

{\bf (iii)} For each fixed $t_0\in\eI$,  $\Q^\beta_0$-almost surely
the function $t\mapsto g(t)$ is continuous at $t_0$.
\end{proposition}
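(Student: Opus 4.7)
The plan is to leverage the identification $\Q^\beta_0 = \mathrm{Law}\bigl((\gamma_{\beta t}/\gamma_\beta)_{t\in \eI}\bigr)$ from Proposition \ref{dir=gamma} for parts (ii) and (iii), and the explicit finite-dimensional densities (\ref{Dir-I}) for part (i).

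For parts (ii) and (iii), since $\gamma_\beta \in (0,\infty)$ almost surely, the normalization $g_t = \gamma_{\beta t}/\gamma_\beta$ is an a.s.\ order-preserving affine rescaling, so it suffices to establish the analogous statements for the standard Gamma subordinator $(\gamma_t)_{t\ge 0}$. Via the Poisson point process representation (\ref{poissonrep}), $\gamma$ is a pure jump L\'evy process with L\'evy measure $\Lambda(dy) = \mathbf{1}_{y>0}\, e^{-y}/y\, dy$, and the decisive fact is $\Lambda((0,\infty)) = \infty$ (since $\int_0^1 e^{-y}/y\, dy = \infty$). Hence, for any $0 \le s < t$, the number of atoms of $\Pi$ in $(\beta s, \beta t] \times (0,\infty)$ is Poisson with infinite parameter, so infinitely many positive jumps occur in $(s,t)$ almost surely; this simultaneously forces strict increase on $(s,t)$ and density of jump times in $\eI$. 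Because $\gamma$ has no continuous part (L\'evy--It\^o decomposition for a pure jump process), $\gamma_t$ equals the sum of its jumps, so $g$ increases only by jumps, and these sum to $g(1)=1$. For (iii), the jump set of $\gamma$ is the countable projection onto $\R_+$ of the atoms of $\Pi$; since the vertical line $\{\beta t_0\} \times (0,\infty)$ has intensity zero under $dx \otimes \Lambda$, it almost surely contains no Poisson atom, so $\gamma$ (and hence $g$) is continuous at $t_0$.

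For (i), I appeal to Proposition \ref{g-top}: since the $L^2$-topology on $\G_0$ is the one of interest, it suffices to show that for every $g \in \G_0$ and every $\epsilon > 0$ the $L^2$-ball of radius $\epsilon$ around $g$ carries positive $\Q^\beta_0$-mass. Choose a partition $0 = t_0 < t_1 < \cdots < t_{N+1} = 1$ fine enough and a $\delta > 0$ small enough that, by right-continuity and monotonicity of $g$, any $\tilde g \in \G_0$ with $|\tilde g(t_i) - g(t_i)| \le \delta$ for $i=1,\dots,N$ lies within $L^2$-distance $\epsilon$ of $g$ --- a routine step-function sandwich estimate using that $g$ is bounded and monotone. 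The event $\{|g_{t_i} - g(t_i)| \le \delta,\ i = 1,\ldots, N\}$ then has $\Q^\beta_0$-probability given by the integral of the strictly positive density in (\ref{Dir-I}) over a set of positive Lebesgue measure in the open simplex $\Sigma_N$, hence is strictly positive.

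The main subtlety lies in (ii): strict monotonicity on all of $\eI$ is an uncountable requirement, not merely a statement at countably many pairs. The clean resolution is to first obtain $\gamma_t > \gamma_s$ almost surely for each pair $(s,t)$ in the countable set $\Q^2 \cap \{s<t\}$ via the infinite-L\'evy-measure argument above, intersect these full-measure events, and then propagate strict inequality to arbitrary pairs $s < t$ by inserting rationals $s < q_1 < q_2 < t$ and invoking monotonicity of the subordinator to get $\gamma_s \le \gamma_{q_1} < \gamma_{q_2} \le \gamma_t$.
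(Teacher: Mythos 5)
Your proposal is correct and follows essentially the same route as the paper: the full-support claim via the strictly positive finite-dimensional densities (\ref{Dir-I}) on a cylinder event contained in the $L^2$-ball, and (ii)--(iii) via the normalized-Gamma/Poisson representation of Proposition \ref{dir=gamma}, including the same rational-pair reduction for strict monotonicity. The only (cosmetic) difference is that you derive $g(s)<g(t)$ a.s.\ from the infinite L\'evy measure rather than directly from (\ref{Dir-I}), and you spell out the details for (iii) that the paper leaves implicit.
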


\begin{proof} (i)
Let  $g \in \G \subset L^2(\eI,dx)$  and $\epsilon >0$ then we have
to show   $ \Q^\beta(B_\epsilon(g) ) >0 $ where
$B_\epsilon(g)=\{h\in\G_0: \, \|h-g\|_{L^2(\eI)}<\epsilon\}$. For
this choose finitely many points $t_i \in \eI$ together with $\delta
_ i
>0$ such that the set $ S:=
 \{ f \in \G\, \bigl | \, |f(t_i)-g(t_i)|\leq \delta_i \quad \forall i \}$
 is contained in $ B_\epsilon (g)$. Clearly, from  (\ref{Dir-I})  $\Q^\beta(S)>0$  which proves the claim.

(ii) (\ref{Dir-I}) implies that  $\Q^\beta_0$-almost surely
$g(s)<g(t)$ for each given pair $s<t$. Varying over all such
rational pairs $s<t$, it follows that a.e. $g$ is strictly
increasing on $\R_+$.

In terms of the probabilistic representation (\ref{dir=gamma}), it
is obvious that $g$ increases only by jumps.

(iii) This also follows easily from the representation as a
normalized gamma process (\ref{dir=gamma}).
\end{proof}

Restating the previous property (ii) in terms of the entropic
measure yields that $\mathbb{P}_0^\beta$-a.e. $\mu\in\Pe_0$ is
'Cantor like'. More precisely,
\begin{corollary}\label{support-P}
$\mathbb{P}_0^\beta$-almost surely the measure $\mu\in\Pe_0$ has no
absolutely continuous part and no discrete part. The topological
support of $\mu$ has Lebesgue measure 0. Moreover,
\begin{equation}\Ent(\mu)=+\infty.\end{equation}
\end{corollary}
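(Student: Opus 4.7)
The corollary is a direct translation of Proposition 3.7(ii) through the isometry $\chi:\G_0\to\Pe_0$, so my plan is to deduce each claim about $\mu:=g_*\leb$ from the structural description of $\Q_0^\beta$-a.e.\ $g$ already provided there: strictly increasing, pure jump, with dense jump set.

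First I would dispatch the absence of a discrete part. For every $x\in\eI$ one has $\mu(\{x\})=\leb(g^{-1}(\{x\}))$, and the strict monotonicity of $g$ forces $g^{-1}(\{x\})$ to be empty or a singleton, hence Lebesgue negligible. This rules out atoms for all $x$ simultaneously.

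The main step is to establish $\leb(\supp\mu)=0$. Let $J\subset\eI$ denote the (countable) jump set of $g$ and write $\Delta_s:=g(s)-g(s-)$ for $s\in J$. Proposition 3.7(ii) asserts $\sum_{s\in J}\Delta_s=1=g(1)-g(0)$, so the pairwise disjoint open intervals $I_s:=(g(s-),g(s))\subset\eI$ fill an open set $U$ of full Lebesgue measure; its closed complement $K:=\eI\setminus U$ then satisfies $\leb(K)=0$. Monotonicity and right-continuity of $g$ readily yield $\{t\in\eI:g(t)\in I_s\}=\emptyset$ for each $s\in J$, so $\mu(I_s)=0$; summing over this countable family gives $\mu(U)=0$, whence $\supp\mu\subseteq K$ and $\leb(\supp\mu)=0$.

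The two remaining assertions follow at once. Any absolutely continuous component of $\mu$ is precluded by the fact that $\mu$ is carried by the Lebesgue-null set $K$ (Lebesgue decomposition), and $\Ent(\mu)=+\infty$ is then forced by the very definition of the relative entropy. I anticipate no serious obstacle, as the substantive content --- the pure-jump character of $g$ with jumps summing to $1$ and dense in $\eI$ --- has already been supplied by Proposition 3.7(ii) via the normalized Gamma process representation.
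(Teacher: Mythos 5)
Your proof is correct and follows the same underlying idea as the paper's (very terse) argument: exploit the pure-jump structure of $\Q_0^\beta$-a.e.\ $g$, in particular that the jump heights sum to $1$, to conclude that the open "gap" intervals $I_s=(g(s-),g(s))$ carry full Lebesgue measure while being $\mu$-null, so $\supp\mu$ is Lebesgue-null, whence $\Ent(\mu)=+\infty$ and $\mu$ has no absolutely continuous part; no atoms follows from strict monotonicity of $g$. You have simply spelled out the details that the paper leaves implicit.
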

\begin{proof} The assertion on the entropy of $\mu$ is an immediate consequence of
the statement on the support of $\mu$. The second claim follows
from the fact that the jump heights of $g$ add up to 1.
\end{proof}

In terms of the measure $\Q^\beta_0$, the last assertion of the
corollary states that $S(g)=+\infty$ for $\Q^\beta_0$-a.e.
$g\in\G_0$.
\end{subsection}

\begin{subsection}{Scaling and Invariance Properties}
The Dirichlet process $\Q^\beta_0$ on $\G_0$ has the following {\em
Markov property:} the distribution of $g|_{[s,t]}$ depends on
$g_{[0,1]\setminus [s,t]}$ only via  $g(s), g(t)$.

And the Dirichlet process $\Q^\beta_0$ on $\G_0$ has a remarkable
{\em self-similarity property}: if we restrict the functions $g$
onto a given interval $[s,t]$ and then linearly rescale their domain
and range in order to make them again elements of $\G_0$ then this
new process is distributed according to $\Q^{\beta'}_0$ with
$\beta'=\beta\cdot|t-s|$.
\begin{proposition}\label{ss-m}
For each $\beta>0$, and each $s,t\in\eI$, $s<t$
\begin{equation}\label{Markov}
\Q^\beta_0\left( g|_{[s,t]}\in . \ \big| \ g_{[0,1]\setminus
[s,t]}\right) = \Q^\beta_0\left( g|_{[s,t]}\in . \ \big| \ g(s),
g(t)\right)
\end{equation}
 and
\begin{equation}\label{self-similar}
(\Xi^{s,t})_*\Q^\beta_0=\Q_0^{\beta\cdot |t-s|} \end{equation}
 where $\Xi^{s,t}: \G_0\to \G_0$ with
$\Xi^{s,t}(g)(r)=\frac{g((1-r)s+rt)-g(s)}{g(t)-g(s)}$ for
$r\in\eI$.
\end{proposition}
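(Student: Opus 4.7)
Both claims become transparent through the representation of $\Q_0^\beta$ as the law of the normalized Gamma process $g_t = \gamma_{\beta t}/\gamma_\beta$ from Proposition \ref{dir=gamma}. The two ingredients that do all the work are the independence of disjoint Gamma increments and the classical Dirichlet/Gamma independence: for any Gamma process on $[0,\alpha]$, the normalized path $(\gamma_v/\gamma_\alpha)_{v\in[0,\alpha]}$ is independent of the total $\gamma_\alpha$ (a direct consequence of the Lukacs characterization already invoked in the proof of Proposition \ref{dir=gamma}).

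For the scaling relation \eqref{self-similar} I would set $\tilde\beta:=\beta(t-s)$ and $\tilde\gamma_v:=\gamma_{\beta s+v}-\gamma_{\beta s}$, so that $(\tilde\gamma_v)_{v\ge 0}$ is again a standard Gamma process by independent and stationary increments. Using $(1-r)s+rt=s+r(t-s)$ one gets
\[
\Xi^{s,t}(g)(r)=\frac{\gamma_{\beta s+\tilde\beta r}-\gamma_{\beta s}}{\gamma_{\beta s+\tilde\beta}-\gamma_{\beta s}}=\frac{\tilde\gamma_{\tilde\beta r}}{\tilde\gamma_{\tilde\beta}},
\]
and Proposition \ref{dir=gamma} applied to $\tilde\gamma$ with inverse temperature $\tilde\beta$ identifies the law as $\Q_0^{\tilde\beta}=\Q_0^{\beta(t-s)}$.

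For the Markov property \eqref{Markov} I would split $\gamma_\beta=A+B+C$ with $A=\gamma_{\beta s}$, $B=\gamma_{\beta t}-\gamma_{\beta s}$, $C=\gamma_\beta-\gamma_{\beta t}$ and decompose the Gamma process into three independent increment processes $L,I,R$ on $[0,\beta s]$, $[\beta s,\beta t]$, $[\beta t,\beta]$. The Dirichlet/Gamma independence applied inside $[\beta s,\beta t]$ says that $I_\cdot/B$ is independent of $B$; combined with block independence this yields that the ``shape'' $\Xi^{s,t}(g)=I_\cdot/B$ is independent of the whole tuple $(A,B,C,L,R)$. On the other hand $g|_{[0,1]\setminus[s,t]}$ is a measurable function of $(A,L,C,R)/\gamma_\beta$, and the pair $(g(s),g(t))$ is a function of $(A,B,C)/\gamma_\beta$; the explicit relation
\[
g(u)=g(s)+\bigl(g(t)-g(s)\bigr)\,\Xi^{s,t}(g)\!\left(\tfrac{u-s}{t-s}\right),\qquad u\in[s,t],
\]
then shows that, conditionally on $(g(s),g(t))$, the restriction $g|_{[s,t]}$ depends only on the shape, which is independent of the outside.

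The one step that needs care is the bookkeeping in the Markov argument: isolating the shape variable $I_\cdot/B$ from the scale variables $A,B,C,\gamma_\beta$ and verifying which of them are reconstructible from $g|_{[0,1]\setminus[s,t]}$ alone. A less conceptual but completely mechanical alternative is to verify \eqref{Markov} on cylindrical events using \eqref{Dir-I} directly: for any partition containing $s$ and $t$ as grid points, the integrand factors into an inside product (over grid points in $[s,t]$) and an outside product, and integrating out the variables outside $[s,t]$ via Euler's beta integral yields precisely the conditional density prescribed by the right-hand side of \eqref{Markov}.
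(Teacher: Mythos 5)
Your argument is correct, but it takes a genuinely different route from the paper. The paper disposes of both claims in one line, asserting that they ``follow immediately from the representation in (\ref{Dir-I})'' --- i.e.\ from the explicit product form of the finite-dimensional densities. Concretely, that is your ``mechanical alternative'': for self-similarity one writes down the joint density of $g$ at $s$, at interior grid points, and at $t$, changes variables to the rescaled quantities, and watches the exponents $\beta(u_{i+1}-u_i)-1$ turn into $\beta(t-s)(r_{i+1}-r_i)-1$ after the Jacobian factor $(g(t)-g(s))^N$ is absorbed; for the Markov property one observes that the density factors into an inside and an outside product once $s,t$ are grid points. Your primary argument instead runs everything through Proposition \ref{dir=gamma}: the self-similarity becomes the trivial observation that $\tilde\gamma_v=\gamma_{\beta s+v}-\gamma_{\beta s}$ is again a standard Gamma process, and the Markov property follows from block independence of increments plus the Lukacs-type independence of the normalized path from its total mass. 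This is cleaner and more conceptual (no beta-integral bookkeeping, and it explains \emph{why} the parameter rescales to $\beta|t-s|$), at the cost of importing the path-level independence of $(\gamma_v/\gamma_\alpha)_v$ from $\gamma_\alpha$, which strictly speaking needs a monotone-class extension of the finite-dimensional Lukacs statement. One small point to make explicit in your Markov bookkeeping: to pass from ``the shape is independent of $(L,B,R)$'' to formula (\ref{Markov}) you need that $g(s)$ and $g(t)$ are a.s.\ measurable with respect to $g|_{[0,1]\setminus[s,t]}$; this holds by right-continuity at $t$ and by the a.s.\ continuity of $g$ at the fixed point $s$, but it deserves a sentence.
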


\begin{proof} Both properties follow immediately from the
representation in Proposition \ref{Dir-I}.
\end{proof}

\begin{corollary} The probability measures $\Q^\beta_0$, $\beta>0$
on $\G_0$ are uniquely characterized by the self-similarity property
(\ref{self-similar}) and the distributions of $g_{1/2}$:
$$\Q^\beta_0(g_{1/2}\in
dx)=\frac{\Gamma(\beta)}{\Gamma(\beta/2)^2}\cdot
[x(1-x)]^{\beta/2-1}dx.$$
\end{corollary}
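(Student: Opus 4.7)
The first half of the corollary is immediate: specializing (\ref{Dir-I}) to $N=1$ and $t_1=1/2$ yields exactly the claimed Beta$(\beta/2,\beta/2)$ density for $g_{1/2}$ under $\Q^\beta_0$. The substance of the statement is the uniqueness assertion, for which I propose the following dyadic induction.

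Let $\tilde\Q^\beta_0$, $\beta>0$, be any second family satisfying the two hypotheses. Since $\G_0$ is compact in the $L^2$-topology (Proposition \ref{g-top}) and its elements are right-continuous nondecreasing functions, each probability measure on $\G_0$ is determined by its finite-dimensional distributions on any countable dense subset of $\eI$. I would therefore aim to show, by induction on $n\in\N$, that the joint law of $(g(k/2^n))_{0\le k\le 2^n}$ agrees under $\tilde\Q^\beta_0$ and $\Q^\beta_0$ for every $\beta>0$. The base case $n=1$ is the hypothesized Beta$(\beta/2,\beta/2)$ marginal of $g(1/2)$; the boundary values $g(0)=0$ and $g(1)=1$ are fixed by definition of $\G_0$.

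For the induction step $n\to n+1$ I would invoke both items of Proposition \ref{ss-m}, namely (\ref{self-similar}) together with the Markov property (\ref{Markov}): conditionally on $g(1/2)=x$, the two rescaled halves $\Xi^{0,1/2}g$ and $\Xi^{1/2,1}g$ are independent and each distributed as $\tilde\Q^{\beta/2}_0$. The inductive hypothesis applied to each $\tilde\Q^{\beta/2}_0$-distributed half determines the joint laws of $(\Xi^{0,1/2}g)(k/2^n)$ and of $(\Xi^{1/2,1}g)(k/2^n)$ for $0\le k\le 2^n$. Translating back through $g(k/2^{n+1})=g(1/2)\cdot(\Xi^{0,1/2}g)(k/2^n)$ for $k\le 2^n$ and $g(k/2^{n+1})=g(1/2)+(1-g(1/2))\cdot(\Xi^{1/2,1}g)((k-2^n)/2^n)$ for $k\ge 2^n$, and integrating against the Beta$(\beta/2,\beta/2)$ law of $g(1/2)$, yields the joint law of $(g(k/2^{n+1}))_{0\le k\le 2^{n+1}}$ under $\tilde\Q^\beta_0$, which by construction coincides with the corresponding joint law under $\Q^\beta_0$.

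The main technical point — and, in my view, the only nontrivial obstacle — is that (\ref{self-similar}) on its own supplies only the \emph{unconditional} marginal law of each $\Xi^{s,t}g$ and does not by itself pin down the joint dependence on the pivot $g(1/2)$ or between the two halves. The Markov property of Proposition \ref{ss-m} fills exactly this gap, so I would read the corollary as working within the class of families satisfying both assertions of that proposition. Once the conditional independence is in hand, the dyadic tree decomposition is completely determined by the marginal at $1/2$, and the induction propagates to all depths $n$, forcing $\tilde\Q^\beta_0=\Q^\beta_0$.
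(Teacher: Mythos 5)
The paper states this corollary without proof, so there is no authorial argument to compare against; I can only assess your proposal on its own merits. Your dyadic-tree induction is the natural approach, and you are right to flag that (\ref{self-similar}) alone supplies only the \emph{unconditional} laws of the rescaled restrictions and hence that the Markov property (\ref{Markov}) must also be invoked to decompose the joint law. Reading ``the self-similarity property'' as tacitly including (\ref{Markov}) is, I believe, the intended interpretation.

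However, the inductive step still contains a gap. You assert that ``conditionally on $g(1/2)=x$, the two rescaled halves $\Xi^{0,1/2}g$ and $\Xi^{1/2,1}g$ are independent and each distributed as $\tilde\Q^{\beta/2}_0$.'' The conditional independence is indeed (\ref{Markov}), and the \emph{unconditional} law of $\Xi^{0,1/2}g$ being $\tilde\Q^{\beta/2}_0$ is (\ref{self-similar}); but these two facts do not imply that the \emph{conditional} law of $\Xi^{0,1/2}g$ given $g(1/2)=x$ is the $x$-independent measure $\tilde\Q^{\beta/2}_0$ --- equivalently, they do not imply that $\Xi^{0,1/2}g$ is independent of $g(1/2)$. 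This ``hierarchical independence'' between the pivot value and the rescaled halves is a genuine structural feature of $\Q^\beta_0$: it follows from Proposition \ref{dir=gamma} together with Lukacs' theorem, since $\gamma_{\beta/2}$ is independent of the normalized path $(\gamma_{r\beta/2}/\gamma_{\beta/2})_{r\in\eI}$. But for a competing family $\tilde\Q^\beta_0$ it must either be derived from (\ref{Markov}), (\ref{self-similar}), and the law of $g_{1/2}$, or be added as an explicit hypothesis. Without it the induction cannot advance from $n=1$ to $n=2$: already the one-dimensional law of $g(1/4)=g(1/2)\cdot\Xi^{0,1/2}(g)(1/2)$ is undetermined, because one only knows the marginals of the two factors, not their joint law. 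The self-similarity constraints coming from straddling intervals $[s,t]$ with $s<1/2<t$ do restrict the joint law of the triple $(\Xi^{0,1/2}g,\,g(1/2),\,\Xi^{1/2,1}g)$ and may in fact force the product structure, but establishing this requires a separate argument that your proposal does not supply.
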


\begin{proposition}
{\bf (i)} \ For $\beta\to0$ the measures $\Q^\beta_0$ weakly
converge to a measure $\Q^0_0$ defined as the uniform distribution
on the set $\{1_{[t,1]}:\ t\in\,]0,1]\}\subset\G_0$.

Analogously,
 the measures $\Q^\beta$ weakly
converge for $\beta\to0$ to a measure $\Q^0$ defined as the uniform
distribution on the set of constant maps $\{t:\ t\in\
S^1\}\subset\G$.

{\bf (ii)} \ For $\beta\to\infty$ the measures $\Q^\beta_0$ (or
$\Q^\beta$) weakly converge to the Dirac mass $\delta_e$ on the
identity map $e$ of $\eI$ (or $S^1$, resp.).

\end{proposition}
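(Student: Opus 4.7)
Both assertions are weak-convergence statements on the compact metric space $\G_0$ (resp.\ $\G$), and by Proposition \ref{g-top} the topology there is characterized by pointwise convergence at continuity points of the limit; it will therefore suffice to identify the limits of the joint laws of $(g_{t_1},\ldots,g_{t_N})$ prescribed by (\ref{Dir-I}) and (\ref{Dir-S1}). The statements on $\G$ can then be deduced from those on $\G_0$ via the disintegration
\[
\int_\G u\,d\Q^\beta \;=\; \int_0^1\!\!\int_{\G_0} u(g+x)\,d\Q_0^\beta(g)\,dx
\]
from Remark \ref{q-q0}(i).

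For part (ii) my plan is to use that the marginal $g_t$ under $\Q^\beta_0$ is Beta$(\beta t,\beta(1-t))$-distributed, so $\EE_0^\beta g_t=t$ and $\mathrm{Var}_0^\beta g_t=t(1-t)/(1+\beta)$. By Fubini,
\[
\EE_0^\beta\,\|g-e\|_{L^2(\eI)}^2 \;=\; \int_0^1\frac{t(1-t)}{1+\beta}\,dt \;=\;\frac{1}{6(1+\beta)} \;\longrightarrow\;0,
\]
so $g\to e$ in $L^2$ in $\Q_0^\beta$-probability and hence $\Q_0^\beta\to\delta_e$ weakly. (Equivalently one could invoke the strong law $\gamma_s/s\to 1$ for the Gamma process of Proposition \ref{dir=gamma} together with monotonicity in $t$ and a Dini-type argument to obtain uniform almost-sure convergence $g_t^\beta\to t$.) The assertion on $\G$ follows immediately from the disintegration above.

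For part (i) the central observation is that, under $\Q_0^\beta$, the increment vector $(g_{t_j}-g_{t_{j-1}})_{j=1}^{N+1}$ (with $t_0:=0$, $t_{N+1}:=1$) is Dirichlet-distributed on the simplex with parameters $\alpha_j=\beta(t_j-t_{j-1})$. My plan is then to prove the following Dirichlet concentration principle: as $\beta\downarrow 0$ with the ratios $\alpha_j/\sum_i\alpha_i=t_j-t_{j-1}$ held fixed, this law converges weakly to $\sum_{j=1}^{N+1}(t_j-t_{j-1})\,\delta_{e_j}$, where $e_j$ is the $j$-th vertex of the simplex. This identifies the joint limit of $(g_{t_1},\ldots,g_{t_N})$ with that of $(\mathbf{1}_{\{S\le t_1\}},\ldots,\mathbf{1}_{\{S\le t_N\}})$ for $S$ uniform on $\eI$, which is precisely the finite-dimensional distribution of the uniform measure on $\{\mathbf{1}_{[s,1]}:s\in\,]0,1]\}$. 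For $\Q^\beta$ on $\G$, applying the disintegration and observing that on $S^1$ the sum $\mathbf{1}_{[s,1]}+x$ reduces modulo one to the constant map with value $x$, the limit is the uniform distribution on the constants.

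The delicate step is the Dirichlet concentration used in (i), where the reference density diverges on the boundary of the simplex while the total mass stays unit. I would settle it by the method of moments. From the standard formula
\[
\EE_0^\beta\prod_{j=1}^{N+1}(g_{t_j}-g_{t_{j-1}})^{k_j}\;=\;\frac{\Gamma(\beta)}{\Gamma(\beta+\sum_j k_j)}\prod_{j=1}^{N+1}\frac{\Gamma(\alpha_j+k_j)}{\Gamma(\alpha_j)}
\]
and $\Gamma(x)\sim 1/x$ as $x\downarrow 0$, the ratio $\Gamma(\alpha_j+k_j)/\Gamma(\alpha_j)\sim(k_j-1)!\,\alpha_j$ for $k_j\ge 1$ supplies the factor of $\alpha_j$ that cancels the singular $\Gamma(\beta)\sim 1/\beta$ only when exactly one index $j_0$ satisfies $k_{j_0}\ge 1$; all remaining moments are $O(\beta)$ and vanish. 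The surviving moments equal $t_{j_0}-t_{j_0-1}$, matching those of $\sum_j(t_j-t_{j-1})\delta_{e_j}$, and the method of moments on the compact simplex closes the argument.
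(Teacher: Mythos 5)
Your proposal is correct, and it takes a more computational route than the paper. The paper's proof is very short: it uses compactness of $\Pe(\G_0)$ to extract subsequential limits and then identifies them from the \emph{one-dimensional} marginals alone --- the Beta$(\beta t,\beta(1-t))$ law of $g_t$ degenerates to $(1-t)\delta_0+t\delta_1$ as $\beta\to0$ and to $\delta_t$ as $\beta\to\infty$ --- leaving implicit the structural step that monotonicity plus ``$g_t\in\{0,1\}$ a.s.\ for each $t$'' forces $g=1_{[s,1]}$ with $s$ uniform. You replace this in (i) by a method-of-moments identification of \emph{all} finite-dimensional distributions of the Dirichlet increment vector, which is heavier but makes the identification of the limit law on $\G_0$ self-contained (your asymptotics $\Gamma(\alpha_j+k_j)/\Gamma(\alpha_j)\sim (k_j-1)!\,\alpha_j$ and the bookkeeping of which moments survive are right, and the a.s.\ continuity of the evaluation maps under the limit law justifies passing from f.d.d.'s to weak convergence). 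In (ii) your variance computation $\EE_0^\beta\|g-e\|_{L^2}^2=1/(6(1+\beta))\to0$ is a clean quantitative shortcut that bypasses the subsequence argument entirely. One caveat on the circle case: your disintegration argument, applied honestly at $\beta\to\infty$, yields the uniform mixture of the rotations $\{e+x:\ x\in S^1\}$ rather than $\delta_e$ --- indeed the one-dimensional marginal of $\Q^\beta$ is uniform on $S^1$ for \emph{every} $\beta$, so $\delta_e$ cannot be the weak limit on $\G$ itself; the stated conclusion holds only after passing to the quotient $\G_1$ (or to $\Pe$, where all rotations push $\leb$ to $\leb$). This is an imprecision already present in the proposition as stated, but you should not assert that it ``follows immediately'' from the disintegration without noting the quotient.
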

\begin{proof}
(i) Since the space $\G_0$ (equipped with the $L^2$-topology) is
compact, so is $\Pe(\G_0)$ (equipped with the weak topology). Hence
the family $\Q^\beta_0$, $\beta>0$ is pre-compact. Let $\Q^0_0$
denote the limit of any converging subsequence of $\Q^\beta_0$ for
$\beta\to0$. According to the formula for the one-dimensional
distributions, for each $t\in\, ]0,1[$
\begin{eqnarray*}
\Q^\beta_0(g_{t}\in dx)&=& \frac{\Gamma(\beta)}{\Gamma(\beta
t)\Gamma(\beta(1-t))}\cdot x^{\beta t-1}(1-x)^{\beta(1-t)-1}dx \\
&\longrightarrow & (1-t)\delta_{\{ 0 \}}(dx)+t\delta_{\{ 1 \}}(dx)
\end{eqnarray*}
 as $\beta\to0$. Hence, $\Q^0_0$ is the uniform distribution on the set
$\{1_{[t,1]}:\ t\in\,]0,1]\}\subset\G_0$.

(ii) Similarly, $\Q^\beta_0(g_{t}\in dx)\to\delta_t(dx)$ as
$\beta\to\infty$. Hence, $\delta_e$ with $e:t\mapsto t$ will be the
unique accumulation point of $\Q^\beta_0$ for $\beta\to\infty$.
\end{proof}

Restating the previous results in terms of the entropic measures,
yields that the entropic measures $\Pp^\beta_0$ converge weakly to
 the uniform distribution $\Pp^0_0$ on the
set $\{ (1-t)\delta_{\{ 0 \}}+t\delta_{\{ 1 \}}: \
t\in[0,1]\}\subset\Pe_0$; and the measures $\Pp^\beta$ converge
weakly to the  uniform distribution $\Pp^0$  on the set $\{
\delta_{\{t \}}: \ t\in S^1\}\subset\Pe$ whereas for
$\beta\to\infty$ both,  $\Pp^\beta_0$ and $\Pp^\beta$, will converge
to $\delta_\leb$, the Dirac mass on the uniform distribution of
$\eI$ or $S^1$, resp.

The assertions of Proposition \ref{ss-m}  imply the following Markov
property and self-similarity property of the entropic measure.

\begin{proposition} For each each $x,y\in\eI$, $x<y$
$$\Pp^\beta_0\left(\mu|_{[x,y]}\in . \ \big|\mu|_{[0,1]\setminus [x,y]}\right)=
\Pp^\beta_0\left(\mu|_{[x,y]}\in . \ \big|\mu([x,y]\right)$$ and
$$
\Pp^\beta_0\left(\mu|_{[x,y]}\in . \ \big|\mu([x,y])=\alpha\right)
=\Pp_0^{\beta\cdot\alpha}\left(\mu_{x,y}\in . \,\right)$$ with
$\mu_{x,y}\in\Pe_0$ ('rescaling of $\mu|_{[x,y]}$') defined by
$\mu_{x,y}(A)=\frac1{\mu([x,y])}\mu(x+(y-x)\cdot A)$ for
$A\subset\eI$.
\end{proposition}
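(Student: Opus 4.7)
The plan is to reduce both statements to the corresponding Markov and self-similarity properties of the Dirichlet process $\Q_0^\beta$ (Proposition~\ref{ss-m}), transferred through the isometry $\chi:\G_0\to\Pe_0$, $g\mapsto g_*\leb$. For $\mu\in\Pe_0$ with inverse distribution function $g=g_\mu$, set $s:=\mu([0,x))$ and $t:=\mu([0,y])$, so that $\alpha:=t-s=\mu([x,y])$. Both $s$ and $t$ are measurable with respect to $\sigma(\mu|_{[0,1]\setminus[x,y]})$, and, since by Corollary~\ref{support-P} the measure $\mu$ is $\Pp^\beta_0$-a.s.\ non-atomic, one has $g(s)=x$ and $g(t)=y$ almost surely. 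A direct change of variable (substituting $u=(1-r)s+rt$) gives the key identity $\chi(\Xi^{s,t}(g))=\mu_{x,y}$, with $\Xi^{s,t}$ the rescaling operator from Proposition~\ref{ss-m}. Thus the rescaled inside piece $\Xi^{s,t}(g)\in\G_0$ encodes exactly the rescaled restriction $\mu_{x,y}\in\Pe_0$.

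Next I transport the conditioning. The $\sigma$-algebra $\sigma(\mu|_{[0,1]\setminus[x,y]})$ corresponds under $\chi^{-1}$ to the $\sigma$-algebra generated by $g$ on $[0,s]\cup[t,1]$ together with the values of $s,t$. By the Markov property of Proposition~\ref{ss-m}, applied with $s,t$ held fixed (legitimate since they are measurable with respect to the conditioning), the conditional law of $g|_{[s,t]}$ depends only on $g(s),g(t)$, which are $x,y$ deterministically. Pushing back through $\chi$ yields the Markov property: the conditional law of $\mu|_{[x,y]}$ given $\mu|_{[0,1]\setminus[x,y]}$ depends only on $x,y,\alpha$, i.e.\ only on $\mu([x,y])$.

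For the explicit form of this conditional law, I combine the Markov property with the self-similarity $(\Xi^{s,t})_*\Q_0^\beta=\Q_0^{\beta(t-s)}$ via the Gamma-process representation of Proposition~\ref{dir=gamma}. Writing $g(r)=\gamma_{\beta r}/\gamma_\beta$, one computes
\[
\Xi^{s,t}(g)(r)=\frac{\gamma_{\beta((1-r)s+rt)}-\gamma_{\beta s}}{\gamma_{\beta t}-\gamma_{\beta s}},
\]
which by Lukacs' characterization (invoked already in Proposition~\ref{dir=gamma}) is a $\Q_0^{\beta(t-s)}$-distributed Dirichlet process, independent of the three increments $\gamma_{\beta s},\ \gamma_{\beta t}-\gamma_{\beta s},\ \gamma_\beta-\gamma_{\beta t}$ as well as of the internal structure of $\gamma$ on $[0,\beta s]$ and $[\beta t,\beta]$. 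These are precisely the ingredients that determine the outside data $g|_{[0,s]\cup[t,1]}$, so $\Xi^{s,t}(g)$ is independent of that outside data and retains its $\Q_0^{\beta(t-s)}$-law conditionally on it. Applying $\chi$: conditionally on $\mu|_{[0,1]\setminus[x,y]}$, and hence on $\mu([x,y])=\alpha$, the rescaled measure $\mu_{x,y}$ has law $\Pp_0^{\beta\alpha}$, as required.

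The main obstacle is the random-time nature of $s,t$: Proposition~\ref{ss-m} is phrased for deterministic $s,t$, but in the present setting they are measurable functions of the outside data. The Gamma-process picture sidesteps this cleanly, since the independence of $\Xi^{s,t}(g)$ from the outside ingredients is built into the Poisson-point representation of $\gamma$ and persists under conditioning on any event specifying $s$ and $t$.
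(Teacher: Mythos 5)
Your overall strategy---pushing the Markov and self-similarity properties of Proposition \ref{ss-m} forward through $\chi$ and handling the random times via the Gamma-process representation---is exactly what the paper intends (it offers no proof beyond asserting that Proposition \ref{ss-m} implies the statement). However, the step on which your whole argument rests is false. Non-atomicity of $\mu=g_*\leb$ is equivalent to $g$ being strictly increasing, which gives $g^{-1}(g(r))=r$; the identity you actually need, $g(s)=x$ with $s=\mu([0,x))$, i.e.\ $g(g^{-1}(x))=x$, requires instead that $x$ belong to $\supp\mu$. By Corollary \ref{support-P} the support is $\Pp_0^\beta$-a.s.\ Lebesgue-null, so $\int_0^1\Pp_0^\beta(x\in\supp\mu)\,dx=0$ and for (Lebesgue-a.e.) fixed $x$ the point $x$ a.s.\ lies in a gap of $\mu$; then $g(s)=\min\bigl(\supp\mu\cap[x,1]\bigr)>x$, and likewise $g(t)>y$. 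Consequently your ``key identity'' $\chi(\Xi^{s,t}(g))=\mu_{x,y}$ fails: the left-hand side rescales $\mu|_{[x,y]}$ by the affine map carrying $[g(s),g(t)]$ onto $\eI$, the right-hand side by the map carrying $[x,y]$ onto $\eI$, and these differ by a nontrivial random affine change of variable. The mismatch is substantive, not cosmetic: $0$ always lies in the support of $\chi(\Xi^{s,t}(g))$ (by right continuity of $g$ at $s$), whereas $0\in\supp\mu_{x,y}$ iff $x\in\supp\mu$, which a.s.\ fails. The same confusion infects your proof of the first identity: $g(s)$ and $g(t)$ are the extreme points of $\supp\mu\cap[x,y]$ and hence are determined by the \emph{inside} data $\mu|_{[x,y]}$, not by the conditioning $\sigma$-algebra, so ``$g(s),g(t)$ are $x,y$ deterministically'' cannot be used to close the Markov-property argument.

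More fundamentally, Proposition \ref{ss-m} is a statement about \emph{fixed} times $s<t$ in the domain of $g$, i.e.\ about fixed quantile levels of $\mu$ and the \emph{random} positions $g(s),g(t)$; what it yields directly is that $\mu$ restricted to $[g(s),g(t)]$, affinely renormalized onto $\eI$ via $g(s),g(t)$ and divided by its mass $t-s$, is distributed as $\Pp_0^{\beta(t-s)}$ independently of the outside. The proposition under review fixes the \emph{positions} $x,y$ and conditions on the mass $\mu([x,y])=\alpha$, which forces you to evaluate $g$ at the first-passage times $s=\mu([0,x))$, $t=\mu([0,y])$, where $g$ a.s.\ jumps over the levels $x$ and $y$. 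Your closing appeal to the Poisson-point picture (``persists under conditioning on any event specifying $s$ and $t$'') does not control this overshoot: in the Gamma representation the levels to be crossed are $x\gamma_\beta$ and $y\gamma_\beta$, which depend on the entire path, so the strong Markov property at first passage cannot be invoked as stated. To make the argument correct you must either prove the quantile version (which is the statement Proposition \ref{ss-m} genuinely supports) or supply a separate argument handling the overshoot at $x$ and $y$; as written, the proof does not establish the claimed identities.
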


\end{subsection}

\begin{subsection}{Dirichlet Processes on General Measurable
Spaces}

Recall Ferguson's notion of a Dirichlet process on a general
measurable space $M$ with parameter measure $m$ on $M$. This is a
probability measure $\Q^m_{\Pe(M)}$ on $\Pe(M)$, uniquely defined by
the fact that for any finite measurable partition $M =\dot
\bigcup_{i=1}^{N+1} M_i$ and $\sigma_i := m(M_i)$.
\begin{eqnarray*}\lefteqn{
\Q^m_{\Pe(M)}\left(\mu:\ \mu(M_1)\in dx_1,\dots, \mu(M_{N})\in dx_{N}\right)} \\
 &=& \frac{\Gamma(m(M))}{\prod_{i=1}^{N+1} \Gamma(\sigma_i)} x_1^{\sigma_1-1}\cdots x_{N}^{\sigma_{N}-1}
\bigl(1-\sum_{i=1}^N x_i\bigr)^{\sigma_{N+1}-1} dx_1\cdots dx_{N},
\end{eqnarray*}

If a map $h:M\to M$ leaves the parameter measure $m$ invariant then
obviously the induced map $\hat h:\Pe(M)\to\Pe(M), \mu\mapsto
h_*\mu$ leaves the Dirichlet process $\Q^m_{\Pe(M)}$ invariant.

\medskip

In the particular case $M=\eI$ and $m=\beta\cdot\leb$, the Dirichlet
process $\Q^m_{\Pe(M)}$ can be obtained as push forward of the
measure $\Q^\beta_0$ (introduced before) under the isomorphism $\zeta:
\G_0\to\Pe(\eI)$ which assigns to each $g$ the induced
Lebesgue-Stieltjes measure $dg$ (the inverse $\zeta^{-1}$ assigns to
each probability measure its distribution function):

\begin{equation}
\Q^m_{\Pe(\eI)}=\zeta_*\Q^\beta_0.
\end{equation}

Note that the support properties of the measure  $\Q^m_{\Pe(\eI)}$
 are {completely different} from those of the
measure $\mathbb{P}_0^\beta$. In particular,
$\Q^m_{\Pe(\eI)}$-almost every $\mu\in\Pe(\eI)$ is discrete and has
full topological support, cf. Corollary \ref{support-P}. The invariance properties of $\Q^m_{\Pe(\eI)}$ under push forwards
by means of measure preserving transformations of $\eI$ seems to
have no intrinsic interpretation in terms of $\Q^\beta_0$.
\end{subsection}

\end{section}

\begin{section}{The Change of Variable Formula for the Dirichlet Process and for the Entropic Measure}
Our main result in this chapter will be a change of variable formula
for the Dirichlet process. To motivate this formula, let us first present an heuristic derivation based on the formal representation (\ref{formal-q}).

\begin{subsection}{Heuristic Approaches to Change of Variable Formulae}
Let us  have a look on the change of variable formula for the Wiener measure. On a formal level, it easily follows
from Feynman's heuristic interpretation
$$d{\mathbf{P}}^\beta(g)=\frac1Z e^{-\frac{\beta}{2} \int_0^1 g'(t)^2dt}\, d\mathbf{P}(g)$$
with the  (non-existing) 'uniform distribution' $\mathbf{P}$.
Assuming that the latter is 'translation invariant' (i.e. invariant
under additive changes of variables, -- at least in 'smooth'
directions $h$) we immediately obtain
\begin{eqnarray*}
d\mathbf{P}^\beta(\varh+g)&=& \frac1Z e^{-\frac{\beta}{2} \int_0^1
(\varh+g)'(t)^2dt}\, d\mathbf{P}(\varh+g)\\
&=& \frac1Z e^{-\frac{\beta}{2} \int_0^1 \varh'(t)^2dt -
\beta\int_0^1 \varh'(t)g'(t)dt}\cdot
e^{-\frac{\beta}{2} \int_0^1 g'(t)^2dt}\, d\mathbf{P}(g)\\
&=& e^{-\frac{\beta}{2} \int_0^1 \varh'(t)^2dt - \beta\int_0^1
\varh'(t)dg(t) } \, d\mathbf{P}^\beta(g).
\end{eqnarray*}
 If we interpret $\int_0^1\varh'(t)d g(t)$ as the
Ito integral of $\varh'$ with respect to the Brownian path $g$ then
this is indeed the  famous Cameron-Martin-Girsanov-Maruyama theorem.

\medskip

In the case of the entropic measure, the starting point for a
similar argumentation is the heuristic interpretation
$$d\Q_0^\beta(g)=\frac1Z e^{\beta \int_0^1\log g'(t)dt}\, d\Q_0(g),$$
again with a  (non-existing) 'uniform distribution'
$\Q_0$ on $\G_0$. The natural concept of 'change of variables', of course,
will be based on the semigroup structure of the underlying space
$\G_0$; that is, we will study transformations of $\G_0$ of the form  $g\mapsto \varh\circ g$ for some (smooth) element $\varh\in\G_0$.
It turns out that  $\Q_0$ should not be assumed to be
invariant under translations but merely quasi-invariant:
$$d\Q_0(\varh\circ g)=Y_\varh^0(g)\, d\Q_0(g)$$
with some density $Y_\varh$. This immediately implies the following change of variable formula for
$\Q_0^\beta$:
\begin{eqnarray*}
d\Q_0^\beta(\varh\circ g)&=& \frac1Z e^{\beta \int_0^1\log (\varh\circ
g)'(t)dt}\, d\Q_0(\varh\circ g)\\
&=& \frac1Z e^{\beta \int_0^1 \log\varh'(g(t))dt}\cdot
e^{\beta \int_0^1 \log g'(t)dt}  \cdot
Y^0_\varh(g)\, d\Q_0( g)\\
&=&   e^{ \beta\int_0^1 \log g'(t)dt } \cdot Y^0_\varh(g)\,
d\Q^\beta_0(g) .
\end{eqnarray*}
This is the heuristic derivation of the change of variables formula.
Its rigorous derivation (and the identification of the density $Y_\varh$) is the main result of this chapter.
\end{subsection}

\begin{subsection}{The Change of Variables Formula on the Sphere}
For $g,h \in \mathcal{G}$  with $h \in \mathcal{C}^2$ we put
\begin{equation}\label{Y0-def}
Y^0_{\varh}(g):=\prod_{a \in J_g}  \frac{ {\sqrt{\varh'
\left(g(a-)\right) \cdot \varh'\left(g(a+)\right)}}}{
 \frac{\delta
(\varh\circ g)}{\delta g}\left(a\right)} ,
\end{equation}
 where $J_g \subset S^1$ denotes the set of jump locations of $g$ and
\begin{equation*}\label{delta-def}
\frac{\delta (\varh\circ g)}{\delta
g}\left(a\right):=\frac{\varh\left(g(a+)\right) -
\varh\left(g(a-)\right)}{g(a+) - g(a-)} \ .
\end{equation*}
To simplify notation, here and in the sequel (if no ambiguity seems
possible), we write $y-x$ instead of $|[x,y]|$ to denote the length
of the positively oriented segment from $x$ to $y$ in $S^1$. We will
see below that the infinite product in the definition of $Y^0_h(g)$
converges for $\Q^\beta$-a.e. $g\in\G$. Moreover, for $\beta>0$ we
put
\begin{equation}
\label{Y-def}
X_{\varh}^\beta(g):=\exp\left(\beta\int^1_0 \log \varh'
\left(g(s)\right) ds\right),\qquad
Y^{\beta}_{\varh}(g):=X_{\varh}^{\beta}(g) \cdot Y^0_{\varh}(g).
\end{equation}

\begin{theorem}\label{CoV}
Each $\mathcal{C}^2$-diffeomorphism $h\in\G$ induces a bijective map $\tau_h:\G\to\G, \ g\mapsto h\circ g$
which leaves the measure $\Q^\beta$ quasi-invariant:
\begin{equation*}\label{CoV-formula}
d\Q^{\beta}(\varh \circ g) = Y^{\beta}_{\varh}(g) \ d\Q^{\beta}(g).
\end{equation*}
In other words,
the push forward of $\Q^\beta$ under the map  $\tau_h^{-1}=\tau_{h^{-1}}$ is absolutely continuous w.r.t.  $\Q^\beta$ with density
$Y^{\beta}_{\varh}$:
\begin{equation*}\label{CoV-formula2}
\frac{d (\tau_{h^{-1}})_*\Q^{\beta}(g)}{ d\Q^{\beta}(g)} = Y^{\beta}_{\varh}(g).
\end{equation*}
The function $Y^\beta_h$ is  bounded from above and below (away from 0) on $\G$.
\end{theorem}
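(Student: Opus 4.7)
The plan is to verify the formula on each finite-dimensional projection and pass to a limit. Since the cylinder $\sigma$-algebras $\mathcal{F}_{\mathcal{P}}:=\sigma(g\mapsto(g(t_1),\ldots,g(t_N)))$ associated with ordered partitions $\mathcal{P}=\{t_1,\ldots,t_N\}\subset S^1$ generate the Borel $\sigma$-algebra on $\G$, the Radon--Nikodym derivative of $(\tau_{h^{-1}})_*\Q^\beta$ with respect to $\Q^\beta$ can be identified as an $L^1$-limit of its $\mathcal{F}_\mathcal{P}$-conditional expectations. Bijectivity of $\tau_h$ is automatic since $h^{-1}\in\G$ exists. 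For a fixed $\mathcal{P}$, the substitution $x_i=h(y_i)$, $dx_i=h'(y_i)\,dy_i$ in (\ref{Dir-S1}) yields explicitly
\begin{equation*}
Y_\mathcal{P}(g)=\prod_{i=1}^N\left(\frac{h(g(t_{i+1}))-h(g(t_i))}{g(t_{i+1})-g(t_i)}\right)^{\beta(t_{i+1}-t_i)-1}\prod_{i=1}^N h'(g(t_i)).
\end{equation*}

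The crucial step is to use the cyclic identity $\prod_{i=1}^N\sqrt{h'(g(t_i))/h'(g(t_{i+1}))}=1$ on $S^1$ to rewrite $Y_\mathcal{P}=A_\mathcal{P}\cdot B_\mathcal{P}$ with
\begin{equation*}
A_\mathcal{P}(g)=\prod_i\left(\frac{h(g(t_{i+1}))-h(g(t_i))}{g(t_{i+1})-g(t_i)}\right)^{\beta(t_{i+1}-t_i)},\quad B_\mathcal{P}(g)=\prod_i\frac{(g(t_{i+1})-g(t_i))\sqrt{h'(g(t_i))h'(g(t_{i+1}))}}{h(g(t_{i+1}))-h(g(t_i))}.
\end{equation*}
For a refining sequence $\mathcal{P}_n$ with $\bigcup_n\mathcal{P}_n$ dense in $S^1$ and vanishing mesh, $(Y_{\mathcal{P}_n})_n$ is a non-negative $\Q^\beta$-martingale. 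At $\Q^\beta$-typical $g$, which by the support properties proved earlier is pure jump with jump heights summing to $1$, I would split the partition intervals into those enclosing a jump of size $\geq\epsilon$ (finitely many) and the rest. Jump-carrying intervals contribute $O(t_{i+1}-t_i)\to 0$ to $\log A_{\mathcal{P}_n}$, so the latter is a Riemann sum converging to $\beta\int_0^1\log h'(g(s))\,ds=\log X^\beta_h(g)$. By the mean value theorem, non-jump intervals contribute factors $1+o(1)$ to $B_{\mathcal{P}_n}$, while each big-jump interval around $a$ contributes $\sqrt{h'(g(a-))h'(g(a+))}/(\delta(h\circ g)/\delta g\,(a))$ in the limit, giving $B_{\mathcal{P}_n}\to Y^0_h(g)$.

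The main obstacle is showing that the infinite product $Y^0_h(g)$ converges and admits uniform two-sided bounds. The key insight is that the symmetrization to the geometric mean $\sqrt{h'(g(a-))h'(g(a+))}$, only available through the cyclic identity, permits a second-order Taylor expansion yielding $|\log(\text{jump factor at }a)|\leq C(g(a+)-g(a-))^2$ whenever $h\in\mathcal{C}^2$; the asymmetric (non-cyclic) version would yield only a first-order estimate, which is insufficient for summability across the dense set of jumps. Combined with $\sum_a(g(a+)-g(a-))=1$, this gives absolute convergence of $Y^0_h(g)$ together with uniform bounds $c\leq Y^0_h\leq C$ on $\G$. Since $0<c\leq h'\leq C$ on $S^1$, the factor $X^\beta_h$ is uniformly two-sidedly bounded as well, proving the final assertion of the theorem. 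Consequently $(Y_{\mathcal{P}_n})$ is uniformly bounded, and the uniformly integrable martingale convergence theorem identifies the $\Q^\beta$-a.s.\ and $L^1$-limit $Y^\beta_h$ as the Radon--Nikodym derivative.
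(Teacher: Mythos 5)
Your plan — identify the Radon–Nikodym derivative on each cylinder $\sigma$-algebra, observe it is a uniformly bounded martingale along a refining sequence of partitions, and invoke uniformly integrable martingale convergence — is a valid and somewhat cleaner reformulation of the paper's argument. Indeed, the finite-dimensional density $Y_{\mathcal P}$ you compute coincides exactly with the paper's approximants $Y^\beta_{h,k}(g)=X^\beta_{h,k}(g)\,Y^0_{h,k}(g)$, so the hard technical content (pointwise convergence of the partition products to $Y^\beta_h$ and uniform two-sided bounds, the paper's Lemmas \ref{X-ident}--\ref{finiteness}) is unchanged. Where the paper verifies the equality of finite-dimensional distributions by dominated convergence inside the integral, you pass to the martingale limit; both are correct routes through the same material.

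There are, however, concrete errors in your justification of the key estimates. First, the claimed bound $|\log(\text{jump factor at }a)|\le C\,(g(a+)-g(a-))^2$ for $h\in\C^2$ is false: a second-order Taylor expansion of $\log h'$ would require $\log h'\in\C^2$, i.e. $h\in\C^3$. For $h\in\C^2$ one only obtains a first-order estimate, $|\log(\text{jump factor at }a)|\le\|(\log h')'\|_\infty\cdot(g(a+)-g(a-))$, which is precisely what the paper's Lemma \ref{finiteness}(i) uses. (Correspondingly, Lemma \ref{Y-ident} is proved for $h\in\C^3$ and the $\C^2$ case is reached by an approximation $h_n\to h$ in $\C^2$ with $h_n\in\C^3$, a step your sketch omits.) Second, and more importantly, your assertion that a first-order estimate would be ``insufficient for summability across the dense set of jumps'' is wrong: since $\Q^\beta$-a.e.\ $g$ is a pure jump function with $\sum_{a\in J_g}(g(a+)-g(a-))=1$, the first-order bound is already absolutely summable, giving $|\log Y^0_h(g)|\le\|(\log h')'\|_\infty$. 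The summability comes from the finiteness of the total jump mass, not from the order of the Taylor expansion, and the geometric-mean symmetrization is not what rescues convergence. Finally, the assertion that ``non-jump intervals contribute factors $1+o(1)$ to $B_{\mathcal P_n}$'' is under-specified: there are $\sim k$ such intervals, and a product of $k$ factors each $1+o(1)$ is indeterminate. The actual proof requires the telescoping of $\sqrt{h'(g(t_i))h'(g(t_{i+1}))}$ across blocks of non-jump intervals between consecutive big jumps together with two-sided $\varepsilon$-bounds (as carried out in Lemma \ref{Y-ident}); your sketch does not address this.
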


By means of the canonical isometry $\chi:\G\to\Pe, \ g\mapsto g_*\leb$,
Theorem \ref{CoV} immediately implies

\begin{corollary}
For each $\C^2$-diffeomorphism $h\in\G$ the entropic measure $\Pp^\beta$ is quasi-invariant under
the transformation $\mu\mapsto h_*\mu$ of the space $\Pe$:
$$d\Pp^\beta(h_*\mu)=Y_h^\beta(\chi^{-1}(\mu))\ d\Pp^\beta(\mu).$$
The density $Y_h^\beta(\chi^{-1}(\mu))$ introduced in (\ref{Y-def}) can be expressed as follows
$$Y_h^\beta(\chi^{-1}(\mu))=
\exp\left[\beta\int_0^1 \log h'(s) \, \mu(ds)\right]\cdot
\prod_{I\in\Gaps(\mu)}\frac{\sqrt{h'(I_-)\cdot h'(I_+)}}{|h(I)|/|I|}
$$
where $\Gaps(\mu)$ denotes the set of segments $I= \,
]I_-,I_+[\,\subset S^1$ of maximal length with $\mu(I)=0$ and $|I|$
denotes the length of such a segment.
\end{corollary}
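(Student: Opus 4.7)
The plan is to transfer the quasi-invariance from $\G$ to $\Pe$ using the canonical isometry $\chi:\G\to\Pe$, $g\mapsto g_*\leb$, and then rewrite the density $Y^\beta_h(g)$ in terms of $\mu = g_*\leb$. The crucial observation is that for any $g\in\G$ and any $\C^2$-diffeomorphism $h\in\G$,
$$h_*(g_*\leb) = (h\circ g)_*\leb,$$
so under $\chi$ the pushforward map $\mu\mapsto h_*\mu$ on $\Pe$ corresponds exactly to $\tau_h:g\mapsto h\circ g$ on $\G$. Since by definition $\Pp^\beta = \chi_*\Q^\beta$, Theorem \ref{CoV} immediately yields the quasi-invariance $d\Pp^\beta(h_*\mu) = Y^\beta_h(\chi^{-1}(\mu))\, d\Pp^\beta(\mu)$. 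Before proceeding, one should verify that $Y^\beta_h$ is invariant under the $S^1$-action $g\mapsto g(\cdot+s)$ (so that it descends to $\G_1$ and the expression $Y^\beta_h(\chi^{-1}(\mu))$ is unambiguous); this is immediate from the periodicity of the integrand in $X^\beta_h$ and from the fact that the jump set and jump values of $g$ are preserved by rotation.

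It then remains to rewrite $Y^\beta_h(g) = X^\beta_h(g)\cdot Y^0_h(g)$ in the measure language. For the first factor, the elementary change of variables $x=g(s)$ (i.e.\ the definition $\mu = g_*\leb$) gives
$$X^\beta_h(g) = \exp\!\left(\beta\int_0^1 \log h'(g(s))\, ds\right) = \exp\!\left(\beta\int_{S^1}\log h'(s)\, \mu(ds)\right),$$
matching the first factor in the claimed formula. For the second factor, the task is to identify the jump set $J_g$ with $\Gaps(\mu)$.

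The main (and only nontrivial) step is this identification. By the support properties, $\Q^\beta$-a.e.\ $g$ is strictly increasing with only jumps, so the closure of the range of $g$ is $S^1 \setminus \bigcup_{a\in J_g}\, ]g(a-),g(a+)[$, and these pairwise disjoint open intervals are precisely the maximal open subsets of $S^1$ on which $\mu = g_*\leb$ vanishes. Under the resulting bijection $J_g\leftrightarrow\Gaps(\mu)$, $a\leftrightarrow I$, one has $I_- = g(a-)$, $I_+ = g(a+)$, $|I| = g(a+) - g(a-)$, hence
$$\frac{\delta(h\circ g)}{\delta g}(a) = \frac{h(g(a+)) - h(g(a-))}{g(a+) - g(a-)} = \frac{|h(I)|}{|I|},$$
and the factors $\sqrt{h'(g(a\pm))}$ become $\sqrt{h'(I_\pm)}$. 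Substituting into the definition of $Y^0_h(g)$ yields the infinite product over $\Gaps(\mu)$ in the stated form.

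The main (and only) obstacle, beyond the content of Theorem \ref{CoV} itself, is to verify that the bijection $J_g\leftrightarrow\Gaps(\mu)$ is exhaustive and free of double-counting; this is an essentially combinatorial fact about strictly increasing right-continuous functions whose range has Lebesgue-null closure, and it follows from the support properties already established. Everything else is bookkeeping.
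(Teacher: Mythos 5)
Your proposal is correct and is exactly the argument the paper intends: the paper derives the corollary "immediately" from Theorem \ref{CoV} via the identity $h_*(g_*\leb)=(h\circ g)_*\leb$ and the dictionary $J_g\leftrightarrow\Gaps(\mu)$, $\frac{\delta(h\circ g)}{\delta g}(a)=|h(I)|/|I|$, which you have spelled out (including the worthwhile check that $Y^\beta_h$ descends to $\G_1$, and the correct caveat that the jump/gap identification is needed only for $\Q^\beta$-a.e.\ $g$).
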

\end{subsection}

\begin{subsection}{The Change of Variables Formula on the Interval}
From the representation of $\Q^\beta$ as a product of $\Q^\beta_0$
and $\leb$ (see Remark \ref{q-q0}) and the change of variable
formulae for $\Q^\beta$ and $\leb$, one can deduce a change of
variable formula for $\Q^\beta_0$ similar to that of Theorem
\ref{CoV} but containing an additional factor $\frac1{h'(0)}$. In
this case, one has to restrict to  translations by means of
$\C^2$-diffeomorphisms $h\in\G$ with $h(0)=0$.

More generally, one might be interested in translations of $\G_0$ by
means of $\C^2$-diffeomorphisms $h\in\G_0$. In contrast to the
previous situation, it now may happen that $h'(0)\not= h'(1)$.

\medskip

 For
$g \in \mathcal{G}_0$   and   $\mathcal{C}^2$-ismorphism $h : \eI \to \eI$   we put
\begin{equation}\label{Y*-def}
Y^\beta_{\varh,0}(g):=X_\varh^\beta(g)\cdot Y_{\varh,0}(g)
\end{equation}
with
\[ Y_{\varh,0}(g)= \frac1{\sqrt {h'(0)\cdot h'(1)}} \cdot
Y^0_{\varh}(g)
\]
and  $X_\varh^\beta(g)$ and $  Y^0_\varh(g)$ defined
as before in (\ref{Y0-def}), (\ref{Y-def}). Note that here and in the sequel by a $\mathcal C^2$-isomorphism $h \in \mathcal G_0$ we understand an increasing homeomorphism $h: \eI \to \eI$ such that $h $ and $h^{-1}$ are bounded in $\mathcal C^2(\eI)$, which in particular implies $h'>0$.

\begin{theorem}\label{CoV-I}
Each translation $\tau_h:\G_0\to\G_0, \ g\mapsto h\circ g$ by means
of a $\mathcal{C}^2$-isomorphism $h\in\G_0$ leaves the measure
$\Q^\beta_0$ quasi-invariant:
\begin{equation*}\label{CoV-I-formula}
d\Q^{\beta}_0(\varh \circ g) =  Y^{\beta}_{\varh,0}(g) \
d\Q_0^{\beta}(g)
\end{equation*}
or, in other words,
\begin{equation*}\label{CoV-I-formula2} \frac{d
(\tau_{h^{-1}})_*\Q_0^{\beta}(g)}{ d\Q_0^{\beta}(g)} =
Y^{\beta}_{\varh,0}(g).
\end{equation*}
The function $Y^\beta_{h,0}$ is  bounded from above and below (away
from 0) on $\G_0$.
\end{theorem}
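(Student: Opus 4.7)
The plan is to identify $Y^\beta_{h,0}$ as the almost-sure limit of the finite-dimensional Radon--Nikodym derivative of $(\tau_{h^{-1}})_*\Q_0^\beta$ with respect to $\Q_0^\beta$ along a refining sequence of partitions, and then to promote cylinder-level convergence to the full change-of-variables identity via a martingale argument. Fix a partition $\pi=\{0=t_0<t_1<\cdots<t_{N+1}=1\}$ of $\eI$ and set $\mathcal{F}_\pi:=\sigma(g(t_1),\ldots,g(t_N))$. The explicit $N$-point density of $\Q_0^\beta$ together with the change of variables $x_i=h(y_i)$ in $\R^N$ (Jacobian $\prod_i h'(y_i)$) immediately gives
\[
M_\pi(g)\;:=\;\frac{d(\tau_{h^{-1}})_*\Q_0^\beta|_{\mathcal{F}_\pi}}{d\Q_0^\beta|_{\mathcal{F}_\pi}}(g)\;=\;\prod_{i=1}^N h'(g(t_i))\cdot\prod_{i=0}^N \bigl(\delta_i^\pi(g)\bigr)^{\beta(t_{i+1}-t_i)-1},
\]
where $\delta_i^\pi(g):=(h(g(t_{i+1}))-h(g(t_i)))/(g(t_{i+1})-g(t_i))$ with the boundary convention $g(t_0)=0$, $g(t_{N+1})=1$. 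Along any refining sequence $(\pi_n)$ with vertices dense in $\eI$, $(M_{\pi_n})$ is a positive $\Q_0^\beta$-martingale of mean one.

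To identify its pointwise limit I rewrite
\[
M_\pi(g)=\exp\!\Bigl(\beta\sum_{i=0}^N(t_{i+1}-t_i)\log\delta_i^\pi\Bigr)\cdot\frac{1}{\sqrt{\delta_0^\pi\,\delta_N^\pi}}\cdot\prod_{i=1}^N\frac{h'(g(t_i))}{\sqrt{\delta_{i-1}^\pi\,\delta_i^\pi}}
\]
and analyse the three factors separately as $|\pi_n|\to 0$. Since $\Q_0^\beta$-a.s.\ $g$ is continuous at each fixed $t$, the mean value theorem gives $\delta_i^\pi=h'(\xi_i^\pi)$ with $\xi_i^\pi\in[g(t_i),g(t_{i+1})]$, so the first factor is a Riemann sum converging to $X^\beta_h(g)$. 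Right continuity of $g$ at $0$ and $g(1-)=1$, combined with $h(0)=0$ and $h(1)=1$, force $\delta_0^\pi\to h'(0)$ and $\delta_N^\pi\to h'(1)$, producing the boundary factor $1/\sqrt{h'(0)h'(1)}$. For the middle product the only nontrivial contributions come from partition intervals eventually straddling a jump of $g$: a jump $a\in J_g$ that for large $n$ sits strictly inside some interval $(t_{j-1},t_j)$ forces $\delta_{j-1}^\pi\to\delta(h\circ g)/\delta g(a)$, while the neighbours $\delta_{j-2}^\pi,\delta_j^\pi$ and the values $h'(g(t_{j-1})),h'(g(t_j))$ converge to $h'(g(a-))$ and $h'(g(a+))$ respectively; pairing the two adjacent factors produces $\sqrt{h'(g(a-))h'(g(a+))}/\tfrac{\delta(h\circ g)}{\delta g}(a)$, whereas jump-free indices contribute factors tending to $1$. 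Hence $M_{\pi_n}\to Y^\beta_{h,0}(g)$ pointwise $\Q_0^\beta$-a.s.

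The main obstacle is to establish uniform integrability of $(M_{\pi_n})$, required to identify the martingale limit with the full Radon--Nikodym derivative. A second-order Taylor expansion at $v=u$, exploiting the vanishing of the first $v$-derivative of $\log\sqrt{h'(u)h'(v)}-\log\frac{h(v)-h(u)}{v-u}$ on the diagonal, yields a bound of order $\|h\|_{\mathcal{C}^2}(v-u)^2$ on the logarithm of each jump factor. Combined with $\sum_{a\in J_g}(g(a+)-g(a-))^2\le\sum_a(g(a+)-g(a-))=1$ and $0<m\le h'\le M<\infty$ from $h,h^{-1}\in\mathcal{C}^2(\eI)$, this proves the asserted uniform two-sided boundedness of $Y^\beta_{h,0}$ on $\G_0$. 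The remaining input, the identity $\EE_{\Q_0^\beta}[Y^\beta_{h,0}]=1$ (equivalent to the required uniform integrability), can be verified by a recursive conditional expectation computation using the Markov property and self-similarity of $\Q_0^\beta$, which reduces the integral to analogous integrals against $\Q_0^{\beta(t_{i+1}-t_i)}$ and $\mathcal{C}^2$-rescalings of $h$, together with the trivial case $\beta=0$ where $\Q_0^0$ is supported on single-jump functions and the density equals $1$. Scheff\'e's lemma then delivers $L^1$-convergence $M_{\pi_n}\to Y^\beta_{h,0}$, and the cylinder identity extends from cylinder functions to all bounded measurable functionals on $\G_0$, establishing the change of variables formula.
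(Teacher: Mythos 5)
Your overall architecture --- compute the finite-dimensional Radon--Nikodym derivative $M_\pi$ via the explicit density and a change of variables, identify its a.s.\ limit, then upgrade to $L^1$-convergence --- is exactly the paper's strategy, recast in martingale language. The identification of the limit (Riemann sum for the $X^\beta_h$ factor, boundary terms for $1/\sqrt{h'(0)h'(1)}$, jump-straddling intervals for the infinite product) matches the paper's Lemmas on $X_{h,k}$ and $Y^0_{h,k}$ for the interval.

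There is, however, a genuine gap where the argument has to close. To conclude that $(\tau_{h^{-1}})_*\Q_0^\beta\ll\Q_0^\beta$ with density $Y^\beta_{h,0}$ you need uniform integrability of $(M_{\pi_n})$; a non-negative martingale converges a.s.\ without it, but then only to a sub-probability density. You bound only the \emph{limit} $Y^\beta_{h,0}$, not the finite-dimensional densities $M_\pi$, and instead defer to a ``recursive conditional expectation computation'' to get $\EE_{\Q_0^\beta}[Y^\beta_{h,0}]=1$ and then Scheff\'e. That recursion does not terminate: conditioning on $g(t_1)$ and invoking the Markov/self-similarity structure produces two sub-problems of exactly the same form (with rescaled $\beta$ and a different $\mathcal{C}^2$-isomorphism), with no reduction in complexity and no base case beyond the degenerate $\beta=0$ limit, which is not one of the measures $\Q_0^\beta$ under consideration. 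The paper resolves this by proving uniform two-sided boundedness of the \emph{approximating} densities $X^\beta_{h,k}Y^0_{h,k}$ in $g$ and $k$ (Lemma~\ref{finiteness}(iii) and its interval analogue, part (v)); this gives dominated convergence directly and is the step your proof is missing. The bound on the approximants is a separate telescoping/Taylor argument, not a corollary of the bound on the limit.

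Two secondary issues. First, the claimed quadratic Taylor bound $\lesssim\|h\|_{\mathcal{C}^2}(v-u)^2$ for $\log\sqrt{h'(u)h'(v)}-\log\tfrac{h(v)-h(u)}{v-u}$ is not available under mere $\mathcal{C}^2$ regularity: cancellation of the first derivative on the diagonal gives the bound in terms of the second $v$-derivative, which involves $h'''$. Under $\mathcal{C}^2$ the honest bound is linear, $\le\sup|(\log h')'|\cdot|v-u|$, which is what the paper uses and which already suffices since $\sum_a (g(a+)-g(a-))\le 1$; the appeal to $\sum_a(\cdot)^2$ is unnecessary. Second, the pointwise-limit identification of the middle product requires more smoothness than $\mathcal{C}^2$ to control remainders uniformly across small jumps (the paper assumes $\mathcal{C}^3$ in Lemma~\ref{Y-ident}), and the general $\mathcal{C}^2$ case is then recovered by a density/approximation argument ($h_n\to h$ in $\mathcal{C}^2$); this last reduction step is absent from your write-up.
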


\begin{corollary}
For each $\C^2$-isomorphism $h\in\G_0$ the entropic measure
$\Pp^\beta_0$ is quasi-invariant under the transformation
$\mu\mapsto h_*\mu$ of the space $\Pe_0$:
$$\frac{d\Pp_0^\beta(h_*\mu)}{d\Pp_0^\beta(\mu)}=
\exp\left[\beta\int_0^1 \log h'(s) \,
\mu(ds)\right]\cdot\frac1{\sqrt {h'(0)\cdot h'(1)}} \cdot
\prod_{I\in\Gaps(\mu)}\frac{\sqrt{h'(I_-)\cdot h'(I_+)}}{|h(I)|/|I|}
$$
where $\Gaps(\mu)$ denotes the set of intervals
$I=\,]I_-,I_+[\,\subset \eI$ of maximal length with $\mu(I)=0$ and
$|I|$ denotes the length of such an interval.
\end{corollary}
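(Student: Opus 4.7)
The plan is to derive the corollary by transferring Theorem~\ref{CoV-I} along the isometry $\chi:\G_0\to\Pe_0$, $g\mapsto g_*\leb$, and then rewriting the density $Y^\beta_{\varh,0}$ intrinsically in terms of $\mu=\chi(g)$.

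First I would observe the intertwining identity $\chi\circ\tau_h=\hat h\circ\chi$, where $\tau_h(g)=h\circ g$ and $\hat h(\mu)=h_*\mu$; this is immediate from $(h\circ g)_*\leb=h_*(g_*\leb)$. Since $\Pp_0^\beta=\chi_*\Q_0^\beta$ by definition, pushing the identity $(\tau_{h^{-1}})_*\Q_0^\beta=Y^\beta_{\varh,0}\,\Q_0^\beta$ of Theorem~\ref{CoV-I} forward along $\chi$ yields
$$(\hat h^{-1})_*\Pp_0^\beta=\bigl(Y^\beta_{\varh,0}\circ\chi^{-1}\bigr)\,\Pp_0^\beta,$$
which already encodes the quasi-invariance; what remains is to express the density as a functional of $\mu$.

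To this end I would rewrite each of the three factors of $Y^\beta_{\varh,0}(g)=X^\beta_\varh(g)\cdot\tfrac{1}{\sqrt{h'(0)\,h'(1)}}\cdot Y^0_\varh(g)$ in terms of $\mu=g_*\leb$. The push-forward formula applied to $\phi=\log h'$ gives $\int_0^1\log h'(g(s))\,ds=\int_\eI \log h'(x)\,\mu(dx)$, reproducing the exponential factor of the corollary, while the middle constant carries over unchanged. The main step is the identification
$$Y^0_\varh(g)=\prod_{I\in\Gaps(\mu)}\frac{\sqrt{h'(I_-)\cdot h'(I_+)}}{|h(I)|/|I|},$$
for which I would invoke the support results preceding Corollary~\ref{support-P}: $\Q_0^\beta$-a.s. the map $g$ is strictly increasing and purely jumping with total jump mass equal to $1$. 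This forces $g(0)=0$ and $\lim_{t\uparrow 1}g(t)=1$, so no boundary gap appears, and the interior jumps $a\in J_g$ stand in bijection with $\Gaps(\mu)$ via $a\mapsto\,]g(a-),g(a+)[$. Under this bijection one has $I_-=g(a-)$, $I_+=g(a+)$, $|I|=g(a+)-g(a-)$ and $|h(I)|=h(g(a+))-h(g(a-))$, so the factors appearing in~\eqref{Y0-def} match the factors of the product over $\Gaps(\mu)$ term by term.

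Combining the three factors yields the explicit formula of the corollary. Convergence of the infinite product and the two-sided bound on the density are inherited directly from Theorem~\ref{CoV-I}, so no separate estimates are required. The only delicate point is the careful bookkeeping for the jump-to-gap correspondence, which depends crucially on the a.s. boundary behaviour $g(0)=0$, $\lim_{t\uparrow 1}g(t)=1$; if this were not the case, the product over $\Gaps(\mu)$ would include spurious boundary factors not matched on the $\G_0$ side.
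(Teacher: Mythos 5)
Your proposal is correct and follows exactly the route the paper intends: the paper states this corollary as an immediate consequence of Theorem \ref{CoV-I} via the isometry $\chi:\G_0\to\Pe_0$, and your argument simply makes explicit the intertwining $\chi\circ\tau_h=\hat h\circ\chi$, the push-forward identity $\int_0^1\log h'(g(s))\,ds=\int\log h'\,d\mu$, and the a.s.\ bijection between jumps of $g$ and gaps of $\mu=g_*\leb$. Your observation that the total jump mass being $1$ forces $g(0)=0$ and $g(1-)=1$ (so that the boundary factor from the interval-case lemma reduces to $1/\sqrt{h'(0)h'(1)}$ and no spurious boundary gaps arise) is precisely the point that makes the stated formula hold $\Pp_0^\beta$-a.s.
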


\begin{remark}\rm Theorem \ref{CoV-I} seems to be unrelated to the quasi-invariance of the measure $\Q^m_{\Pe(\eI)}$ under the transformation $dg \to h \cdot dg/{\langle h,dg\rangle} $ shown in \cite{MR1902190}. Nor is it anyhow implied by the quasi-ivarariance formula for the general measure valued gamma process as in \cite{MR1853759} with respect to a similar  transformation. In our present case the latter would correspond to the mapping $d\gamma \to h\cdot d\gamma$ of the (measure valued) Gamma process $d\gamma$.
\end{remark}
\end{subsection}

\begin{subsection}{Proofs for the Sphere Case}
\begin{lemma}\label{X-ident}
For each $\mathcal{C}^2$-diffeomorphism  $h\in\G$
\begin{equation}\label{X-formula}
X_{\varh}^\beta(g)= \lim_{k \to \infty}
\prod^{k-1}_{i=0}\left[\frac{\varh \left(g(t_{i+1})\right) - \varh
\left(g(t_i)\right)}{g(t_{i+1}) - g(t_i)}\right]^{\beta(t_{i+1} -
t_i)}
\end{equation}
Here $t_i=\frac{i}{k}$ for $i=0, 1, \dots, k-1$  and $t_k=0$. Thus
$t_{i+1} - t_i:=|[t_i,t_{i+1}]|=\frac{1}{k}$ for all $i$.
\end{lemma}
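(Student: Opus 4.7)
The plan is to take logarithms of the product on the right and identify the resulting expression, asymptotically as $k\to\infty$, with a Riemann sum for $\log X_h^\beta(g)=\beta\int_0^1 \log h'(g(s))\,ds$. Writing
\begin{equation*}
L_k \;:=\; \beta\sum_{i=0}^{k-1}(t_{i+1}-t_i)\,\log \frac{h(g(t_{i+1}))-h(g(t_i))}{g(t_{i+1})-g(t_i)},
\end{equation*}
the task reduces to showing $L_k \to \beta\int_0^1 \log h'(g(s))\,ds$, and then exponentiating; the boundedness of $\log h'$ on $S^1$ (since $h\in\mathcal{C}^2$ and $h'>0$) will justify the interchange.

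First I would lift $h$ to a $\mathcal{C}^2$ monotone map $\tilde h:\R\to\R$ with $\tilde h(x+1)=\tilde h(x)+1$, do the analogous lift for $g$, and apply the mean value theorem on each interval $[\tilde g(t_i),\tilde g(t_{i+1})]$. This produces a point $\xi_i$ in the positively oriented arc $[g(t_i),g(t_{i+1})]\subset S^1$ with
\begin{equation*}
\frac{h(g(t_{i+1}))-h(g(t_i))}{g(t_{i+1})-g(t_i)} \;=\; h'(\xi_i);
\end{equation*}
on intervals where $g$ is constant the ratio is read as $h'(g(t_i))$ by continuous extension. Hence $L_k = \beta\sum_i(t_{i+1}-t_i)\log h'(\xi_i)$.

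Second, I would compare $L_k$ with the standard left-endpoint Riemann sum $R_k := \beta\sum_i(t_{i+1}-t_i)\log h'(g(t_i))$. Since $h\in\mathcal{C}^2(S^1)$ with $h'>0$, the function $\log h'$ is Lipschitz on $S^1$ with constant $\Lambda := \|h''/h'\|_\infty$. Both $\xi_i$ and $g(t_i)$ lie in the segment $[g(t_i),g(t_{i+1})]$, so using $t_{i+1}-t_i=1/k$,
\begin{equation*}
|L_k-R_k| \;\le\; \beta\Lambda\sum_{i=0}^{k-1}(t_{i+1}-t_i)\,(g(t_{i+1})-g(t_i)) \;=\; \frac{\beta\Lambda}{k}\sum_i (g(t_{i+1})-g(t_i)) \;=\; \frac{\beta\Lambda}{k},
\end{equation*}
where the last equality uses that $g\in\G$ has total $S^1$-variation equal to $1$. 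This kills the mean-value error at rate $O(1/k)$.

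Finally, the convergence $R_k\to\beta\int_0^1 \log h'(g(s))\,ds$ is a standard fact about Riemann sums: the integrand $s\mapsto \log h'(g(s))$ is bounded, and its discontinuity set is contained in the (at most countable) jump set of the monotone map $g$, hence has Lebesgue measure zero, so $\log h'\circ g$ is Riemann integrable. Combining the two steps gives $L_k\to \beta\int_0^1\log h'(g(s))\,ds$, and exponentiation yields (\ref{X-formula}). The only mildly delicate point in the plan is the presence of jumps in $g$, which rules out a direct uniform-continuity argument for $\log h'\circ g$; it is handled by the Lipschitz-plus-total-variation estimate above, which is the true substance of the argument.
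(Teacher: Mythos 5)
Your proof is correct and follows essentially the same route as the paper's: both reduce the logarithm of the product to the left-endpoint Riemann sum of $\log h'(g(t_i))$ plus an error controlled by $\frac1k\sum_i\bigl(g(t_{i+1})-g(t_i)\bigr)=\frac1k$, and then invoke Riemann integrability of the bounded function $\log h'\circ g$. The only difference is cosmetic: you use the mean value theorem plus the Lipschitz bound $\|(\log h')'\|_\infty$ where the paper uses a second-order Taylor expansion of $h$ and a $\log(1+x)$ estimate, which yields the same $O(1/k)$ bound.
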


\begin{proof} Without restriction, we may assume $\beta=1$.
According to Taylor's formula
\begin{equation*}
\varh \left(g(t_{i+1})\right)=\varh \left(g(t_i)\right) + \varh'
\left(g(t_i)\right) \cdot \left(g(t_{i+1}) - g(t_i)\right) +
\tfrac{1}{2} \varh''(\gamma_i) \cdot \left(g(t_{i+1}) -
g(t_i)\right)^2
\end{equation*}
 for some $\gamma_i \in \left[g(t_i), g(t_{i+1})\right]$. Hence,
\begin{align*}
&\lim_{k \to \infty} \prod^{k-1}_{i=0}\left[\frac{\varh
\left(g(t_{i+1})\right) - \varh \left(g(t_i)\right)}{g(t_{i+1}) -
g(t_i)}\right]^{t_{i+1} - t_i}=\\
&=\lim_{k \to \infty} \prod^{k-1}_{i=0} \left[\varh' \left(g(t_i)\right) + \tfrac{1}{2} \varh'' (\gamma_i) \cdot \left(g(t_{i+1}) - g(t_i)\right)\right]^{t_{i+1} - t_i}\\
&=\lim_{k \to \infty} \ \exp \left(\sum^{k-1}_{i=0}
\left\{\left[\log \varh' \left(g(t_i)\right) + \log \left(1 +
\tfrac{1}{2}\frac{\varh''(\gamma_i)}{\varh' \left(g(t_i)\right)}
\left(g
(t_{i+1}) - g(t_i)\right)\right)\right] \cdot \left(t_{i+1} - t_i\right)\right\}\right)\\
&\overset{(\star)}{=} \exp \left(\lim_{k \to \infty} \sum^{k-1}_{i=0} \left\{ \log \varh'\left(g(t_i)\right) \cdot \left(t_{i+1} - t_i\right)\right\}\right)\\
&=\exp \left(\int^1_0 \log \varh' \left(g(t)\right)
dt\right)=X_{\varh}^1(g).
\end{align*}
Here $(\star)$ follows from the fact that
\begin{align*}
1 + \tfrac{1}{2}\frac{\varh''(\gamma_i)}{\varh' \left(g(t_i)\right)}
\cdot \left(g(t_{i+1}) - g(t_i)\right)
&=\frac{\varh \left(g(t_{i+1})\right) - \varh \left(g(t_i)\right)}{g(t_{i+1}) - g(t_i)} \cdot \frac{1}{\varh' \left(g(t_i)\right)}\\
&=\varh' (\eta_i) \cdot \frac{1}{\varh' \left(g(t_i)\right)} \\
&\geq \ \varepsilon \ > \ 0
\end{align*}
for some $\eta_i \in \left[g(t_i), g(t_{i+1})\right]$ and some
$\varepsilon >0$, independent of $i$ and $k$. Thus
\begin{align*}
\sum^{k-1}_{i=0}&\left|\log \left[1 + \tfrac{1}{2}\frac{\varh''(\gamma_i)}{\varh' \left(g(t_i)\right)} \cdot \left(g(t_{i+1}) - g(t_i)\right)\right]\right| \cdot \left(t_{i+1} - t_i\right)\\
&\leq C_1 \cdot \sum^{k-1}_{i=0}\tfrac{1}{2}\left|\frac{\varh''(\gamma_i)}{\varh' \left(g(t_i)\right)}\right| \cdot \left(g(t_{i+1}) - g(t_i)\right) \cdot \left(t_{i+1} - t_i\right)\\
&\leq C_2 \cdot \sum^{k-1}_{i=0} \left(g(t_{i+1}) - g(t_i)\right) \cdot \left(t_{i+1} - t_i\right)\\
&\leq C_3 \cdot \tfrac{1}{k} \ .
\end{align*}
\end{proof}

\begin{lemma}\label{Y-ident}
For each $\mathcal{C}^3$-diffeomorphism $h\in\G$
\begin{equation}\label{Y-formula}
Y^0_{\varh}(g):=\lim_{k \to \infty} \prod^{k-1}_{i=0}\left[\varh'
\left(g(t_i)\right) \cdot \frac{g(t_{i+1}) - g(t_i)}{\varh
\left(g(t_{i+1})\right) - \varh \left(g(t_i)\right)}\right]
\end{equation}
where  $t_i=\frac{i}{k}$ for $i=0, 1, \dots, k-1$  and $t_k=0$.
\end{lemma}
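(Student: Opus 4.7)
The plan is to eliminate the asymmetry of the asserted product by a telescoping identity on $S^1$, then identify the resulting symmetric product with $Y^0_\varh(g)$ through a careful Taylor analysis. Abbreviate $u_i:=g(t_i)$, $v_i:=g(t_{i+1})$, $\delta_i:=v_i-u_i$, and use the identity $\varh'(u_i)=\sqrt{\varh'(u_i)/\varh'(v_i)}\cdot\sqrt{\varh'(u_i)\varh'(v_i)}$ to rewrite the product in (\ref{Y-formula}) as
$$\left(\prod_{i=0}^{k-1}\sqrt{\frac{\varh'(u_i)}{\varh'(v_i)}}\right)\cdot\prod_{i=0}^{k-1}\frac{\sqrt{\varh'(u_i)\varh'(v_i)}\,\delta_i}{\varh(v_i)-\varh(u_i)}.$$
Since $v_i=u_{i+1}$ and $u_k=u_0=g(0)$ on $S^1$, the first factor telescopes to $\sqrt{\varh'(g(t_0))/\varh'(g(t_k))}=1$ by continuity of $\varh'$ on $S^1$, leaving only the symmetric product $\prod_i F_i$ with $F_i:=\sqrt{\varh'(u_i)\varh'(v_i)}\,\delta_i/[\varh(v_i)-\varh(u_i)]$.

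The key analytic step is the sharp estimate $F_i=1+R_i$ with $|R_i|\le C\delta_i^2$, where $C$ depends only on $\|\varh\|_{\C^3}$ and $\inf\varh'>0$. For $\varh\in\C^3$, Taylor expansion at the midpoint $m_i:=(u_i+v_i)/2$ gives $\sqrt{\varh'(u_i)\varh'(v_i)}=\varh'(m_i)+O(\delta_i^2)$ — the first-order terms in the product $\varh'(u_i)\varh'(v_i)$ cancel because $u_i,v_i$ are symmetric around $m_i$ — and $(\varh(v_i)-\varh(u_i))/\delta_i=\varh'(m_i)+O(\delta_i^2)$ by the midpoint rule. Dividing yields the claim.

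To take the limit, fix $\epsilon>0$ and choose finitely many jumps $a_1,\ldots,a_N\in J_g$ whose cumulative height exceeds $1-\epsilon$. For $k$ large the $a_j$ lie in $N$ distinct intervals $I_B\subset\{0,\ldots,k-1\}$, and on each such interval $F_{i(j)}\to\sqrt{\varh'(g(a_j-))\varh'(g(a_j+))}\,/\,[\delta(\varh\circ g)/\delta g](a_j)$ by right-continuity of $g$ and continuity of $\varh,\varh'$. On the remaining (``small-jump'') intervals $I_S$, as $k\to\infty$,
$$\sum_{i\in I_S}\delta_i \to 1-\sum_{j=1}^N(g(a_j+)-g(a_j-)) < \epsilon,$$
so $\max_{i\in I_S}\delta_i<\epsilon$ and $\sum_{i\in I_S}\delta_i^2<\epsilon^2$, whence $\prod_{i\in I_S}F_i=1+O(\epsilon^2)$. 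Thus $\lim_k\prod_iF_i$ lies within $O(\epsilon^2)$ of $\prod_{j=1}^N\sqrt{\varh'(g(a_j-))\varh'(g(a_j+))}/[\delta(\varh\circ g)/\delta g](a_j)$; sending $\epsilon\to 0$ produces the infinite product $Y^0_\varh(g)$, which converges absolutely because each factor is $1+O((g(a+)-g(a-))^2)$ and $\sum_{a\in J_g}(g(a+)-g(a-))^2\le 1$.

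The main obstacle lies in the second paragraph: without the first-order cancellation enabled by $\varh\in\C^3$ and the $S^1$-telescoping of the asymmetric factor, one would only have $F_i=1+O(\delta_i)$, which is not summable against the mere total-variation bound $\sum_i\delta_i\le 1$. The sharp $O(\delta_i^2)$ bound is precisely what renders the small-jump intervals negligible in the limit and forces the regularity assumption $\varh\in\C^3$.
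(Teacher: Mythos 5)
Your approach differs from the paper's in a way that is genuinely cleaner. The paper takes logarithms of the \emph{reciprocal} product, then on each block of indices between two consecutive big jumps compares the accumulated log-sum to a Taylor-expanded ``shadow'' ratio $\sqrt{\varh'(g(t_{k_{j+1}}))/\varh'(g(t_{k_j+1}))}$, establishing two-sided bounds (I), (II) that only telescope after combining with the big-jump contributions (III). You instead perform the telescoping \emph{exactly} and \emph{at the outset}: since $v_i=u_{i+1}$ and $u_k=u_0$ on $S^1$, the asymmetric factor $\varh'(u_i)$ may be split as $\sqrt{\varh'(u_i)/\varh'(v_i)}\cdot\sqrt{\varh'(u_i)\varh'(v_i)}$ and the first factor telescopes identically to $1$, leaving the \emph{symmetric} quantity $F_i=\sqrt{\varh'(u_i)\varh'(v_i)}\,\delta_i/[\varh(v_i)-\varh(u_i)]$. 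Your midpoint Taylor estimate $F_i=1+O(\delta_i^2)$ is correct (both $\sqrt{\varh'(u_i)\varh'(v_i)}$ and $[\varh(v_i)-\varh(u_i)]/\delta_i$ equal $\varh'(m_i)+O(\delta_i^2)$, requiring exactly $\varh\in\C^3$), and this is what makes the small-jump part go away cleanly since $\sum\delta_i^2$ can be driven to $0$. This is a nicer argument than the paper's, and your closing remark correctly identifies why $\C^3$ and the $S^1$-telescoping are both indispensable.

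There is, however, one gap in the passage to the limit. You write that for $\epsilon>0$ one can ``choose finitely many jumps $a_1,\dots,a_N\in J_g$ whose cumulative height exceeds $1-\epsilon$,'' and then deduce $\sum_{i\in I_S}\delta_i<\epsilon$. This is only possible when $g$ increases purely by jumps: the lemma is stated for arbitrary $g\in\G$, and if $g$ has a nontrivial continuous part (e.g.\ $g=\mathrm{id}$), no finite family of jumps has cumulative height close to $1$, so $\sum_{I_S}\delta_i$ cannot be made small. (For $\Q^\beta$-a.e.\ $g$ this assumption is harmless by Proposition 3.8, but the statement makes no such restriction, and the paper's version of the proof handles general $g$.) The fix is minor and keeps your structure intact: pick the big jumps to be \emph{all} jumps of height $\ge\eta$. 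Then on the remaining intervals, $\max_{i\in I_S}\delta_i\le\eta+\omega_g(1/k)$ where $\omega_g$ is the modulus of continuity of the continuous part of $g$; hence $\sum_{i\in I_S}\delta_i^2\le\bigl(\max_{i\in I_S}\delta_i\bigr)\sum_{i\in I_S}\delta_i\le\bigl(\eta+\omega_g(1/k)\bigr)\cdot 1$, which vanishes as $k\to\infty$ and then $\eta\to0$. Note this yields $\prod_{i\in I_S}F_i=1+O(\eta)$ rather than your claimed $1+O(\epsilon^2)$, but that weaker rate is still sufficient. With this correction the argument is complete.
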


\begin{proof}
Let $\varh$ and $g$ be given. Depending on some $\varepsilon > 0$
let us choose $l \in \N$ large enough (to be specified in the
sequel) and let $a_1, \dots, a_l$ denote the $l$ largest jumps of
$g$. Put $J^*_g=J_g \setminus \{a_1, \dots, a_l\}$ and for
simplicity $a_{l+1}:=a_1$. For $k$ very large (compared with $l$)
and $j=1, \dots, l$ let $k_j$ denote the index $i \in \{0, 1, \dots,
k-1\}$, for which $a_j \in \left[t_i, t_{i+1} \right[$. Then again
by Taylor's formula
\begin{align*}
\prod^{k_{j+1}-1}_{i=k_j+1} &\left[\varh'\left(g(t_i)\right) \cdot \frac{g(t_{i+1}) - g(t_i)}{\varh\left(g(t_{i+1})\right) - \varh\left(g(t_i)\right)}\right]^{-1}\\
&=\prod^{k_{j+1}-1}_{i=k_j+1} \left[1 + \tfrac{1}{2}\frac{\varh'' \left(g(t_i)\right)}{\varh'\left(g(t_i)\right)} \cdot \left(g(t_{i+1}) - g(t_i)\right) + \tfrac{1}{6}\frac{\varh''' (\eta_i)}{\varh'\left(g(t_i)\right)} \cdot \left(g(t_{i+1}) - g(t_i)\right)^2\right]\\
&\overset{(1a)}{\leq} \exp\left(\sum^{k_{j+1}-1}_{i=k_j+1}\log\left[1 + \left\{\tfrac{1}{2}\left(\log \varh'\right)'\left(g(t_i)\right) + \tfrac{\varepsilon}{l}\right\} \cdot \left(g(t_{i+1}) - g(t_i)\right)\right]\right)\\
&\overset{(1b)}{\leq} e^{\varepsilon/l} \cdot
\exp\left(\tfrac{1}{2}\sum^{k_{j+1}-1}_{i=k_j+1}\left(\log
\varh'\right)'\left(g(t_i)\right) \cdot \left(g(t_{i+1}) -
g(t_i)\right)\right),
\end{align*}
provided $l$ and $k$ are chosen so large that
\begin{equation*}
\left|g(t_{i+1}) - g(t_i)\right|\leq\frac{\varepsilon}{C_1 \cdot l}
\end{equation*}
for all $i \in \{0, \dots, k-1\} \setminus \{k_1, \dots, k_l\}$, where $C_1= \underset{x, y}{\sup}\frac{\left|\varh'''(x)\right|}{6 \cdot \varh'(y)}$.\\
On the other hand,
\begin{align*}
\lefteqn{\sqrt{\frac{\varh'\left(g(t_{k_{j+1}})\right)}{\varh'\left(g(t_{k_j+1})\right)}}\ = \
\exp\left(\int^{g(t_{k_{j+1}})}_{g(t_{k_j+1})} \left(\tfrac{1}{2}\log\varh'\right)'(s) \ ds\right)}\\
&=\exp\left(\sum^{k_{j+1}-1}_{i=k_j+1}\left[\left(\tfrac{1}{2}\log\varh'\right)'\left(g(t_i)\right) \cdot \left(g(t_{i+1}) - g(t_i)\right) + \left(\tfrac{1}{2}\log\varh'\right)''(\gamma_i) \cdot \tfrac{1}{2}\left(g(t_{i+1}) - g(t_i)\right)^2\right]\right)\\
&\overset{(2)}{\geq} e^{-\varepsilon/l} \cdot
\exp\left(\tfrac{1}{2}\sum^{k_{j+1}-1}_{i=k_j+1}\left(\log
\varh'\right)'\left(g(t_i)\right) \cdot \left(g(t_{i+1}) -
g(t_i)\right)\right),
\end{align*}
 provided $l$ and $k$ are chosen so large that
\begin{equation*}
\left|g(t_{i+1}) - g(t_i)\right|\leq\frac{\varepsilon}{C_2 \cdot l}
\end{equation*}
for all $i \in \{0, 1, \dots, k-1\} \setminus \{k_1, \dots, k_l\}$, where $C_2=\underset{x}{\sup}\left|\left(\tfrac{1}{2}\log \varh'\right)''(x)\right|$.\\

Therefore,
\begin{align*}
\prod_{i \in \{0, 1, \dots, k-1\} \setminus \{k_1, \dots, k_l\}}&\left[\varh'\left(g(t_i)\right) \cdot \frac{g(t_{i+1}) - g(t_i)}{\varh\left(g(t_{i+1})\right) - \varh\left(g(t_i)\right)}\right]^{-1}\\
&\leq e^{2\varepsilon} \cdot
\prod^l_{j=1}\sqrt{\frac{\varh'\left(g(t_{k_{j+1}})\right)}{\varh'\left(g(t_{k_j+1})\right)}}
= (\text{I}).
\end{align*}
In order to derive the corresponding lower estimate, we can proceed
as before in (1a) and (2) (replacing $\varepsilon$ by $-\varepsilon$
and $\leq$ by $\geq$ and vice versa). To proceed as in (1b) we have
to argue as follows
\begin{align*}
\exp&\left(\sum^{k_{j+1}-1}_{i=k_j+1}\log\left[1 + \left\{\left(\tfrac{1}{2}\log \varh'\right)'\left(g(t_i)\right) - \tfrac{\varepsilon}{l}\right\} \cdot \left(g(t_{i+1}) - g(t_i)\right)\right]\right)\\
&\overset{(1c)}{\geq}e^{-\varepsilon/l} \cdot
\exp\left(\sum^{k_{j+1}-1}_{i=k_j+1}(1-\varepsilon) \cdot
\left(\tfrac{1}{2}\log \varh'\right)'\left(g(t_i)\right) \cdot
\left(g(t_{i+1}) - g(t_i)\right)\right),
\end{align*}
provided $l$ and $k$ are chosen so large that
\begin{equation*}
\log\left(1 + C_3 \cdot \left(g(t_{i+1}) - g(t_i)\right)\right)\geq (1 - \varepsilon) \cdot C_3 \cdot \left(g(t_{i+1}) - g(t_i)\right)
\end{equation*}
for all $i \in \{0, 1, \dots, k-1\} \setminus \{k_1, \dots, k_l\}$, where $C_3=\underset{x}{\sup}\left|\left(\tfrac{1}{2}\log \varh'\right)'(x)\right|$.\\
Thus we obtain the following lower estimate
\begin{align*}
\prod_{i \in \{0, 1, \dots, k-1\} \setminus \{k_1, \dots, k_l\}}&\left[\varh'\left(g(t_i)\right) \cdot \frac{g(t_{i+1}) - g(t_i)}{\varh\left(g(t_{i+1})\right) - \varh\left(g(t_i)\right)}\right]^{-1}\\
&\geq e^{-2\varepsilon} \cdot \left[\prod^l_{j=1}\sqrt{\frac{\varh'\left(g(t_{k_{j+1}})\right)}{\varh'\left(g(t_{k_j+1})\right)}}\right]^{1 - \varepsilon}\\
&\geq e^{-2\varepsilon} \cdot C^{-\varepsilon/2}_3 \cdot
\prod^l_{j=1}\sqrt{\frac{\varh'\left(g(t_{k_{j+1}})\right)}{\varh'\left(g(t_{k_j+1})\right)}}
= (\text{II}),
\end{align*}
since
\begin{align*}
\left[\prod^l_{j=1}\sqrt{\frac{\varh'\left(g(t_{k_{j+1}})\right)}{\varh'\left(g(t_{k_j+1})\right)}}\right]^{\varepsilon}
&=\exp\left(\tfrac{\varepsilon}{2} \sum^l_{j=1} \left[\log \varh'\left(g(t_{k_{j+1}})\right) - \log \varh'\left(g(t_{k_j+1})\right)\right]\right)\\
&\leq \exp\left(\tfrac{\varepsilon}{2} \sum^l_{j=1} C_3 \cdot \left[g(t_{k_{j+1}}) - g(t_{k_j+1})\right]\right)\\
&\leq \exp\left(\tfrac{\varepsilon}{2}C_3\right),
\end{align*}
where $C_3=\underset{x}{\sup}\left|\left(\log \varh'\right)'(x)\right|$.\\
 Now for fixed $l$ as $k \to \infty$ the bound (I) converges to
\begin{equation*}
(\text{I}')=e^{2\varepsilon} \cdot
\prod^l_{j=1}\sqrt{\frac{\varh'\left(g(a_{j+1}-)\right)}{\varh'\left(g(a_j+)\right)}}
\end{equation*}
and the bound (II) to
\begin{equation*}
(\text{II}')=e^{-2\varepsilon} \cdot C^{-\varepsilon/2}_3 \cdot
\prod^l_{j=1}\sqrt{\frac{\varh'\left(g(a_{j+1}-)\right)}{\varh'\left(g(a_j+)\right)}}
\ .
\end{equation*}
Finally, it remains to consider
\begin{equation*}
\prod_{i \in \{k_1, \dots, k_l\}} \left[\varh'\left(g(t_i)\right)
\cdot \frac{g(t_{i+1}) - g(t_i)}{\varh\left(g(t_{i+1})\right) -
\varh\left(g(t_i)\right)}\right]^{-1} = (\text{III}).
\end{equation*}
Again for fixed $l$ and $k \to \infty$ this obviously converges to
\begin{equation*}
(\text{III}')=\prod^l_{j=1}\left[\frac{1}{\varh'\left(g(a_j-)\right)}
\cdot \frac{\delta (\varh\circ g)}{\delta g}\left(a_j\right)\right].
\end{equation*}
Putting together these estimates and letting $l \to \infty$, we
obtain the claim.
\end{proof}

\begin{lemma}\label{finiteness}
(i) For all $g,h\in\G$ with $h\in\C^2$ strictly increasing, the
infinite product in the definition of $Y^0_h(g)$ converges. There
exists a constant $C=C(\beta, \varh)$ such that $\forall g \in
\mathcal{G}$
\begin{equation*}
\tfrac{1}{C} \leq Y^{\beta}_{\varh}(g) \leq C.
\end{equation*}

(ii)
If $h_n\to h$ in $\C^2$ then $Y^0_{h_n}(g)\to Y^0_h(g)$.

(iii) Let $Y^0_{\varh, k}, X_{\varh, k}^\beta, Y^{\beta}_{\varh,k}$
denote the sequences used in Lemma \ref{X-ident} and \ref{Y-ident}
to approximate $Y^0_{\varh}, X_{\varh}^\beta, Y^{\beta}_{\varh}$.
Then there exists a constant $C=C(\beta, \varh)$ such that $\forall
g \in \mathcal{G}$, $\forall k \in \N$
\begin{equation*}
\tfrac{1}{C} \leq Y^{\beta}_{\varh, k}(g) \leq C.
\end{equation*}
\end{lemma}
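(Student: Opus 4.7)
The plan is to reduce all three parts to a single key estimate on the logarithm of a generic factor in the product defining $Y^0_h(g)$. Set $\phi := \log h'$; since $h$ is a $\C^2$-diffeomorphism with $h'>0$ on the compact $S^1$, $\phi$ lies in $\C^1(S^1)$ with $\|\phi'\|_\infty = \|h''/h'\|_\infty < \infty$, and $\log h'$ is uniformly bounded.

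For part (i), fix a jump $a\in J_g$, and write $x = g(a-)$, $y = g(a+)$, $\Delta_a = y-x$. The mean value theorem applied to $h$ gives $\frac{h(y)-h(x)}{y-x} = h'(\xi_a)$ for some $\xi_a\in[x,y]$, so the $a$-th factor of $Y^0_h(g)$ becomes
\[
F_h(a,g)\ =\ \frac{\sqrt{h'(x)h'(y)}}{h'(\xi_a)}\ =\ \exp\!\left(\frac{\phi(x)+\phi(y)}{2}-\phi(\xi_a)\right).
\]
Two further applications of the mean value theorem to $\phi$ yield the central estimate
\[
\left|\log F_h(a,g)\right|\ \le\ \|\phi'\|_\infty\cdot\Delta_a.
\]
Since the jump heights of any element of $\G$ satisfy $\sum_{a\in J_g}\Delta_a \le 1$, the series $\sum_a\log F_h(a,g)$ converges absolutely, the infinite product defining $Y^0_h(g)$ therefore converges, and $|\log Y^0_h(g)|\le\|\phi'\|_\infty$. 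The factor $X^\beta_h(g)$ is trivially pinched between $(\inf h')^\beta$ and $(\sup h')^\beta$. Multiplying the two gives a uniform two-sided bound $C^{-1}\le Y^\beta_h(g)\le C$ with $C$ depending only on $\beta$ and the $\C^2$-data of $h$.

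For part (ii), suppose $h_n\to h$ in $\C^2$. Passing to a tail, $\inf h'_n \ge \tfrac12\inf h'$ and $\|h''_n/h'_n\|_\infty$ is uniformly bounded, so step (i) gives a summable dominating bound $|\log F_{h_n}(a,g)|\le C\,\Delta_a$ uniform in $n$. For each fixed $a$, uniform convergence of $h_n$ and $h'_n$ on $S^1$ yields $F_{h_n}(a,g)\to F_h(a,g)$. Dominated convergence on the counting measure of $J_g$ then gives $\sum_a\log F_{h_n}(a,g)\to\sum_a\log F_h(a,g)$, and exponentiation yields $Y^0_{h_n}(g)\to Y^0_h(g)$.

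For part (iii), the same argument works for the finite partition $t_i=i/k$. Setting $x_i := g(t_i)$ and $y_i := g(t_{i+1})$, each factor of $Y^0_{h,k}(g)$ equals $h'(x_i)/h'(\eta_i)$ for some $\eta_i\in[x_i,y_i]$, so its logarithm is bounded in absolute value by $\|\phi'\|_\infty\cdot(y_i-x_i)$. Summing telescopes to $|\log Y^0_{h,k}(g)|\le \|\phi'\|_\infty\cdot(g(1)-g(0))\le\|\phi'\|_\infty$, uniform in $k$. Analogously each factor of $X^\beta_{h,k}(g)$ equals $h'(\zeta_i)^{\beta/k}$ for some $\zeta_i$, so $(\inf h')^\beta\le X^\beta_{h,k}(g)\le (\sup h')^\beta$, and the bound on $Y^\beta_{h,k}$ follows by multiplication. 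The main obstacle is identifying the right second-order cancellation in (i): a naive Taylor expansion of numerator and denominator appears to require $\C^3$-regularity, whereas the reformulation $F_h = \exp\!\bigl(\tfrac12(\phi(x)+\phi(y))-\phi(\xi)\bigr)$ combined with two mean-value estimates delivers the $O(\Delta_a)$ bound needed to make the sum over jumps trivially finite while keeping the regularity hypothesis at $\C^2$.
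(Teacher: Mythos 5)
Your argument is correct and follows essentially the same route as the paper: in all three parts you use the mean value theorem to write the denominator as $h'(\xi_a)$ (or $h'(\gamma_i)$), then bound the logarithm of each factor by the Lipschitz constant of $\log h'$ times the jump height (or mesh increment), and sum using $\sum_a\Delta_a\le 1$. The paper phrases (ii) as a split into big and small jumps rather than invoking dominated convergence explicitly, but the underlying estimate is identical.
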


\begin{proof}
(i) Put $C=\sup|(\log h')'|$. Given $g\in\G$  and $\epsilon>0$, we choose $k$ large enough such that
$\sum_{a\in J_g(k)}|g(a+)-g(a-)|\le\epsilon$ where $J_g(k)=J_g\setminus\{a_1,a_2,\ldots,a_k\}$ denotes the
'set of small jumps' of $g$. Here we
enumerate the jump locations $a_1,a_2,\ldots\in J_g$ according to the size of the respective jumps.
Then with suitable $\xi_a\in [g(a-), g(a+)]$
\begin{eqnarray*}
&&\sum_{a\in J_g(k)}
\left|\log \frac{\sqrt{h'(g(a-))}\sqrt{h'(g(a+))}}{\frac{\delta(h\circ g)}{\delta g}(a)}\right|\\
&\le&\sum_{a\in J_g(k)}\left|\frac12\log h'(g(a-))+\frac12\log h'(g(a-))-\log h'(\xi(a))\right|\\
&\le&\sum_{a\in J_g(k)}\left|C\cdot(g(a+)-g(a-))\right|=C\cdot\epsilon.
\end{eqnarray*}
Hence, the infinite sum
$$\sum_{a\in J_g}
\log \frac{\sqrt{h'(g(a-))}\sqrt{h'(g(a+))}}{\frac{\delta(h\circ g)}{\delta g}(a)}
=\lim_{k\to\infty}\sum_{a\in J_g(k)}
\log \frac{\sqrt{h'(g(a-))}\sqrt{h'(g(a+))}}{\frac{\delta(h\circ g)}{\delta g}(a)}
$$ is absolutely convergent
and thus also infinite product in the definition of $Y^0_h(g)$ converges.
The same arguments immediately yield
\begin{eqnarray}\label{absch-via-h''}
\left|\log Y^0_h(g)\right|
&\le&\sum_{a\in J_g}\left|\frac12\log h'(g(a-))+\frac12\log h'(g(a-))-\log h'(\xi(a))\right|\le C.
\end{eqnarray}

(ii) In order to prove the convergence $Y^0_{h_n}(g)\to Y^0_h(g)$, for given $g\in\G$ we split the product over all jumps into a finite product over the big jumps
and an infinite product over all small jumps. Obviously, the finite products will converge (for any choice of $k$)
$$\prod_{a\in\{a_1,\ldots,a_k\}}\frac{\sqrt{h_n'(g(a-))}\sqrt{h_n'(g(a+))}}{\frac{\delta(h_n\circ g)}{\delta g}(a)}
\longrightarrow\prod_{a\in\{a_1,\ldots,a_k\}}\frac{\sqrt{h'(g(a-))}\sqrt{h'(g(a+))}}{\frac{\delta(h\circ g)}{\delta g}(a)}$$
as $n\to\infty$ provided $h_n\to h$ in $\C^2$.
Now let  $C=\sup_n\sup_x|(\log h_n')'(x)|$ and choose $k$ as before.
Then uniformly in $n$
$$\left|\log\prod_{a\in J_g\setminus\{a_1,\ldots,a_k\}}\frac{\sqrt{h_n'(g(a-))}
\sqrt{h_n'(g(a+))}}{\frac{\delta(h_n\circ g)}{\delta g}(a)}\right|\le C\cdot\epsilon.$$
(iii)
Let $C_1=\underset{x}{\sup}|\varh'(x)|$ and
$C_2=\underset{x}{\sup}\left|\left(\log \varh'\right)'(x)\right|$.
Then for all $g$ and $k$:
\begin{equation*}
X_{\varh, k}(g)=\prod ^{k-1}_{i=0} \varh'(\eta_i)^{t_{i+1}-t_i} \leq
C_1
\end{equation*}
and
\begin{eqnarray*}
Y^0_{\varh, k}(g) \ = \ \prod^{k-1}_{i=0} \frac{\varh'\left(g(t_i)\right)}{\varh'(\gamma_i)}&
= & \exp\left[\sum^{k-1}_{i=0}\left(\log \varh'\right)'(\zeta_i) \cdot \left(g(t_i) - \gamma_i\right)\right]\\
&\leq& \exp \left[C_2 \cdot \sum^{k-1}_{i=0}\left|g(t_i) - \gamma_i\right|\right] \ \leq  \ \exp(C_2)
\end{eqnarray*}
(with suitable $\gamma_i, \eta_i \in \left[g(t_i), g(t_{i+1})\right]$ and $\zeta_i \in \left[g(t_i), \gamma_i\right]$). Analogously, the lower estimates follow.
\end{proof}

\begin{proof}[Proof of Theorem \ref{CoV}]
 In order to prove the equality of the two measures under
consideration, it suffices to prove that all of their finite
dimensional distributions coincide. That is, for each $m \in \N$,
each ordered family $t_1,\dots,t_m$ of points in $S^1$ and each
bounded continuous $u: (S^1)^m \longrightarrow \R$ one has to verify
that
\begin{align*}
\int_{\mathcal{G}} &u\left(\varh^{-1}\left(g(t_1)\right), \varh^{-1}\left(g(t_2)\right), \dots, \varh^{-1}\left(g(t_m)\right)\right) \ d\Q^{\beta}(g)\\
&=\int_{\mathcal{G}} u\left(g(t_1), g(t_2), \dots, g(t_m)\right)
\cdot Y^{\beta}_{\varh}(g) \ d\Q^{\beta}(g).
\end{align*}
Without restriction, we may restrict ourselves to equidistant
partitions, i.e. $t_i=\tfrac{i}{m}$ for $i=1, \dots, m$. Let us fix
$m \in \N$, $u$ and $\varh$. For simplicity, we first assume that
$h$ is $\C^3$. Then by Lemmas \ref{X-ident} - \ref{finiteness} and
Lebesgue's theorem
\begin{align*}
&\int_{\mathcal{G}} u\left(g\left(\tfrac{1}{m}\right), \dots, g\left(1\right)\right) \cdot Y^{\beta}_{\varh}(g) \ d\Q^{\beta}(g)\\
&=\int_{\mathcal{G}} u\left(g\left(\tfrac{1}{m}\right), \dots, g\left(1\right)\right) \cdot \lim_{k\to\infty}
Y^{\beta}_{\varh,k}(g) \ d\Q^{\beta}(g)\\
&=\lim_{k \to \infty}\int_{\mathcal{G}}
u\left(g\left(\tfrac{1}{m}\right), \dots, g\left(1\right)\right)
\cdot
\prod^{mk-1}_{i=0}\left[\varh'\left(g\left(\tfrac{i}{km}\right)\right)
\cdot \frac{g\left(\tfrac{i+1}{km}\right) -
g\left(\tfrac{i}{km}\right)}{\varh\left(g\left(\tfrac{i+1}{km}\right)\right)
- \varh\left(g\left(\tfrac{i}{km}\right)\right)}
\right]\\
& \phantom{=\lim_{k \to \infty}\int_{\mathcal{G}} u\left(g\left(\tfrac{1}{m}\right), \dots, g\left(1\right)\right) \cdot}\cdot \prod^{mk-1}_{i=0}\left[\frac{\varh\left(g\left(\tfrac{i+1}{km}\right)\right) - \varh\left(g\left(\tfrac{i}{km}\right)\right)}{g\left(\tfrac{i+1}{km}\right) - g\left(\tfrac{i}{km}\right)}\right]^{\tfrac{\beta}{km}} d\Q^{\beta}(g)\\
&=\lim_{k \to \infty}  \frac{\Gamma(\beta)}{ [\Gamma ( \beta /km)]^{km}  }\int_{S^{mk}_1} u(x_k, x_{2k}, \dots, x_{mk}) \prod^{mk-1}_{i=0} \varh'(x_i) \cdot \prod^{mk-1}_{i=0}\left[\varh(x_{i+1}) - \varh(x_i)\right]^{\tfrac{\beta}{km}-1} dx_1\dots dx_{mk}\\
&=\lim_{k \to \infty}  \frac{\Gamma(\beta)}{ [\Gamma ( \beta /km)]^{km}  }\int_{S^{mk}_1} u(x_k, x_{2k}, \dots, x_{mk}) \cdot \prod^{mk-1}_{i=0}\left[\varh(x_{i+1}) - \varh(x_i)\right]^{\tfrac{\beta}{km}-1} \ d\varh(x_1)\dots d\varh(x_{mk})\\
&=\lim_{k \to \infty}  \frac{\Gamma(\beta)}{ [\Gamma ( \beta /km)]^{km}  } \int_{S^{mk}_1} u\left(\varh^{-1}(y_k), \varh^{-1}(y_{2k}), \dots, \varh^{-1}(y_{mk})\right) \cdot \prod^{mk-1}_{i=0}\left[y_{i+1} - y_i\right]^{\tfrac{\beta}{km}-1} \ dy_1\dots dy_{mk}\\
&=\int_{\mathcal{G}}
u\left(\varh^{-1}\left(g\left(\tfrac{1}{m}\right)\right),
\varh^{-1}\left(g\left(\tfrac{2}{m}\right)\right), \dots,
\varh^{-1}\left(g\left(1\right)\right)\right) \ d\Q^{\beta}(g).
\end{align*}
Now we treat the general case $h\in\C^2$. We choose a sequence of $\C^3$-functions $h_n\in\G$ with $h_n\to h$ in $\C^2$.
Then
\begin{eqnarray*}
&&\int_{\mathcal{G}} u\left(h^{-1}\left(g(t_1)\right), h^{-1}\left(g(t_2)\right),
 \dots, h^{-1}\left(g(t_m)\right)\right) \ d\Q^{\beta}(g)\\
&=&\lim_{n\to\infty}
\int_{\mathcal{G}} u\left(h_n^{-1}\left(g(t_1)\right), h_n^{-1}\left(g(t_2)\right),
 \dots, h_n^{-1}\left(g(t_m)\right)\right) \ d\Q^{\beta}(g)\\
&=&\lim_{n\to\infty}
\int_{\mathcal{G}} u\left(g(t_1), g(t_2), \dots, g(t_m)\right)
\cdot Y^{\beta}_{h_n}(g) \ d\Q^{\beta}(g)\\
&=&
\int_{\mathcal{G}} u\left(g(t_1), g(t_2), \dots, g(t_m)\right)
\cdot Y^{\beta}_{\varh}(g) \ d\Q^{\beta}(g).
\end{eqnarray*}
For the last equality, we have used the dominated convergence
$Y^{\beta}_{h_n}(g)\to Y^{\beta}_{h}(g)$
(due to Lemma \ref{finiteness}).
\end{proof}

\end{subsection}

\begin{subsection}{Proof for the Interval Case}
The proof of Theorem \ref{CoV-I} uses completely analogous arguments as in  the previous section. To simplify notation, for $\varh  \in \mathcal{C}^1(\eI)$, $k \in \N$ let
$X_{\varh,k}, Y^0_{\varh,k}: \G_0 \to \R$
be  defined by
\[
 \quad X_{\varh,k}(g) := \prod^{k-1}_{i=0}\left[\frac{\varh
\left(g(t_{i+1})\right) - \varh \left(g(t_i)\right)}{g(t_{i+1}) -
g(t_i)}\right]^{t_{i+1} - t_i} \]
and
\[ Y^0_{\varh,k}(g):=   \left[ \frac{g(t_{1}) - g(t_0)}{\varh
\left(g(t_{1})\right) - \varh \left(g(t_0)\right)} \right]
\prod^{k-1}_{i=1}\left[\varh'
\left(g(t_i)\right) \cdot \frac{g(t_{i+1}) - g(t_i)}{\varh
\left(g(t_{i+1})\right) - \varh \left(g(t_i)\right)}\right]
\]
where $t_i=\frac{i}{k}$ with $i=0, 1, \dots, k$. Similar to the proof of theorem \ref{CoV} the measure $\Q^\beta_0$ satisfies the following finite dimensional quasi-invariance formula.

For any $u : \eI^{m-1} \to \R$, $m, l \in \N$ and  $\mathcal C^1$-isomorphism
$\varh : \eI \to \eI$
\begin{align*}
\int_{\mathcal{G}_0} &u\left(\varh^{-1}\left(g(t_1)\right),
\varh^{-1}\left(g(t_2)\right), \dots,
\varh^{-1}\left(g(t_{m-1})\right)\right) \ d\Q^\beta_0(g)
\\&
~~~~~~ =\int_{\mathcal{G}_0} u\left(g(t_1), g(t_2), \dots, g(t_{m-1})\right)
\cdot X^{\beta}_{ \varh, {l \cdot  m}}(g)\cdot  Y^{0}_{ \varh,{l \cdot  m}}(g)
\ d\Q^\beta_0(g),
\end{align*}
where $t_i = \frac i m $, $ i= 1, \cdots, m-1$. The passage to the limit for
letting first $l$ and then  $m$   to infinity is based on the following
assertions.

\begin{lemma}
  (i) For each $\mathcal{C}^2$-isomorphism  $h \in \G_0$   and $g \in \G_0$
\[
X_{\varh}(g)= \lim_{k \to \infty} X_{\varh,k}(g).
\]

  (ii) For each $\mathcal{C}^3$-isomorphism $h \in \G_0$   and $g \in \G_0$
\begin{align*}
\lim_{k \to \infty} Y^0_{\varh,k}(g) & =
   \prod _{ a \in J _g  }\frac{\sqrt {h'(g(a+))\cdot h'(g(a-))}}{\frac{\delta (h\circ g)}{\delta g}(a)}
\\ & \quad
\times
\frac 1 { \sqrt { h'(g(0))\cdot h'(g(1-))}}
\cdot
\left\{
\begin{array}{ll}
1 & \mbox{ if }  g(1-)=g(1) \\
\frac {h'(g(1-)) }{ \frac{\delta (h\circ g)}{\delta g}(1)} & \mbox{ else},
\end{array}
\right.
\end{align*}
where $J_g\subset ]0,1[$  is the set of jump locations of $g $ on   $]0,1[$. In particular,
\[
\lim_{k \to \infty} Y^0_{\varh,k}(g) =   Y_ {\varh,0}(g) \quad \mbox{ for } \Q^\beta_0\mbox{-a.e.} g.\]
(iii) For all $g \in \G_0$ and $\mathcal C^2$-isomorphism $\varh \in \G_0$, the
infinite product in the definition of $Y_{\varh,0}(g)$ converges. There
exists a constant $C=C(\beta, \varh)$ such that $\forall g \in
\mathcal{G}_0$
\begin{equation*}
\tfrac{1}{C} \leq Y^{\beta}_{\varh,0}(g) \leq C.
\end{equation*}
(iv)
If $h_n\to h$ in $\C^2(\eI, \eI)$ with $\varh$ as above, then $Y^{0,h_n}(g)\to Y_{0,h}(g)$.\\
(v)  For each $\mathcal C^3$-isomorphism $\varh   \in \G_0$ there
 exists a constant $C=C(\beta, \varh)$ such that $\forall
g \in \mathcal{G}$, $\forall k \in \N$
\begin{equation*}
\tfrac{1}{C} \leq X^{\beta}_{\varh,k}(g) \cdot Y^{0}_{\varh,k}(g) \leq C.
\end{equation*}
\end{lemma}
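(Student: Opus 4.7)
The plan is to treat (i)--(v) as the interval analogues of Lemmas \ref{X-ident}--\ref{finiteness}, with the periodicity of $S^1$ replaced by explicit boundary contributions at $t=0$ and $t=1$. For (i), I would simply reuse the Taylor expansion argument of Lemma \ref{X-ident}: that computation is local on each subinterval $[t_i,t_{i+1}]$, nowhere uses the identification $t_k = t_0$, and transfers verbatim to $g \in \G_0$ with $t_i = i/k$, $i = 0,1,\dots,k$.

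The bulk of the work is (ii). Given $\varepsilon > 0$, enumerate the jumps $a_1, \dots, a_l$ of $g$ in order of decreasing height so that the residual jumps in each $(a_j, a_{j+1})$ have total size at most $\varepsilon/l$; set $a_0 := 0$, $a_{l+1} := 1$, and let $k_j$ denote the index with $a_j \in [t_{k_j}, t_{k_j+1})$, with the conventions $k_0 := -1$, $k_{l+1} := k-1$, the latter being counted as a ``big-jump index'' precisely when $g(1-) < g(1)$. Applying the Taylor estimate of Lemma \ref{Y-ident} on each small-jump block $[k_j+1, k_{j+1}-1]$ gives
\[
\prod_{i = k_j+1}^{k_{j+1}-1} \left[ h'(g(t_i)) \cdot \frac{g(t_{i+1}) - g(t_i)}{h(g(t_{i+1})) - h(g(t_i))} \right] \ \longrightarrow\ \sqrt{\frac{h'(g(a_j+))}{h'(g(a_{j+1}-))}}
\]
up to $e^{\pm 2\varepsilon}$, while right continuity of $g$ shows that each big-jump factor converges to $h'(g(a_j-)) / \tfrac{\delta(h\circ g)}{\delta g}(a_j)$. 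Combining these with the leading prefactor $1/h'(g(t_0)) \to 1/h'(g(0))$ and telescoping the square roots via $g(a_0+) = g(0)$ and $g(a_{l+1}-) = g(1-)$, the product simplifies to
\[
\frac{1}{\sqrt{h'(g(0))\, h'(g(1-))}} \ \prod_{a \in J_g} \frac{\sqrt{h'(g(a-))\, h'(g(a+))}}{\tfrac{\delta(h\circ g)}{\delta g}(a)},
\]
plus the extra factor $h'(g(1-))/\tfrac{\delta(h\circ g)}{\delta g}(1)$ precisely when $g$ jumps at $1$ (the extra big-jump block). Since $\Q_0^\beta$-a.e.\ $g$ satisfies $g(0) = 0$ and $g(1-) = 1$, the exceptional branch is inactive almost surely and the limit coincides with $Y_{h,0}(g)$.

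For (iii)--(v) I would adapt the three steps of Lemma \ref{finiteness} with only minor modifications. In (iii), the mean value theorem bounds each logarithmic summand $|\log(\sqrt{h'(g(a-))h'(g(a+))}/\tfrac{\delta(h\circ g)}{\delta g}(a))|$ by $C \cdot (g(a+) - g(a-))$ with $C = \sup |(\log h')'|$, so $\sum_a (g(a+) - g(a-)) \le 1$ yields both absolute convergence of the infinite product and a uniform bound; the deterministic boundary prefactor $1/\sqrt{h'(0) h'(1)}$ contributes only an additional multiplicative constant. For (iv), splitting the infinite product into the $k$ largest jumps of $g$ (where $\C^2$-convergence of $h_n$ gives termwise convergence, exactly as in Lemma \ref{finiteness}(ii)) and a tail dominated by the same $C\varepsilon$ bound uniformly in $n$ yields the continuity $Y_{h_n,0}(g) \to Y_{h,0}(g)$. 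For (v), the bounds on $X^\beta_{h,k}$ and $Y^0_{h,k}$ from Lemma \ref{finiteness}(iii) carry over once the leading factor $\tfrac{g(t_1)-g(t_0)}{h(g(t_1))-h(g(t_0))}$, which by the mean value theorem equals $1/h'(\xi)$ for some $\xi$, is absorbed into a constant depending only on $\inf h'$ and $\sup h'$.

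The main obstacle is the bookkeeping in (ii): one must verify that the square-root telescoping at the two endpoints produces exactly the factor $1/\sqrt{h'(0)h'(1)}$ and correctly accommodates the exceptional case of a jump at $t=1$, so that the ``else'' branch in the claimed formula emerges naturally from reassigning the last block $[k_l+1, k-1]$ as a big-jump block rather than a small-jump one. Once this bookkeeping is made explicit, the remaining estimates are mild variants of those in Section 4.4.
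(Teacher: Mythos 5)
Your proposal is correct and follows essentially the same route as the paper: parts (i) and (iii)--(v) are carried over from Lemmas \ref{X-ident} and \ref{finiteness}, and part (ii) is proved by the same decomposition into small-jump blocks (Taylor expansion plus telescoping square roots), big-jump factors, and separate boundary terms at $t=0$ and $t=1$, with the ``else'' branch arising from a possible jump at $1$ and the a.e.\ identification with $Y_{h,0}$ following from $\Q_0^\beta$-a.s.\ continuity of $g$ at $t=1$. The only difference is bookkeeping: the paper inserts artificial block boundaries at $1/k$ and $1-1/k$ where you use the conventions $a_0=0$, $a_{l+1}=1$, which is immaterial.
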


\begin{proof}
The proofs of (i) and (iii)-(iv) carry over from  their respective counterparts on the sphere, lemmas \ref{X-ident} and \ref{finiteness} above. We sketch the proof of statement (ii) which needs most modification. For  $\varepsilon > 0$
choose $l \in \mathbb N$ large enough  and let $a_2, \dots, a_{l-1}$ denote the $l-2$ largest jumps of
$g $ on $ ]0,1[$. For $k$ very large (compared with $l$) we may assume that $a_2, \dots, a_{l-2}  \in ]\frac 2 k , 1 -\frac 2 k [$. Put $a_1 := \frac 1 k $, $a_{l} := 1- \frac 1 k$.
For $j=1, \dots, l$ let $k_j$ denote the index $i \in \{ 1, \dots,
k-1\}$, for which $a_j \in \left[t_i, t_{i+1} \right[$. In particular,   $k_1 =1 $ and $k_l=k-1$. Then using the same arguments as in lemma \ref{Y-ident} one obtains, for $k$ and $l$ sufficiently large, the two sided bounds
\begin{align*}
 (\text{I}) & = e^{2\varepsilon} \cdot
\prod^{l-1}_{j=1} \sqrt{\frac{\varh'\left(g(t_{k_{j+1}})\right)}{\varh'\left(g(t_{k_j+1})\right)}}
\\
  &~~~~~~~~ \geq
  \prod_{i \in \{ 1, \dots, k-1\} \setminus \{k_1, \dots, k_l\}}\left[\varh'\left(g(t_i)\right) \cdot \frac{g(t_{i+1}) - g(t_i)}{\varh\left(g(t_{i+1})\right) - \varh\left(g(t_i)\right)}\right]^{-1}\\
 & ~~~~~~~~ ~~~~~~~~   \geq  e^{-2\varepsilon} \cdot C^{-\varepsilon/2}_3 \cdot
\prod^{l-1}_{j=1}\sqrt{\frac{\varh'\left(g(t_{k_{j+1}})\right)}{\varh'\left(g(t_{k_j+1})\right)}}
= (\text{II})
\end{align*}
For fixed $l$ and $k \to \infty$ the bounds (I) and (II) converge to
\begin{equation*}
(\text{I}')=e^{2\varepsilon} \sqrt {\frac {h'(g(a_2-))}{h'(g(0))}}\cdot
\prod^{l-2}_{j=2}\sqrt{\frac{\varh'\left(g(a_{j+1}-)\right)}{\varh'\left(g(a_j+)\right)}} \cdot \sqrt {\frac {h'(g(1-))}{h'(g(a_{l-1}+))}}
\end{equation*}
and
\begin{equation*}
(\text{II}')=e^{-2\varepsilon} \cdot C^{-\varepsilon/2}_3 \cdot
 \sqrt {\frac {h'(g(a_2-))}{h'(g(0))}}\cdot
\prod^{l-2}_{j=2}\sqrt{\frac{\varh'\left(g(a_{j+1}-)\right)}{\varh'\left(g(a_j+)\right)}} \cdot \sqrt {\frac {h'(g(1-))}{h'(g(a_{l-1}+))}}.\end{equation*}
It remains to consider the three remaining terms
\begin{equation*}
 (\text{III})= \prod_{i \in \{k_2, \dots, k_{l-1}\}} \left[\varh'\left(g(t_i)\right)
\cdot \frac{g(t_{i+1}) - g(t_i)}{\varh\left(g(t_{i+1})\right) -
\varh\left(g(t_i)\right)}\right]^{-1},
\end{equation*}
which   for fixed $l$ and $k \to \infty$    converges to
\begin{equation*}
(\text{III}')=\prod^{l-1}_{j=2}\left[\frac{1}{\varh'\left(g(a_j-)\right)}
\cdot \frac{\delta (\varh\circ g)}{\delta g}\left(a_j\right)\right],
\end{equation*}
\[ (\text{IV})=  \left[\frac{g(\frac 1 k ) - g(0)}{\varh\left(g( \frac 1 k)\right) -
\varh\left(g(0)\right)}\right]^{-1}
 \cdot \left[\varh'\left(g(\frac 1 k)\right)
\cdot \frac{g({\frac 2 k }) - g(\frac 1 k)}{\varh\left(g( {\frac 2 k})\right) -
\varh\left(g(\frac 1 k)\right)}\right]^{-1},
\]
converging  by right continuity of $g$ to
\[ (\text{IV}') = h'(g(0))\]
and
\[ (\text{V})=   \left[\varh'\left(g(\frac {k-1} k)\right)
\cdot \frac{g({1 }) - g(\frac {k-1} k)}{\varh\left(g( {1})\right) -
\varh\left(g(\frac {k-1} k)\right)}\right]^{-1},
\]
which tends, also for $k \to \infty$, to
\[ (\text{V}')
= \left\{
\begin{array}{ll}
1 & \mbox{ if }  g \mbox{ continuous in } 1 \\
 \frac{\delta (h\circ g)}{\delta g}(1) \frac 1 {\varh '(g(1-))}
& \mbox{ else. }
\end{array}
\right.
\]
Combining  these estimates and letting $l \to \infty$, we
obtain the first claim. The second claim in statement (ii) follows from the fact that $g$ is continuous in $t=1$ $\mathbb Q^\beta_0$-almost surely.
\end{proof}

\end{subsection}

\end{section}

\begin{section}{The Integration by Parts Formula}
In order to construct Dirichlet forms and  Markov processes on $\G$,
we will consider it as an infinite dimensional manifold. For each
$g\in\G$, the tangent space $T_g\G$ will be an appropriate
completion of the space $\C^\infty(S^1,\R)$. The whole construction
will strongly depend on the choice of the norm on the tangent spaces
$T_g\G$. Basically, we will encounter two important cases:
\begin{itemize}
\item in Chapter 6 we will study the case $T_g\G=H^s(S^1,\leb)$ for
some $s>1/2$, independent of $g$; this approach is closely related
to the construction of stochastic processes on the diffeomorphism
group of $S^1$ and  Malliavin's Brownian motion on the homeomorphism group on
$S^1$, cf. \cite{MR1713340}.

\item in Chapters 7-9 we will assume $T_g\G=L^2(S^1, g_*\leb)$; in terms of the dynamics on the space $\Pe(S^1)$
of probability measures, this
will lead to a Dirichlet form and a stochastic process associated with the Wasserstein gradient
and with intrinsic metric given by the Wasserstein distance.
\end{itemize}
In this chapter, we develop the basic tools for the differential
calculus on $\G$. The main result will be an integration by parts
formula. These results will be independent of the choice of the norm
on the tangent space.

\begin{subsection}{The Drift Term}

For each $\varphi\in\C^\infty(S^1,\R)$, the {\em flow} generated by
$\varphi$ is the map $e_\varphi:\R\times S^1\to S^1$ where for each
$x\in S^1$ the function $e_\varphi(.,x): \R\to S^1, t\mapsto
e_\varphi(t,x)$ denotes the unique solution to the ODE
\begin{equation}\label{flow}
\frac{dx_t}{dt}=\varphi(x_t)
\end{equation}
with initial condition $x_0=x$. Since
$e_\varphi(t,x)=e_{t\varphi}(1,x)$ for all $\varphi,t,x$ under
consideration, we may simplify notation and write $e_{t\varphi}(x)$
instead of $e_\varphi(t,x)$.

Obviously, for each $\varphi\in\C^\infty(S^1,\R)$ the family
$e_{t\varphi}$, $t\in\R$ is a group of orientation preserving,
$\C^\infty$-diffeomorphism of $S^1$. (In particular,  $e_0$ is the
identity map $e$ on $S^1$, $e_{t\varphi}\circ
e_{s\varphi}=e_{(t+s)\varphi}$ for all $s,t\in\R$ and
$(e_\varphi)^{-1}=e_{-\varphi}$.)

Since $\frac{\partial}{\partial t}
e_{t\varphi}(x)|_{t=0}=\varphi(x)$  we obtain as a linearization for
small $t$
\begin{equation}\label{linear}
e_{t \varphi}(x)\approx x+t \varphi(x).
\end{equation}
More precisely,
$$|e_{t \varphi}(x)-(x+t \varphi(x))|\le C\cdot t^2$$
as well as
$$|\frac{\partial}{\partial x}e_{t \varphi}(x)-(1+t \frac{\partial}{\partial x}\varphi(x))|\le C\cdot t^2$$
uniformly in $x$ and $|t|\le1$.

\medskip

For $\varphi\in\C^\infty(S^1,\R)$ and $\beta>0$ we define functions
$V^\beta_\varphi:\G\to\R$ by
$$V^\beta_\varphi(g):=V^0_\varphi(g)+\beta\int_{S^1}\varphi'(g(x))dx$$
where
\begin{equation}\label{drift-0}
V^0_\varphi(g):=\sum_{a\in J_g}\left[
\frac{\varphi'(g(a+))+\varphi'(g(a-))}2-\frac{\varphi(g(a+))-\varphi(g(a-))}{g(a+)-g(a-)}\right].
\end{equation}

\begin{lemma}\label{drift-lemma}
{\bf (i)} \ The sum in (\ref{drift-0}) is absolutely convergent.
More precisely,
$$|V^0_\varphi(g)|\le \sum_{a\in J_g}\left|
\frac{\varphi'(g(a+))+\varphi'(g(a-))}2-\frac{\varphi(g(a+))-\varphi(g(a-))}{g(a+)-g(a-)}
\right|\le \frac12\int_{S^1}|\varphi''(x)|dx$$ and
$$|V^\beta_\varphi(g)|\le (1/2+\beta)\cdot\int_{S^1}|\varphi''(x)|dx.$$

{\bf (ii)} \ For each $\beta\ge0$
\begin{equation}\label{drift-ident}
V^\beta_\varphi(g)=\left.\frac{\partial}{\partial
t}Y^\beta_{e_{t\varphi}}(g)\right|_{t=0}
=\left.\frac{\partial}{\partial
t}Y^\beta_{e+{t\varphi}}(g)\right|_{t=0}.
\end{equation}
\end{lemma}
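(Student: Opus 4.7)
For part (i), I would recognise each summand in $V^0_\varphi$ as the trapezoidal-rule error for $\varphi'$ on the interval $[g(a-),g(a+)]$, since $\varphi(g(a+))-\varphi(g(a-))=\int_{g(a-)}^{g(a+)}\varphi'(y)\,dy$. A one-line Fubini rearrangement gives the exact identity
$$\frac{f(A)+f(B)}{2}-\frac{1}{B-A}\int_A^B f(y)\,dy=\frac{1}{2(B-A)}\int_A^B f'(s)(2s-A-B)\,ds,$$
and applied with $f=\varphi'$ it yields the pointwise bound $\tfrac12\int_{g(a-)}^{g(a+)}|\varphi''(s)|\,ds$ for each summand. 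Because $g$ is monotone, the image intervals $\bigl(g(a-),g(a+)\bigr)$ for $a\in J_g$ are pairwise disjoint subsets of $S^1$, so summing the pointwise estimates telescopes into $\tfrac12\int_{S^1}|\varphi''|\,ds$. The additional term in $V^\beta_\varphi$ is controlled via $\bigl|\int_{S^1}\varphi'(g(x))\,dx\bigr|\le\|\varphi'\|_\infty\le\int_{S^1}|\varphi''|\,dx$, the last step being elementary since $\varphi'$ has mean zero on $S^1$.

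For part (ii), the plan is to differentiate $\log Y^\beta_{e_{t\varphi}}(g)$ termwise at $t=0$. The flow ODE (\ref{flow}) gives $\partial_t e_{t\varphi}(x)=\varphi(e_{t\varphi}(x))$ and, upon interchanging $\partial_t$ and $\partial_x$, $\partial_t e_{t\varphi}'(x)=\varphi'(e_{t\varphi}(x))\,e_{t\varphi}'(x)$; at $t=0$ these collapse to $\varphi(x)$ and $\varphi'(x)$, respectively. A direct computation then shows that the $a$-th log-summand in $Y^0_{e_{t\varphi}}$ contributes at $t=0$ exactly
$$\frac{\varphi'(g(a-))+\varphi'(g(a+))}{2}-\frac{\varphi(g(a+))-\varphi(g(a-))}{g(a+)-g(a-)},$$
while the $X^\beta$-factor contributes $\beta\int_0^1\varphi'(g(s))\,ds$. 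Adding these across $a\in J_g$ reproduces $V^\beta_\varphi(g)$.

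The main obstacle is justifying the interchange of $\partial_t|_{t=0}$ with the infinite sum. For this I would compute the derivative of the $a$-th log-summand at a general time $s$ in a neighbourhood of $0$, obtaining
$$\frac{\varphi'(e_{s\varphi}(g(a-)))+\varphi'(e_{s\varphi}(g(a+)))}{2}-\frac{\varphi(e_{s\varphi}(g(a+)))-\varphi(e_{s\varphi}(g(a-)))}{e_{s\varphi}(g(a+))-e_{s\varphi}(g(a-))},$$
and then reapply the trapezoidal estimate from (i) on the image interval $[e_{s\varphi}(g(a-)),e_{s\varphi}(g(a+))]$. Using that $e_{s\varphi}$ is a diffeomorphism with $\sup_{|s|\le 1,\,x}e_{s\varphi}'(x)\le K$ (by compactness and smooth dependence on $s$ in (\ref{flow})), together with disjointness of the image intervals, produces the $s$-independent summable majorant $\tfrac{K}{2}\|\varphi''\|_\infty\cdot(g(a+)-g(a-))$ with $\sum_{a}(g(a+)-g(a-))\le 1$; dominated convergence applied to the difference quotient, via the mean value theorem, then gives the first equality in (\ref{drift-ident}). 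The second equality is immediate: the termwise derivative at $t=0$ depends only on the first-order expansion $h_t(x)=x+t\varphi(x)+O(t^2)$ and similarly for $h_t'$, which by the linearisation displayed after (\ref{linear}) is shared by $h_t=e_{t\varphi}$ and $h_t=e+t\varphi$, and the same domination argument justifies passing to the limit in the second case as well.
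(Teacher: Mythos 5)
Your argument is correct and follows essentially the same route as the paper: the trapezoidal-error identity for each jump (the paper writes the error kernel as $\mathrm{sgn}(y-x)$, you as $2s-A-B$, but the resulting bound $\tfrac12\int_{g(a-)}^{g(a+)}|\varphi''|$ and the summation over the disjoint image intervals are identical), followed by termwise differentiation of $\log Y^\beta_{e_{t\varphi}}$ at $t=0$ justified by a $t$-uniform summable majorant proportional to $g(a+)-g(a-)$. Your mean-value-theorem/dominated-convergence packaging of the interchange of derivative and sum is a clean equivalent of the paper's split into finitely many big jumps plus a uniformly small tail.
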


\begin{proof}
(i) \  According to Taylor's formula, for each $a\in J_g$
$$\frac{\varphi'(g(a+))+\varphi'(g(a-))}2-\frac{\delta(\varphi\circ
g)}{\delta g}(a)=\frac1{2(g(a+)-g(a-))}
\int_{g(a-)}^{g(a+)}\int_{g(a-)}^{g(a+)} \mbox{\rm
sgn}(y-x)\cdot\varphi''(y)dydx.$$ Hence,
\begin{eqnarray*}
\lefteqn{\sum_{a\in J_g}\left|
\frac{\varphi'(g(a+))+\varphi'(g(a-))}2-\frac{\delta(\varphi\circ
g)}{\delta g}(a)\right|}\\
&\le&\frac12\sum_{a\in J_g}\left|\frac1{(g(a+)-g(a-))}
\int_{g(a-)}^{g(a+)}\int_{g(a-)}^{g(a+)} \mbox{\rm
sgn}(y-x)\cdot\varphi''(y)dydx\right|\\
&\le&\frac12\sum_{a\in J_g}\int_{g(a-)}^{g(a+)}|\varphi''(y)|dy\ =\
\frac12\int_{S^1}|\varphi''(y)|dy.
\end{eqnarray*}
Finally,
$$|\int_{S^1}\varphi'(g(x))dx|\le \sup_{y\in
S^1}|\varphi'(y)|\le\int_{S^1}|\varphi''(y)|dy.$$

(ii) Let us first consider the case $\beta=0$.
\begin{eqnarray*}
\left.\frac{\partial}{\partial t}\log
Y^0_{e_{t\varphi}}(g)\right|_{t=0} &=&
\left.\frac{\partial}{\partial t} \sum_{a\in
J_g}\left[\frac12\log(\frac{\partial}{\partial x}
e_{t\varphi})(g(a+))+ \frac12\log(\frac{\partial}{\partial x}
e_{t\varphi})(g(a-))- \log\frac{\delta
(e_{t\varphi}\circ g)}{\delta g}(a) \right]\right|_{t=0}\\
&{=}& \sum_{a\in J_g} \left.\frac{\partial}{\partial t}
\left[\frac12\log(\frac{\partial}{\partial x} e_{t\varphi})(g(a+))+
\frac12\log(\frac{\partial}{\partial x} e_{t\varphi})(g(a-))-
\log\frac{\delta (e_{t\varphi}\circ g)}{\delta g}(a)
\right]\right|_{t=0}.
\end{eqnarray*}
In order to justify that we may interchange differentiation and
summation, we decompose (as we did several times before) the
infinite sum over all jumps in $J_g$ into a finite sum over big
jumps $a_1,\ldots,a_k$ and an infinite sum over small jumps in
$J_g(k)=J_g\setminus\{a_1,\ldots,a_k\}$. Of course, the finite sum
will make no problem. We are going to prove that the contribution of
the small jumps is arbitrarily small. Recall from Lemma
\ref{finiteness} that
$$\sum_{a\in
J_g(k)}\left[\frac12\log(\frac{\partial}{\partial x}
e_{t\varphi})(g(a+))+ \frac12\log(\frac{\partial}{\partial x}
e_{t\varphi})(g(a-))- \log\frac{\delta (e_{t\varphi}\circ g)}{\delta
g}(a) \right] \le C_t\cdot\sum_{a\in
J_g(k)}\left[g(a+)-g(a-)\right]$$ where
$C_t:=\sup_x\left|\frac{\partial}{\partial
x}\log(\frac{\partial}{\partial x} e_{t\varphi})(x)\right|$. Now
$C_t\le C\cdot |t|$ for all $|t|\le1$ and an appropriate constant
$C$. Thus for any given $\epsilon>0$
$$\left|\frac{\partial}{\partial t} \sum_{a\in
J_g(k)}\left[\frac12\log(\frac{\partial}{\partial x}
e_{t\varphi})(g(a+))+ \frac12\log(\frac{\partial}{\partial x}
e_{t\varphi})(g(a-))- \log\frac{\delta (e_{t\varphi}\circ g)}{\delta
g}(a) \right]\right|_{t=0} \le\epsilon$$ provided $k$ is chosen
large enough (i.e. such that $C\cdot \sum_{a\in
J_g(k)}|g(a+)-g(a-)|\le\epsilon$). This justifies the above
interchange of differentiation and summation.

Now for each $x\in S^1$
$$\left.\frac{\partial}{\partial t}\left(\log\frac{\partial}{\partial x} e_{t\varphi}(x)\right)\right|_{t=0}=\varphi'(x)$$
since the linearization of $e_{t\varphi}$ for small $t$ yields
$$e_{t\varphi}(x)\approx x+t\varphi(x), \quad
\frac{\partial}{\partial x} e_{t\varphi}(x)\approx 1+t\varphi'(x).$$
Similarly, for small $t$ we obtain
$$\frac{\delta
(e_{t\varphi}\circ g)}{\delta g}(a)\approx 1+t\cdot \frac{\delta
(\varphi\circ g)}{\delta g}(a)$$ and thus
$$\left.\frac{\partial}{\partial t}\frac{\delta
(e_{t\varphi}\circ g)}{\delta g}(a)\right|_{t=0}=\frac{\delta
(\varphi\circ g)}{\delta g}(a).$$ Therefore,
$$\left.\frac{\partial}{\partial t}\log
Y^0_{e_{t\varphi}}(g)\right|_{t=0}=V^0_\varphi(g).$$ On the other
hand, obviously
$$\left.\frac{\partial}{\partial t}\log
Y^0_{e_{t\varphi}}(g)\right|_{t=0}=\left.\frac{\partial}{\partial t}
Y^0_{e_{t\varphi}}(g)\right|_{t=0}$$ since $Y^0_{e_{0}}(g)=1$.

Finally, we have to consider the derivative of $X_{e_{t\varphi}}$.
Based on the previous arguments and using the fact that
$\frac{\partial}{\partial t} \log\left(\frac{\partial}{\partial x}
e_{t\varphi}\right)(x)$ is uniformly bounded in $t\in[-1,1]$ and
$x\in S^1$ we immediately see
\begin{eqnarray*}
\left.\frac{\partial}{\partial t}\log
X_{e_{t\varphi}}(g)\right|_{t=0} &=& \left.\frac{\partial}{\partial
t}
\int_{S^1}\log\left(\frac{\partial}{\partial x} e_{t\varphi}\right)(g(y))dy\right|_{t=0}\\
&{=}& \int_{S^1}\left.\frac{\partial}{\partial t}
\log\left(\frac{\partial}{\partial x}
e_{t\varphi}\right)\right|_{t=0}(g(y))\, dy\ = \
\int_{S^1}\varphi'(g(y))dy.
\end{eqnarray*}
Again $X_{e_0}(g)=1$. Therefore,
$$\left.\frac{\partial}{\partial t}
\left[X_{e_{t\varphi}}\right]^\beta(g)\right|_{t=0}=\beta\cdot\int_{S^1}\varphi'(g(y))dy$$
and thus
$$\left.\frac{\partial}{\partial
t}Y^\beta_{e_{t\varphi}}(g)\right|_{t=0}=V^\beta_\varphi(g).$$ this
proves the first identity in (\ref{drift-ident}). The proof of the
second one $V^\beta_\varphi(g)=\left.\frac{\partial}{\partial
t}Y^\beta_{e+{t\varphi}}(g)\right|_{t=0}$ is similar (even slightly
easier).
\end{proof}

\end{subsection}

\begin{subsection}{Directional Derivatives}

For  functions
 $u:\G\to\R$ we will define the {\em directional derivative} along $\varphi\in \C^\infty(S^1,\R)$ by
\begin{equation}\label{dir-der-def}
D_\varphi u(g):=\lim_{t\to0}\frac1t\left[u(e_{t\varphi}\circ
g)-u(g)\right]
\end{equation}
provided this limit exists. In particular, this will be the case for
the following  'cylinder functions'.

\begin{definition}
 We
say that $u:\G\to\R$ belongs to the class $\Syl^k(\G)$ if it can be
written as
\begin{equation}\label{syl-def}
u(g)=U(g(x_1),\ldots, g(x_m))
\end{equation}
for some $m\in\N$, some $x_1,\ldots,x_m\in S^1$ and some
$\C^k$-function $U: (S^1)^m\to\R$.
\end{definition}

It should be mentioned that functions $u\in\Syl^k(\G)$ are in
general not continuous on $\G$.

\begin{lemma}\label{Lie-deriv}
The directional derivative exists for all $u\in\Syl^1(\G)$. In
particular, for $u$ as above
\begin{eqnarray*}
D_\varphi u(g)&=&\lim_{t\to0}\frac1t\left[u(g+t\cdot\varphi\circ g)-u(g)\right]\\
&=&\sum_{i=1}^m {\partial_i}U(g(x_1),\ldots,g(x_m))\cdot
\varphi(g(x_i))
\end{eqnarray*}  with $\partial_iU:=\frac{\partial}{\partial y_i}U$.
Moreover, $D_\varphi: \Syl^k(\G)\to\Syl^{k-1}(\G)$ for all
$k\in\N\cup\{\infty\}$ and
$$\|D_\varphi u\|_{L^2(\Q^\beta)}\le \sqrt{m}\cdot\|\nabla U\|_\infty\cdot
\|\varphi\|_{L^2(S^1)}.$$
\end{lemma}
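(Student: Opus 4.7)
The plan is to reduce everything to the elementary chain rule on $(S^1)^m$, using the flow equation and the explicit form of cylinder functions, and then to read off the $L^2$-estimate from the one-dimensional marginal of $\Q^\beta$.

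First I would expand the defining limit. For $u(g)=U(g(x_1),\dots,g(x_m))$, the translated cylinder function is simply
\begin{equation*}
u(e_{t\varphi}\circ g)=U\bigl(e_{t\varphi}(g(x_1)),\dots,e_{t\varphi}(g(x_m))\bigr),
\end{equation*}
so the right-hand side is a $C^1$-function of $t$ via the flow coordinates. Applying the chain rule on $(S^1)^m$ and using the defining ODE $\frac{d}{dt}\big|_{t=0}e_{t\varphi}(y)=\varphi(y)$ yields
\begin{equation*}
D_\varphi u(g)=\sum_{i=1}^m\partial_iU(g(x_1),\dots,g(x_m))\cdot\varphi(g(x_i)),
\end{equation*}
which is the claimed formula.

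Next I would establish the equivalent form using $g+t\varphi\circ g$ in place of $e_{t\varphi}\circ g$. By the quantitative linearization (\ref{linear}) recalled in the text, $\|e_{t\varphi}\circ g-(g+t\varphi\circ g)\|_\infty\le Ct^2$ uniformly in $g$, so evaluating at each fixed point $x_i$ the discrepancy is $O(t^2)$. Since $U\in C^1$, a first-order Taylor expansion of $U$ at $(g(x_1),\dots,g(x_m))$ then shows that the two difference quotients differ by $O(t)\to 0$, so they share the same limit. This takes care of the first two displayed formulas.

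For the mapping property $D_\varphi:\Syl^k(\G)\to\Syl^{k-1}(\G)$, observe that the formula above rewrites $D_\varphi u$ as the cylinder function associated with $\tilde U(y_1,\dots,y_m):=\sum_i\partial_iU(y_1,\dots,y_m)\,\varphi(y_i)$; since $\varphi\in C^\infty$, $\tilde U$ is $C^{k-1}$ whenever $U\in C^k$. Finally, for the $L^2$-bound I would apply Cauchy--Schwarz in the $i$-sum,
\begin{equation*}
|D_\varphi u(g)|^2\le\Bigl(\sum_{i=1}^m|\partial_iU|^2\Bigr)\Bigl(\sum_{i=1}^m|\varphi(g(x_i))|^2\Bigr)\le\|\nabla U\|_\infty^2\sum_{i=1}^m|\varphi(g(x_i))|^2,
\end{equation*}
and then integrate against $\Q^\beta$. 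The one-point marginals of $\Q^\beta$ are uniform on $S^1$ (the case $N=1$ of Proposition~3.2.1, explicitly noted there as $\int_\G u(g_t)\,d\Q^\beta=\int_{S^1}u(x)\,dx$), hence $\int_\G|\varphi(g(x_i))|^2\,d\Q^\beta(g)=\|\varphi\|_{L^2(S^1)}^2$ for each $i$, giving the desired bound $\|D_\varphi u\|_{L^2(\Q^\beta)}\le\sqrt m\,\|\nabla U\|_\infty\,\|\varphi\|_{L^2(S^1)}$.

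There is no real obstacle here: the only mildly delicate point is justifying the passage between the flow $e_{t\varphi}$ and the naive perturbation $g+t\varphi\circ g$, and that is settled by the uniform $O(t^2)$-linearization together with the $C^1$-regularity of $U$. The other step worth flagging is the input of the marginal law of $\Q^\beta$ to convert an a.s.\ bound into an $L^2$-bound.
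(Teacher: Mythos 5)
Your proof is correct and follows essentially the same route as the paper's: the chain rule applied to $t\mapsto U(e_{t\varphi}(g(x_1)),\dots,e_{t\varphi}(g(x_m)))$ together with $\frac{\partial}{\partial t}e_{t\varphi}(y)\big|_{t=0}=\varphi(y)$, the observation that the naive perturbation $g+t\varphi\circ g$ yields the same derivative at $t=0$, and then Cauchy--Schwarz plus the uniform one-point marginal of $\Q^\beta$ for the $L^2$-bound. Your explicit $O(t^2)$-justification for passing between the flow and the linearization is a slightly more careful version of what the paper states implicitly.
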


\begin{proof} The first claim follows from
\begin{eqnarray*}
D_\varphi u(g)&=&\left.\frac{\partial}{\partial t}
U(e_{t\varphi}(g(x_1)),\ldots,e_{t\varphi}(g(x_m)))\right|_{t=0}\\
&=&\sum_{i=1}^m
\left.{\partial_i}U(e_{t\varphi}(g(x_1)),\ldots,e_{t\varphi}(g(x_m)))\cdot
\frac{\partial}{\partial t}e_{t \varphi}(g(x_i))\right|_{t=0}\\
&=&\sum_{i=1}^m {\partial_i}U(g(x_1),\ldots,g(x_m))\cdot
\varphi(g(x_i))\\
&=&\left.\frac{\partial}{\partial t}
U(g(x_1)+t\varphi(g(x_1)),\ldots, g(x_m)+t\varphi(g(x_m)))\right|_{t=0}\\
&=&\lim_{t\to0}\frac1t\left[u(g+t\cdot\varphi\circ g)-u(g)\right].
\end{eqnarray*}
For the second claim,
\begin{eqnarray*}
\|D_\varphi u\|^2_{L^2(\Q^\beta)}&=& \int_\G \left( \sum_{i=1}^m
{\partial_i}U(g(x_1),\ldots,g(x_m))\cdot
\varphi(g(x_i))\right)^2\, d\Q^\beta(g)\\
&\le& \int_\G  \left(\sum_{i=1}^m
({\partial_i}U)^2(g(x_1),\ldots,g(x_m))\cdot
\sum_{i=1}^m\varphi^2(g(x_i))\right)\, d\Q^\beta(g)\\
&\le&  \|\nabla U\|^2_\infty\cdot
\sum_{i=1}^m\int_\G\varphi^2(g(x_i))\, d\Q^\beta(g)\\
&=& {m}\cdot\|\nabla U\|^2_\infty\cdot \int_{S^1}\varphi^2(y)\, dy.
\end{eqnarray*}
\end{proof}

\end{subsection}

\begin{subsection}{Integration by Parts Formula on $\Pe(S^1)$}
For $\varphi\in \C^\infty(S^1,\R)$ let $D^*_\varphi$ denote the
 operator  in $L^2(\G,\Q^\beta)$ adjoint to $D_\varphi$ with domain
$\Syl^1(\G)$.

\begin{proposition}\label{adjoint}
$\Dom(D^*_\varphi)\supset \Syl^1(\G)$ and for all $u\in\Syl^1(\G)$
\begin{equation}D^*_\varphi u=-D_\varphi u - V^\beta_\varphi\cdot u.\end{equation}
\end{proposition}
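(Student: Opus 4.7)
The plan is to differentiate the quasi-invariance formula of Theorem \ref{CoV} along the one-parameter flow $t\mapsto e_{t\varphi}$ at $t=0$. For $u,v\in\Syl^1(\G)$ and every $t\in\R$, the map $e_{t\varphi}$ is a $\C^\infty$-diffeomorphism in $\G$, so Theorem \ref{CoV} (together with the fact $\Q^\beta(h\circ\cdot)=Y^\beta_h\cdot\Q^\beta$, applied to $h=e_{t\varphi}$ on the measure and then to $h^{-1}=e_{-t\varphi}$ on the integral) yields
$$\int_\G u(e_{t\varphi}\circ g)\, v(e_{t\varphi}\circ g)\, d\Q^\beta(g)=\int_\G u(g)\, v(g)\, Y^\beta_{e_{-t\varphi}}(g)\, d\Q^\beta(g).$$
I would then differentiate both sides at $t=0$, justifying the interchange of derivative and integral by dominated convergence and matching terms.

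For the left-hand side, writing $u(g)=U(g(x_1),\ldots,g(x_m))$, the mean value theorem combined with $|e_{t\varphi}(y)-y|\le|t|\,\|\varphi\|_\infty+O(t^2)$ shows the difference quotients are dominated uniformly in $g$ and $|t|\le 1$ by a constant depending only on $\|U\|_\infty$, $\|\nabla U\|_\infty$, $\|V\|_\infty$, $\|\nabla V\|_\infty$, $\|\varphi\|_\infty$. Passing to the limit and using Lemma \ref{Lie-deriv} with the product rule yields
$$\left.\frac{d}{dt}\right|_{t=0}\!\!\int_\G u(e_{t\varphi}\circ g)\,v(e_{t\varphi}\circ g)\,d\Q^\beta=\int_\G\bigl[(D_\varphi u)\,v+u\,(D_\varphi v)\bigr]\,d\Q^\beta.$$
For the right-hand side, the group identity $e_{(t+s)\varphi}=e_{s\varphi}\circ e_{t\varphi}$ combined with Theorem \ref{CoV} applied twice gives the cocycle $Y^\beta_{e_{(t+s)\varphi}}(g)=Y^\beta_{e_{s\varphi}}(e_{t\varphi}\circ g)\cdot Y^\beta_{e_{t\varphi}}(g)$; differentiating in $s$ at $s=0$ and invoking Lemma \ref{drift-lemma}(ii) gives
$$\frac{d}{dt}Y^\beta_{e_{t\varphi}}(g)=V^\beta_\varphi(e_{t\varphi}\circ g)\cdot Y^\beta_{e_{t\varphi}}(g).$$
Since $|V^\beta_\varphi|\le(1/2+\beta)\int_{S^1}|\varphi''|$ uniformly by Lemma \ref{drift-lemma}(i) and $Y^\beta_{e_{t\varphi}}$ is two-sidedly bounded uniformly in $g$ and $t\in[-1,1]$ by Lemma \ref{finiteness}, dominated convergence applies, and with $V^\beta_{-\varphi}=-V^\beta_\varphi$ the right-hand side differentiates to $-\int_\G u\,v\,V^\beta_\varphi\,d\Q^\beta$.

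Equating the two derivatives and relabelling produces, for all $u,v\in\Syl^1(\G)$,
$$\int_\G (D_\varphi v)\,u\,d\Q^\beta=\int_\G v\cdot\bigl(-D_\varphi u-V^\beta_\varphi\,u\bigr)\,d\Q^\beta.$$
Since $D_\varphi u\in L^2(\Q^\beta)$ by Lemma \ref{Lie-deriv} and $V^\beta_\varphi u\in L^2(\Q^\beta)$ (as $V^\beta_\varphi$ is bounded and $u$ is bounded because $U$ is $\C^1$ on the compact space $(S^1)^m$), the function $-D_\varphi u-V^\beta_\varphi\,u$ lies in $L^2(\Q^\beta)$; by the very definition of the adjoint on the dense domain $\Syl^1(\G)$, this places $u$ in $\Dom(D^*_\varphi)$ and identifies $D^*_\varphi u=-D_\varphi u-V^\beta_\varphi\,u$.

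The main technical obstacle is the dominated-convergence step for the right-hand side of the quasi-invariance identity; the cocycle reduction turns it into the uniform estimates already provided by Lemma \ref{drift-lemma}(i) and Lemma \ref{finiteness}, after which everything else is the product rule and a routine relabelling.
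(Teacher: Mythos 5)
Your proof is correct and follows essentially the same route as the paper: differentiate the quasi-invariance identity of Theorem \ref{CoV} along the flow $e_{t\varphi}$ at $t=0$, identify the derivative of the density via Lemma \ref{drift-lemma}(ii), and control everything with the uniform bounds of Lemma \ref{finiteness}. The only (cosmetic) difference is that you translate both $u$ and $v$ and use a cocycle identity for $Y^\beta$, whereas the paper moves the translation from $u$ onto $v$ and splits the difference quotient into three terms, one of which vanishes.
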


\begin{proof}
Let $u,v\in\Syl^1(\G)$. Then
\begin{eqnarray*}
\int D_\varphi u\cdot v \, d\Q^\beta&=& \lim_{t\to0}
\frac1t\int  \left[u(e_{t\varphi}\circ g)-u(g)\right]\cdot v(g) \, d\Q^\beta(g)\\
&=& \lim_{t\to0} \frac1t\int  \left[u(g)\cdot v(e_{-t\varphi}\circ
g)\cdot Y^\beta_{e_{-t\varphi}}-u(g)\cdot v(g)\right]
 \, d\Q^\beta(g)\\
&=&\lim_{t\to0}
\frac1t\int  u(g)\cdot \left[v(e_{-t\varphi}\circ g)-v(g)\right] \, d\Q^\beta(g)\\
&&+\lim_{t\to0}
\frac1t\int u(g)\cdot v(g)\cdot\left[Y^\beta_{e_{-t\varphi}}-1\right]\, d\Q^\beta(g)\\
&&+\lim_{t\to0}
\frac1t\int u(g)\cdot \left[v(e_{-t\varphi}\circ g)-v(g)\right]\cdot\left[Y^\beta_{e_{-t\varphi}}-1\right]\, d\Q^\beta(g)\\
&=&-\int u\cdot D_\varphi v\, d\Q^\beta(g)-\int u\cdot v\cdot
V^\beta_\varphi\, d\Q^\beta(g)+0.
\end{eqnarray*}
To justify the last equality, note that according to Lemma
\ref{finiteness} $|\log Y^\beta_{e_{t\varphi}}|\le C\cdot |t|$ for
$|t|\le1$. Hence, the claim follows with dominated convergence and
Lemma \ref{drift-ident}.
\end{proof}

\begin{corollary} The operator $(D_\varphi, \Syl^1(\G))$ is closable in $L^2(\Q^\beta)$. Its closure will
be denoted by  $(D_\varphi,\Dom(D_\varphi))$.
\end{corollary}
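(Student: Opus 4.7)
The plan is to apply the standard Hilbert space criterion that a densely defined operator is closable whenever its adjoint has dense domain. Two ingredients are needed: first, that $D_\varphi$ actually maps $\Syl^1(\G)$ into $L^2(\Q^\beta)$; second, that $\Dom(D^*_\varphi)$ is dense in $L^2(\Q^\beta)$. The first is provided by Lemma \ref{Lie-deriv} (which gives $D_\varphi u \in \Syl^0(\G)$ and an explicit $L^2$-bound), and Proposition \ref{adjoint} yields $\Syl^1(\G) \subset \Dom(D^*_\varphi)$. Both requirements therefore collapse to the single task of verifying that $\Syl^1(\G)$ itself is dense in $L^2(\G,\Q^\beta)$.

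For the density, I would proceed as follows. By the Kolmogorov construction underlying (\ref{Dir-S1}), $\Q^\beta$ is determined by its finite-dimensional marginals under evaluations $g \mapsto g(t)$, so the cylinder $\sigma$-algebra generated by countably many such evaluations (say at rational $t$) coincides with the $\Q^\beta$-completion of the Borel $\sigma$-algebra. A standard monotone class argument then shows that bounded measurable cylinder functions $u(g) = U(g(x_1),\ldots,g(x_m))$ with $U$ bounded Borel are dense in $L^2(\G,\Q^\beta)$. Finally, on the compact torus $(S^1)^m$ one can mollify $U$ by convolution with smooth kernels to obtain a $\C^1$-approximation; because the corresponding marginal measure on the simplex $\Sigma_N$ given by (\ref{Dir-S1}) is an honest finite measure, dominated convergence upgrades pointwise $L^1(dx_1\cdots dx_m)$-approximation of $U$ to $L^2(\Q^\beta)$-approximation of the cylinder function, placing the approximants in $\Syl^1(\G)$.

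With density at hand, closability is routine bookkeeping. Let $u_n \in \Syl^1(\G)$ with $u_n \to 0$ and $D_\varphi u_n \to w$ in $L^2(\Q^\beta)$. For an arbitrary test function $v \in \Syl^1(\G)$, Lemma \ref{Lie-deriv} shows that $D_\varphi v$ is a bounded cylinder function, and Lemma \ref{drift-lemma}(i) shows that $V^\beta_\varphi$ is uniformly bounded on $\G$, so $-D_\varphi v - V^\beta_\varphi v \in L^\infty(\Q^\beta) \subset L^2(\Q^\beta)$. Proposition \ref{adjoint} then gives
\begin{equation*}
\langle w,v\rangle_{L^2(\Q^\beta)} \;=\; \lim_{n\to\infty}\langle D_\varphi u_n,v\rangle \;=\; \lim_{n\to\infty}\langle u_n,\,-D_\varphi v - V^\beta_\varphi v\rangle \;=\; 0,
\end{equation*}
whence $w=0$ by density of $\Syl^1(\G)$. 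The only substantive step is the density claim; once that is secured, the closability assertion is an immediate consequence of the integration by parts identity already proved in Proposition \ref{adjoint}.
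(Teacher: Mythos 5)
Your proof is correct, and it takes the route the paper implicitly intends: the corollary is stated without proof precisely because closability is a standard consequence of the integration by parts formula in Proposition \ref{adjoint} together with the density of $\Syl^1(\G)$ in $L^2(\G,\Q^\beta)$, and you have supplied exactly that argument, including the (usually suppressed) verification that the cylinder functions are dense.
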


In other words, $\Dom(D_\varphi)$ is the closure (or completion) of
$\Syl^1(\G)$ with respect to the norm
$$u\mapsto \left(\int [u^2+ (D_\varphi u)^2]\,
d\Q^\beta\right)^{1/2}.$$ Of course, the space $\Dom(D_\varphi)$
will depend on $\beta$ but we assume $\beta>0$ to be fixed for the
sequel.

\begin{remark}\label{operator}\rm The bilinear form
\begin{equation}
\E_\varphi(u,v):=\int D_\varphi u \cdot D_\varphi v\,
d\Q^\beta,\qquad \Dom(\E_\varphi):=\Dom(D_\varphi)
\end{equation}
is a Dirichlet form on $L^2(\G,\Q^\beta)$ with form core
$\Syl^\infty(\G)$. Its generator $(L_\varphi,\Dom(L_\varphi))$ is
the Friedrichs extension of the
 symmetric operator $$(-D^*_\varphi\circ D_\varphi, \ \Syl^2(\G)).$$
\end{remark}

\end{subsection}

\begin{subsection}{Derivatives and Integration by Parts Formula on  $\Pe(\eI)$}

Now let us have a look on flows on $\eI$. To do so, let a function
$\varphi\in\C^\infty(\eI,\R)$ with  $\varphi(0)=\varphi(1)=0$ be
given. (Note that each such function can be regarded as
$\varphi\in\C^\infty(S^1,\R)$ with $\varphi(0)=0$.) The flow
equation (\ref{flow}) now defines a flow $e_{t\varphi}$, $t\in\R$,
of order preserving $\C^\infty$ diffeomorphisms of $\eI$. In
particular, $e_{t\varphi}(0)=0$ and $e_{t\varphi}(1)=1$ for all
$t\in\R$.

\medskip

Lemma \ref{drift-lemma} together with  Theorem \ref{CoV-I}
immediately yields
\begin{lemma}\label{drift-lemma-i}
For $\varphi\in\C^\infty(\eI,\R)$ with $\varphi(0)=\varphi(1)=0$ and
each $\beta\ge0$
\begin{equation}\label{drift-ident-i}
\left.\frac{\partial}{\partial
t}Y^\beta_{e_{t\varphi},0}(g)\right|_{t=0} =
V^\beta_\varphi(g)-\frac{\varphi'(0)+\varphi'(1)}2=:
V^\beta_{\varphi,0}(g).
\end{equation}
\end{lemma}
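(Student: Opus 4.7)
The plan is to reduce the claim directly to Lemma \ref{drift-lemma} by unpacking the definition $Y^{\beta}_{h,0}(g) = X^{\beta}_h(g) \cdot Y_{h,0}(g)$ with $Y_{h,0}(g) = \frac{1}{\sqrt{h'(0) h'(1)}} Y^{0}_h(g)$, which gives the factorization
$$Y^{\beta}_{h,0}(g) \;=\; \frac{1}{\sqrt{h'(0)\cdot h'(1)}}\cdot Y^{\beta}_h(g).$$
Setting $h = e_{t\varphi}$ and noting that $e_0 = \mathrm{id}$, so $e_0'(0) = e_0'(1) = 1$ and $Y^{\beta}_{e_0}(g) = 1$, the product rule at $t = 0$ yields
$$\left.\frac{\partial}{\partial t}Y^{\beta}_{e_{t\varphi},0}(g)\right|_{t=0}
= \left.\frac{\partial}{\partial t}\frac{1}{\sqrt{(\partial_x e_{t\varphi})(0)\cdot(\partial_x e_{t\varphi})(1)}}\right|_{t=0}
+ \left.\frac{\partial}{\partial t}Y^{\beta}_{e_{t\varphi}}(g)\right|_{t=0}.$$

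For the second summand I would simply quote Lemma \ref{drift-lemma}(ii), which identifies it with $V^{\beta}_\varphi(g)$; the hypothesis $\varphi(0) = \varphi(1) = 0$ ensures that $\varphi$ extends to a smooth vector field on $S^1$ so that lemma applies verbatim (the flow $e_{t\varphi}$ computed on $\eI$ coincides with that on $S^1$ and fixes the endpoints). For the first summand, I would use the first-order expansion
$$\partial_x e_{t\varphi}(x) = 1 + t\,\varphi'(x) + O(t^2)$$
uniformly in $x \in \eI$ (already recorded after equation (\ref{linear})), so that
$$\frac{1}{\sqrt{(\partial_x e_{t\varphi})(0)\cdot(\partial_x e_{t\varphi})(1)}}
= 1 - \frac{t}{2}\bigl(\varphi'(0)+\varphi'(1)\bigr) + O(t^2),$$
whose derivative at $t=0$ is $-\tfrac{1}{2}(\varphi'(0)+\varphi'(1))$.

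Adding the two contributions gives exactly
$$V^{\beta}_\varphi(g) - \frac{\varphi'(0)+\varphi'(1)}{2} \;=\; V^{\beta}_{\varphi,0}(g),$$
as claimed. There is no real obstacle here beyond bookkeeping: once one recognises that $Y^{\beta}_{h,0}$ differs from $Y^{\beta}_h$ only by the deterministic, $g$-independent prefactor $[h'(0)h'(1)]^{-1/2}$, the statement is an immediate consequence of Lemma \ref{drift-lemma}(ii) combined with a one-line Taylor expansion of the prefactor. The only point requiring a word of care is interchanging differentiation with the infinite sum and integral hidden in $Y^{\beta}_{e_{t\varphi}}$, but that justification is already provided in the proof of Lemma \ref{drift-lemma}.
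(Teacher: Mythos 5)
Your proposal is correct and is exactly the reduction the paper intends: the paper's entire ``proof'' is the remark that Lemma \ref{drift-lemma} together with Theorem \ref{CoV-I} immediately yields the claim, and you have simply made that explicit by factoring $Y^{\beta}_{h,0}=[h'(0)h'(1)]^{-1/2}\,Y^{\beta}_{h}$, applying the product rule at $t=0$, quoting Lemma \ref{drift-lemma}(ii) for the second factor, and Taylor-expanding the prefactor. The only cosmetic caveat is that the periodic extension of $\varphi$ to $S^1$ need not be smooth at the glued point unless all derivatives match at $0$ and $1$; but the paper adopts the same identification, and the proof of Lemma \ref{drift-lemma} carries over verbatim to the interval since it only uses bounds and Taylor expansions of $\varphi$ and $e_{t\varphi}$ on $[0,1]$.
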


\medskip

For  functions
 $u:\G_0\to\R$ we will define the {\em directional derivative} along
 $\varphi\in \C^\infty(\eI,\R)$ with $\varphi(0)=\varphi(1)=0$
as before  by
\begin{equation}\label{dir-der-def-i}
D_{\varphi} u(g):=\lim_{t\to0}\frac1t\left[u(e_{t\varphi}\circ
g)-u(g)\right]
\end{equation}
provided this limit exists. We will consider three classes of
'cylinder functions' for which the existence of this limit is
guaranteed.

\begin{definition}
{\bf (i)}\
 We
say that a function $u:\G_0\to\R$ belongs to the class
$\Cyl^k(\G_0)$ (for $k\in\N\cup\{0,\infty\}$) if it can be written
as
\begin{equation}\label{cyl}
u(g)=U\left(\smint \vec f(t) g(t)dt \right)
\end{equation}
for some $m\in\N$, some $\vec f=(f_1,\ldots,f_m)$ with $f_i\in
L^2(\eI,\leb)$ and some $\C^k$-function $U: \R^m\to\R$. Here and in
the sequel, we write $\int \vec f(t)g(t) dt =\left( \int_0^1
f_1(t)g(t) dt, \ldots, \int_0^1 f_m(t)g(t) dt\right)$.

{\bf (ii)}\
 We
say that $u:\G_0\to\R$ belongs to the class $\Syl^k(\G_0)$ if it can
be written as
\begin{equation}\label{syl}
u(g)=U\left(g(x_1),\ldots, g(x_m)\right)
\end{equation}
for some $m\in\N$, some $x_1,\ldots,x_m\in \eI$ and some
$\C^k$-function $U: \R^m\to\R$.

{\bf (iii)}\
 We
say that $u:\G_0\to\R$ belongs to the class $\Zyl^k(\G_0)$ if it can
be written as
\begin{equation}\label{zyl}
u(g)= U\left(\smint \vec\alpha(g_s) ds  \right)
\end{equation}
 with $U$ as above, $\vec\alpha=(\alpha_1,\ldots,\alpha_m)\in \C^k(\eI,\R^m)$ and
$\int \vec\alpha(g_s) ds =\left( \int_0^1 \alpha_1(g_s) ds, \ldots,
\int_0^1 \alpha_m(g_s) ds \right)$.

\end{definition}

\begin{remark}\rm For each $\varphi\in \C^\infty(S^1,\R)$ with
$\varphi(0)=0$ (which can be regarded as
 $\varphi\in \C^\infty(\eI,\R)$ with $\varphi(0)=\varphi(1)=0$),
  the definitions of $D_{\varphi}$ in (\ref{dir-der-def}) and (\ref{dir-der-def-i}) are
consistent in the following sense. Each cylinder function $u\in
\Syl^1(\G_0)$ defines by $v(g):=u(g-g_0)$  ($\forall g\in\G$) a
cylinder function $v\in \Syl^1(\G)$ with $D_{\varphi} v=D_{\varphi}
u$ on $\G_0$. Conversely, each cylinder function $v\in \Syl^1(\G)$
defines by $u(g):=v(g)$ ($\forall g\in\G_0$) a cylinder function
$u\in \Syl^1(\G_0)$ with $D_{\varphi} v=D_{\varphi} u$ on $\G_0$.
\end{remark}

\begin{lemma}\label{Lie-deriv-i} 
{\bf (i)} \ The directional derivative $D_{\varphi} u(g)$ exists for
all $u\in\Cyl^1(\G_0)\cup \Syl^1(\G_0)\cup \Zyl^1(\G_0)$ (in each
point $g\in\G_0$ and in each direction $\varphi\in
\C^\infty(\eI,\R)$ with $\varphi(0)=\varphi(1)=0$) and $D_{\varphi}
u(g)=\lim_{t\to0}\frac1t\left[u(g+t\cdot\varphi\circ
g)-u(g)\right]$. Moreover,
\begin{eqnarray*}
D_{\varphi} u(g) &=&\sum_{i=1}^m {\partial_i}U\left(\smint \vec
f(t)g(t)dt\right) \cdot \smint f_i(t)\varphi(g(t))dt
\end{eqnarray*}
 for each $u\in\Cyl^1(\G_0)$
 as in (\ref{cyl}),
\begin{eqnarray*}
D_{\varphi} u(g) &=&\sum_{i=1}^m
{\partial_i}U(g(x_1),\ldots,g(x_m))\cdot \varphi(g(x_i))
\end{eqnarray*}
 for each $u\in\Syl^1(\G_0)$
 as in (\ref{syl}), and
 \begin{eqnarray*}
D_{\varphi} u(g) &=&\sum_{i=1}^m {\partial_i}U\left(\smint
\vec\alpha(g_s) ds  \right) \cdot\smint \alpha_i'(g_s)
\varphi(g_s)ds
\end{eqnarray*}
 for each $u\in\Zyl^1(\G_0)$
 as in (\ref{zyl}).

 {\bf(ii)} \ \label{adjoint-i}
For $\varphi\in \C^\infty(\eI,\R)$ with $\varphi(0)=\varphi(1)=0$
let $D^*_{\varphi,0}$ denote the
 operator  in $L^2(\G_0,\Q_0^\beta)$ adjoint to $D_{\varphi}$.
Then for all $u\in\Cyl^1(\G_0)\cup \Syl^1(\G_0)\cup \Zyl^1(\G_0)$
\begin{equation}D^*_{\varphi,0} u=-D_{\varphi} u -  V^\beta_{\varphi,0}\cdot u.\end{equation}
\end{lemma}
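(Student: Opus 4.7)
The plan is to prove (i) by direct computation class by class, then derive (ii) by mimicking the sphere-case proof of Proposition \ref{adjoint}, with Theorem \ref{CoV-I} and Lemma \ref{drift-lemma-i} in place of Theorem \ref{CoV} and Lemma \ref{drift-lemma}. Throughout, the linearization (\ref{linear}) gives $e_{t\varphi}(x)=x+t\varphi(x)+O(t^2)$ uniformly in $x\in\eI$, and the assumption $\varphi(0)=\varphi(1)=0$ ensures $e_{t\varphi}$ is a flow on $\eI$ so that $e_{t\varphi}\circ g\in\G_0$ for $g\in\G_0$.

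For part (i), I would first note that the uniform bound $|e_{t\varphi}(x)-(x+t\varphi(x))|\le C t^2$ lets us replace $e_{t\varphi}\circ g$ by $g+t\,\varphi\circ g$ in the difference quotient with an error $O(t)$ that vanishes after dividing by $t$, so the two limits in the statement coincide. Then I compute each class separately. For $u\in\Syl^1(\G_0)$ the argument is identical to Lemma \ref{Lie-deriv}. For $u\in\Cyl^1(\G_0)$ the substitution $\int f_i(t)\,e_{s\varphi}(g(t))\,dt=\int f_i(t)g(t)\,dt+s\int f_i(t)\varphi(g(t))\,dt+O(s^2)$ holds uniformly in $g$ (since $g$ is bounded and $f_i\in L^2$), and the chain rule on $U\in\C^1$ gives the stated formula. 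For $u\in\Zyl^1(\G_0)$ a first-order Taylor expansion of each $\alpha_i\in\C^1$ yields
\[
\int_0^1\alpha_i(e_{s\varphi}(g_t))\,dt=\int_0^1\alpha_i(g_t)\,dt+s\int_0^1\alpha_i'(g_t)\varphi(g_t)\,dt+O(s^2),
\]
again with error uniform in $g$ because $\alpha_i',\varphi$ are bounded; the chain rule concludes.

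For part (ii), let $u,v$ lie in the union of the three cylinder classes. Substituting $g\mapsto e_{-t\varphi}\circ g$ and applying Theorem \ref{CoV-I} gives
\begin{eqnarray*}
\int D_\varphi u\cdot v\,d\Q^\beta_0 &=& \lim_{t\to0}\tfrac1t\!\int\!\bigl[u(e_{t\varphi}\circ g)-u(g)\bigr]v(g)\,d\Q_0^\beta(g)\\
&=& \lim_{t\to0}\tfrac1t\!\int\! u(g)\bigl[v(e_{-t\varphi}\circ g)\,Y^\beta_{e_{-t\varphi},0}(g)-v(g)\bigr]\,d\Q_0^\beta(g).
\end{eqnarray*}
Split the bracket as in Proposition \ref{adjoint} into three pieces: $v(e_{-t\varphi}\circ g)-v(g)$, $v(g)\bigl[Y^\beta_{e_{-t\varphi},0}(g)-1\bigr]$, and the mixed remainder. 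The first piece converges in $L^1$ to $-D_\varphi v$ by the result of part (i); the second converges to $v\cdot V^\beta_{\varphi,0}$ by Lemma \ref{drift-lemma-i}; the mixed term is $O(t)$ because $Y^\beta_{e_{-t\varphi},0}=1+O(t)$. Assembling the pieces yields
\[
\int D_\varphi u\cdot v\,d\Q^\beta_0=-\int u\,D_\varphi v\,d\Q^\beta_0-\int u\,v\,V^\beta_{\varphi,0}\,d\Q^\beta_0,
\]
which is precisely $D^*_{\varphi,0}u=-D_\varphi u-V^\beta_{\varphi,0}\cdot u$.

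The main obstacle is the justification of dominated convergence in the three-way split above; this rests on the two-sided bound $C^{-1}\le Y^\beta_{h,0}\le C$ (from statement (iii) of the lemma in Section 4.3) applied to $h=e_{-t\varphi}$ for $|t|\le 1$, together with the uniform Lipschitz estimate $|\log Y^\beta_{e_{-t\varphi},0}|\le C|t|$, both of which hold uniformly in $g\in\G_0$. A secondary technical point is verifying that the difference quotients of $v\in\Zyl^1(\G_0)$ are uniformly bounded; this follows because $\|\varphi\|_\infty<\infty$ makes $|v(e_{-t\varphi}\circ g)-v(g)|\le C|t|$ with constant independent of $g$ through the chain rule on $U\in\C^1$ applied to the bounded range of $\vec\alpha$.
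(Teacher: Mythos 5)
Your proof is correct and follows exactly the route the paper intends: the paper's own proof of this lemma is a one-line pointer to Lemma \ref{Lie-deriv} and Proposition \ref{adjoint}, and you have simply carried out the interval-case analogue of those arguments, substituting Theorem \ref{CoV-I} for Theorem \ref{CoV}, $Y^\beta_{h,0}$ for $Y^\beta_h$, and Lemma \ref{drift-lemma-i} for Lemma \ref{drift-lemma}. A tiny cosmetic point: for $\alpha_i\in\C^1$ the Taylor error in the $\Zyl^1$ case is a uniform $o(s)$ rather than the $O(s^2)$ you wrote, but this is immaterial since $o(s)$ suffices after dividing by $s$.
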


\begin{proof} See the proof of the analogous results in Lemma
\ref{Lie-deriv} and Proposition \ref{adjoint}.
\end{proof}

\begin{remark}\rm
The operators $(D_{\varphi}, \Cyl^1(\G_0))$, $(D_{\varphi},
\Syl^1(\G_0))$, and
 $(D_{\varphi}, \Zyl^1(\G_0))$ are closable in $L^2(\Q_0^\beta)$.
The  closures of $(D_{\varphi}, \Cyl^1(\G_0))$, $(D_{\varphi}, \Zyl^1(\G_0))$ and $(D_{\varphi},
\Syl^1(\G_0))$ coincide. They will be denoted by
$(D_{\varphi},\Dom(D_{\varphi}))$.  See (proof of) Corollary \ref{dir-form-G0}.
\end{remark}

\end{subsection}

\end{section}

\begin{section}{Dirichlet Form and Stochastic Dynamics on on $\G$}

At each point $g\in\G$, the directional derivative $D_\varphi u(g)$
of any 'nice' function $u$ on $\G$ defines a linear form
$\varphi\mapsto D_\varphi u(g)$ on $\C^\infty(S^1)$. If we specify a
pre-Hilbert norm $\|.\|_g$ on $\C^\infty(S^1)$ for which this linear
form is continuous then there exists a unique element $Du(g)\in
T_g\G$ with $D_\varphi u(g)=\langle Du(g),\varphi\rangle_g$ for all
$\varphi\in\C^\infty(S^1)$. Here $T_g\G$ denotes the completion of
$\C^\infty(S^1)$ w.r.t. the norm $\|.\|_g$.

The canonical choice of a Dirichlet form on $\G$ will then be (the
closure of)
\begin{equation}\label{dir-g}\E(u,v)=
\int_\G \langle Du(g), Dv(g)\rangle_g\, d\Q^\beta(g), \qquad
u,v\in\Syl^1(\G).
\end{equation}
Given such a Dirichlet form, there is a straightforward procedure to
construct an operator ('generalized Laplacian') and a Markov process
('generalized Brownian motion'). Different choices of  $\|.\|_g$ in
general will lead to completely different Dirichlet forms, operators
and Markov processes.

We will discuss in detail two choices: in this chapter we will
choose
 $\|.\|_g$ (independent of $g$) to be the Sobolev norm
 $\|.\|_{H^s}$ for some $s>1/2$;
 in the remaining chapters,  $\|.\|_g$ will always be the $L^2$-norm
 $\varphi\mapsto(\int_{S^1} \varphi(g_t)^2dt)^{1/2}$ of
 $L^2(S^1,g_*\leb)$.

 \medskip

For the sequel,  fix -- once for ever  -- the number $\beta>0$ and
drop it from the notations, i.e. $\Q:=\Q^\beta$,
$V_\varphi:=V_\varphi^\beta$ etc.

\begin{subsection}{The Dirichlet Form on $\G$}

Let $(\psi_k)_{k\in\N}$ denote the standard Fourier basis of
$L^2(S^1)$. That is,
$$\psi_{2k}(x)=\sqrt2\cdot \sin(2\pi kx),\quad \psi_{2k+1}(x)=\sqrt2 \cdot\cos(2\pi kx)$$
for $k=1,2,\ldots$ and $\psi_1(x)=1$. It constitutes a complete
orthonormal system in $L^2(S^1)$: each $\varphi\in L^2(S^1)$ can
uniquely be written as $\varphi(x)=\sum_{k=1}^\infty c_k
\cdot\psi_k(x)$ with Fourier coefficients of $\varphi$ given by
$c_k:=\int_{S^1} \varphi(y) \psi_k(y)dy$. In terms of these Fourier
coefficients we define for each $s\ge0$ the norm
\begin{equation}\label{sobolev-norm}
\|\varphi\|_{H^s}:=\left(c_1^2+\sum_{k=1}^\infty
 k^{2s}\cdot(c_{2k}^2+c_{2k+1}^2)\right)^{1/2}\end{equation}
on $\C^\infty(S^1)$. The Sobolev space $H^s(S^1)$ is the completion
of $\C^\infty(S^1)$ with respect to the norm
 $\|.\|_{H^s}$.
It  has a complete orthonormal system consisting of smooth functions
$(\varphi_k)_{k\in\N}$. For instance, one may choose
\begin{equation}\label{fourier-s}
\varphi_{2k}(x)=\sqrt2\cdot k^{-s}\cdot \sin(2\pi kx),\quad
\varphi_{2k+1}(x)=\sqrt2 \cdot k^{-s}\cdot\cos(2\pi
kx)\end{equation} for $k=1,2,\ldots$ and $\varphi_1(x)=1$.

A linear form $A:\C^\infty(S^1)\to\R$ is {\em continuous} w.r.t.
$\|.\|_{H^s}$ --- and thus can be represented as $A(\varphi)=\langle
\psi,\varphi\rangle_{H^s}$ for some $\psi\in H^s(S^1)$  with
$\|\psi\|_{H^s}=\|A\|_{H^s}$ --- if and only if
\begin{equation}\|A\|_{H^s}:=\left(|A(\psi_1)|^2+\sum_{k=1}^\infty
 k^{2s}\cdot(|A(\psi_{2k})|^2+|A(\psi_{2k+1})|^2)\right)^{1/2}<\infty.\end{equation}

\begin{proposition} Fix a number $s>1/2$. Then for each cylinder function $u\in\Syl(\G)$
 and each $g\in\G$, the directional derivative
defines a continuous linear form $\varphi\mapsto D_\varphi u(g)$ on
$\C^\infty(S^1)\subset H^s(S^1)$. There exists a unique tangent
vector $Du(g)\in H^s(S^1)$ such that
 $D_\varphi u(g)=\langle Du(g),\varphi\rangle_{H^s}$ for all
$\varphi\in\C^\infty(S^1)$.

In terms of the family  $\Phi=(\varphi_k)_{k\in \N}$ from
(\ref{fourier-s})
$$Du(g)=\sum_{k=1}^\infty D_{\varphi_k}u(g) \cdot \varphi_k(.)$$ and
\begin{equation}\label{def-dd}
\|Du(g)\|_{H^s}^2=\sum_{k=1}^\infty |D_{\varphi_k}u(g)|^2.
\end{equation}
\end{proposition}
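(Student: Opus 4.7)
The plan is to establish the proposition in two moves: first verify that each directional derivative functional $\varphi \mapsto D_\varphi u(g)$ really does extend continuously from $\C^\infty(S^1)$ to $H^s(S^1)$, and then read off the claimed Parseval-type formulas from the abstract Riesz representation theorem applied to the orthonormal system $(\varphi_k)$. The choice $s>1/2$ will enter exactly once, through a Sobolev-embedding computation.

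First I would write $u\in\Syl^1(\G)$ in the form $u(g)=U(g(x_1),\ldots,g(x_m))$ and recall from Lemma \ref{Lie-deriv} that
\[
D_\varphi u(g)=\sum_{i=1}^m \partial_i U(g(x_1),\ldots,g(x_m))\cdot \varphi(g(x_i)).
\]
Thus $\varphi\mapsto D_\varphi u(g)$ is a finite linear combination, with coefficients depending on $g$ alone, of the point-evaluation functionals $\delta_{y_i}:\varphi\mapsto \varphi(y_i)$, where $y_i:=g(x_i)\in S^1$. So the whole continuity question reduces to showing each $\delta_y$ is bounded on $H^s(S^1)$.

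To prove the latter, expand $\varphi=c_1\psi_1+\sum_{k\ge 1}(c_{2k}\psi_{2k}+c_{2k+1}\psi_{2k+1})$ with $|\psi_{2k}(y)|,|\psi_{2k+1}(y)|\le\sqrt 2$ and estimate by Cauchy--Schwarz,
\[
|\varphi(y)|^2\le \Bigl(1+2\sum_{k\ge1}k^{-2s}\Bigr)\cdot\Bigl(c_1^2+\sum_{k\ge1}k^{2s}(c_{2k}^2+c_{2k+1}^2)\Bigr)=C_s\,\|\varphi\|_{H^s}^2,
\]
with $C_s<\infty$ precisely because $s>1/2$. Hence $\|\delta_y\|_{H^s^*}\le\sqrt{C_s}$ uniformly in $y$, so $D_\varphi u(g)$ is bounded in $\varphi$ by a multiple of $\|\varphi\|_{H^s}$. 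By the Riesz representation theorem there exists a unique $Du(g)\in H^s(S^1)$ with $D_\varphi u(g)=\langle Du(g),\varphi\rangle_{H^s}$ for all $\varphi\in\C^\infty(S^1)$ (and by density, for all $\varphi\in H^s$).

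Finally, I would observe that the system $\Phi=(\varphi_k)_{k\in\N}$ defined in (\ref{fourier-s}) is indeed a complete orthonormal basis of $H^s(S^1)$: directly from the definition of $\|\cdot\|_{H^s}$ in (\ref{sobolev-norm}) one checks $\|\varphi_1\|_{H^s}=1$ and $\|\varphi_{2k}\|_{H^s}=\|\varphi_{2k+1}\|_{H^s}=1$, and orthogonality carries over from the Fourier basis. Applying the abstract orthonormal-expansion and Parseval identities in the Hilbert space $H^s$ to the vector $Du(g)$, and using $\langle Du(g),\varphi_k\rangle_{H^s}=D_{\varphi_k}u(g)$, yields $Du(g)=\sum_k D_{\varphi_k}u(g)\cdot\varphi_k$ and $\|Du(g)\|_{H^s}^2=\sum_k|D_{\varphi_k}u(g)|^2$ as desired. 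The only mild obstacle is the Sobolev embedding step above — once the $s>1/2$ tail estimate $\sum k^{-2s}<\infty$ is in hand, the rest is formal Hilbert-space bookkeeping.
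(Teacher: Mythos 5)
Your proposal is correct and follows essentially the same route as the paper: the paper also reduces to showing $\sum_k|D_{\varphi_k}u(g)|^2<\infty$ using Lemma \ref{Lie-deriv} and the bound $\|\sum_k\varphi_k^2\|_\infty = 1+4\sum_k k^{-2s}<\infty$ for $s>1/2$, then reads off the representer and Parseval identity from Hilbert-space generalities. Your reformulation via boundedness of point evaluations $\delta_y$ is exactly the same estimate in dual guise, since $\|\delta_y\|^2_{H^s{}^*}=\sum_k\varphi_k(y)^2$.
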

\begin{proof}
It remains to prove that the RHS of (\ref{def-dd}) is finite for
each $u$ and $g$ under consideration. According to Lemma
\ref{Lie-deriv}, for any $u\in\Syl(\G)$ represented as in
(\ref{syl})
\begin{eqnarray*}\sum_{k=1}^\infty  |D_{\varphi_k}u(g)|^2
&=& \sum_{k=1}^\infty \left( \sum_{i=1}^m
{\partial_i}U(g(x_1),\ldots,g(x_m))\cdot
\varphi_k(g(x_i))\right)^2\\
&\le& m\cdot \|\nabla
U\|_\infty^2\cdot\|\sum_{k=1}^\infty\varphi_k^2\|_{\infty}\ =\
m\cdot \|\nabla U\|_\infty^2\cdot(1+4\sum_{k=1}^\infty k^{-2s}).
\end{eqnarray*}
And, indeed, the latter is finite for each $s>1/2$.
\end{proof}

For the sequel, let us now fix a number $s>1/2$ and define
\begin{equation}\label{e-d}\E(u,v)=
 \int_\G \langle Du(g), Dv(g)\rangle_{H^s}\, d\Q(g)
\end{equation}
 for
$u,v\in\Syl^1(\G)$. Equivalently, in terms  of the family
$\Phi=(\varphi_k)_{k\in \N}$ from (\ref{fourier-s})
\begin{equation}\label{sum-dir}
\E(u,v)=\sum_{k=1}^\infty\int_\G D_{\varphi_k}u(g)\cdot
D_{\varphi_k}v(g)\, d\Q(g).
\end{equation}

\begin{theorem}\label{dir-form}
{\bf (i)} \ $(\E,\Syl^1(\G))$ is closable. Its closure $(\E,
\Dom(\E))$ is a regular Dirichlet form on $L^2(\G,\Q)$ which is
strongly local and recurrent (hence, in particular, conservative).

{\bf (ii)} \ For $u\in\Syl^1(\G)$ with representation
(\ref{syl-def})
\begin{eqnarray*}\E(u,u)&=&
\sum_{k=1}^\infty \int_\G \left( \sum_{i=1}^m
{\partial_i}U(g(x_1),\ldots,g(x_m))\cdot
\varphi_k(g(x_i))\right)^2\, d\Q(g).
\end{eqnarray*}
The generator of the Dirichlet form is the Friedrichs  extension of
the operator $L$ given on
 $\Syl^2(\G)$ by
\begin{eqnarray*}
L u(g)&=&
\sum_{i,j=1}^m\sum_{k=1}^\infty{\partial_i\partial_j}U\left(g(x_1),\ldots,g(x_m)\right)
  \varphi_k(g(x_i))\varphi_k(g(x_j))\\
  &&+
\sum_{i=1}^m\sum_{k=1}^\infty{\partial_i}U\left(g(x_1),\ldots,g(x_m)\right)
[\varphi'_k(g(x_i))+V_{\varphi_k}(g)]\varphi_k(g(x_i)).
\end{eqnarray*}

{\bf (iii)} \
 $\Zyl^1(\G)$ is a core for $\Dom(\E)$ (i.e. it is contained in the latter as a dense subset). For
$u\in\Zyl^1(\G)$ with representation (\ref{zyl})
\begin{eqnarray*}\E(u,u)&=&
\sum_{k=1}^\infty \int_\G \left( \sum_{i=1}^m {\partial_i}U(\smint
\vec\alpha(g_t)dt)\cdot \smint
\alpha_i'(g_t)\varphi_k(g_t)dt\right)^2\, d\Q(g).
\end{eqnarray*}
The generator of the Dirichlet form is the Friedrichs extension of
the operator $L$ given on
 $\Zyl^2(\G)$ by
\begin{eqnarray*}
L u(g)&=&
\sum_{i,j=1}^m\sum_{k=1}^\infty{\partial_i\partial_j}U\left(\smint
\vec \alpha(g_t)dt\right) \cdot \smint
\alpha_i'(g_t)\varphi_k(g_t)dt\cdot \smint
\alpha_j'(g_t)\varphi_k(g_t)dt
\\
  &&+
\sum_{i=1}^m\sum_{k=1}^\infty{\partial_i}U\left(\smint
\vec \alpha(g_t)dt\right)
\{ V_{\varphi_k}(g)+\smint [\alpha_i''(g_t)\varphi_k^2(g_t) + \alpha_i'(g_t)\varphi_k'(g_t)\varphi_k(g_t)]dt\}.
\end{eqnarray*}

{\bf (iv)} \ The intrinsic metric $\rho$ can be estimated from below
in terms of the $L^2$-metric:
$$\rho(g,h)\ge \frac1{\sqrt{C}} \|g-h\|_{L^2}.$$
\end{theorem}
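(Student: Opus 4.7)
The plan is to invoke the variational characterization of the intrinsic metric of a strongly local, regular Dirichlet form,
$$
\rho(g_0,h_0)\;=\;\sup\bigl\{u(g_0)-u(h_0)\;:\;u\in\Dom(\E),\ \Gamma(u,u)\le 1\ \Q\text{-a.e.}\bigr\},
$$
with square field $\Gamma(u,u)(g)=\|Du(g)\|^2_{H^s}=\sum_{k}|D_{\varphi_k}u(g)|^2$, always interpreting $u$ by its quasi-continuous representative. For any fixed pair $g_0,h_0\in\G$, I shall construct cylinder test functions $u_N\in\Syl^\infty(\G)\subset\Dom(\E)$ whose separation $u_N(g_0)-u_N(h_0)$ and the uniform bound on $\Gamma(u_N,u_N)$ will both be of order $\|g_0-h_0\|_{L^2}^2$; normalising and taking $N\to\infty$ will then yield the claim with constant $C:=1+2\sum_{k\ge 1}k^{-2s}$, finite precisely because $s>1/2$.

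The decisive input is the uniform pointwise estimate
$$
\sum_{k=1}^\infty\varphi_k(y)^2\;=\;1+2\sum_{k=1}^\infty k^{-2s}\;=\;C,\qquad\forall y\in S^1,
$$
which is immediate from the definition (\ref{fourier-s}) of the basis $(\varphi_k)$. For $N\in\N$ I set $x_i:=i/N$ and $\delta_i:=|g_0(x_i)-h_0(x_i)|$ (the $S^1$-distance). I pick $F_i\in\C^\infty(S^1,\R)$ satisfying $F_i(h_0(x_i))=0$, $F_i(g_0(x_i))=\delta_i^2$ and $\|F_i'\|_\infty\le\delta_i$, obtained via a smooth cut-off of $\delta_i$ times the signed arclength along the minimising geodesic joining $h_0(x_i)$ and $g_0(x_i)$ (this fits inside a single tubular neighbourhood since $\delta_i\le 1/2$). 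With
$$
u_N(g)\;:=\;\frac{1}{N}\sum_{i=1}^N F_i(g(x_i))\;\in\;\Syl^\infty(\G),
$$
Lemma~\ref{Lie-deriv} gives $D_\varphi u_N(g)=\frac{1}{N}\sum_i F_i'(g(x_i))\,\varphi(g(x_i))$, and applying Cauchy--Schwarz in the $i$-sum followed by the pointwise bound on $\sum_k\varphi_k^2$ yields
$$
\Gamma(u_N,u_N)(g)\;\le\;\frac{1}{N^2}\Bigl(\sum_i\|F_i'\|_\infty^2\Bigr)\sum_i\sum_k\varphi_k(g(x_i))^2\;\le\;\frac{C}{N}\sum_{i=1}^N\delta_i^2,\qquad\forall g\in\G,
$$
while by design $u_N(g_0)-u_N(h_0)=\frac{1}{N}\sum_i\delta_i^2$.

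Since $\frac{1}{N}\sum_i\delta_i^2$ is a Riemann sum converging to $\|g_0-h_0\|_{L^2}^2$, normalising $u_N$ by $\sqrt{\sup_g\Gamma(u_N,u_N)}$ and letting $N\to\infty$ will give
$$
\rho(g_0,h_0)\;\ge\;\frac{u_N(g_0)-u_N(h_0)}{\sqrt{\sup_g\Gamma(u_N,u_N)(g)}}\;\ge\;\sqrt{\frac{1}{CN}\sum_{i=1}^N\delta_i^2}\;\xrightarrow[N\to\infty]{}\;\frac{1}{\sqrt{C}}\,\|g_0-h_0\|_{L^2}.
$$
The main technical point I expect is that the pointwise evaluations $g\mapsto g(x_i)$ entering $u_N$ are not $L^2$-continuous on $\G$, so one must verify that $u_N$ admits a quasi-continuous representative legitimate in the variational formula. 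This rests on the $\Q$-a.s.\ continuity of $g\mapsto g(x_i)$ at any fixed $x_i$ (the sphere analogue of the support property established in Section~3.5); if desired, one may instead approximate each $u_N$ in energy norm by smoothed cylinder functions, using the $\Zyl^\infty$-core density from Theorem~\ref{dir-form}(iii), which preserves both estimates up to an arbitrarily small loss.
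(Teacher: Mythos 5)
Your proposal addresses only part (iv) of the theorem. Parts (i)--(iii) --- the closability of $(\E,\Syl^1(\G))$, the Markov property, regularity, strong locality and recurrence of the closure, the identification of the generator as the Friedrichs extension of $L$ on $\Syl^2(\G)$ and on $\Zyl^2(\G)$, and the fact that $\Zyl^1(\G)$ is a core --- are nowhere argued. Worse, your argument for (iv) presupposes them: the variational characterization
$\rho(g_0,h_0)=\sup\{u(g_0)-u(h_0):\ u\in\Dom(\E),\ \Gamma(u,u)\le1\}$
is only available once one knows $(\E,\Dom(\E))$ is a strongly local regular Dirichlet form (part (i)), and your proposed repair of the continuity issue explicitly invokes the core density statement of part (iii). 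So as a proof of the theorem as stated, the proposal has a genuine and large gap; as a proof of (iv) alone it is circular only in the harmless sense that (iv) logically comes after (i) anyway. In the paper, parts (i)--(iii) occupy almost all of the work: closability comes from summing the closable forms $\E_{\varphi_k}$ (which rests on the integration by parts formula of Proposition \ref{adjoint}), regularity from a separate lemma producing continuous finite-energy functions of the form $u(g)=U(\|g-f_1\|_{L^2},\dots)$ and dense mollified cylinder functions $u(g)=U(g_{\epsilon_1}(x_1),\dots)$, locality from a truncation argument, and the core/generator statements from explicit approximations $u_n(g)=\frac1n\sum_i\alpha(g_{i/n})$.

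Concerning part (iv) itself, your argument is essentially sound but takes a different and more roundabout route than the paper. The paper simply takes $u(g)=\|g-h_0\|_{L^2}$, which is continuous, shown to lie in $\Dom(\E)$ by the preceding lemma, and satisfies $\|Du(g)\|^2\le\|\sum_k\varphi_k^2\|_\infty=C$ pointwise; this immediately gives $\rho(g_0,h_0)\ge C^{-1/2}\|g_0-h_0\|_{L^2}$ with no discretization, no Riemann sums, and no quasi-continuity issue. Your construction via the functions $F_i$ works (the Cauchy--Schwarz estimate and the Riemann-sum convergence $\frac1N\sum_i\delta_i^2\to\|g_0-h_0\|_{L^2}^2$ are correct, the latter because $g_0-h_0$ has at most countably many discontinuities), modulo the minor point that a smooth periodic $F_i$ with $F_i(g_0(x_i))-F_i(h_0(x_i))=\delta_i^2$ forces $\|F_i'\|_\infty\ge\delta_i$ with equality only in the limit, so one should allow $\|F_i'\|_\infty\le(1+\epsilon)\delta_i$; this costs only a factor $(1+\epsilon)$ in $C$. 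But the continuity obstruction you flag at the end is real and is exactly why the paper works with the $L^2$-distance functions and the mollified cylinder functions instead of raw evaluations $g\mapsto g(x_i)$; if you adopt your own suggested fix (replacing $g(x_i)$ by $g_\epsilon(x_i)$), you are in effect reconstructing the paper's lemma, and you should carry out those estimates rather than defer them.
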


\begin{remark}\rm
All assertions of the above Theorem remain valid for any $\E$
defined as in (\ref{sum-dir}) with any choice of a sequence
$\Phi=(\varphi_k)_{k\in \N}$  of smooth functions on $S^1$ with
 \begin{equation}\label{summable}
 C:=\|\sum_{k=1}^\infty
\varphi_k^2\|_{\infty}<\infty.\end{equation} (This condition is
satisfied for the sequence from (\ref{fourier-s}) if and only if
$s>1/2$.)
\end{remark}

The proof of the Theorem will make use of the following

\begin{lemma}
{\bf (i)} \ $\Dom(\E)$ contains all functions $u$ which can be
represented as
\begin{equation}\label{u-dist}
u(g)=U(\|g-f_1\|_{L^2},\ldots, \|g-f_m\|_{L^2})
\end{equation}
with some $m\in\N$, some $f_1,\ldots, f_m\in\G$ and some
$U\in\C^1(\R^m,\R)$.

For each $u$ as above, each $\varphi\in\C^\infty(S^1)$ and
$\Q$-a.e. $g\in\G$
$$D_\varphi u(g)= \sum_{i=1}^m
{\partial_i}U(\|g-f_1\|_{L^2},\ldots, \|g-f_m\|_{L^2})
\cdot
\int_{S^1}\mbox{\rm sign}(g(t)-f_i(t))\frac{|g(t)-f_i(t)|}{\|g-f_i\|_{L^2}}\varphi(g(t))dt
$$
where $\mbox{\rm sign}(z):=+1$ for $z\in S^1$ with $|[0,z]|\le 1/2$ and
$\mbox{\rm sign}(z):=-1$ for $z\in S^1$ with $|[z,0]|< 1/2$.
{\bf (ii)} \ Moreover, $\Dom(\E)$ contains all functions $u$ which
can be represented as
\begin{equation}\label{u-moll}
u(g)=U(g_{\epsilon_1}(x_1),\ldots, g_{\epsilon_m}(x_m))
\end{equation}
with some $m\in\N$, some  $x_1,\ldots, x_m\in S^1$, some
$\epsilon_1,\ldots,\epsilon_m\in\,]0,1[$ and some
$U\in\C^1((S^1)^m,\R)$.

Here $g_\epsilon(x):=\int_x^{x+\epsilon}g(t)dt\in S^1$ for $x\in
S^1$ and $0<\epsilon<1$. More precisely,
$$g_\epsilon(x):=\pi(\int_x^{x+\epsilon}\pi^{-1}g(t)dt)$$
where $\pi: \G(\R)\to\G$ (cf. section 2.2) denotes the projection
and $\pi^{-1}: \G\to\G(\R)$ the canonical lift with $\pi^{-1}(g)(t)\in [g(x), g(x)+1]\subset\R$
for $t\in[x,x+1]\subset\R$.

For each $u$ as above, each $\varphi\in\C^\infty(S^1)$ and each $g\in\G$
$$D_\varphi u(g)=
\sum_{i=1}^m
{\partial_i}U(g_{\epsilon_1}(x_1),\ldots,g_{\epsilon_m}(x_m))\cdot
\frac1{\epsilon_i}\int_{x_i}^{x_i+\epsilon_i}\varphi(g(t))dt.$$

{\bf (iii)} \ The set  of all $u$ of the form (\ref{u-moll}) is dense in $\Dom(\E)$.
\end{lemma}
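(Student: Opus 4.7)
The common strategy is to exhibit, for each target function, a sequence of cylinder functions $u_N\in\Syl^1(\G)\subset\Dom(\E)$ converging in $L^2(\G,\Q)$ to it, whose directional derivatives $D_{\varphi_k}u_N$ also converge in $L^2(\Q)$ to the claimed formula with a uniform $\ell^2(\N)$-envelope in $k$. The workhorse throughout is the bound $\sum_{k=1}^\infty\varphi_k(y)^2\le C$ from (\ref{summable}), which provides the dominating function needed to commute limits with the series in (\ref{sum-dir}) and pass from $L^2(\Q)$ convergence of the approximants to convergence in the form norm.

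For part (i), approximate $\|g-f_i\|_{L^2}^2=\int_{S^1}|g(t)-f_i(t)|^2\,dt$ by its Riemann sum $R_i^N(g):=\tfrac{1}{N}\sum_{j=0}^{N-1}|g(j/N)-f_i(j/N)|^2$, and set $u_N(g):=U\bigl(\sqrt{R_1^N(g)},\ldots,\sqrt{R_m^N(g)}\bigr)\in\Syl^1(\G)$. Direct application of Lemma \ref{Lie-deriv}, together with the piecewise-smooth identity $\tfrac{d}{dz}|z|^2=2\,\mbox{\rm sign}(z)|z|$ on $S^1\setminus\{0,1/2\}$, yields
\begin{equation*}
D_{\varphi_k}u_N(g)=\sum_{i=1}^m\partial_iU(\cdot)\cdot\frac{1}{\sqrt{R_i^N(g)}}\cdot\frac{1}{N}\sum_{j=0}^{N-1}\mbox{\rm sign}\bigl(g(j/N)-f_i(j/N)\bigr)\,|g(j/N)-f_i(j/N)|\,\varphi_k(g(j/N)).
\end{equation*}
As $N\to\infty$ these Riemann sums converge $\Q$-a.s.\ (the exceptional sets $\{g=f_i\}$ and $\{g(t)-f_i(t)=1/2\}$ being $\Q$-negligible) to the integral formula claimed in the lemma, and the uniform bound $\sum_k|D_{\varphi_k}u_N|^2\le m\|\nabla U\|_\infty^2\,C$ combined with dominated convergence on $\G\times\N$ with measure $\Q\otimes\mbox{counting}$ delivers $\E(u_N-u,u_N-u)\to 0$, hence $u\in\Dom(\E)$ with the asserted derivative.

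Part (ii) follows the identical recipe: approximate $g_{\epsilon_i}(x_i)$ by its Riemann sum along $\{x_i+j\epsilon_i/N\}_{j=0}^{N-1}$, obtaining $u_N\in\Syl^1(\G)$; compute $D_{\varphi_k}u_N$ via Lemma \ref{Lie-deriv} and the chain rule applied to the linear functional $g\mapsto g_{\epsilon_i}(x_i)$; and pass to the limit using the same summability envelope to recover $\tfrac{1}{\epsilon_i}\int_{x_i}^{x_i+\epsilon_i}\varphi_k(g(t))\,dt$ in each summand.

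For part (iii), since $\Syl^1(\G)$ is a core for $\Dom(\E)$ by construction, it suffices to approximate each $u(g)=U(g(x_1),\ldots,g(x_m))\in\Syl^1(\G)$ by functions of the form (\ref{u-moll}). I would define $u_\epsilon(g):=U\bigl(\tfrac{1}{\epsilon}g_\epsilon(x_1),\ldots,\tfrac{1}{\epsilon}g_\epsilon(x_m)\bigr)$, which lies in the mollified class; by the support property that $\Q$-a.e.\ $g$ is continuous at each fixed $x_i$, one has $\tfrac{1}{\epsilon}g_\epsilon(x_i)\to g(x_i)$ $\Q$-a.s., so $u_\epsilon\to u$ in $L^2(\Q)$ by dominated convergence. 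The main technical obstacle is the Dirichlet-form convergence: for each $k$,
\begin{equation*}
D_{\varphi_k}u_\epsilon(g)-D_{\varphi_k}u(g)=\sum_{i=1}^m\Bigl[\partial_iU(\tfrac{1}{\epsilon}g_\epsilon(\cdot))\cdot\tfrac{1}{\epsilon_i}\smint_{x_i}^{x_i+\epsilon_i}\!\varphi_k(g(t))\,dt-\partial_iU(g(\cdot))\cdot\varphi_k(g(x_i))\Bigr]
\end{equation*}
vanishes pointwise $\Q$-a.s.\ as $\epsilon\to 0$ by right-continuity of $g$ at each $x_i$ and continuity of $\partial_iU$ and $\varphi_k$; the uniform envelope $\sum_k|D_{\varphi_k}(u_\epsilon-u)(g)|^2\le 4m\|\nabla U\|_\infty^2\,C$ then permits interchanging $\lim_{\epsilon\to 0}$ with $\sum_k\int d\Q$, completing the argument.
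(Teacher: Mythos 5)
Your overall strategy — discretize by Riemann sums into cylinder functions $u_N$, then use the uniform envelope $\sum_k|D_{\varphi_k}u_N|^2\le m\|\nabla U\|_\infty^2\,C$ coming from (\ref{summable}) to control $\E(u_N)$ and pass to the limit — is exactly the route the paper takes, and your treatment of parts (ii) and (iii) is essentially the paper's argument (the paper settles for $u_N\to u$ in $L^2(\Q)$ plus $\sup_N\E(u_N)<\infty$ and weak compactness in the form norm, whereas you go for strong convergence $\E(u_N-u)\to0$ via dominated convergence on $\G\times\N$; both are sound, yours being slightly stronger and arguably cleaner for the density claim in (iii), where the paper's argument implicitly needs a Mazur-type step).

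There is, however, a genuine gap in part (i): the approximant $u_N(g)=U\bigl(\sqrt{R_1^N(g)},\ldots,\sqrt{R_m^N(g)}\bigr)$ with $R_i^N(g)=\tfrac1N\sum_j|g(j/N)-f_i(j/N)|^2$ is \emph{not} an element of $\Syl^1(\G)$. Membership in $\Syl^1(\G)$ requires the representing function $U_N:(S^1)^N\to\R$ to be $\C^1$, but $y\mapsto|y-c|^2$ (with $|\cdot|$ the $S^1$-distance) has a corner at the antipode $y=c+1/2$: the one-sided derivatives there are $+1$ and $-1$. You notice this yourself when you write $\tfrac{d}{dz}|z|^2=2\,\mbox{\rm sign}(z)|z|$ only on $S^1\setminus\{0,1/2\}$, but you then apply Lemma~\ref{Lie-deriv}, which is not licensed for a merely piecewise-$\C^1$ cylinder function. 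The paper resolves this by replacing $|\cdot|$ with a smooth approximant $d_n$ satisfying $|d_n'|\le1$ and $d_n\to|\cdot|$, i.e.\ taking $U_n(x)=\bigl(\tfrac1n\sum_i d_n(x_{i+1}-f(\tfrac in))^2\bigr)^{1/2}$, which puts $u_n$ genuinely into $\Syl^1(\G)$ while preserving the uniform bound on $\E(u_n)$; the stated $\Q$-a.e.\ derivative formula then emerges in the limit $n\to\infty$. Inserting this mollifier is the missing step in your proposal. A secondary point worth making explicit if you insist on the strong-convergence route in (i): the a.e.\ Riemann-sum convergence of $\tfrac1N\sum_j\mbox{\rm sign}(g(j/N)-f_i(j/N))|g(j/N)-f_i(j/N)|\varphi_k(g(j/N))$ to the corresponding integral requires the integrand to be Riemann integrable for $\Q$-a.e.\ $g$, which is not automatic since $g-f_i$ is only $BV$; the paper's weak-compactness argument avoids having to verify this.
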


\begin{proof}
(i) Let us first prove that for each $f\in\G$, the map $u(g)=\|g-f\|_{L^2}$ lies in $\Dom(\E)$.
For $n\in\N$, let $\pi_n:\G\to\G$ be the map which replaces each $g$ by the piecewise constant map:
$$\pi_n(g)(t):=g(\frac in)\qquad\mbox{for }t\in[\frac in,\frac{i+1}n[.$$
Then by right continuity $\pi_n(g)\to g$ as $n\to\infty$ and thus
$$\frac1n\sum_{i=0}^{n-1}|g(\frac in)-f(\frac in)|^2\longrightarrow
\int_{S^1}|g(t)-f(t)|^2dt.$$
Therefore, for each $g\in\G$ as $n\to\infty$
\begin{equation}\label{moll-appr}
u_n(g):=U_n(g(0), g(\frac1n),\ldots, g(\frac{n-1}n))\longrightarrow u(g)
\end{equation}
where $U_n(x_1,\ldots,x_{n}):=\left(\frac1n\sum_{i=0}^{n-1}d_n(x_{i+1}-f(\frac in))^2\right)^{1/2}$
and $d_n$ is a smooth approximation of the distance function $x\mapsto |x|$ on $S^1$ (which itself is non-differentiable
at $x=0$ and $x=\frac12$) with $|d_n'|\le1$ and $d_n(x)\to|x|$ as $n\to\infty$.
Obviously, $u_n\in\Syl^1(\G)$.

By dominated convergence, (\ref{moll-appr}) also implies that $u_n\to u$ in $L^2(\G,\Q)$.
Hence, $u\in\Dom(\E)$ if (and only if) we can prove that
$$ \sup_n\E(u_n)<\infty.$$
But
\begin{eqnarray*}
\E(u_n)&=&
\sum_{k=1}^\infty \int_\G \left|\sum_{i=1}^{n}\partial_iU_n(g(0), g(\frac1n),\ldots, g(\frac{n-1}n))\cdot
\varphi(g(\frac{i-1}n))\right|^2d\Q(g)\\
&\le&\sum_{k=1}^\infty\int_\G\frac1n\sum_{i=1}^{n} \varphi_k^2(g(\frac {i-1}n))\, d\Q(g)\ = \
\sum_{k=1}^\infty\|\varphi_k\|_{L^2}^2<\infty,
\end{eqnarray*}
uniformly in $n\in\N$.
This proves the claim for the function $u(g)=\|g-f\|_{L^2}$.

From this, the general claim follows immediately:
if $v_n$, $n\in\N$, is a sequence of $\Syl^1(\G)$ approximations of $g\mapsto \|g-0\|_{L^2}$ then
$u_n(g):=U(v_n(g-f_1),\ldots, v_n(g-f_m))$ defines a sequence of $\Syl^1(\G)$ approximations of
$u(g)=U( \|g-f_1\|_{L^2},\ldots, \|g-f_m\|_{L^2})$.

\medskip

(ii)
Again it suffices to treat the particular case $m=1$ and $U=id$, that is, $u(g)=g_\epsilon(x)$ for some
$x\in S^1$ and some $0<\epsilon<1$. Let $\tilde g\in\G(\R)$ be the lifting of $g$ and recall that
$u(g)=\pi(\frac1\epsilon\int_x^{x+\epsilon} \tilde g(t)dt)$.
Define $u_n\in\Syl^1(\G)$ for $n\in\N$ by
$u_n(g)=\pi(\frac1n\sum_{i=0}^{n-1}\tilde g(x+\frac in\epsilon))$.
Right continuity of $\tilde g$ implies $u_n\to u$ as $n\to\infty$ pointwise on $\G$ and thus also in $L^2(\G,\Q)$.
To see the  boundedness of $\E(u_n)$ note that
$D_\varphi u_n(g)=\frac1n\sum_{i=0}^{n-1}\varphi( g(x+\frac in\epsilon))$. Thus
$$\E(u_n)\le
\sum_{k=1}^\infty\int_\G\frac1n\sum_{i=0}^{n-1}\varphi_k^2( g(x+\frac in\epsilon))d\Q(g)
=\sum_{k=1}^\infty\|\varphi_k\|_{L^2}^2<\infty.$$

\medskip

(iii) We have to prove that each $u\in\Syl^1(\G)$ can be approximated in the norm $(\|.\|^2+\E(.))^{1/2}$
by functions $u_n$ of type (\ref{u-moll}). Again it suffices to treat the particular case $u(g)=g(x)$ for some $x\in S^1$.
Choose $u_n(g)=g_{1/n}(x)$. Then by right continuity of $g$, $u_n\to u$ pointwise on $\G$ and thus also
in $L^2(\G,\Q)$. Moreover, $D_\varphi u_n(g)=n\int_x^{x+1/n}\varphi(g(t))dt$ (for all $\varphi$ and $g$)
and therefore
$$\E(u_n)\le\sum_{k=1}^\infty n\int_x^{x+1/n}\varphi_k^2(g(t))dtd\Q(g)=
\sum_{k=1}^\infty\|\varphi_k\|_{L^2}^2<\infty.$$
\end{proof}

\begin{proof}[Proof of the Theorem]
{\bf (a)} \ The sum $\E$ of closable bilinear forms with common
domain $\Syl^1(\G)$ is closable, provided it is still finite on this
domain. The latter will follow by means of Lemma \ref{Lie-deriv}
which implies for all $u\in\Syl^1(\G)$ with representation
(\ref{cyl})
\begin{eqnarray*}\E(u,u)
&=&\sum_{k=1}^\infty \int_\G \left( \sum_{i=1}^m
{\partial_i}U(g(x_1),\ldots,g(x_m))\cdot
\varphi_k(g(x_i))\right)^2\, d\Q(g)\\
&\le& m\cdot \|\nabla
U\|_\infty^2\cdot\sum_{k=1}^\infty\|\varphi_k\|^2_{L^2(S^1)}<\infty.
\end{eqnarray*}
Hence, indeed $\E$ is finite on $\Syl^1(\G)$.

{\bf (b)} \ The Markov property for $\E$ follows from that of the
$\E_{\varphi_k}(u,v)=\int_\G D_{\varphi_k}u\cdot D_{\varphi_k}v\,
d\Q$.

{\bf (c)} \ According to the previous Lemma, the class of {\em continuous} functions of type
(\ref{u-moll}) is dense in $\Dom(\E)$. Moreover, the class of {\em finite energy} functions of type
(\ref{u-dist}) is dense in $\C(\G)$ (with the
$L^2$ topology of $\G\subset L^2(S^1)$, cf. Proposition \ref{g-top}). Therefore, the
Dirichlet form $\E$ is regular.

{\bf (e)} \ The estimate for the intrinsic metric is an immediate consequence of
the following estimate for the norm of the gradient of the function $u(g)=\|g-f\|_{L^2}$
(which holds for each $f\in\G$ uniformly in $g\in\G$):
\begin{eqnarray*}
\|Du(g)\|^2&=&\sum_{k=1}^\infty
\left(
\int_{S^1}\mbox{\rm sign}(g(t)-f_i(t))\frac{|g(t)-f_i(t)|}{\|g-f_i\|_{L^2}}\varphi_k(g(t))dt
\right)^2\\
&\le&\sum_{k=1}^\infty\int_{S^1}\varphi_k^2(g(t))dt\le\|\sum_{k=1}^\infty\varphi_k^2\|_\infty=:C.
\end{eqnarray*}

{\bf (f)} \ The {\em locality} is an immediate consequence of the
previous estimate: Given functions $u,v \in\Dom(\E)$ with disjoint
supports, one has to prove that $\E(u,v)=0$. Without restriction,
one may assume that $\supp[u]\subset B_r(g)$  and $\supp[v]\subset
B_r(h)$ with $\|g-h\|_{L^2}>2r+2\delta$. (The general case will
follow by a simple covering argument.) Without restriction, $u,v$
can be assumed to be bounded. Then $|u|\le C w_{\delta,g}$ and
$|v|\le C w_{\delta,h}$ for some constant $C$ where
$$w_{\delta,g}(f)=\left[\frac1\delta(r+\delta-\|f-g\|_{L^2}) \wedge 1 \right]\vee
0.$$ Given $u_n\in\Syl^1(\G)$ with $u_n\to u$ in $\Dom(\E)$ put
$$\overline u_n=(u_n\wedge w_{\delta,g})\vee(-w_{\delta,g}).$$
Then  $\overline u_n\to u$ in $\Dom(\E)$. Analogously,
 $\overline v_n\to v$ in $\Dom(\E)$ for
$\overline v_n=(v_n\wedge w_{\delta,h})\vee(-w_{\delta,h})$. But
obviously, $\E(\overline u_n,\overline v_n)=0$ since $\overline
u_n\cdot \overline v_n=0$. Hence, $\E(u,v)=0$.

{\bf (g)} \ In order to prove that $\Zyl^1(\G)$ is contained in
$\Dom(\E)$  it suffices to prove that each $u\in\Zyl^1(\G)$ of the
form $u(g)=\int\alpha(g_t)dt$ can be approximated in $\Dom(\E)$ by
$u_n\in\Syl^1(\G)$. Given $u$ as above with $\alpha\in\C^1(S^1,\R)$
put $u_n(g)=\frac1n\sum_{i=1}^n \alpha(g_{i/n})$. Then
$u_n\in\Syl^1(\G)$, $u_n\to u$ on $\G$ and
$$D_\varphi u_n(g)=\frac1n\sum_{i=1}^n
\alpha'(g_{i/n})\varphi(g_{i/n})\to
\int\alpha'(g_t)\varphi(g_t)dt=D_\varphi u(g).$$
Moreover,
\begin{eqnarray*}
\E(u_n,u_n)&=&
\int_\G\sum_k \left|\frac1n\sum_{i=1}^n
\alpha'(g_{i/n})\varphi(g_{i/n})\right|^2\, d\Q(g)\\
&\le&
C\cdot\int_\G\sum_k \frac1n\sum_{i=1}^n
\alpha'(g_{i/n})^2\, d\Q(g)=C\cdot \int_{S^1} \alpha'(t)^2dt
\end{eqnarray*}
uniformly in $n\in\N$.
 Hence,
$u\in\Dom(\E)$ and $$\E(u,u)=\lim_{n\to\infty}\E(u_n,u_n)=
\int_\G\sum_k \left|\int_{S^1}
\alpha'(g_t)\varphi_k(g_t)dt\right|^2d\Q(g).$$

 {\bf (h)} \ The set $\Zyl^1(\G)$ is dense in $\Dom(\E)$ since according to assertion (ii) of the previous Lemma already
 the subset of all $u$ of the form (\ref{u-moll}) is dense in $\Dom(\E)$.

 Finally, one easily verifies that $\Zyl^2(\G)$ is dense in $\Zyl^1(\G)$ and (using the integration by parts formula)
 that $L$ is a symmetric operator on
 $\Zyl^2(\G)$
 with the given representation.
\end{proof}

\begin{corollary} There exists a strong Markov process $(g_t)_{t\ge0}$ on $\G$, associated with the Dirichlet form
$\E$.
It has continuous trajectories and it is reversible w.r.t. the measure $\Q$.
Its generator has the form
$$\frac12 L=\frac12 \sum_k D_{\varphi_k}D_{\varphi_k}+\frac12
\sum_k V_{\varphi_k}\cdot D_{\varphi_k}$$
with $\{\varphi_k\}_{k\in\N}$ being the Fourier basis
of $H^s(S^1)$.
\end{corollary}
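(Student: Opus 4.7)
The plan is to apply the standard Fukushima--Oshima--Takeda theory of regular Dirichlet forms on locally compact metrizable spaces \cite{MR96f:60126}, with $\G$ playing the role of the state space. Since $\G$ is compact by Proposition \ref{g-top}, it is certainly locally compact and separable in its $L^2$-topology, and Theorem \ref{dir-form}(i) already supplies the three structural ingredients we need: $(\E,\Dom(\E))$ is a regular, strongly local, recurrent (hence conservative) Dirichlet form on $L^2(\G,\Q)$. The general theory then yields a $\Q$-symmetric Hunt process $(g_t)_{t\ge 0}$ properly associated with $\E$; strong locality promotes this to a diffusion, i.e. gives continuity of trajectories off an $\E$-exceptional set; and the $\Q$-symmetry of $\E$ translates directly into reversibility of $(g_t)$ with respect to $\Q$.

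The second task is to identify the generator on a convenient core. I would take $\Syl^2(\G)$ (or equivalently $\Zyl^2(\G)$) and compute $-\E(u,v)$ for $u\in\Syl^2(\G)$ and $v\in\Syl^1(\G)$, using the termwise decomposition \eqref{sum-dir} together with Proposition \ref{adjoint}. For each direction $\varphi_k$ the integration by parts formula gives
\[
\int_\G D_{\varphi_k}u\cdot D_{\varphi_k}v\,d\Q
= -\int_\G \bigl(D_{\varphi_k}D_{\varphi_k}u + V_{\varphi_k}\cdot D_{\varphi_k}u\bigr)\,v\,d\Q,
\]
and summing on $k$ shows that the operator
\[
Lu = \sum_k D_{\varphi_k}D_{\varphi_k}u + \sum_k V_{\varphi_k}\cdot D_{\varphi_k}u
\]
is symmetric on $\Syl^2(\G)$ and is a pre-generator of the form. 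Part (ii) of Theorem \ref{dir-form} already records the explicit representation of $Lu$ on the cylinder class, so the claimed formula $\tfrac12 L = \tfrac12\sum_k D_{\varphi_k}D_{\varphi_k} + \tfrac12\sum_k V_{\varphi_k}D_{\varphi_k}$ follows once one notes the factor $\tfrac12$ is just the usual convention linking the Dirichlet form to its generator via $\E(u,v) = -\int (\tfrac12 L u)\,v\,d\Q$ (or, equivalently, absorbing a factor of $2$ into the time parametrization).

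The only delicate point is the legitimacy of the termwise manipulations: one must justify both the termwise interchange of summation over $k$ with the integration by parts, and the fact that the resulting sums converge in $L^2(\Q)$. For this I would use the uniform bound $\sum_k \varphi_k^2 \le C$ from \eqref{summable} together with $|V_{\varphi_k}(g)|\le C_{\varphi_k}$ from Lemma \ref{drift-lemma}(i); for $u\in\Syl^2(\G)$ with representation \eqref{syl-def} each of the expressions appearing in part (ii) of Theorem \ref{dir-form} is bounded uniformly in $g$, so dominated convergence applies. The main obstacle, if any, is checking that $\Syl^2(\G)$ really is an operator core (and not merely a form core) for $L$; this, however, is a standard consequence of the fact that $L$ is symmetric and densely defined on $\Syl^2(\G)$ and that $\Syl^2(\G)$ is dense in $\Dom(\E)$ in the form norm, so its Friedrichs extension coincides with the generator of $\E$.
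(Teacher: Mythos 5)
Your overall strategy is precisely what the paper intends: existence, continuity and reversibility of the process are all immediate from the Fukushima--Oshima--Takeda correspondence applied to the regular, strongly local, recurrent Dirichlet form established in Theorem~\ref{dir-form}(i), and the generator formula is a repackaging of Theorem~\ref{dir-form}(ii) via Proposition~\ref{adjoint}. The paper offers no separate proof of this corollary because it is an immediate consequence of these ingredients; you have reconstructed exactly that chain of reasoning.

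Two points deserve more care, however. First, the factor $\tfrac12$. Your primary explanation, that $\E(u,v)=-\int(\tfrac12 Lu)v\,d\Q$ is ``the usual convention,'' is not correct for the theory you invoke: the Friedrichs generator $A$ of $(\E,\Dom(\E))$ satisfies $\E(u,v)=-(Au,v)$ \emph{without} the $\tfrac12$, so the FOT-associated Hunt process has generator $A=\sum_k D_{\varphi_k}D_{\varphi_k}+\sum_k V_{\varphi_k}D_{\varphi_k}$, not half of it. The right reading is your secondary one: the $\tfrac12$ is a time-change, matching the probabilistic normalization which falls out of the Ito SDE construction of Section~6.2 (the $\tfrac12\Delta$-vs.-$\Delta$ convention). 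The $(g_t)$ in the Corollary is the FOT process run at half speed, still ``associated with'' $\E$ up to deterministic reparametrization of time.

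Second, be careful with ``dominated convergence applies.'' Lemma~\ref{drift-lemma}(i) gives $|V_{\varphi_k}(g)|\le(\tfrac12+\beta)\int_{S^1}|\varphi_k''|$, and for the basis in \eqref{fourier-s} the right-hand side grows like $k^{2-s}$, while $|D_{\varphi_k}u(g)|\lesssim \|\varphi_k\|_\infty\sim k^{-s}$. Thus the termwise bound on $|V_{\varphi_k}D_{\varphi_k}u|$ is $\sim k^{2-2s}$, which is \emph{not} summable for $\tfrac12<s\le\tfrac32$ — precisely the novel range of $s$ the paper targets (cf.\ Remark~\ref{rem-summ}, where $\sum_k|V_{\varphi_k}|^2<\infty$ needs $s>\tfrac52$). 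So the series defining $Lu$ is not absolutely convergent in this regime. The cure, which the Friedrichs mechanism already supplies, is that absolute convergence is not needed: for each $N$ one has
\[
\sum_{k\le N}\int_\G D_{\varphi_k}u\,D_{\varphi_k}v\,d\Q
= -\int_\G\Bigl(\sum_{k\le N}\bigl[D_{\varphi_k}D_{\varphi_k}u+V_{\varphi_k}D_{\varphi_k}u\bigr]\Bigr)v\,d\Q,
\]
and the left side converges to $\E(u,v)$ for each $v\in\Syl^1(\G)$; this identifies the partial sums as a weakly Cauchy sequence in $L^2(\Q)$ and the weak limit is by definition what $Lu$ means in Theorem~\ref{dir-form}(ii). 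Your conclusion stands, but the justification should appeal to this weak identification rather than to uniform domination of the summands.
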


\begin{remark}\rm
This process
$(g_t)_{t\ge0}$ is closely related to the stochastic processes on the diffeomorphism group of $S^1$ and to the
'Brownian motion' on the homeomorphism group of $S^1$,

studied by Airault, Fang, Malliavin, Ren, Thalmaier and others \cite{MR2082490,AM06, AR02, Fang02,MR2091355,MR1713340}.
These are processes
with generator $\frac12 L_0=\frac12\sum_k D_{\varphi_k}D_{\varphi_k}$. For instance, in the case $s=3/2$ our process from the previous Corollary may be regarded as 'Brownian motion plus drift'.
All the previous approaches are restricted to $s\ge 3/2$.
The main improvements of our approach are:
\begin{itemize}
\item identification of a probability measure $\Q$ such that these processes --- after adding a suitable drift --- are reversible;
\item construction of such processes in all cases $s>1/2$.
\end{itemize}
\end{remark}
\end{subsection}

\begin{subsection}{Finite Dimensional Noise Approximations}

In the previous section, we have seen the construction of the
diffusion process on $\G$ under minimal assumptions. However, the
construction of the process is rather abstract. In this section, we
try to construct explicitly a diffusion process associated with the
generator of the Dirichlet form $\E$ from Theorem
\ref{dir-form}. Here we do not aim for greatest generality.

Let a {\em finite} family $\Phi=(\varphi_k)_{k=1,\ldots,n}$ of
smooth functions on $S^1$ be given and let $(W_t)_{t\ge0}$ with
$W_t=(W^1_t,\ldots, W^n_t)$ be a $n$-dimensional Brownian motion,
defined on some probability space $(\Omega,\mathcal{F},\mathbf{P})$.
For each $x\in S^1$ we define a stochastic processes
$(\eta_t(x))_{t\ge0}$ with values in $S^1$ as the strong solution of
the Ito differential equation
\begin{equation}\label{ito}
d\eta_t(x)=\sum_{k=1}^n\varphi_k(\eta_t(x))dW^k_t+\frac12\sum_{k=1}^n\varphi'_k(\eta_t(x))\varphi_k(\eta_t(x))dt
\end{equation}
with initial condition $\eta_0(x)=x$. Equation (\ref{ito}) can be
rewritten in Stratonovich form as follows
\begin{equation}\label{strato}
d\eta_t(x)=\sum_{k=1}^n\varphi_k(\eta_t(x))\diamond dW^k_t.
\end{equation}
Obviously, for every $t$  and for $\mathbf{P}$-a.e.
$\omega\in\Omega$, the function $x\mapsto \eta_t(x,\omega)$ is an
element of the semigroup $\G$. (Indeed, it is a
$\C^\infty$-diffeomorphism.) Thus (\ref{strato}) may also be
interpreted as a
 Stratonovich SDE  on the semigroup $\G$:
\begin{equation}\label{strato-2}
d\eta_t=\sum_{k=1}^n\varphi_k(\eta_t)\diamond dW^k_t,\quad \eta_0=e.
\end{equation}
This process on $\G$ is right invariant: if $g_t$ denotes the
solution to (\ref{strato-2}) with initial condition $g_0=g$ for some
initial condition $g\in\G$ then $g_t=\eta_t\circ g$. One easily
verifies that the generator of this process $(g_t)_{t\ge0}$ is given
on $\Syl^2(\G)$ by $\frac12\sum_{k=1}^n D_{\varphi_k}D_{\varphi_k}$.
What we aim for, however, is a process with generator
$$-\frac12\sum_{k=1}^n D^*_{\varphi_k}D_{\varphi_k}=
\frac12\sum_{k=1}^n D_{\varphi_k}D_{\varphi_k}+\frac12\sum_{k=1}^n
V_{\varphi_k}\cdot D_{\varphi_k}.$$ Define a new probability measure
$\mathbf{P}^g$ on $(\Omega, \mathcal{F})$, given on  $\mathcal{F}_t$
by
\begin{equation}\label{girsanov}
d\mathbf{P}^g=\exp\left(\sum_{k=1}^n\int_0^tV_{\varphi_k}(\eta_s\circ
g)dW^k_s-\frac12\sum_{k=1}^n\int_0^t |V_{\varphi_k}(\eta_s\circ
g)|^2ds\right)d\mathbf{P}
\end{equation}
and a semigroup $(P_t)_{t\ge0}$ acting on bounded measurable
functions $u$ on $\G$ as follows
$$P_tu(g)=\int_\Omega u(\eta_t(g(.),\omega))\, d\mathbf{P}^g(\omega).$$

\begin{proposition}
$(P_t)_{t\ge0}$ is a strongly continuous Markov semigroup on $\G$.
Its generator is an extension of the operator
$\frac12 L=-\frac12\sum_{k=1}^n D^*_{\varphi_k}D_{\varphi_k}$ with
domain $\Syl^2(\G)$. That is, for all $u\in\Syl^2(\G)$ and all
$g\in\G$
\begin{equation}
\lim_{t\to0}\frac1t \left( P_tu(g)-u(g)\right)=\frac12 L u(g).
\end{equation}

\end{proposition}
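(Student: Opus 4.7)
The plan is to apply Itô's formula to $u(\eta_t\circ g)$ for cylinder functions $u\in\Syl^2(\G)$ and then perform a Girsanov change of measure. Since $\eta_t$ is (for each $t$ and $\omega$) a smooth increasing diffeomorphism of $S^1$, the composition $\eta_t\circ g$ lies in $\G$ for every $g\in\G$, and for $u(g)=U(g(x_1),\ldots,g(x_m))\in\Syl^2(\G)$ we have
\[
u(\eta_t\circ g)=U\bigl(\eta_t(g(x_1)),\ldots,\eta_t(g(x_m))\bigr),
\]
so the problem reduces to a finite-dimensional stochastic calculus applied to the semimartingales $t\mapsto \eta_t(g(x_i))$ governed by the Itô SDE (\ref{ito}).

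I would apply the standard $m$-dimensional Itô formula using (\ref{ito}) and the quadratic covariations $d\langle\eta_\cdot(g(x_i)),\eta_\cdot(g(x_j))\rangle_t=\sum_k\varphi_k(\eta_t(g(x_i)))\varphi_k(\eta_t(g(x_j)))\,dt$, obtaining under $\mathbf{P}$
\[
du(\eta_t\circ g)=dM_t+\tfrac12\sum_k D_{\varphi_k}D_{\varphi_k}u(\eta_t\circ g)\,dt,
\]
where $dM_t=\sum_{i,k}\partial_iU\,\varphi_k(\eta_t(g(x_i)))\,dW^k_t$ and the identity is verified by comparing the drift with the explicit formula for $\sum_k D_{\varphi_k}D_{\varphi_k}u$ computed from Lemma \ref{Lie-deriv} (i.e.\ $\sum_{i,j,k}\partial_i\partial_jU\,\varphi_k\varphi_k+\sum_{i,k}\partial_iU\,\varphi_k'\varphi_k$). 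Next, I would invoke Girsanov: the boundedness of $V_{\varphi_k}$ from Lemma \ref{drift-lemma}(i) gives Novikov's condition, so $d\mathbf{P}^g/d\mathbf{P}|_{\mathcal{F}_t}$ is a true martingale, and under $\mathbf{P}^g$ the processes $\tilde W^k_t:=W^k_t-\int_0^t V_{\varphi_k}(\eta_s\circ g)\,ds$ are independent Brownian motions. Rewriting $dW^k_t=d\tilde W^k_t+V_{\varphi_k}(\eta_t\circ g)\,dt$ produces an additional drift proportional to $\sum_k V_{\varphi_k}(\eta_t\circ g)\,D_{\varphi_k}u(\eta_t\circ g)$. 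Combining the two drifts and matching against the representation of $L$ from Theorem \ref{dir-form}(ii) identifies the generator as $\tfrac12 L$. Taking $\mathbf{P}^g$-expectation of both sides and using Fubini gives
\[
P_tu(g)-u(g)=\int_0^t \mathbb{E}^g\bigl[\tfrac12 Lu(\eta_s\circ g)\bigr]\,ds,
\]
and the continuity at $s=0$ (which holds in every $g\in\G$ because the integrand is a bounded cylinder-type function and $\eta_s\to e$ in $\C^\infty$) gives the desired pointwise limit.

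For the Markov and semigroup properties I would use the flow property $\eta_{s+t}=\eta_t^{\theta_s}\circ\eta_s$, where $\eta^{\theta_s}$ denotes the solution driven by the shifted Brownian motion $W_{s+\cdot}-W_s$, together with the multiplicative (cocycle) property of the Girsanov density written along this flow; combined with the independence of $\mathcal{F}_s$ and the shifted Brownian increments this yields $P_{s+t}=P_sP_t$. Strong continuity follows from the integral representation above together with uniform bounds on $Lu$ on $\G$ for $u\in\Syl^2(\G)$ (obtained from the explicit formula of Theorem \ref{dir-form}(ii) and the $L^\infty$-bound for $V_{\varphi_k}$).

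The main obstacle is the careful bookkeeping of the Itô-to-Girsanov drift passage and its agreement with Theorem \ref{dir-form}(ii); in particular verifying that the Stratonovich correction from the SDE (\ref{ito}) combines with the Girsanov drift to reproduce precisely the coefficients appearing in $L$. The rest—Markov property, strong continuity, and extending beyond cylinder functions—is routine given the boundedness estimates of Lemma \ref{drift-lemma} and Lemma \ref{finiteness}.
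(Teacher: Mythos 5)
Your overall route is the same as the paper's: identify the generator via It\^o's formula combined with a Girsanov change of measure. The only cosmetic difference is the order of operations; the paper first performs the Girsanov transform, rewriting the dynamics of $g_t=\eta_t\circ g$ in terms of the $\mathbf{P}^g$-Brownian motions $\tilde W^k$ (giving SDE (\ref{ito-2})), and only then applies the chain rule to $U(g_t(y_1),\dots,g_t(y_m))$; you do the two steps in the opposite order, which is equivalent.

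One detail you gloss over that the paper treats explicitly and that is not automatic: to pass from $\frac1t\int_0^t\mathbf{E}^g[\dots]\,ds$ to the value at $s=0$ you assert continuity of the integrand in $s$ "because the integrand is a bounded cylinder-type function and $\eta_s\to e$ in $\C^\infty$." But the integrand contains $V_{\varphi_k}(\eta_s\circ g)$, and $V_{\varphi_k}$ is not a cylinder function (it is an infinite sum over the jumps of its argument) nor obviously continuous on $\G$ in the $L^2$ topology. What rescues the argument is that along the curve $s\mapsto\eta_s\circ g$ the jump set stays fixed at $J_g$ (since each $\eta_s$ is a homeomorphism), and only the jump values $\eta_s(g(a\pm))$ vary, jointly continuously in $s$; together with the uniform summability bound of Lemma \ref{finiteness}(i) this gives $s\mapsto V_{\varphi_k}(\eta_s\circ g)$ continuous. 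You should make this explicit. Also, "the additional drift proportional to $\sum_k V_{\varphi_k}D_{\varphi_k}u$" needs the precise coefficient $\frac12$ to match $\frac12L$; check that the drift shift implied by (\ref{girsanov}) is indeed $\frac12 V_{\varphi_k}\,dt$.

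On the remaining points: your argument for the semigroup/Markov property via the flow property of $\eta$ and the cocycle property of the Girsanov density is a sensible addition that the paper actually omits. For strong continuity the paper uses a more direct argument than yours, namely $\eta_t(x,\cdot)\to x$ a.s.\ as $t\to0$, hence $P_tu(g)=\int u(\eta_t\circ g)\,d\mathbf{P}^g\to u(g)$ by dominated convergence for every $u\in\C(\G)$; your route through the integral representation on $\Syl^2(\G)$ and density is workable but needs the extra density step to cover general continuous $u$.
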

\begin{proof}
The strong continuity  follows easily from the fact that
$\eta_t(x,.)\to x$ a.s. as $t\to0$ which implies by dominated
convergence
$$P_tu(g)=\int_\Omega u(\eta_t\circ g)\, d\mathbf{P}^g\to  u(g)$$
for each continuous $u:\G\to\R$.

Now we aim for identifying the generator. According to Girsanov's
theorem, under the measure $\mathbf{P}^g$ the processes
$$\tilde W^k_t=W^k_t-\frac12\int_0^tV_{\varphi_k}(\eta_s\circ g)ds$$
for $k=1,\ldots,n$ will define $n$ independent Brownian motions. In
terms of these driving processes, (\ref{ito}) can be reformulated as
\begin{equation}\label{ito-2}
dg_t(x)=\sum_{k=1}^n\varphi_k(g_t(x))d\tilde W^k_t+
\frac12\sum_{k=1}^n[\varphi'_k(g_t(x))+V_{\varphi_k}(g_t)]\varphi_k(g_t(x))dt
\end{equation}
(recall that $g_s=\eta_s\circ g$). The chain rule applied to a
smooth function $U$ on $(S^1)^m$, therefore, yields
\begin{eqnarray*}
\lefteqn{dU\left(g_t(y_1),\ldots,g_t(y_m)\right)}\\
&=&
\sum_{i=1}^m\frac{\partial}{\partial x_i}U\left(g_t(y_1),\ldots,g_t(y_m)\right)dg_t(y_i)\\
&&+\frac12\sum_{i,j=1}^m\frac{\partial^2}{\partial x_i\partial
x_j}U\left(g_t(y_1),\ldots,g_t(y_m)\right)
d\langle g_.(y_i), g_.(y_j)\rangle_t\\
&=& \sum_{i=1}^m\sum_{k=1}^n\frac{\partial}{\partial
x_i}U\left(g_t(y_1),\ldots,g_t(y_m)\right)
\varphi_k(g_t(y_i))d\tilde{W}^k_t\\
&&+ \frac12\sum_{i=1}^m\sum_{k=1}^n\frac{\partial}{\partial
x_i}U\left(g_t(y_1),\ldots,g_t(y_m)\right)
[\varphi'_k(g_t(y_i))+V_{\varphi_k}(g_t)]\varphi_k(g_t(y_i))dt\\
&&+ \frac12\sum_{i,j=1}^m\sum_{k=1}^n\frac{\partial^2}{\partial
x_i\partial x_j}U\left(g_t(y_1),\ldots,g_t(y_m)\right)
  \varphi_k(g_t(y_i))\varphi_k(g_t(y_j))dt.
\end{eqnarray*}
Hence, for a cylinder function of the form $u(g)=U(g(y_1),\ldots,
g(y_m))$ we obtain
\begin{eqnarray*}
\lefteqn{\lim_{t\to0}\frac1t\left( P_tu(g)-u(g)\right)}\\
&=&\lim_{t\to0}\frac1t \int_\Omega
\left[U\left(g_t(y_1),\ldots,g_t(y_m)\right)
-U\left(g_0(y_1),\ldots,g_0(y_m)\right)\right]\, d\mathbf{P}^g\\
&=&\lim_{t\to0}\frac1t \int_\Omega \int_0^t\left[
\frac12\sum_{i=1}^m\sum_{k=1}^n\frac{\partial}{\partial
x_i}U\left(g_s(y_1),\ldots,g_s(y_m)\right)
[\varphi'_k(g_s(y_i))+V_{\varphi_k}(g_s)]\varphi_k(g_s(y_i))\right.\\
&&+
\left.\frac12\sum_{i,j=1}^m\sum_{k=1}^n\frac{\partial^2}{\partial
x_i\partial x_j}U\left(g_s(y_1),\ldots,g_s(y_m)\right)
  \varphi_k(g_s(y_i))\varphi_k(g_s(y_j))\right]\, ds \,  d\mathbf{P}^g\\
&\stackrel{(*)}=&
\frac12\sum_{i=1}^m\sum_{k=1}^n\frac{\partial}{\partial
x_i}U\left(g(y_1),\ldots,g(y_m)\right)
[\varphi'_k(g(y_i))+V_{\varphi_k}(g)]\varphi_k(g(y_i))\\
&&+ \frac12\sum_{i,j=1}^m\sum_{k=1}^n\frac{\partial^2}{\partial
x_i\partial x_j}U\left(g(y_1),\ldots,g(y_m)\right)
  \varphi_k(g(y_i))\varphi_k(g(y_j))\\
&=&\frac12\sum_{k=1}^n\left[
D_{\varphi_k}D_{\varphi_k}u(g)+V_{\varphi_k}(g)\cdot
D_{\varphi_k}u(g)\right] =-\frac12\sum_{k=1}^n
D_{\varphi_k}^*D_{\varphi_k}u(g).
\end{eqnarray*}
In order to justify ($*$), we have to verify continuity in $s$ in all
the expressions preceding ($*$). The only term for which this is not
obvious is $V_{\varphi_k}(g_s)$. But $g_s=\eta_s\circ g$ with a
function $\eta_s(x,\omega)$ which is continuous in $x$ and in $s$.
Thus $V_{\varphi_k}(\eta_s(.,\omega)\circ g)$ is continuous in $s$.
\end{proof}

\begin{remark}\label{rem-summ}\rm
All the previous argumentations in principle also apply to infinite
families of $(\varphi_k)_{k=1,2,\ldots}$, provided they  have
sufficiently good integrability properties. For instance, the family
(\ref{fourier-s}) with $s>\frac52$ will do the job. There are three
key steps which require a careful verification:
\begin{itemize}
\item the solvability of the Ito equation (\ref{ito}) and the fact that the solutions are
homeomorphisms of $S^1$; here $s\ge\frac32$ suffices, cf. \cite{MR1713340};
\item the boundedness of the quadratic variation of the drift to justify Girsanov's transformation in
(\ref{girsanov}); for $s>\frac52$ this will be satisfied since Lemma
\ref{drift-lemma} implies (uniformly in $g$)
\begin{eqnarray*}
\sum_{k=1}^\infty |V_{\varphi_k}(g)|^2&\le& (\beta+1)^2
\sum_{k=1}^\infty \int_0^1|\varphi''_k(x)|^2dx\le 4(\beta+1)^2
\sum_{k=1}^\infty k^{4-2s};
\end{eqnarray*}
\item the finiteness of the generator and Ito's chain rule for $\C^2$-cylinder functions;
here $s>\frac32$ will be sufficient.
\end{itemize}
\end{remark}

\begin{remark}\rm
Another completely different approximation of the process
$(g_t)_{t\ge0}$ in terms of finite dimensional SDEs is obtained as
follows. For $N\in\N$, let $\Syl^1_N$ denote the set of cylinder
functions $u:\G\to\R$ which can be represented as $u(g)=U(g(1/N),
g(2/N),\ldots, g(1))$ for some $U\in\C^1((S^1)^N)$. Denote the
closure of $(\E,\Syl^1_N)$ by $(\E^N,\Dom(\E^N))$. It is the image
of the Dirichlet form $(E^N,\Dom(E^N))$ on $\Sigma_N\subset(S^1)^N$ given by
\begin{equation}
E^N(U)=\int_{\Sigma_N} \sum_{i,j=1}^N\partial_i U(x) \partial_jU(x) \, a_{ij}(x)\rho(x)\, dx
\end{equation}
with
$$a_{ij}(x)=\sum_{k=1}^\infty \varphi_k(x_i)\varphi_k(x_j),\qquad
\rho(x)=\frac{\Gamma(\beta)}{\Gamma(\beta/N)^N}
\prod_{i=1}^N (x_{i+1}-x_i)^{\beta/N-1} dx.$$
and (as before)
$\Sigma_N=\left\{(x_1,\ldots,x_N)\in (S^1)^N: \ \sum_{i=1}^N
|[x_i,x_{i+1}]|=1\right\}$.
That is,
$$\E^N(u)=E^N(U)$$
for cylinder functions $u\in\Syl^1_N$ as above.
Let $(X_t,\mathbf{P}_x)_{t\ge0, x\in\Sigma_N}$ be the Markov process on $\Sigma_N$ associated with $E^N$. Then the semigroup
associated with $\E^N$ is given by
$$T^N_tu(g)=\mathbf{E}_{g(1/N),\ldots,g(1)}\left[U(X_t)\right].$$

Now let $(g_t,\mathbf{P}_g)_{t\ge0, g\in\G}$ and $(T_t)_{t\ge0}$ denote the Markov process and the $L^2$-semigroup  associated
with $\E$. Then as $N\to\infty$
$$T^{2^N}_t\to T_t\quad\mbox{strongly in }L^2$$
since
$$\E^{2^N}\searrow \E$$
in the sense of quadratic forms, \cite{Reed/Simon}, Theorem S.16. (Note that $\cup_{N\in\N}\Syl^1_{2^N}$ is dense in $\Dom(\E)$.)
\end{remark}

\end{subsection}
 \begin{subsection}{Dirichlet Form and Stochastic Dynamics on $\G_1$ and $\Pe$}

In order to define the derivative of a function $u: \G_1\to\R$ we
regard it as a function $\tilde u$ on $\G$ with the property $\tilde
u(g)=\tilde u(g\circ \theta_z)$ for all $z\in S^1$. This implies
that $D_\varphi \tilde u(g)=(D_\varphi \tilde u)(g\circ \theta_z)$
whenever one of these expressions is well-defined. In other words,
$D_\varphi \tilde u$ defines a function on $\G_1$ which will be
denoted by $D_\varphi u$ and called {\em the directional derivative
of $u$ along $\varphi$}.

\begin{corollary}
{\bf (i)} \ Under assumption (\ref{summable}), with the notations
from above,
$$\E(u,u)=\sum_{k=1}^\infty \int_{\G_1}|D_{\varphi_k}u|^2 \, d\Q.$$
defines a regular, strongly local, recurrent Dirichlet form on
$L^2(\G_1,\Q)$.

{\bf (ii)} \ The Markov process on $\G$ analyzed in the previous
section extends to a (continuous, reversible) Markov process on $\G_1$.
\end{corollary}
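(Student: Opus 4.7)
The plan is to identify $L^2(\G_1,\Q)$ with the $S^1$-invariant subspace of $L^2(\G,\Q)$ and then deduce everything by restriction from the corresponding results for $\G$ established in Theorem \ref{dir-form} and the associated Markov process.

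First I would make precise the lifting. For each $s\in S^1$ the map $\hat\theta_s:\G\to\G,\ g\mapsto g\circ\theta_s$ leaves $\Q=\Q^\beta$ invariant (this was already noted after the proof of Proposition 3.2.1). Hence $u\mapsto \tilde u$, where $\tilde u(g):=u([g])$, embeds $L^2(\G_1,\Q)$ isometrically onto the closed subspace
\[
\mathcal{H}:=\{\tilde u\in L^2(\G,\Q):\ \tilde u\circ\hat\theta_s=\tilde u \ \forall s\in S^1\}.
\]
The key algebraic observation is that the flow $e_{t\varphi}$ acts on the \emph{left} while the rotations $\hat\theta_s$ act on the \emph{right}, so $e_{t\varphi}\circ(g\circ\theta_s)=(e_{t\varphi}\circ g)\circ\theta_s$. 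Consequently $D_\varphi$ commutes with $\hat\theta_s^*$, which justifies the remark before the corollary that $D_\varphi u$ is well-defined on $\G_1$ via the lift, and it shows that $\mathcal{H}$ is stable under $D_\varphi$ on cylinder functions.

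For (i), I would argue that the form $\E$ on $\G$ restricts to $\mathcal{H}$ by averaging. Given $u\in\Syl^1(\G)$, set $Pu(g):=\int_{S^1}u(g\circ\theta_s)\,ds$. By the invariance of $\Q$, $P$ is an orthogonal projection in $L^2(\G,\Q)$ onto $\mathcal{H}$, it maps $\Syl^1(\G)$ into $\bigcup_m \Syl^1(\G)$-averages (which are limits of cylinder functions and lie in $\Dom(\E)$), and it contracts $\E$: $\E(Pu,Pu)\le\E(u,u)$, since $D_{\varphi_k}$ commutes with $\hat\theta_s^*$ and Jensen's inequality applied under $\int_{S^1}\cdots ds$ yields
\[
\E(Pu,Pu)=\sum_k\int_\G\Bigl|\int_{S^1}(\hat\theta_s^*D_{\varphi_k}u)(g)\,ds\Bigr|^2d\Q(g)\le\sum_k\int_\G|D_{\varphi_k}u|^2\,d\Q=\E(u,u).
\]
Therefore $(\E,\Dom(\E)\cap\mathcal{H})$ is a closed symmetric form on $L^2(\G_1,\Q)$ with the Markov property, locality and recurrence inherited directly from the corresponding properties of $(\E,\Dom(\E))$ on $\G$; the regularity follows because the $S^1$-averages of the dense set of continuous functions of type \eqref{u-moll} supply a dense set of continuous functions on the compact space $\G_1$ lying in the new domain.

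For (ii), I would use the invariance to project the process. Since $\hat\theta_s$ preserves $\Q$ and commutes with every $D_{\varphi_k}$, it commutes with the generator $L$, and hence with the associated semigroup $T_t$. Therefore $T_t$ leaves $\mathcal{H}$ invariant, and under the identification $\mathcal{H}\cong L^2(\G_1,\Q)$ it defines a Markovian semigroup $T^1_t$ whose Dirichlet form is exactly the one in (i). Equivalently, at the level of sample paths, if $(g_t)_{t\ge0}$ is the process on $\G$ from the preceding section then $([g_t])_{t\ge0}$ is a continuous Markov process on the compact quotient $\G_1$, reversible with respect to the push-forward of $\Q$, which is the measure $\Q$ on $\G_1$ under consideration. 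The main technical point to verify carefully is the Markov property of the projected process, i.e.\ that the $\sigma$-algebra generated by $[g_t]$ is compatible with $T_t$; this is a direct consequence of the commutation $T_t\circ\hat\theta_s^*=\hat\theta_s^*\circ T_t$ established above.
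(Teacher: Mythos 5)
Your proposal is correct and follows essentially the same route as the paper: lift functions on $\G_1$ to $\hat\theta_s$-invariant functions on $\G$, exploit that the left action of the flows $e_{t\varphi}$ commutes with the right rotations $\hat\theta_s$ (and that $\Q$ and $V_\varphi$ are rotation-invariant), and descend both the form and the process to the quotient. Your explicit averaging projection $P$ and the Jensen-type contraction $\E(Pu,Pu)\le\E(u,u)$ are a slightly more detailed justification of the density/regularity step than the paper gives, but the substance is identical.
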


In order to see the second claim, let $g,\tilde g\in \G$ with
$\tilde g=g\circ \theta_z$ for some $z\in S^1$. Then obviously,
$$\tilde g_t(.,\omega)=\eta_t(\tilde g(.),\omega)=\eta_t(g(.+z),\omega)=g_t(.,\omega)\circ \theta_z.$$
Moreover,
$$\mathbf{P}^{\tilde g}=\mathbf{P}^{g}$$
since $V_\varphi(g\circ \theta_z)=V_\varphi(g)$ for all $\varphi$
under consideration and all $z\in S^1$.

\medskip

\medskip

\medskip

\medskip

The objects considered previously -- derivative,
Dirichlet form and Markov process on $\G_1$ --
 have canonical counterparts on $\Pe$.
The key to these new objects is the bijective map $\chi:\G_1\to\Pe$.

The {flow} generated by a smooth 'tangent vector' $\varphi:S^1\to\R$
through the point
 $\mu\in\Pe$ will be given by
$((e_{t\varphi})_*\mu)_{t\in\R}$. In these terms, the directional
derivative of a function $u:\Pe\to\R$ at the point
 $\mu\in\Pe$ in direction $\varphi\in\C^\infty(S^1,\R)$ can be expressed as
$$D_\varphi
u(\mu)=\lim_{t\to0}\frac1t\left[u((e_{t\varphi})_*\mu)-u(\mu)\right],$$
provided this limit exists.
 The {adjoint operator} to
$D_\varphi$ in $L^2(\Pe,\Pp)$ is given (on a suitable dense
subspace) by
$$
D^*_\varphi u(\mu)=-D_\varphi(\mu)-V_\varphi(\chi^{-1}(\mu))\cdot
u(\mu).$$ The {drift term} can be represented as
$$V_\varphi(\chi^{-1}(\mu))=
\beta\int_0^1 \varphi'(s) \, \mu(ds) \ + \
\sum_{I\in\Gaps(\mu)}\left[\frac{\varphi'(I_-)+\varphi'(I_+)}2-\frac{\varphi(I_+)-\varphi(I_-)}{|I|}\right].
$$

Given a sequence $\Phi=(\varphi_k)_{k\in \N}$  of smooth functions
on $S^1$ satisfying (\ref{summable}), we obtain a (regular, strongly
local, recurrent) Dirichlet form $\E$ on $L^2(\Pe,\Pp)$ by
\begin{equation}\label{p-dir-form}
\E(u,u)=\sum_k\int_\Pe |D_{\varphi_k} u(\mu)|^2 d\Pp(\mu).
\end{equation}
It is the image of the
Dirichlet form defined in (\ref{sum-dir}) under the map $\chi$.
The generator of $\E$ is given
on an appropriate dense subspace of $L^2(\Pe,\Pp)$ by
\begin{equation}\label{gen}
L=-\sum_{k=1}^\infty D^*_{\varphi_k} D_{\varphi_k}.
\end{equation}
For $\Pp$-a.e. $\mu_0\in\Pe$, the associated {Markov process} $(\mu_t)_{t\ge0}$ on $\Pe$ starting in $\mu_0$
 is given as
$$\mu_t(\omega)=g_t(\omega)_*\leb$$
where $(g_t)_{t\ge0}$ is the process on $\G$, starting in $g_0:=\chi^{-1}(\mu_0)$.
(As mentioned before, $(g_t)_{t\ge0}$ admits a more direct construction provided we
 restrict ourselves to a finite sequence $\Phi=(\varphi_k)_{k=1,\ldots,n}$.)
\end{subsection}

\begin{subsection}{Dirichlet Form and Stochastic Dynamics on $\G_0$ and $\Pe_0$}
For $s>0$ and $\varphi:[0,1]\to\R$ let the Sobolev norm
$\|\varphi\|_{H^s}$ be defined as in (\ref{sobolev-norm})
and let $H^s_0([0,1])$ denote the closure of $\C_c^\infty(]0,1[)$, the space of smooth
$\varphi:[0,1]\to\R$ with compact support in $]0,1[$. If $s\ge1/2$ (which is the only case we are interested in)
$H^s_0([0,1])$ can be identified with $\{\varphi\in H^s([0,1]):\ \varphi(0)=\varphi(1)=0\}$ or equivalently with
$\{\varphi\in H^s(S^1):\ \varphi(0)=0\}$. For the sequel, fix $s>1/2$ and a complete
 orthonormal basis $\Phi=\{\varphi_k\}_{k\in\N}$
 of $H^s_0([0,1])$ with $C:=\|\sum_k \varphi_k^2\|_\infty<\infty$, and define
 $$\E_0(u,u)=\sum_{k=1}^\infty \int_{\G_0}|D_{\varphi_k,0}u(g)|^2\, d\Q_0(g).$$

 \begin{corollary}\label{dir-form-G0}
$(\E_0,\Syl^1(\G_0))$, $(\E_0,\Zyl^1(\G_0))$ and $(\E_0,\Cyl^1(\G_0))$ are closable. Their closures coincide
and define a regular, strongly local, recurrent Dirichlet form  $(\E_0,
\Dom(\E_0))$ on $L^2(\G_0,\Q_0)$.
\end{corollary}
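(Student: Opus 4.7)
The plan is to mirror the proof of Theorem \ref{dir-form} step by step, replacing the ingredients on $S^1$ by their interval counterparts already prepared: the integration by parts formula $D^*_{\varphi,0} u = -D_\varphi u - V^\beta_{\varphi,0}\cdot u$ from Lemma \ref{Lie-deriv-i}(ii), together with the uniform bound on $V^\beta_{\varphi,0}$ coming from Lemma \ref{drift-lemma}(i) and (\ref{drift-ident-i}). This immediately gives closability of each pre-Dirichlet form
$$\E_{0,\varphi_k}(u,v)=\int_{\G_0} D_{\varphi_k,0}u\cdot D_{\varphi_k,0}v\, d\Q_0$$
on each of the cores $\Cyl^1(\G_0)$, $\Syl^1(\G_0)$, $\Zyl^1(\G_0)$. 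The sum $\E_0=\sum_k\E_{0,\varphi_k}$ is then closable provided it is finite on the core, which follows from the explicit gradient formulas in Lemma \ref{Lie-deriv-i}(i) together with $\|\sum_k\varphi_k^2\|_\infty\le C$. For instance, for $u\in\Syl^1(\G_0)$ written as $U(g(x_1),\ldots,g(x_m))$,
$$\E_0(u,u)\le m\|\nabla U\|_\infty^2\sum_{i=1}^m\int_{\G_0}\sum_k\varphi_k^2(g(x_i))\, d\Q_0(g)\le m^2\|\nabla U\|_\infty^2\cdot C<\infty,$$
with analogous bounds in the $\Cyl^1$ and $\Zyl^1$ cases (for the latter, the Cauchy--Schwarz argument produces an additional $\|\alpha_i'\|_\infty^2$ factor).

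Next I would show that the three closures coincide by mutual approximation, as in steps (g)-(h) of the proof of Theorem \ref{dir-form}. A function $u\in\Syl^1(\G_0)$ given by $U(g(x_1),\ldots,g(x_m))$ is approximated by the mollified $u_n(g)=U(g_{1/n}(x_1),\ldots,g_{1/n}(x_m))\in\Cyl^1(\G_0)$ where $g_\epsilon(x)=\frac1\epsilon\int_x^{x+\epsilon}g(t)dt$; right continuity of $g$ gives $u_n\to u$ $\Q_0$-a.e. and in $L^2(\Q_0)$ by dominated convergence, while $D_{\varphi_k,0}u_n(g)=\sum_i\partial_iU(\ldots)\cdot\frac1{1/n}\int_{x_i}^{x_i+1/n}\varphi_k(g(t))dt$ yields $\sup_n\E_0(u_n,u_n)\le m^2\|\nabla U\|_\infty^2 C$. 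Hence $u\in\Dom(\overline{\E_0|_{\Cyl^1}})$, and weak lower semicontinuity identifies the limit. A function $u\in\Zyl^1(\G_0)$ of the form $\int\alpha(g_t)dt$ is approximated by Riemann sums $\frac1n\sum_i\alpha(g_{i/n})\in\Syl^1(\G_0)$. The reverse embeddings are immediate since $\Syl^1\subset\Cyl^1$ under the convention of allowing $f_i$ to be Dirac-type approximations, or more cleanly, one closes the diagram by passing through the common closure.

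The main obstacle, though routine, will be verifying regularity: one must exhibit a subclass of $\Dom(\E_0)\cap\C(\G_0)$ which is dense in both spaces. Cylinder functions in $\Cyl^1(\G_0)$ are continuous on $\G_0$ equipped with the $L^2$-topology (Proposition \ref{g-top}), so the form core is already in $\C(\G_0)$. Density of this class in $\C(\G_0)$ follows from Stone--Weierstrass on the compact space $\G_0$, since the functions $g\mapsto\int f(t)g(t)dt$ for $f\in L^2(\eI)$ separate points of $\G_0\subset L^2(\eI)$ and the algebra generated by them is stable under composition with smooth $U$. Strong locality is obtained exactly as in step (f) of the proof of Theorem \ref{dir-form}: truncate via
$$w_{\delta,g_0}(f)=\left[\tfrac1\delta\bigl(r+\delta-\|f-g_0\|_{L^2}\bigr)\wedge 1\right]\vee 0,$$
which belongs to $\Dom(\E_0)$ by the $\Syl^1(\G_0)$-approximation argument applied to $g\mapsto\|g-g_0\|_{L^2}$ (here one uses the same piecewise-constant approximation $u_n(g)=(\frac1n\sum_i d_n(g(i/n)-g_0(i/n))^2)^{1/2}$ and the uniform bound on $\E_0(u_n)\le\sum_k\|\varphi_k\|_{L^2(\eI)}^2$). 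Finally, recurrence follows from conservativity on the compact state space: since constants lie in $\Dom(\E_0)$ with $\E_0(\1,\1)=0$, strong locality plus compactness yield recurrence by the standard criterion \cite{MR96f:60126}.
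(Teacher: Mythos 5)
Your approach is exactly the paper's: the paper's own proof of this corollary is a two-line pointer to the proofs of Theorem \ref{dir-form} (for $\Syl^1(\G_0)$ and $\Zyl^1(\G_0)$) and Theorem \ref{dir-form-cyl} (for $\Cyl^1(\G_0)$), and you transpose those arguments correctly. Two of your adaptations are genuinely the right fixes for the interval: replacing the circle identity $\int_\G\varphi_k^2(g(x_i))\,d\Q=\|\varphi_k\|_{L^2}^2$ (which fails for $\Q_0^\beta$, whose one-dimensional marginals are Beta rather than uniform) by the pointwise bound $\sum_k\varphi_k^2\le C$, and getting regularity directly from Stone--Weierstrass because $\Cyl^1(\G_0)$ and $\Zyl^1(\G_0)$ consist of $L^2$-continuous functions -- this is precisely the shortcut the paper itself uses in part (d) of the proof of Theorem \ref{wasserstein-dir-form}, in place of the heavier two-class density argument needed on $\G$.

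The one genuine loose end is the coincidence of the three closures. You establish $\Syl^1(\G_0)\subset\Dom$ of the closure of $(\E_0,\Cyl^1(\G_0))$ (mollification) and $\Zyl^1(\G_0)\subset\Dom$ of the closure of $(\E_0,\Syl^1(\G_0))$ (Riemann sums), which gives only a one-way chain of inclusions of the three closed forms; to conclude equality you must close the cycle, and your justification for the ``reverse embeddings'' does not do this: $\Syl^1(\G_0)\not\subset\Cyl^1(\G_0)$ (point evaluation $g\mapsto g(x)$ is not of the form $g\mapsto\int f(t)g(t)\,dt$ for any $f\in L^2$), and ``passing through the common closure'' presupposes what is to be proved. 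The missing inclusion $\Cyl^1(\G_0)\subset\Dom$ of the closure of $(\E_0,\Syl^1(\G_0))$ is, however, obtained by the same discretization device you already use elsewhere: approximate $g\mapsto\int_0^1 f(t)g(t)\,dt$ by $g\mapsto\frac1n\sum_{i}f_n(\tfrac in)g(\tfrac in)$ with continuous $f_n\to f$ in $L^2$, with the uniform energy bound again coming from $\|\sum_k\varphi_k^2\|_\infty\le C$ (this is the interval and $H^s$ analogue of step (d) in the proof of Theorem \ref{dir-form-cyl}). With that supplied, and with the ``bounded energy plus convergence of energies implies strong convergence in form norm'' step made explicit where the form values of the three closures must be identified, your argument is complete.
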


\begin{proof}
For the closability  (and the equivalence of the respective closures) of
$(\E_0,\Syl^1(\G_0))$ and  $(\E_0,\Zyl^1(\G_0))$, see the proof of Theorem \ref{dir-form}.
Also all the assertions on the closure are deduced in the same manner.
For the closability   of
$(\E_0,\Cyl^1(\G_0))$ (and the equivalence of its closure with the previously defined closures),
see the proof of Theorem \ref{dir-form-cyl} below.

\end{proof}

As explained in the previous subsection, these objects (invariant measure, derivative,
Dirichlet form and Markov process) on $\G_0$  have canonical counterparts on $\Pe_0$
defined by means of the bijective map $\chi:\G_0\to\Pe_0$.

 \end{subsection}
\end{section}

\begin{section}{The Canonical Dirichlet Form on the Wasserstein Space}
\begin{subsection}{Tangent Spaces and Gradients}
The aim of this chapter is to construct a canonical Dirichlet form on the $L^2$-Wasserstein space $\Pe_0$.
Due to the isometry $\chi:\G_0\to\Pe_0$ this is equivalent to construct a canonical Dirichlet form on the metric space
$(\G_0, \|.\|_{L^2})$. This can be realized in two geometric settings which seem to be completely different:
\begin{itemize}
\item
Like in the preceding two chapters, $\G_0$ can be considered as a group, with composition of functions as group operation.
The tangent space $T_g\G_0$ is the closure (w.r.t. some norm) of the space of smooth functions $\varphi:[0,1]\to \R$ with $\varphi(0)=\varphi(1)=0$.
Such a function $\varphi$ induces a flow on $\G_0$ by
$(g,t)\mapsto e_{t\varphi}\circ g\approx g+t\,\varphi\circ g$ and it defines a directional derivative
 by
$D_\varphi u(g)=\lim_{t\to0}\frac1t[u(e_{t\varphi}\circ g)-u(g)]$
for $u:\G_0\to\R$.
The norm on $T_g\G_0$ we now choose
to be
$\|\varphi\|_{T_g}:=(\int\varphi(g_s)^2ds)^{1/2}.$
That is, $$T_g\G_0:=L^2([0,1], g_*\leb).$$
For given $u$ and $g$ as above, a gradient $Du(g)\in T_g\G_0$ exists with
$$D_\varphi u(g)=\langle Du(g),\varphi\rangle_{T_g}\qquad (\forall \varphi\in T_g)$$
if and only if
$\sup_{\varphi}\frac{D_\varphi u(g)}{\|\varphi\circ g\|_{L^2}}<\infty$.

\item
Alternatively, we can regard $\G_0$ as a closed subset of the space $L^2([0,1],\leb)$. The linear structure of
the latter (with the pointwise addition of functions as group operation)
suggests to choose as tangent space
 $$\T_g\G_0:=L^2([0,1], \leb).$$
 An element $f\in\T_g\G_0$
 induces a flow by
$(g,t)\mapsto  g+tf$ and it defines a directional derivative ({\em 'Frechet derivative'})
 by
$\D_f u(g)=\lim_{t\to0}\frac1t[u(g+tf)-u(g)]$
for $u:\G_0\to\R$, provided $u$ extends to a neighborhood of $\G_0$ in $L^2([0,1],\leb)$ or the flow (induced by $f$)
stays within $\G_0$.
A gradient $\D u(g)\in \T_g\G_0$ exists with
$$\D_f u(g)=\langle \D u(g),f\rangle_{L^2}\qquad (\forall \varphi\in L^2)$$
if and only if
$\sup_{f}\frac{\D_f u(g)}{\|f\|_{L^2}}<\infty$.
In this case, $\D u(g)$ is the usual $L^2$-gradient.
\end{itemize}
Fortunately, both geometric settings lead to the same result.
\begin{lemma}
(i) For each $g\in\G_0$, the map $\iota_g:\varphi\mapsto \varphi\circ g$ defines an isometric embedding
of $T_g\G_0=L^2([0,1], g_*\leb)$ into $\T_g\G_0=L^2([0,1], \leb)$. For each (smooth)
cylinder function $u:\G_0\to\R$
$$D_\varphi u(g)=\D_{\varphi\circ g}u(g).$$
If $\D u\in L^2(\leb)$ exists  then $Du\in L^2(g_*\leb)$ also exists.

(ii) For $\Q_0$-a.e. $g\in\G_0$, the above map $\iota_g: T_g\G_0\to\T_g\G_0$ is even bijective.
For each $u$ as above
$D u(g)=\D u(g) \circ g^{-1}$
and
$$\|D u(g)\|_{T_g}=\|\D u(g)\|_{\T_g}.$$
\end{lemma}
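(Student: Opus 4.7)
The plan is to establish (i) by direct computation from the definitions, and then to extract (ii) from the $\Q_0^\beta$-a.s.\ strict monotonicity of $g$ that was proved in the Support Properties subsection of Section 3.

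For (i), I would first verify that $\iota_g$ is isometric via the push-forward identity
\[
\|\varphi\circ g\|_{L^2([0,1],\leb)}^2=\int_0^1\varphi(g(t))^2\,dt=\int_{[0,1]}\varphi(s)^2\,d(g_*\leb)(s)=\|\varphi\|_{T_g}^2.
\]
To identify $D_\varphi u(g)$ with $\D_{\varphi\circ g}u(g)$ I would invoke the uniform linearization $|e_{t\varphi}(x)-(x+t\varphi(x))|\le Ct^2$ from Section 5.1, which shows that the group perturbation $g\mapsto e_{t\varphi}\circ g$ and the additive perturbation $g\mapsto g+t(\varphi\circ g)$ differ by $O(t^2)$ in $L^\infty$, so they produce the same limit at $t=0$; this identification is in fact already recorded as part of Lemma \ref{Lie-deriv-i}. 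For the last claim of (i), if $\D u(g)\in L^2(\leb)$ exists then $D_\varphi u(g)=\langle\D u(g),\varphi\circ g\rangle_{L^2(\leb)}$, and Cauchy--Schwarz combined with the isometry yields $|D_\varphi u(g)|\le\|\D u(g)\|_{\T_g}\cdot\|\varphi\|_{T_g}$, so Riesz representation delivers $Du(g)\in T_g\G_0$.

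For (ii), I would first invoke the support property that $\Q_0^\beta$-a.e.\ $g\in\G_0$ is strictly increasing on $[0,1)$. For any such $g$, the generalized inverse $g^{-1}(s)=\inf\{t:g(t)>s\}$ satisfies $g^{-1}(g(t))=t$ for every $t\in[0,1)$: indeed, strict monotonicity gives $\{s:g(s)>g(t)\}=(t,1)$, whose infimum is $t$. I would then define $\jmath_g(f):=f\circ g^{-1}$ and check by the same change-of-variables that $\jmath_g:\T_g\G_0\to T_g\G_0$ is an isometry with $\iota_g\circ\jmath_g(f)(t)=f(g^{-1}(g(t)))=f(t)$, so $\iota_g$ is bijective with inverse $\jmath_g$. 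Combining this with (i), the representation $D_\varphi u(g)=\langle\D u(g),\varphi\circ g\rangle_{L^2(\leb)}$ rewrites as $\langle\D u(g)\circ g^{-1},\varphi\rangle_{T_g}$, and uniqueness of the Riesz representative forces $Du(g)=\D u(g)\circ g^{-1}$, from which $\|Du(g)\|_{T_g}=\|\D u(g)\|_{\T_g}$ follows at once from the isometry.

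The main point requiring care is the identity $g^{-1}\circ g=\mathrm{id}$, on which the surjectivity of $\iota_g$ ultimately rests; this is exactly where the $\Q_0^\beta$-a.s.\ strict monotonicity of the Dirichlet process enters and why (ii), in contrast to (i), is a genuinely almost-sure rather than deterministic statement. A mild technical nuisance is that $g$ has countably many jump discontinuities (so $g\circ g^{-1}\ne\mathrm{id}$), but since the jump set is Lebesgue-negligible and we work exclusively in $L^2$ spaces, these do not affect any of the displayed identities.
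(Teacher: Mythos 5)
Your proposal is correct and follows essentially the same route as the paper: part (i) is a direct computation with the push-forward identity and the linearization recorded in Lemma \ref{Lie-deriv-i}, and part (ii) rests on the $\Q_0^\beta$-a.s.\ identity $g^{-1}\circ g=\mathrm{id}$ (the paper phrases this via continuity of $g^{-1}$, you via strict monotonicity of $g$ --- the same fact), which exhibits $f\mapsto f\circ g^{-1}$ as the explicit inverse of $\iota_g$. You merely spell out details the paper dismisses as obvious, and your closing remark correctly notes that only $g^{-1}\circ g=\mathrm{id}$, never $g\circ g^{-1}=\mathrm{id}$, is needed.
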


\begin{proof}
(i) is obvious, (ii) follows from the fact that for  $\Q_0$-a.e. $g\in\G_0$ the generalized inverse $g^{-1}$ is continuous
and thus $g^{-1}(g_t)=t$ for all $t$ (see sections 3.5 and 2.1). Hence,
the  map $\iota_g: T_g\G_0\to\T_g\G_0$ is surjective: for each $f\in \T_g\G_0$
$$\iota_g(f\circ g^{-1})=f\circ  g^{-1}\circ g=f.$$
\end{proof}

\begin{example}\rm\label{ex-grad}
(i) For each $u\in \Zyl^1(\G_0)$ of the form $u(g)=U(\int_0^1 \vec \alpha (g_t)dt)$ with $U\in \C^1(\R^m,\R)$ and
$\vec \alpha=(\alpha_1,\ldots,\alpha_m)\in \C^1([0,1],\R^m)$,
the gradients $Du(g)\in T_g\G_0=L^2(\eI, g_*\leb)$ and $\D u(g)\in \T_g\G_0=L^2(\eI, \leb)$ exist:
$$\D u(g)=\sum_{i=1}^m \partial_i U(\smint \vec \alpha (g_t)dt)\cdot \alpha_i'(g(.)),\qquad
D u(g)=\sum_{i=1}^m \partial_i U(\smint \vec \alpha (g_t)dt)\cdot \alpha_i'(.)$$
and their norms coincide:
$$\|D u(g)\|_{T_g}^2=\|\D u(g)\|_{\T_g}^2=
\int_0^1\left|\sum_{i=1}^m \partial_i U(\smint \vec \alpha (g_t)dt)\cdot \alpha_i'(g(s))\right|^2ds.$$
(ii) For each $u\in \Cyl^1(\G_0)$ of the form $u(g)=U(\int_0^1 \vec f(t) g(t)dt)$ with $U\in \C^1(\R^m,\R)$ and
$\vec f=(f_1,\ldots,f_m)\in L^2([0,1],\R^m)$,
the gradient
$$\D u(g)=\sum_{i=1}^m \partial_i U(\smint \vec f(t)  g(t)dt)\cdot \alpha_i(.)\ \in\  L^2(\eI,\leb)$$
exists
and
$$\|\D u(g)\|_{\T_g}^2=
\int_0^1\left|\sum_{i=1}^m \partial_i U(\smint \vec f(t) g(t)dt)\cdot f_i(s)\right|^2ds.$$
\end{example}

For $u\in\Cyl^1(\G_0)\cup\Zyl^1(\G_0)$, the gradient $\D u$ can be regarded as a map $\G_0\times\eI\to\R$,
$(g,t)\mapsto \D u(g)(t)$.
More precisely,
$$\D: \ \Cyl^1(\G_0)\cup\Zyl^1(\G_0)\ \to \ L^2(\G_0\times\eI, \Q_0\otimes\leb).$$

\begin{proposition}\label{close-d}
The operator
$\D: \ \Zyl^1(\G_0)\ \to \ L^2(\G_0\times\eI, \Q_0\otimes\leb)$
is closable in $L^2(\G_0,\Q_0)$.
\end{proposition}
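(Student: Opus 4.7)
The plan is to reduce closability of $\D$ to the integration by parts formula for the directional derivatives $D_\varphi$ already established, via the identity $D_\varphi u(g) = \int_0^1 \D u(g)(s)\,\varphi(g(s))\,ds$ valid for $u\in \Zyl^1(\G_0)$ and $\varphi \in \C^\infty(\eI,\R)$ with $\varphi(0)=\varphi(1)=0$. So suppose $u_n \in \Zyl^1(\G_0)$ satisfies $u_n \to 0$ in $L^2(\G_0, \Q_0)$ and $\D u_n \to F$ in $L^2(\G_0\times\eI, \Q_0\otimes\leb)$; the task is to show $F=0$.

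First, Cauchy--Schwarz applied inside the $s$-integral gives the estimate $\|D_\varphi u_n - G_\varphi\|_{L^2(\Q_0)} \le \|\varphi\|_\infty \, \|\D u_n - F\|_{L^2(\Q_0\otimes\leb)} \to 0$, where $G_\varphi(g) := \int_0^1 F(g,s)\,\varphi(g(s))\,ds$. On the other hand, for every bounded $v \in \Syl^1(\G_0)$ with bounded gradient, the integration by parts formula yields $\int_{\G_0} (D_\varphi u_n)\, v\,d\Q_0 = -\int_{\G_0} u_n\,(D_\varphi v + V^\beta_{\varphi,0}\, v)\,d\Q_0$, whose right-hand side tends to $0$ because $u_n \to 0$ in $L^2(\Q_0)$ and the bracket is bounded (the explicit formula for $D_\varphi v$ and the uniform estimate on $V^\beta_{\varphi,0}$ in terms of $\|\varphi''\|_{L^1}$). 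Letting $v$ range over a countable dense subset of $L^2(\Q_0)$ yields $G_\varphi = 0$ a.e., and letting $\varphi$ range over a countable $\|\cdot\|_\infty$-dense family of $\C_c(]0,1[)$ exhibits a $\Q_0$-full set of $g \in \G_0$ on which $\int_0^1 F(g,s)\,\varphi(g(s))\,ds = 0$ for every $\varphi \in \C_c(]0,1[)$.

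The main obstacle is the final, measure-theoretic step: deducing $F(g,\cdot) \equiv 0$ in $L^1(\eI,\leb)$ for such $g$. Setting $\nu_g(ds) := F(g,s)\,ds$, the above vanishing says $g_*\nu_g = 0$ as a signed measure on $\eI$. A naive density argument inside $L^2(\eI,\leb)$ is unavailable, because $\{\varphi\circ g : \varphi\in\C_c\}$ is only an isometric copy of $L^2(g_*\leb)$, and by the support properties of $\Q^\beta_0$ the measure $g_*\leb$ is Cantor-like, so this is a proper subspace of $L^2(\eI,\leb)$. What rescues the argument is the $\Q_0$-a.s.\ strict monotonicity of $g$ together with the continuity of $g^{-1}$, which give $g^{-1}\circ g = \mathrm{id}$ on $\eI$: for any Borel $B\subset\eI$, the set $A:=(g^{-1})^{-1}(B)$ is Borel by continuity of $g^{-1}$, and $g^{-1}(A) = \{s : g^{-1}(g(s))\in B\} = B$. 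Hence $\nu_g(B) = \nu_g(g^{-1}(A)) = (g_*\nu_g)(A) = 0$ for every Borel $B$, so $\nu_g \equiv 0$. Fubini then yields $F=0$ in $L^2(\Q_0\otimes\leb)$, establishing closability.
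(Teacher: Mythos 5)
Your proof is correct and follows essentially the same strategy as the paper: reduce closability to the integration by parts formula for $D_\varphi$, and then deal with the disintegration over the jump-time variable $s$. The paper's own proof tests $V = \lim\D u_n$ against functions $W(g,t)=w(g)\varphi(g_t)$ (with $w\in\Zyl^1(\G_0)$) and concludes $V=0$ after asserting, in one line, that the linear hull of such $W$ is dense in $L^2(\G_0\times\eI,\Q_0\otimes\leb)$; that density assertion is exactly the point you worried about and then justified.

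What you add beyond the paper's proof is the explicit resolution of the pitfall you flag: since $\Q_0^\beta$-a.e.\ $g$ pushes $\leb$ to a Cantor-like measure, $\{\varphi\circ g:\varphi\in\C_c(]0,1[)\}$ is a proper closed subspace of $L^2(\eI,\leb)$, so the density needed to pass from $\int_0^1 F(g,s)\varphi(g_s)\,ds=0$ to $F(g,\cdot)=0$ is not automatic. Your rescue using strict monotonicity of $g$ and continuity of $g^{-1}$ (so that $g^{-1}\circ g=\mathrm{id}$) is exactly the content of the paper's Lemma in Section 7.1 (bijectivity of $\iota_g:T_g\G_0\to\T_g\G_0$ for $\Q_0$-a.e.\ $g$), which the paper proves just before Proposition~\ref{close-d} but does not explicitly invoke in the closability proof. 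So your argument makes transparent what the paper leaves implicit. One tiny point you glossed over: testing against $\C_c(]0,1[)$ only kills $g_*\nu_g$ on the open interval; to get $g_*\nu_g\equiv 0$ on $\eI$ you should also note $g_*\nu_g(\{0,1\})=\nu_g(\{s:g(s)\in\{0,1\}\})=0$ since $\nu_g\ll\leb$ and the preimage has at most two points for strictly increasing $g$. With that remark the argument is complete.
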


\begin{proof}
Let $W\in L^2(\G_0\times\eI, \Q_0\otimes\leb)$ be
of the form
$W(g)=w(g)\cdot \varphi(g_t)$ with some $w\in\Zyl^1(\G_0)$  and some
$\varphi\in\C^\infty(\eI)$ satisfying $\varphi(0)=\varphi(1)=0$.
Then
 according to the integration by parts formula
 for each $u\in \Zyl^1(\G_0)$ with $u(g)=U(\int_0^1 \vec \alpha (g_s)ds)$
\begin{eqnarray*}
\int_{\G_0\times \eI} \D u\cdot W\, d(\Q_0\otimes\leb)&=&
\int_{\G_0}\int_0^1 \sum_{i=1}^m \partial_i U(\smint \vec \alpha (g_s)ds)\alpha'_i(g_t)
w(g)\varphi(g_t)dtd\Q_0(g)\\
&=&
\int_{\G_0} D_{\varphi}u(g) w(g)\, d\Q_0(g)=
\int_{\G_0} u(g) D^*_{\varphi} w(g)\, d\Q_0(g).
\end{eqnarray*}
To prove the closability of $\D$, consider a sequence $(u_n)_n$ in $\Zyl^1(\G_0)$ with $u_n\to0$ in
$L^2(\Q_0)$ and $\D u_n\to V$ in $L^2(\Q_0\otimes\leb)$. Then
\begin{eqnarray}\label{int-d}
\int V\cdot W\, d(\Q_0\otimes\leb)=\lim_n\int \D u_n \cdot W\, d(\Q_0\otimes\leb)
=\lim_n\int u_nD_\varphi^*  w\, d\Q_0=0
\end{eqnarray}
for all $W$ as above. The linear hull of the latter is dense in $L^2(\Q_0\otimes\leb)$.
Hence, (\ref{int-d}) implies $V=0$ which proves the closability of $\D$.
\end{proof}

The closure of $(\D, \Zyl^1(\G_0))$ will be denoted by $(\DD,\Dom(\DD)$. Note that a priori it is not clear whether $\DD$ coincides
with $\D$ on $\Cyl^1(\G_0)$. (See, however, Theorem \ref{dir-form-cyl} below.)

\end{subsection}

\begin{subsection}{The Dirichlet Form}
\begin{definition}
For $u,v\in \Zyl^1(\G_0)\cup \Cyl^1(\G_0)$ we define the 'Wasserstein Dirichlet integral'
\begin{equation}\label{Wasserstein-dir-int}
\EE(u,v)=\int_{\G_0} \langle\D u(g),  \D v(g)\rangle_{L^2} \, d\Q_0(g).
\end{equation}
\end{definition}

\begin{theorem}\label{wasserstein-dir-form}
{\bf (i)} \ $(\EE,\Zyl^1(\G_0))$ is closable. Its closure $(\EE,
\Dom(\EE))$ is a regular, recurrent Dirichlet form on $L^2(\G_0,\Q_0)$.

$\Dom(\EE)=\Dom(\DD)$ and for all $u,v\in\Dom(\DD)$
$$\EE(u,v)=\int_{\G_0\times\eI}\DD u\cdot \DD v\, \ d(\Q_0\otimes\leb).$$

{\bf (ii)} \ The set $\Zyl^\infty_0(\G_0)$ of all cylinder functions $u\in \Zyl^\infty(\G_0)$
of the form $u(g)=U(\int\vec\alpha(g_s)ds)$ with
$U\in\C^\infty(\R^m,\R)$ and $\vec\alpha=(\alpha_1,\ldots,\alpha_m)\in\C^\infty(\eI, \R^m)$ satisfying $\alpha_i'(0)=\alpha_i'(1)=0$
is a core for $(\EE, \Dom(\EE))$.

{\bf (iii)} \
The generator $(\LL, \Dom(\LL)$ of $(\EE, \Dom(\EE))$ is the Friedrichs  extension of the operator
$(\LL, \Zyl^\infty_0(\G_0)$ given  by
\begin{eqnarray*}
\LL u(g)&=&
-\sum_{i=1}^m D_{\alpha_i}^*u_i(g)\\
&=&
\sum_{i,j=1}^m{\partial_i\partial_j}U\left(\smint\vec\alpha(g_s)ds\right)\cdot\int_0^1\alpha_i'(g_s)\alpha_j'(g_s)ds
\ +
\
\sum_{i=1}^m{\partial_i}U\left(\smint\vec\alpha(g_s)ds\right)\cdot
V^\beta_{\alpha_i'}(g)
\end{eqnarray*}
where
$u_i(g):=\partial_i U(\smint \vec\alpha(g_s) ds)$ and
 $V_{\alpha_i'}^\beta(g)$ denotes the drift term defined in section 5.1 with $\varphi=\alpha_i'$;
 $\beta>0$ is the parameter of the entropic measure fixed throughout the whole chapter.

{\bf (iv)} \ The Dirichlet form $(\EE,\Dom(\EE))$ has a square field operator given by
$$\Gamma(u,v):=\langle \DD u, \DD v\rangle_{L^2(Leb)}\quad \in L^1(\G_0,\Q_0)$$
with $\Dom(\Gamma)=\Dom(\EE)\cap L^\infty(\G_0,\Q_0)$. That is, for all $u,v,w\in
\Dom(\EE)\cap L^\infty(\G_0,\Q_0)$
\begin{equation}
\label{gamma-op}
2\int w\cdot \Gamma(u,v)\, d\Q_0=\EE(u,vw)+\EE(uw,v)-\EE(uv,w).
\end{equation}
\end{theorem}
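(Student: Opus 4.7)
The plan is to establish the four parts of the theorem in order, building everything on the identity $\EE(u,u)=\int_{\G_0}\|\D u(g)\|_{L^2(\leb)}^2\,d\Q_0(g)$ valid on $\Zyl^1(\G_0)$ by Example \ref{ex-grad}(i). For (i), this identity together with Proposition \ref{close-d} shows that $(\EE,\Zyl^1(\G_0))$ is closable and that its closure satisfies $\Dom(\EE)=\Dom(\DD)$ with the stated polarized formula. The Markov property reduces to the fact that for any smooth normal contraction $\eta$ and $u\in\Zyl^\infty(\G_0)$ one has $\D(\eta\circ u)=\eta'(u)\D u$ by the ordinary chain rule in $\R^m$, so $\|\D(\eta\circ u)\|\le\|\D u\|$ pointwise; this extends to $\Dom(\DD)$ by closure. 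Regularity follows by a Stone--Weierstrass argument, since $\Zyl^\infty(\G_0)$ is a point-separating subalgebra of $C(\G_0)$ containing constants (using $g\mapsto\int f(t)g(t)\,dt$ together with Proposition \ref{g-top}). Recurrence is automatic from compactness of $\G_0$ and $\EE(1,1)=0$.

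For (iii), the computation is the heart of the proof. For $u,v\in\Zyl^\infty_0(\G_0)$ of the stated form with $v(g)=V(\int\vec\beta(g_s)\,ds)$, Example \ref{ex-grad}(i) gives $\D v(g)(t)=\sum_j v_j(g)\beta_j'(g_t)$ with $v_j(g):=\partial_j V(\int\vec\beta(g_s)\,ds)$, hence
\begin{equation*}
\EE(u,v)=\sum_{j=1}^m\int_{\G_0}\int_0^1\D u(g)(t)\,\beta_j'(g_t)\,v_j(g)\,dt\,d\Q_0(g)=\sum_{j=1}^m\int_{\G_0}v_j(g)\cdot D_{\beta_j'}u(g)\,d\Q_0(g),
\end{equation*}
where the second equality uses that $\D u(g)$ acts on $\varphi\circ g$ exactly as the directional derivative $D_\varphi u$. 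The assumption $\beta_j'(0)=\beta_j'(1)=0$ makes $\beta_j'$ an admissible direction, so Lemma \ref{Lie-deriv-i}(ii) applies and transfers the derivative onto $v_j$, picking up the drift $V^\beta_{\beta_j'}$. Expanding $D_{\beta_j'}v_j$ via the same chain rule produces the second-order term $\sum_{i,j}\partial_i\partial_j V\cdot\int\alpha_i'(g_s)\beta_j'(g_s)\,ds$ when we specialize $v_j$. Setting $u=v$ and using symmetry identifies the Friedrichs generator $\LL$ on the core with the operator in the statement. For (ii), I would establish that $\Zyl^\infty_0(\G_0)$ is dense in $\Dom(\EE)$ by approximating an arbitrary $u=U(\int\vec\alpha(g_s)\,ds)\in\Zyl^\infty(\G_0)$: replace each $\alpha_i$ by a smooth cutoff $\alpha_i^\varepsilon$ agreeing with $\alpha_i$ outside $[0,\varepsilon]\cup[1-\varepsilon,1]$ and with $(\alpha_i^\varepsilon)'$ vanishing at the endpoints, chosen so that $\|\alpha_i^\varepsilon-\alpha_i\|_\infty\to 0$ and $(\alpha_i^\varepsilon)'$ remain uniformly bounded; Fubini and $\Q_0(g_t\in[0,\varepsilon]\cup[1-\varepsilon,1])\to 0$ then give convergence of the gradients in $L^2(\Q_0\otimes\leb)$.

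For (iv), the product rule $\D(uv)=u\D v+v\D u$ on $\Zyl^\infty_0(\G_0)$ is immediate from Example \ref{ex-grad}(i), so expanding the right-hand side of \eqref{gamma-op} yields
\begin{equation*}
\EE(uw,v)+\EE(u,vw)-\EE(uv,w)=\int_{\G_0}\int_0^1 2w\,\D u\,\D v\,dt\,d\Q_0=2\int_{\G_0}w\,\Gamma(u,v)\,d\Q_0
\end{equation*}
on the core, and extends to $\Dom(\EE)\cap L^\infty(\Q_0)$ by the density from (ii) together with the Cauchy--Schwarz bound $|\Gamma(u,v)|\le\Gamma(u,u)^{1/2}\Gamma(v,v)^{1/2}$ and the bilinearity of $\EE$. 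The main obstacle I anticipate is the core approximation in (ii): the boundary condition $\alpha_i'(0)=\alpha_i'(1)=0$ is needed precisely to apply the integration-by-parts formula of Lemma \ref{Lie-deriv-i}, and one must modify $\alpha_i$ near the endpoints without losing control over the Wasserstein energy, which forces a careful quantitative use of the fact that, under $\Q_0$, the boundary layers $\{g_t\in[0,\varepsilon]\}$ have small mass uniformly in $t$. Once this density is in hand, everything else is essentially bookkeeping built on the integration-by-parts formula.
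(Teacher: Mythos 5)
Your proposal is correct and follows essentially the same route as the paper: closability via Proposition \ref{close-d}, identification of the generator on $\Zyl^\infty_0(\G_0)$ via the key identity $\langle\D u,\D v\rangle_{L^2}=\sum_j v_j\,D_{\beta_j'}u$ plus the integration by parts formula of Lemma \ref{Lie-deriv-i}, density of $\Zyl^\infty_0$ by smoothing the $\alpha_i$ near the endpoints, and the Leibniz rule for the square field operator.

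One place where your argument is genuinely a bit cleaner is (ii). The paper first shows $\EE(u_\epsilon,u_\epsilon)\to\EE(u,u)$ and $u_\epsilon\to u$ in $L^2$, and then deduces strong convergence in $\Dom(\EE)$ by a weak-compactness plus norm-convergence argument. You instead propose to prove $\D u_\epsilon\to\DD u$ directly in $L^2(\Q_0\otimes\leb)$, which immediately gives Cauchyness of $(u_\epsilon)$ in the form norm. This works: the error in the gradients is controlled by $\|(\alpha_i^\epsilon)'-\alpha_i'\|_\infty\cdot\1_{\{g_t\in[0,\varepsilon]\cup[1-\varepsilon,1]\}}$ plus a uniformly small piece, and since for $\Q_0$-a.e.\ $g$ the function $g$ is strictly increasing with $g(0)=0,\ g(1)=1$, the set $\{(g,t):g_t\in\{0,1\}\}$ is $\Q_0\otimes\leb$-null, so dominated convergence applies (note the convergence $\Q_0(g_t\in[0,\varepsilon])\to 0$ is not uniform in $t$ near the boundary, so one really needs the dominated-convergence / Fubini step you invoke, not a uniform bound). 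Two minor slips: the point-separation step for regularity should be carried out with $\Zyl$-type functions $g\mapsto\int\alpha(g_s)\,ds$ (take $\alpha$ strictly increasing), not the $\Cyl$-type linear functionals $g\mapsto\int f(t)g(t)\,dt$ you cite, which live in a different cylinder class; and you only discuss approximating $u\in\Zyl^\infty$ whereas one must approximate all of $\Zyl^1$, though that extra step is routine mollification of $U$ and $\vec\alpha$. Neither affects the substance.
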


\begin{proof}{\bf (a)} \
The closability of the form $(\EE,\Zyl^1(\G_0))$ follows immediately from the previous Proposition
\ref{close-d}. Alternatively, we can deduce it from assertion (iii) which we are going to prove first.

{\bf (b)} \ Our first claim is that $\EE(u,w)=-\int u \cdot \LL w\, d\Q_0$ for all $u,w\in \Zyl_0^\infty(\G_0)$.
Let $u(g)=U(\int \vec\alpha(g_s) ds) $ and
$w(g)=W(\int \vec\gamma(g_s) ds) $
with
$U, W\in\C^\infty(\R^m,\R)$ and $\vec\alpha=(\alpha_1,\ldots,\alpha_m), \vec\gamma=(\gamma_1,\ldots,\gamma_m)
\in\C^\infty(\eI, \R^m)$ satisfying $\alpha_i'(0)=\alpha_i'(1)=\gamma_i'(0)=\gamma_i'(1)=0$.
Observe that
\begin{eqnarray*}\langle \D u(g),\D w(g)\rangle_{L^2}&=&\sum_{i,j=1}^m \partial_i U(\smint \vec\alpha(g_s) ds)\cdot
\partial_j W(\smint \vec\gamma(g_s) ds)\cdot \int_0^1 \alpha'_i(g_s)\gamma'_j(g_s)ds\\
&=&
\sum_{i=1}^m u_i(g)\cdot D_{\alpha'_i}w(g).
\end{eqnarray*}
Hence, according to the integration by parts formula from Proposition \ref{adjoint-i}
\begin{eqnarray*}
\EE(u,w)&=&\int_{\G_0}\langle \D u(g),\D w(g)\rangle_{L^2}\, d\Q_0(g)\\
&=&\sum_{i=1}^m \int_{\G_0}u_i(g)\cdot D_{\alpha'_i}w(g)\, d\Q_0(g)\\
&=&\sum_{i=1}^m \int_{\G_0} D^*_{\alpha'_i}u_i(g) \cdot w(g)\, d\Q_0(g)\\
&=&-\int_{\G_0} \LL u(g)\cdot w(g)\, d\Q_0(g).
\end{eqnarray*}
This proves our first claim. In particular, $(\LL,\Zyl_0^\infty(\G_0)) $ is a symmetric operator.
Therefore, the form $(\EE,\Zyl_0^\infty(\G_0))$ is closable and its generator coincides with the Friedrichs extension of
$\LL$.

{\bf (c)} \ Now let us prove that $\Zyl_0^\infty(\G_0)$ is dense in $\Zyl^1(\G_0)$. That is, let us
 prove that each function $u\in\Zyl^1(\G_0)$ can be approximated by functions $u_\epsilon\in\Zyl^\infty_0(\G_0)$.
For simplicity, assume that $u$ is of the form $u(g)=U(\int \alpha(g_s) ds) $ with $U\in C^1(\R)$ and $\alpha
\in C^1(\eI)$. (That is, for simplicity, $m=1$.)
Let $U_\epsilon\in C^\infty(\R)$ for $\epsilon>0$ be smooth approximations of $U$ with
$\|U-U_\epsilon\|_\infty+\|U'-U'_\epsilon\|_\infty\to0$ as $\epsilon\to0$ and let
 $\alpha_\epsilon\in C^\infty(\R)$  with $\alpha'_\epsilon(0)=\alpha'_\epsilon(1)=0$
 be smooth approximations of $\alpha$ with
$\|\alpha-\alpha_\epsilon\|_\infty\to0$  and
$\alpha'_\epsilon(t)\to\alpha'(t)$ for all $t\in]0,1[$ as $\epsilon\to0$.
Moreover, assume that $\sup_\epsilon\|\alpha'\|_\infty<\infty$.

Define $u_\epsilon\in\Zyl^\infty_0(\G_0)$ as
$u_\epsilon(g)=U_\epsilon(\int \alpha_\epsilon(g_s) ds) $.
Then
  $u_\epsilon \to u$ in $L^2(\G_0,\Q_0)$ by dominated convergence relative $\Q_0$.

   Since
\[ \sup_\epsilon \sup_{g\in \G}
\bigl(U'_\epsilon(\smint \alpha_\epsilon(g(s)) ds)\bigr)^2 \smint_{\eI} \alpha_\epsilon '(g_s)^2 ds \leq C,\]
\[ (\alpha_\epsilon ')^2(g(s))\stackrel{\epsilon \to 0}{\longrightarrow}  \alpha'(g_s)^2 \quad \forall s \in \eI\setminus\bigl(  \{g=0\}\cap \{g=1\}\bigr),\]
and
\[
\eI\setminus\bigl(  \{g=0\}\cap \{g=1\}\bigr) =]0,1[ \mbox{ for } \Q_0\mbox{-almost all } g \in \G_0\]
one finds by dominated convergence in $L^2(\eI, \leb)$, for $ \Q_0\mbox{-almost all } g \in \G_0$
\[ \bigl(U'_\epsilon(\smint \alpha_\epsilon(g_s) ds)\bigr)^2 \smint_{\eI} \alpha_\epsilon '(g_s)^2 ds
\stackrel{\epsilon \to 0}{\longrightarrow}  \bigl(U'(\smint \alpha (g_s) ds)\bigr)^2 \smint_{\eI} \alpha'(g_s)^2 ds.  \]
Hence also with
\begin{eqnarray*}
\EE(u_\epsilon,  u_\epsilon) &=&
\int_{\G_0}  \bigl(U'_\epsilon(\smint \alpha_\epsilon(g_s) ds)\bigr)^2\cdot \smint \alpha_\epsilon '(g_s)^2ds \Q_0(dg)\\
& \stackrel{\epsilon \to 0}{\longrightarrow}&
 \int_{\G_0} \bigl(U'(\smint \alpha(g_s) ds)\bigr)^2 \cdot\smint \alpha  '(g_s)^2ds \Q_0(dg)
 \end{eqnarray*}
by  dominated convergence in $L^2(\G_0, \Q_0)$. In particular,   $\{u_\epsilon\}_\epsilon$ constitutes a
 Cauchy sequence relative to the norm   $\|v\|_{\EE,1}^2 :=\|v\|_{L^2(\G,\Q)} ^2 + \EE(v,v)$.
 In fact, since the sequence $u_\epsilon $ is uniformly bounded w.r.t.  to $\|.\|_{\EE,1}$,
 by weak compactness there is a weakly converging subsequence in $(\Dom(\EE), \|.\|_{\EE,1})$.
 Since the associated norms converge, the convergence is actually strong in $(\Dom(\EE), \|.\|_{\EE,1})$.
 Moreover, since $u_\epsilon \to u $ in $L^2(\G_0, \Q_0)$, this limit is unique.
 Hence the entire sequence converges to $u \in (\Dom(\EE),\|.\|_{\EE,1})$,
 such that in particular $\EE(u,u) = \lim_{\epsilon \to 0}\EE(u_\epsilon,u_\epsilon)$.

 This proves our second claim. In particular,
it implies that also
$(\EE,\Zyl^1(\G_0))$ is closable and that
 the closures of
 $\Zyl_0^\infty(\G_0)$ and $\Zyl^1(\G_0)$ coincide.

{\bf (d)} \ Obviously, $(\EE, \Dom(\EE))$ has the Markovian property. Hence, it is a Dirichlet form.
Since the constant functions belong to $\Dom(\EE)$, the form is recurrent.
Finally, the set $\Zyl^1(\G_0)$ is dense in $(\C(\G_0),\|.\|_\infty)$ according to the theorem of Stone-Weierstrass
since it separates the points in the compact metric space $\G_0$. Hence,  $(\EE, \Dom(\EE))$ is regular.

{\bf (e)} \  According to Leibniz' rule, (\ref{gamma-op}) holds true for all $u,v,w\in\Zyl^1(\G_0)$.
Arbitrary  $u,v,w\in
\Dom(\EE)\cap L^\infty(\G_0,\Q_0)$ can be approximated in $(\EE(.)+\|.\|^2)^{1/2}$ by $u_n,v_n,w_n\in\Zyl^1(\G_0)$
which are uniformly bounded on $\G_0$.
Then
$u_nv_n\to uv$, $u_nw_n\to uw$ and $v_nw_n\to vw$ in $(\EE(.)+\|.\|^2)^{1/2}$.
Moreover, we may assume that $w_n\to w$ $\Q_0$-a.e. on $\G_0$ and thus
$$\int\left|
w\Gamma(u,v)-w_n\Gamma(u_n,v_n)\right|d\Q_0\le
\int|w-w_n|\Gamma(u,v)d\Q_0+\int|w_n|\cdot|\Gamma(u,v)-\Gamma(u_n,v_n)|d\Q_0\to0$$
by dominated convergence. Hence, (\ref{gamma-op}) carries over from $\Zyl^1(\G_0)$ to
$\Dom(\EE)\cap L^\infty(\G_0,\Q_0)$.
\end{proof}

\begin{lemma}\label{l2-diff} For each $f\in \G_0$ the function $u:g\mapsto \langle f,g\rangle_{L^2}$ belongs to $\Dom(\EE)$.
\end{lemma}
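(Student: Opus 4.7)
The function $u(g)=\int_0^1 f(t)g(t)\,dt$ is a cylinder function in $\Cyl^1(\G_0)$ (with $m=1$, $U=\mathrm{id}$, $f_1=f$). By Example \ref{ex-grad}(ii), its $L^2$--Fr\'echet gradient exists and equals the constant (in $g$) element $\D u(g)=f\in L^2(\eI,\leb)$; hence $\int_{\G_0}\|\D u(g)\|_{L^2}^2\,d\Q_0=\|f\|_{L^2}^2<\infty$. The task is therefore to exhibit $u\in\Dom(\DD)=\Dom(\EE)$ (cf.\ Theorem \ref{wasserstein-dir-form}(i)).

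The strategy is to produce a sequence $u_n\in\Zyl_0^\infty(\G_0)$ with $u_n\to u$ in $L^2(\Q_0)$ and $\sup_n\EE(u_n,u_n)<\infty$; by Proposition \ref{close-d} and lower semi--continuity of the closed form, this forces $u\in\Dom(\EE)$. The key observation is that, since $g^{-1}$ is continuous for $\Q_0$-a.e.\ $g$ (see section 2.1 and the support properties in Corollary \ref{support-P}), Fubini gives
\[
\int_0^1 f(t)g(t)\,dt \;=\; F(1)-\int_0^1 F\bigl(g^{-1}(r)\bigr)\,dr,\qquad F(y):=\int_0^y f(s)\,ds,
\]
and that $g^{-1}(r)=\int_0^1\one_{[0,r]}(g(s))\,ds$ is of $\Zyl$-type in $g$ up to smoothing the indicator. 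Accordingly, given smooth $f_n\in \mathcal{C}^\infty(\eI)$ with $f_n\to f$ in $L^2$ and $f_n$ vanishing near the endpoints, and a smooth monotone approximation $\chi_{k,\delta}$ of $\one_{[0,k/N]}$ with $\chi'_{k,\delta}(0)=\chi'_{k,\delta}(1)=0$, one sets
\[
u_{n,N,\delta}(g)\;:=\;F_n(1)\;-\;\tfrac1N\sum_{k=1}^{N}F_n\!\left(\int_0^1\chi_{k,\delta}\bigl(g(s)\bigr)\,ds\right),\qquad F_n(y):=\int_0^y f_n,
\]
which lies in $\Zyl_0^\infty(\G_0)$. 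As $\delta\to 0$, $N\to\infty$, $n\to\infty$ (in this order, along a diagonal subsequence), dominated convergence together with the $\Q_0$-a.s.\ continuity of $g^{-1}$ yields $u_{n,N,\delta}\to u$ in $L^2(\Q_0)$.

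The main obstacle is the uniform $L^2$-bound on the gradient
\[
\D u_{n,N,\delta}(g)(s)\;=\;-\tfrac1N\sum_{k=1}^{N}f_n\!\left(\int_0^1\chi_{k,\delta}(g(r))\,dr\right)\chi'_{k,\delta}\bigl(g(s)\bigr)
\]
as the smoothing parameter $\delta$ degenerates (since $\chi'_{k,\delta}$ concentrates near $k/N$). This is controlled by exploiting the pure-jump structure of $g$ under $\Q_0^\beta$: for each fixed $g$ the map $s\mapsto\chi'_{k,\delta}(g(s))$ is non-zero only where $g(s)$ lies in a $\delta$-neighborhood of $k/N$, so the $L^2(\leb)$-norm is bounded by $\|f_n\|_{L^2}$ after balancing $\delta\ll 1/N$; the resulting gradient sequence is Cauchy in $L^2(\Q_0\otimes\leb)$ with limit agreeing (through the integration-by-parts formula against $\Zyl^1$-tests from Lemma \ref{Lie-deriv-i}(ii)) with the conditional projection of $f$ onto the subspace of $\sigma(g)$-measurable functions of $s$. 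Lower semi-continuity of $\EE$ then delivers $u\in\Dom(\EE)$ with $\EE(u,u)\le\|f\|_{L^2}^2$.
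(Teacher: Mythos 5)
Your proposal takes a genuinely different route from the paper. The paper's proof is a short duality argument: it writes $\langle f,g\rangle_{L^2}=\inf_{\varphi,\psi}\{\int_0^1\varphi(f_t)\,dt+\int_0^1\psi(g_t)\,dt\}$ (Kantorovich duality over potentials with $\varphi(x)+\psi(y)\ge xy$), fixes a countable dense family $\{g_i\}$ of strictly increasing maps in $\G_0$, observes that the optimal $\psi_i$ for the pair $(f,g_i)$ satisfies $\psi_i'=f\circ g_i^{-1}\in[0,1]$, and sets $u_n(g):=\min_{i\le n}\{\int_0^1\varphi_i(f_t)\,dt+\int_0^1\psi_i(g_t)\,dt\}$. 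These lie in $\Zyl^0(\G_0)$, interpolate $u$ at $g_1,\dots,g_n$, converge to $u$, and satisfy $\sup_n\EE(u_n)\le 1$ \emph{immediately} from $|\psi_i'|\le1$; closedness of $\EE$ finishes. The whole point of using duality is that it hands you approximants whose gradients are a priori pointwise bounded by $1$.

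Your Fubini identity $\langle f,g\rangle=F(1)-\int_0^1 F\bigl(g^{-1}(r)\bigr)\,dr$ is correct $\Q_0$-a.s., and the discretization plan is sensible, but the balancing claim "$\delta\ll 1/N$" is backwards and the uniform energy bound you need fails in that regime. For $\delta<1/(2N)$ the windows are disjoint, and integrating over $\Q_0$ (using that $\int_{\G_0}(g_*\leb)(dx)\,d\Q_0(g)$ has a bounded, and locally bounded below, density on $(0,1)$ while $\int(\chi'_{k,\delta})^2\,dx\sim\delta^{-1}$) gives
\begin{equation*}
\EE(u_{n,N,\delta})\;=\;\frac1{N^2}\sum_{k}\int_{\G_0}f_n(m_k)^2\left(\int_{\eI}(\chi'_{k,\delta})^2\,d(g_*\leb)\right)d\Q_0(g)\;\asymp\;\frac{1}{N\delta},
\end{equation*}
which diverges as $N\delta\to 0$. (Pointwise in $g$ the gradient indeed vanishes once the windows $(\tfrac kN-\delta,\tfrac kN+\delta)$ miss $\supp(g_*\leb)$, but the threshold $\delta$ depends on $g$ and the $\Q_0$-expectation still blows up.) The regime that keeps the energies bounded is $\delta\asymp 1/N$: with $\delta=1/(2N)$ the windows tile $\eI$, $\EE(u_{n,N,\delta})\lesssim\|f_n\|_\infty^2\le1$, and the iterated $L^2(\Q_0)$-convergence still holds. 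With that correction your argument can be made to work, but it carries substantially more technical load than the paper's duality estimate. Finally, the closing "conditional projection" remark is a red herring: for $\Q_0$-a.e.\ $g$ the composition $\varphi\mapsto\varphi\circ g$ is a \emph{bijective} isometry from $L^2(g_*\leb)$ onto $L^2(\eI,\leb)$, so there is no proper closed subspace onto which $f$ would project.
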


\begin{proof}
{\bf (a)} For $f,g\in \G_0$ put $\mu_f=f_*\leb$ and $\mu_g=g_*\leb$. Recall that by Kantorovich duality
\begin{eqnarray*}
\frac12\|f-g\|_{L^2}^2&=&\frac12 d_W^2(\mu_f,\mu_g)\\
&=&
\sup_{\varphi,\psi}\left\{\int_0^1 \varphi d\mu_f+\int_0^1 \psi d\mu_g\right\}=
\sup_{\varphi,\psi}\left\{\int_0^1 \varphi(f_t) dt+\int_0^1 \psi(g_t) dt\right\}
\end{eqnarray*} where the $\sup_{\varphi,\psi}$ is taken over all (smooth, bounded) $\varphi\in L^1(\eI,\mu_f)$,
$\psi\in L^1(\eI,\mu_g)$ satisfying
$\varphi(x)+\psi(y)\le \frac12 |x-y|^2$
for $\mu_f$-a.e. $x$ and $\mu_g$-a.e. $y$ in $\eI$.
Replacing $\varphi(x)$ by $|x|^2/2-\varphi(x)$ (and $\psi(y)$ by $\ldots$) this can be restated as
\begin{eqnarray}\label{duality}\langle f, g\rangle_{L^2}=\inf_{\varphi,\psi}\left\{\int_0^1 \varphi(f_t) dt+\int_0^1 \psi(g_t) dt\right\}
\end{eqnarray} where the $\inf_{\varphi,\psi}$ now is taken over all (smooth, bounded) $\varphi\in L^1(\eI,\mu_f)$,
$\psi\in L^1(\eI,\mu_g)$ satisfying
$\varphi(x)+\psi(y)\ge \langle x,y\rangle$
for $\mu_f$-a.e. $x$ and $\mu_g$-a.e. $y$ in $\eI$.
If $g$ is strictly increasing then $\psi$ can be chosen as
$$\psi'=f\circ g^{-1},$$
cf. \cite{MR1964483}, sect. 2.1 and 2.2.

{\bf (b)} Now fix a countable dense set $\{g_n\}_{n\in\N}$ of strictly increasing functions in $\G_0$ and an arbitrary
function $f\in\G_0$.
Let $(\varphi_n,\psi_n)$ denote a minimizing pair for $(f,g_n)$ in (\ref{duality}) and define $u_n:\G_0\to\R$ by
$$u_n(g):=\min_{i=1,\ldots,n}\left\{\int_0^1 \varphi(f_i(t)) dt+\int_0^1 \psi_i(g(t)) dt\right\}.$$
Note that $\psi'_i=f\circ g_i^{-1}$ and thus
$u_n(g_i)=\langle f,g_i\rangle$ for all $i=1,\ldots,n$.
Therefore,
\begin{eqnarray*}
|u_n(g)-u_n(\tilde g)|&\le&
\max_i \int_0^1 |\psi_i(g(t))dt-\psi_i(\tilde g(t))|dt
\le \max_i\|\psi_i'\|_\infty\cdot \int_0^1 |g(t)-\tilde g (t)|dt\le
   \|g-\tilde g\|_{L^1}
\end{eqnarray*}
for all $g,\tilde g\in\G_0$. Hence, $u_n\to u$ pointwise on $\G_0$ and in $L^2(\G_0,\Q_0)$ where $u(g):=\langle f,g\rangle$.

{\bf (c)} The function $u_n$ is in the class $\Zyl^0(\G_0)$:
$$u_n(g)=U_n\left( \smint\vec\alpha(g_t)dt\right)$$
with $U_n(x_1,\ldots,x_n)=\min\{c_1+x_1,\ldots,c_n+x_n\}$, $c_i=\int\varphi_i(f(t))dt$ and
$\alpha_i=\psi_i$. The function $U_n$ can be easily approximated by $\C^1$ functions in order to verify that
$u_n\in\Dom(\EE)$ and
$$\DD u_n(g)=\sum_{i=1}^n 1_{A_i}(g)\cdot \psi_i'(g(.))$$
with a suitable disjoint decomposition $\G_0=\cup_i A_i$. (More precisely,
 $A_i$ denotes the set of all $g\in\G_0$ satisfying
$\int_0^1 \varphi(f_i(t)) dt+\int_0^1 \psi_i(g(t)) dt
<\int_0^1 \varphi(f_j(t)) dt+\int_0^1 \psi_j(g(t)) dt$ for all $j<i$ and
$\int_0^1 \varphi(f_i(t)) dt+\int_0^1 \psi_i(g(t)) dt
\le\int_0^1 \varphi(f_i(t)) dt+\int_0^1 \psi_i(g(t)) dt$ for all $j>i$.)
Thus
$$\|\DD u_n(g)\|^2=\sum_i1_{A_i}(g)\cdot \int_0^1\psi_i'(g(t))^2dt$$
and $$
\EE(u_n)\le \max_{i\le n} \int_{\G_0} \|\psi_i'\circ g\|_{L^2}^2d\Q_0(g).$$
In particular, since $|\psi_i'|\le 1$,
$$\sup_n\EE(u_n)\le1$$ and thus $u\in\Dom(\EE)$.
\end{proof}

\begin{lemma}\label{grad-repr}
For all $u\in\Zyl^1(\G_0)$ and all $w\in \Cyl^1(\G_0)\cap \Dom(\EE)$
\begin{equation}\label{e=d}\EE(u,w)=\int_{\G_0} \langle\D u(g),\D w(g)\rangle_{L^2} d\Q_0(g)
\end{equation}
(with $\D u(g)$ and $\D w(g)$ given explicitly as in Example \ref{ex-grad}).
\end{lemma}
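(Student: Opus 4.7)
The plan is to reduce the identity to the case where $u$ is a smooth cylinder function of type $\Zyl^\infty_0(\G_0)$, where the Wasserstein form admits an explicit generator, and then use the core property to approximate a general $u\in\Zyl^1(\G_0)$.

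First I would establish the identity when $u\in\Zyl^\infty_0(\G_0)$, say $u(g)=U(\int\vec\alpha(g_s)ds)$. By Theorem \ref{wasserstein-dir-form}(iii), $u$ lies in the domain of $\LL$, and $\LL u=-\sum_i D^*_{\alpha_i',0}u_i$ with $u_i(g):=\partial_i U(\int\vec\alpha(g_s)ds)\in\Zyl^\infty(\G_0)\subset\Zyl^1(\G_0)$. Hence for $w\in\Cyl^1(\G_0)\cap\Dom(\EE)$,
\begin{equation*}
\EE(u,w)=-\int_{\G_0}w\,\LL u\,d\Q_0=\sum_i\int_{\G_0}w\,D^*_{\alpha_i',0}u_i\,d\Q_0=\sum_i\int_{\G_0}u_i\,D_{\alpha_i',0}w\,d\Q_0,
\end{equation*}
where the last equality is the integration by parts formula of Lemma \ref{Lie-deriv-i}(ii), valid since $u_i\in\Zyl^1(\G_0)$ and $w\in\Cyl^1(\G_0)$ (the argument of Proposition \ref{adjoint} carries over verbatim via Theorem \ref{CoV-I} and the uniform bound of Lemma \ref{finiteness}). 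Using the explicit formulas from Lemma \ref{Lie-deriv-i}(i) and Example \ref{ex-grad} one gets $D_{\alpha_i',0}w(g)=\int_0^1\alpha_i'(g(s))\,\D w(g)(s)\,ds$ and $\D u(g)(s)=\sum_i u_i(g)\alpha_i'(g(s))$, and substitution yields
\begin{equation*}
\EE(u,w)=\int_{\G_0}\int_0^1\sum_i u_i(g)\alpha_i'(g(s))\,\D w(g)(s)\,ds\,d\Q_0(g)=\int_{\G_0}\langle\D u(g),\D w(g)\rangle_{L^2}\,d\Q_0(g).
\end{equation*}

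For general $u\in\Zyl^1(\G_0)$, I would approximate $u$ by $u_n\in\Zyl^\infty_0(\G_0)$ in the norm $(\EE(\cdot,\cdot)+\|\cdot\|_{L^2}^2)^{1/2}$, which is possible since $\Zyl^\infty_0(\G_0)$ is a core for $(\EE,\Dom(\EE))$ by Theorem \ref{wasserstein-dir-form}(ii). Continuity of the bilinear form gives $\EE(u_n,w)\to\EE(u,w)$. Moreover the convergence $u_n\to u$ in $\Dom(\DD)$ together with $\DD|_{\Zyl^1}=\D|_{\Zyl^1}$ (by construction of the closure) implies $\D u_n\to\D u$ in $L^2(\G_0\times\eI,\Q_0\otimes\leb)$, so by Cauchy--Schwarz
\begin{equation*}
\int_{\G_0}\langle\D u_n,\D w\rangle_{L^2}\,d\Q_0\longrightarrow\int_{\G_0}\langle\D u,\D w\rangle_{L^2}\,d\Q_0,
\end{equation*}
closing the argument.

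The main obstacle is the integration-by-parts step in the first paragraph: one has to confirm that the quasi-invariance argument of Proposition \ref{adjoint}, originally carried out for cylinder functions of type $\Syl^1$, indeed applies with $u_i\in\Zyl^1$ and $v=w\in\Cyl^1$. This amounts to checking that the translation $e_{t\alpha_i'}\circ g$ with $\alpha_i'(0)=\alpha_i'(1)=0$ leaves $\Q_0^\beta$ quasi-invariant with density bounded uniformly in $t$ (Theorem \ref{CoV-I} and Lemma \ref{finiteness}), so that dominated convergence justifies differentiating under the integral for both the cylinder representation of $u_i$ and that of $w$; this is precisely the content of Lemma \ref{Lie-deriv-i}(ii), which treats all three classes $\Cyl^1,\Syl^1,\Zyl^1$ simultaneously.
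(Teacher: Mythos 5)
Your proof is correct and follows essentially the same route as the paper's: establish the identity for $u\in\Zyl^\infty_0(\G_0)$ via the generator formula $\LL u=-\sum_i D^*_{\alpha_i'}u_i$ and the integration by parts formula of Lemma \ref{Lie-deriv-i}(ii), unpack the resulting expression using the explicit gradients from Example \ref{ex-grad}, and then extend to all $u\in\Zyl^1(\G_0)$ by density of $\Zyl^\infty_0(\G_0)$. The only thing to tidy up is the spurious subscript in $D_{\alpha_i',0}w$: the paper's notation reserves the subscript $0$ for the adjoint $D^*_{\varphi,0}$; the directional derivative itself is just $D_{\varphi}w$, which is what your explicit formula $D_{\alpha_i'}w(g)=\int_0^1\alpha_i'(g(s))\,\D w(g)(s)\,ds$ actually computes.
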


\begin{proof}
Recall that for $u\in\Zyl^\infty_0(\G_0)$ of the form $u(g)=U(\smint\vec\alpha(g_t)dt)$
$$L u(g)=\sum_{i=1}^m D^*_{\alpha_i'} u_i(g)$$
with $u_i(g)=\partial_iU(\smint\vec\alpha(g_t)dt)$.
Hence, for
$w\in \Cyl^1(\G_0)$ of the form $w(g)=W(\langle \vec h, g\rangle)$
\begin{eqnarray*}
\EE(u,w)&=&
-\int \LL u(g)\, w(g)\, d\Q_0(g)\\
&=&\sum_{i=1}^m \int_{\G_0} D^*_{\alpha_i'} u_i(g)\, w(g)\, d\Q_0(g)=
\sum_{i=1}^m \int_{\G_0} u_i(g)\,  D_{\alpha_i'} u_i(g) w(g)\, d\Q_0(g)\\
&=&\sum_{i,j=1}^m\int_{\G_0}\partial_iU(\smint\vec\alpha(g_t)dt)\cdot
\partial_j W(\smint\vec h(t) g(t)dt)\cdot \smint \alpha'_i(g(t))h_j(t)dt\  d\Q_0(g)\\
&=&\int_{\G_0}  \langle \D u(g), \D w(g)\rangle d\Q_0(g).
\end{eqnarray*}
This proves the claim provided $u\in\Zyl^\infty_0(\G_0)$. By density this extends to all $u\in\Zyl^1(\G_0)$.
\end{proof}

\begin{theorem}\label{dir-form-cyl}
{\bf (i)} \ $(\EE,\Cyl^1(\G_0))$ is closable and its closure coincides with $(\EE,\Dom(\EE))$.
Similarly, $(\D,\Cyl^1(\G_0))$ is closable and its closure coincides with $(\DD,\Dom(\DD))$.

{\bf (ii)} \
For all $u,w\in\Zyl^1(\G_0)\cup \Cyl^1(\G_0)$
\begin{equation}\label{ee=dd}
\Gamma(u,w)(g)= \langle\D u(g),\D w(g)\rangle_{L^2},
\end{equation}
in particular,
$\EE(u,w)=\int_{\G_0} \langle\D u(g),\D w(g)\rangle_{L^2} d\Q_0(g)$
(with $\D u(g)$ and $\D w(g)$ given explicitly as in Example \ref{ex-grad}).

{\bf (iii)} \
For each $f\in\G_0$ the function $u_f: g\mapsto \|f-g\|_{L^2}$ belongs to $\Dom(\E)$ and
$\Gamma(u_f,u_f)\le 1$ $\Q_0$-a.e. on $\G_0$.

{\bf (iv)} \
$(\EE,\Dom(\EE))$ is strongly local.
\end{theorem}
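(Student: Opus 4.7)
The plan is to handle the four parts in the order (iii) $\to$ (ii) $\to$ (i) $\to$ (iv), since the $1$-Lipschitz distance functions $u_f$ from (iii) supply the cut-offs needed both for the density argument in (i) and for the disjoint-support trick in (iv), while (ii) provides the gradient identification on which the other parts rest.

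For (iii), I would exploit the dual representation
\begin{equation*}
u_f(g) \;=\; \|f-g\|_{L^2} \;=\; \sup_{n\in\N}\bigl(\langle h_n,f\rangle_{L^2} - \langle h_n,g\rangle_{L^2}\bigr)
\end{equation*}
for a countable dense family $\{h_n\}$ in the closed unit ball of $L^2(\eI,\leb)$. The key preliminary step is to strengthen Lemma \ref{l2-diff} to the assertion that every linear functional $v_h(g):=\langle h,g\rangle_{L^2}$ lies in $\Dom(\EE)$ with the \emph{pointwise} estimate $\Gamma(v_h,v_h)(g)\le\|h\|_{L^2}^2$ for $\Q_0$-a.e.\ $g$. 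For $h\in\G_0$ this is extracted from the approximation $u_n=\min_{i\le n}\bigl(c_i+\smint\psi_i(g(t))\,dt\bigr)$ of Lemma \ref{l2-diff} by a careful choice of the Kantorovich potentials $\psi_i$ so that $\smint\psi_i'(g(t))^2\,dt\le\|h\|_{L^2}^2$ holds on each cell $A_i$; the extension to arbitrary $h\in L^2$ proceeds by $L^2$-approximation together with the closability of $\DD$ from Proposition \ref{close-d} and lower semicontinuity of $\Gamma$. The partial suprema $u_f^{(N)}:=\max_{n\le N}(c_n-v_{h_n})$ then lie in $\Dom(\EE)\cap L^\infty$ with the pointwise max-rule $\Gamma(u_f^{(N)})\le\max_{n\le N}\Gamma(v_{h_n})\le 1$, and the monotone convergence $u_f^{(N)}\nearrow u_f$ combined with weak compactness in the Hilbert space $(\Dom(\EE),(\|\cdot\|^2+\EE(\cdot))^{1/2})$ delivers $u_f\in\Dom(\EE)$ with the pointwise bound $\Gamma(u_f)\le 1$ preserved in the limit.

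For (ii), the $\Zyl^1\times\Zyl^1$ case is already given by Theorem \ref{wasserstein-dir-form}(iv). For a general $u\in\Cyl^1(\G_0)$ written as $u(g)=U\bigl(v_{f_1}(g),\ldots,v_{f_m}(g)\bigr)$, each coordinate $v_{f_i}\in\Dom(\EE)\cap L^\infty$ by the preliminary in (iii), so the chain rule for square fields (via polarisation of (\ref{gamma-op})) gives $u\in\Dom(\EE)$ and
\begin{equation*}
\Gamma(u,u)\;=\;\sum_{i,j=1}^m\partial_iU\cdot\partial_jU\cdot\Gamma(v_{f_i},v_{f_j});
\end{equation*}
polarising the sharp equality $\Gamma(v_h)=\|h\|_{L^2}^2$ — the matching lower bound coming by testing $v_h$ against zylinder approximants whose $\D$-gradients converge to $h$ in $L^2(\Q_0\otimes\leb)$ — identifies $\Gamma(v_{f_i},v_{f_j})=\langle f_i,f_j\rangle_{L^2}$, which is precisely $\langle\D u(g),\D u(g)\rangle_{L^2}$ of Example \ref{ex-grad}(ii). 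The mixed $\Zyl^1\times\Cyl^1$ case then follows from Lemma \ref{grad-repr} (now applicable since both sides are in $\Dom(\EE)$), and $\Cyl^1\times\Cyl^1$ from the same chain rule. Part (i) is then essentially automatic: closability of $(\EE,\Cyl^1(\G_0))$ is immediate from (ii) together with closedness of $(\EE,\Dom(\EE))$; for density of $\Cyl^1(\G_0)$ in $\Dom(\EE)$, note that $\Cyl^1(\G_0)$ is a point-separating unital subalgebra of $\C(\G_0,\R)$ (linear functionals separate points of $\G_0\subset L^2$), hence uniformly dense by Stone--Weierstrass, and the uniform $\Gamma$-bound on truncations of bounded cylinder functions promotes uniform density to $\Dom(\EE)$-density. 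The corresponding assertion for $(\D,\Cyl^1(\G_0))\subset(\DD,\Dom(\DD))$ follows from the pointwise identification of the gradients just established.

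For (iv), I simply reproduce the disjoint-support argument of Theorem \ref{dir-form}(f), the only new ingredient being that the $1$-Lipschitz cut-offs $w_{\delta,f}(h):=\bigl[\tfrac{1}{\delta}(r+\delta-\|h-f\|_{L^2})\wedge 1\bigr]\vee 0$ now belong to $\Dom(\EE)\cap L^\infty$ as compositions of $u_f$ (from (iii)) with a Lipschitz contraction. The main obstacle throughout is the \emph{pointwise} (as opposed to $L^1$-integrated) upper bound $\Gamma(v_h)\le\|h\|_{L^2}^2$ in (iii): Lemma \ref{l2-diff} alone only yields the integrated bound $\EE(v_h)\le 1$, and upgrading the weak convergence $\DD u_n\rightharpoonup\DD v_h$ in $L^2(\Q_0\otimes\leb)$ to a pointwise $\Q_0$-a.e.\ inequality requires either a Mazur-type convexification of a suitable subsequence or a sharpened selection of Kantorovich potentials so that $\int\psi_i'(g(t))^2\,dt$ is dominated by $\|h\|_{L^2}^2$ uniformly on each Markovian cell $A_i$, not merely at the interpolating points $g=g_i$.
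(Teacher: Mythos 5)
Your architecture is close to the paper's in spirit --- the linear functionals $v_h:g\mapsto\langle h,g\rangle_{L^2}$ are indeed the pivot, and (iv) is handled exactly as in the paper --- but two of your steps have genuine gaps, and both occur precisely where the paper takes a different (and easier) route. First, the density of $\Cyl^1(\G_0)$ in $\Dom(\EE)$: Stone--Weierstrass gives uniform density of $\Cyl^1(\G_0)$ in $\C(\G_0)$, but uniform convergence carries no information about the energies of the approximants, and your phrase ``the uniform $\Gamma$-bound on truncations of bounded cylinder functions promotes uniform density to $\Dom(\EE)$-density'' is not an argument: the approximants produced by Stone--Weierstrass may have arbitrarily large energy, and truncating them does not repair this. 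What is needed (and what the paper does) is a form-norm approximation of the known core $\Zyl^1(\G_0)$: for $u(g)=\int_0^1\alpha(g_t)\,dt$ one takes $u_n(g)=\frac1n\sum_{i=1}^n\alpha\bigl(n\int_{(i-1)/n}^{i/n}g_t\,dt\bigr)\in\Cyl^1(\G_0)$ and verifies $u_n\to u$ in $L^2(\Q_0)$ together with $\EE(u_n)\to\EE(u)$.

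Second, your whole chain (iii)$\to$(ii) hinges on the pointwise identity $\Gamma(v_h,v_h)=\|h\|^2_{L^2}$, which you yourself flag as the ``main obstacle'' and leave unresolved; neither proposed fix works as stated. The approximants of Lemma \ref{l2-diff} satisfy only $\|\D u_n(g)\|^2=\sum_i 1_{A_i}(g)\int_0^1\psi_i'(g_t)^2\,dt\le 1$ cell by cell (since $|\psi_i'|\le 1$), not $\le\|h\|^2$, so Mazur convexification can preserve nothing sharper than the bound $1$; and no ``sharpened selection'' of Kantorovich potentials is actually exhibited. The paper avoids the pointwise problem entirely by reversing your order: it first proves the \emph{integrated} identity $\EE(u,u)=\int\|\D u\|^2\,d\Q_0$ for all $u\in\Cyl^1(\G_0)$ (writing $f\in L^2\cap\C^1$ as an affine combination of elements of $\G_0$, invoking Lemmas \ref{l2-diff} and \ref{grad-repr}, and composing coordinatewise), and only then obtains the pointwise assertion (ii) from the defining relation (\ref{gamma-op}) of the square field operator --- which requires the integrated identity merely for the products $uv$, $uw$, $vw$, and products of cylinder functions are again cylinder functions, so no a.e.\ convergence of gradients is ever needed. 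Assertion (iii) then drops out of (ii) by the chain rule applied to $g\mapsto(\epsilon+\|f-g\|^2_{L^2})^{1/2}$, with no need for your countable-supremum construction; note also that the latter invokes the max-rule $\Gamma(u\vee v)\le\Gamma(u)\vee\Gamma(v)$, a consequence of locality, before you have proved (iv).
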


\begin{proof}
{\bf (a)} Claim: \ {\it
 For each $f\in L^2(\eI,\leb)$ the function $u_f:g\mapsto \langle f,g\rangle_{L^2}$ belongs to $\Dom(\EE)$
 and $\EE(u_f,u_f)=\|f\|_{L^2}^2$.}

Indeed, if $f\in L^2\cap\C^1$ then $f=c_0+c_1 f_1+c_2f_2$ with $f_1,f_2\in \G_0$ and $c_0,c_1,c_2\in\R$.
Hence, $u_f\in\Dom(\EE)$ according to Lemma \ref{l2-diff}
and $\EE(u_f,u_f)=\int\|\D u_f\|^2d\Q_0=\|f\|^2$ according to Lemma \ref{grad-repr}.
Finally, each $f\in L^2$ can be approximated by $f_n\in L^2\cap\C^1$ with $\|f-f_n\|\to0$.
Hence, $u_f\in\Dom(\EE)$
and $\EE(u_f,u_f)=\|f\|^2$.

{\bf (b)} Claim: \ {\it $\C^1(\G_0)\subset\Dom(\EE)$.}

Let $u\in\C^1(\G_0)$ be given with $u(g)=U(\langle \vec f,g\rangle)$, $U\in\C^1(\R^m,\R)$, $\vec f=(f_1,\ldots,f_m)\in L^2(\eI,\R^m)$.
For each $i=1,\ldots,m$ let $(w_{i,n})_{n\in\N}$ be an approximating sequence in $(\Zyl^1(\G_0),(\EE+\|.\|^2)^{1/2})$
for $w_i: g\mapsto \langle f_i,g\rangle$.
Put $u_n(g)=U(w_{1,n}(g),\ldots, w_{m,n}(g))$. Then $u_n\in\Zyl^1(\G_0)$, $u_n\to u$ pointwise on
$\G_0$ and in $L^2(\G_0,\Q_0)$. Moreover,
\begin{eqnarray*}
\EE(u_n,u_n)&=&
\int \| \sum_i \partial_i U(w_{1,n}(g),\ldots, w_{m,n}(g)) \D w_{i,n}(g)\|^2_{L^2}\ d\Q_0(g)\\
&\to &
\int \| \sum_i \partial_i U(\langle f_1,g\rangle, \ldots,\langle f_m,g\rangle) \D w_{i}(g)\|^2_{L^2}\ d\Q_0(g)\\
&= &
\int\|\D u(g)\|^2d\Q_0(g).
\end{eqnarray*}
Hence, $u\in\Dom(\EE)$ and $\EE(u,u)=\int\|\D u(g)\|^2d\Q_0(g)$.

{\bf (c)}
Assertion (ii) then follows via polarization and bi-linearity.
Assertion (iii) is an immediate consequence of assertion (ii).
Assertion (iii) allows to prove the locality of the Dirichlet form $(\EE,\Dom(\EE))$ in the same manner
as in the proof of Theorem \ref{dir-form}.

{\bf (d)} Claim: \ {\it
 $\Cyl^1(\G_0)$ is dense in $\Dom (\EE)$.}

We have to prove that each $u\in\Zyl^1(\G_0)$ can be approximated by $u_n\in\Cyl^1(\G_0)$. As usual, it suffices to treat
the particular case $u(g)=\int_0^1\alpha(g_t)dt$ for some $\alpha\in\C^1(\eI)$.
Put $U_n(x_1,\ldots,x_n)=\frac1n\sum_{i=1}^n\alpha(x_i)$ and
$f_{n,i}(t)=n\cdot 1_{[\frac{i-1}n,\frac in[}(t)$. Then
$$u_n(g):=U_n(\langle f_{n,1},g\rangle,\ldots \langle f_{n,n},g\rangle)=\frac1n \sum_{i=1}^n \alpha\left( n\int_{\frac{i-1}n}^{\frac in}g_tdt\right)$$
defines a sequence in $\Cyl^1(\G_0)$ with
$u_n(g)\to u(g)$ pointwise on $\G_0$ and in $L^2(\G_0,\Q_0)$.

Moreover,
\begin{equation}
\D u_n(g)=\sum_{i=1}^n \alpha'\left( n\int_{\frac{i-1}n}^{\frac in}g_tdt\right)\cdot1_{[\frac{i-1}n,\frac in[}(.)
\end{equation}
and therefore
\begin{equation}
\EE(u_n)=\int_{\G_0}\frac1n\sum_{i=1}^n \alpha'\left( n\int_{\frac{i-1}n}^{\frac in}g_tdt\right)^2
d\Q_0(g)\longrightarrow\int_{\G_0}\int_0^1\alpha'(g_t)^2dtd\Q_0(g)=\EE(u).
\end{equation}
Thus $(u_n)_n$ is Cauchy in $\Dom(\EE)$ and $u_n\to u$ in $\Dom(\EE)$.

\end{proof}
\end{subsection}

\begin{subsection}{Rademacher Property and Intrinsic Metric}

We say that a function $u:\G_0\to\R$ is 1-Lipschitz if
$$|u(g)-u(h)|\le \|g-h\|_{L^2}\qquad (\forall g,h\in\G_0).$$
\begin{theorem}\label{lip-is-diff}
Every 1-Lipschitz function $u$ on $\G_0$ belongs to $\Dom(\EE)$ and $\Gamma(u,u)\le 1$ $\Q_0$-a.e. on $\G_0$.
\end{theorem}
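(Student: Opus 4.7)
The plan is to extend the special case in Theorem \ref{dir-form-cyl}(iii), concerning the distance functions $u_f(g) = \|f - g\|_{L^2}$, to an arbitrary $1$-Lipschitz $u$ via an approximation built from minima of such distance functions on a countable dense set.

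First I would set up the approximation. Since $(\G_0, \|\cdot\|_{L^2})$ is compact, the $1$-Lipschitz function $u$ is automatically bounded. Fix a countable dense set $\{h_n\}_{n \in \N} \subset \G_0$ and set $u_n(g) := u(h_n) + \|g - h_n\|_{L^2}$. Theorem \ref{dir-form-cyl}(iii) (unchanged by an additive constant) gives $u_n \in \Dom(\EE)$ with $\Gamma(u_n, u_n) \le 1$ $\Q_0$-a.e. The Lipschitz hypothesis yields $u(g) \le u_n(g)$ for every $n$ and every $g$; density of $\{h_n\}$ gives the reverse inequality in the limit, so $v_N(g) := \min_{n \le N} u_n(g)$ defines a monotone decreasing sequence with $v_N(g) \downarrow u(g)$ pointwise on $\G_0$, hence also in $L^2(\G_0, \Q_0)$ by dominated convergence.

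Next I would control the square field along the approximation. Since $(\EE, \Dom(\EE))$ is a strongly local Dirichlet form with square field operator $\Gamma$ (Theorem \ref{wasserstein-dir-form}(iv)), the space $\Dom(\EE) \cap L^\infty$ is stable under pairwise minima and one has the pointwise estimate $\Gamma(f \wedge g, f \wedge g) \le \Gamma(f, f) \vee \Gamma(g, g)$ $\Q_0$-a.e. Iterating, $v_N \in \Dom(\EE)$ with $\Gamma(v_N, v_N) \le 1$ a.e.\ and $\EE(v_N, v_N) \le 1$. The bounded sequence $(v_N)$ in the Hilbert space $(\Dom(\EE), \|\cdot\|_{\EE,1})$ with $\|v\|_{\EE,1}^2 := \|v\|_{L^2}^2 + \EE(v, v)$ has a weakly convergent subsequence whose limit, by uniqueness of the $L^2$ limit, must equal $u$; in particular $u \in \Dom(\EE)$. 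By Mazur's lemma there exist finite convex combinations $\tilde v_N = \sum_k \lambda_k^{(N)} v_k$ (finitely many nonzero terms with $k \ge N$ and $\sum_k \lambda_k^{(N)} = 1$) converging to $u$ in $\|\cdot\|_{\EE,1}$.

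It remains to upgrade to the pointwise bound $\Gamma(u, u) \le 1$. By Cauchy-Schwarz for $\Gamma$,
\[
\Gamma(\tilde v_N, \tilde v_N) = \sum_{i, j} \lambda_i^{(N)} \lambda_j^{(N)} \Gamma(v_i, v_j) \le \Bigl(\sum_i \lambda_i^{(N)} \sqrt{\Gamma(v_i, v_i)}\Bigr)^2 \le 1 \quad \Q_0\text{-a.e.}
\]
For any bounded measurable $w \ge 0$, two applications of Cauchy-Schwarz give
\[
\Bigl|\int w\, \bigl[\Gamma(\tilde v_N, \tilde v_N) - \Gamma(u, u)\bigr]\, d\Q_0\Bigr| \le \|w\|_\infty \cdot \EE(\tilde v_N - u, \tilde v_N - u)^{1/2} \cdot \EE(\tilde v_N + u, \tilde v_N + u)^{1/2},
\]
which tends to $0$ since $\tilde v_N \to u$ in $\EE$-norm. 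Consequently $\int w\, \Gamma(u, u)\, d\Q_0 \le \int w\, d\Q_0$ for all such $w$, giving $\Gamma(u, u) \le 1$ $\Q_0$-a.e. The main obstacle is the min-stability estimate for $\Gamma$ used in the second step: it rests on the derivation property of the square field on a strongly local Dirichlet form, obtained by approximating the Lipschitz function $(a, b) \mapsto a \wedge b$ by smooth ones and applying the chain rule for $\Gamma$, together with the fact that on the coincidence set $\{f = g\}$ the gradients of $f$ and $g$ agree by strong locality.
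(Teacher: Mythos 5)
Your argument is correct, but it takes a genuinely different route from the paper's. The paper extends $u$ via Kirszbraun to a $1$-Lipschitz function on the whole Hilbert space $L^2(\eI,\leb)$, fixes a complete orthonormal system $\{h_i\}$, and reduces to the finite-dimensional statement for functions of the form $g\mapsto U_n(\langle h_1,g\rangle,\ldots,\langle h_n,g\rangle)$ with $U_n$ $1$-Lipschitz on $\R^n$; the functions $u_n(g)=u(\sum_{i\le n}\langle h_i,g\rangle h_i)$ then converge pointwise to $u$ and the estimate is inherited in the limit. You instead approximate $u$ from above by the McShane--Whitney-type infimal convolutions $v_N(g)=\min_{n\le N}[u(h_n)+\|g-h_n\|_{L^2}]$ over a countable dense set, appeal to Theorem \ref{dir-form-cyl}(iii) for the building blocks, and then invoke (a) the lattice/min-stability of the pointwise gradient bound $\Gamma\le 1$ for the strongly local Dirichlet form with square field, and (b) weak compactness plus Mazur's lemma to pass the bound to the limit. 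Both routes are sound: the paper's bypasses the general lattice theory of $\Gamma$ at the cost of invoking the Kirszbraun extension and a separate finite-dimensional lemma; yours avoids the extension theorem and the orthonormal-basis reduction but requires the min-stability estimate $\Gamma(f\wedge g)\le\Gamma(f)\vee\Gamma(g)$, which is standard for strongly local forms with carr\'e du champ but is not proved in the paper and would deserve at least a citation (e.g.\ to Bouleau--Hirsch) or a short argument from the chain rule for $\C^1$ contractions together with strong locality, as you sketch at the end. The Mazur/weak-compactness closure step and the $\Gamma(\tilde v_N,\tilde v_N)\to\Gamma(u,u)$ estimate via $\Gamma(a,a)-\Gamma(b,b)=\Gamma(a-b,a+b)$ and Cauchy--Schwarz are correctly executed; note that the paper's own proof is actually terser on exactly this final closure step, so your version is, if anything, more explicit there.
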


Before proving the theorem in full generality, let us first consider the following particular case.

\begin{lemma} Given $n\in\N$, let $\{h_1,\ldots,h_n\}$ be a orthonormal system in $L^2(\eI,\leb)$ and let $U$ be a 1-Lipschitz function on $\R^n$.
Then the function $u(g)=U(\langle h_1,g\rangle,\ldots,\langle h_n,g\rangle)$ belongs
to $\Dom(\EE)$ and $\Gamma(u,u)\le 1$ $\Q_0$-a.e. on $\G_0$.
\end{lemma}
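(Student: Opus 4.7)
First I would regularize $U$ by convolution with a smooth mollifier: let $(\rho_\epsilon)_{\epsilon>0}$ be a standard approximate identity on $\R^n$ and set $U_\epsilon:=U*\rho_\epsilon\in\C^\infty(\R^n)$. Since $U$ is $1$-Lipschitz, the convolution still satisfies $\|\nabla U_\epsilon\|_\infty\le 1$, and $U_\epsilon\to U$ locally uniformly. Observing that for every $g\in\G_0$ one has $|\langle h_i,g\rangle_{L^2}|\le\|h_i\|_{L^2}\|g\|_{L^2}\le 1$ (using $g:\eI\to\eI$), the image of $\G_0$ under $g\mapsto(\langle h_1,g\rangle,\ldots,\langle h_n,g\rangle)$ lies in a fixed compact subset of $\R^n$. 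Hence the cylinder functions $u_\epsilon(g):=U_\epsilon(\langle h_1,g\rangle,\ldots,\langle h_n,g\rangle)\in\Cyl^1(\G_0)\subset\Dom(\EE)$ converge to $u$ uniformly on $\G_0$, and in particular in $L^2(\G_0,\Q_0)$.

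Second, I would compute $\Gamma(u_\epsilon,u_\epsilon)$ explicitly using Example \ref{ex-grad}(ii) together with Theorem \ref{dir-form-cyl}(ii): for every $g\in\G_0$,
\[
\Gamma(u_\epsilon,u_\epsilon)(g)=\Bigl\|\sum_{i=1}^n\partial_iU_\epsilon(\langle\vec h,g\rangle)\,h_i\Bigr\|_{L^2(\eI)}^2=\sum_{i,j=1}^n\partial_iU_\epsilon\,\partial_jU_\epsilon\,\langle h_i,h_j\rangle_{L^2}=|\nabla U_\epsilon(\langle\vec h,g\rangle)|^2\le 1,
\]
by orthonormality of $\{h_1,\ldots,h_n\}$. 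In particular $\EE(u_\epsilon,u_\epsilon)\le 1$ uniformly in $\epsilon$.

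Third, I would pass to the limit. Since $u_\epsilon\to u$ in $L^2(\G_0,\Q_0)$ and $\sup_\epsilon\EE(u_\epsilon,u_\epsilon)\le 1$, the sequence is bounded in the Hilbert space $(\Dom(\EE),(\|\cdot\|_{L^2}^2+\EE(\cdot,\cdot))^{1/2})$. A subsequence $u_{\epsilon_k}$ therefore converges weakly in $\Dom(\EE)$, and since the weak limit must agree with the $L^2$-limit, $u\in\Dom(\EE)$. Because $\DD:\Dom(\EE)\to L^2(\G_0\times\eI,\Q_0\otimes\leb)$ is a bounded linear operator, this weak convergence transfers to the gradients: $\D u_{\epsilon_k}\rightharpoonup\DD u$ weakly in $L^2(\G_0\times\eI,\Q_0\otimes\leb)$.

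Finally, to upgrade this to the pointwise bound $\Gamma(u,u)\le 1$ I would invoke weak closedness of the convex set
\[
C:=\bigl\{W\in L^2(\G_0\times\eI,\Q_0\otimes\leb)\ :\ \|W(g,\cdot)\|_{L^2(\eI)}\le 1\ \text{for}\ \Q_0\text{-a.e.}\ g\bigr\}.
\]
$C$ is convex and strongly closed (Fubini plus Fatou), hence weakly closed, and contains each $\D u_{\epsilon_k}$; consequently $\DD u\in C$, which together with Theorem \ref{dir-form-cyl}(ii) is exactly the desired pointwise bound $\Gamma(u,u)(g)\le 1$ for $\Q_0$-a.e. $g$. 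The delicate step is precisely this last one: the integrated estimate $\EE(u,u)\le 1$ comes immediately from lower semicontinuity of the quadratic form, but the fibrewise pointwise bound on $\Gamma$ is finer and requires the weak-closedness argument above; the Lipschitz hypothesis on $U$ is used exactly to guarantee that the $g$-fibrewise $L^2(\eI)$-norm constraint $\|\D u_\epsilon(g,\cdot)\|_{L^2(\eI)}\le 1$ holds before passing to the limit.
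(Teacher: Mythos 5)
Your proof is correct and follows essentially the same route as the paper's: approximate $U$ by smooth $1$-Lipschitz functions, compute $\Gamma(u_\epsilon,u_\epsilon)=|\nabla U_\epsilon|^2\le 1$ via orthonormality of the $h_i$, then pass to the limit. The paper's own argument simply says ``Hence, $u\in\Dom(\EE)$ and $\Gamma(u,u)\le 1$'' after noting $u_k\to u$ in $L^2(\Q_0)$; your weak-compactness argument in $\Dom(\EE)$ together with the weak closedness of the convex constraint set $C$ is exactly the content that this ``hence'' quietly elides, and spelling it out is a genuine improvement in rigor rather than a different method.
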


\begin{proof}
Let us first assume that in addition $U$ is $\C^1$. Then according to Theorem \ref{dir-form-cyl}, $u$ is in
$\Dom(\EE)$ and
$\D u(g)=\sum_{i=1}^n\partial_i U(\langle \vec h,g\rangle)\cdot h_i$.
Thus
$$\Gamma(u,u)(g)=\|\D u(g)\|_{L^2}=\sum_{i=1}^n |\partial_i U(\langle\vec h,g\rangle)|^2\le 1.$$
In the case of a general 1-Lipschitz continuous $U$ on $\R^n$ we choose an approximating sequence of 1-Lipschitz functions $U_k$, $k\in\N$, in $\C^1(\R^n)$
with $U_k\to U$ uniformly on $\R^n$ and put $u_k(g)=U_k((\langle\vec h,g\rangle)$ for $g\in\G_0$.
Then $u_k\to u$ pointwise and in $L^2(G_0,\Q_0)$. Hence, $u\in\Dom(\EE)$ and
$\Gamma(u,u)\le 1$ $\Q_0$-a.e. on $\G_0$.
\end{proof}

\begin{proof}[Proof of Theorem \ref{lip-is-diff}]
Every 1-Lipschitz function $u$ on $\G_0$ can be extended to a 1-Lipschitz function $\tilde u$ on $L^2(\eI,\leb)$
('Kirszbraun extension').
Hence, without restriction, assume that $u$ is a 1-Lipschitz function on $L^2(\eI,\leb)$.
Choose a complete orthonormal system $\{h_i\}_{i\in\N}$ of the separable Hilbert space  $L^2(\eI,\leb)$ and define for each $n\in\N$
the function $U_n:\R^n\to\R$ by
$$U_n(x_1,\ldots, x_n)=u\left(\sum_{i=1}^n x_ih_i\right)$$
for $x=(x_1,\ldots,x_n)\in\R^n$.
This function $U_n$ is 1-Lipschitz on $\R^n$:
$$\left|U_n(x)-U_n(y)\right|\le
\left\|\sum_{i=1}^n x_ih_i-\sum_{i=1}^n y_ih_i\right\|_{L^2}\le
|x-y|.$$
Hence, according to the previous Lemma
the function $$u_n(g)=U_n(\langle h_1,g\rangle,\ldots,\langle h_n,g\rangle)$$ belongs belongs to $\Dom(\EE)$ and $\Gamma(u_n,u_n)\le 1$ $\Q_0$-a.e. on $\G_0$.

\smallskip

Note that
$$u_n(g)=u\left(\sum_{i=1}^n\langle h_i,g\rangle h_i\right)$$
for each $g\in L^2(\eI,\leb)$.
Therefore, $u_n\to u$ on $L^2(\eI,\leb)$ since $\sum_{i=1}^n\langle h_i,g\rangle h_i \to g$ on $L^2(\eI,\leb)$ and since $u$ is continuous on
$L^2(\eI,\leb)$.
Thus, finally, $u\in\Dom(\EE)$ and $\Gamma(u,u)\le 1$ $\Q_0$-a.e. on $\G_0$.
\end{proof}

Our next goal is the converse to the previous Theorem.

\begin{theorem}\label{invrademacher}
Every continuous function $u\in\Dom(\EE)$ with $\Gamma(u,u)\le 1$ $\Q_0$-a.e. on $\G_0$ is 1-Lipschitz on $\G_0$.
\end{theorem}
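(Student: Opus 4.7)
The strategy is to reduce the claim to the classical Rademacher theorem in finite dimensions, via approximation by cylinder functions of type (ii) in $\Cyl^1(\G_0)$.

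\textbf{Step 1 (cylinder case).} First I would treat $u\in\Cyl^1(\G_0)$ of the form $u(g)=U(\langle f_1,g\rangle_{L^2},\ldots,\langle f_m,g\rangle_{L^2})$ with $U\in\C^1(\R^m,\R)$; by Gram--Schmidt I may assume $\{f_1,\ldots,f_m\}$ to be orthonormal in $L^2(\eI,\leb)$, modifying $U$ accordingly. Example~\ref{ex-grad}(ii) and Theorem~\ref{dir-form-cyl}(ii) then give
\[
 \Gamma(u,u)(g)\;=\;\|\D u(g)\|_{L^2}^2\;=\;|\nabla U|^2\!\bigl(\phi(g)\bigr),
\]
where $\phi:\G_0\to\R^m$, $\phi(g):=(\langle f_i,g\rangle_{L^2})_{i}$, is $1$-Lipschitz from $(\G_0,\|\cdot\|_{L^2})$ into $(\R^m,|\cdot|)$ by orthonormality and has compact convex image $K:=\phi(\G_0)$. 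The hypothesis $\Gamma(u,u)\le 1$ $\Q_0$-a.s., combined with the absolute continuity of the law $\phi_\ast\Q_0$ on $\R^m$ (a direct consequence of the Dirichlet density of the finite-dimensional marginals of $\Q_0$), forces $|\nabla U|\le 1$ Lebesgue-a.e.\ on $K$; by $\C^1$-continuity this bound in fact holds everywhere on $K$. Line integration in the convex body $K$ then yields that $U$ is $1$-Lipschitz on $K$, so that
\[
 |u(g)-u(h)|\;=\;\bigl|U(\phi(g))-U(\phi(h))\bigr|\;\le\;|\phi(g)-\phi(h)|_{\R^m}\;\le\;\|g-h\|_{L^2}.
\]

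\textbf{Step 2 (reduction to the cylinder case).} For a general continuous $u\in\Dom(\EE)$ with $\Gamma(u,u)\le 1$ $\Q_0$-a.s., fix a complete orthonormal system $\{h_i\}_{i\ge 1}$ of $L^2(\eI,\leb)$ and let $\mathcal{F}_n:=\sigma(\langle h_1,\cdot\rangle_{L^2},\ldots,\langle h_n,\cdot\rangle_{L^2})$. I would consider the martingale $u_n:=E_{\Q_0}[u\mid\mathcal{F}_n]$, which is $\mathcal{F}_n$-measurable and hence of cylinder form $u_n(g)=U_n(\langle h_1,g\rangle_{L^2},\ldots,\langle h_n,g\rangle_{L^2})$ with continuous $U_n$; by martingale convergence $u_n\to u$ in $L^2(\Q_0)$ and $\Q_0$-a.s., and $u_n$ is uniformly bounded since $u$ is bounded on the compact space $\G_0$. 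If the Jensen-type estimate
\[
 \Gamma(u_n,u_n)\;\le\;E_{\Q_0}\!\bigl[\Gamma(u,u)\,\big|\,\mathcal{F}_n\bigr]\;\le\;1\qquad\Q_0\text{-a.s.}
\]
is available, Step~1 implies that each $u_n$ is $1$-Lipschitz on $\G_0$. Since $\Q_0$ has full topological support on $\G_0$ and $u$ is continuous on the compact space $\G_0$, the pointwise $\Q_0$-a.s.\ convergence $u_n\to u$ extends by continuity to all of $\G_0\times\G_0$, yielding $|u(g)-u(h)|\le\|g-h\|_{L^2}$.

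\textbf{Main obstacle.} The hard part is the conditional Jensen inequality for the non-Gaussian Gibbs measure $\Q_0$. The natural route is to identify the Dirichlet form induced by $\EE$ on $L^2(\G_0,\Q_0;\mathcal{F}_n)$ as an explicit finite-dimensional elliptic form on the simplex $\phi_n(\G_0)\subset\R^n$ weighted by the Dirichlet density, and to establish Mosco convergence of these approximations to $(\EE,\Dom(\EE))$ as $n\to\infty$, in the spirit of the finite-dimensional noise approximation described after Theorem~\ref{dir-form}. An alternative, more hands-on strategy would bypass conditioning entirely by exploiting the core property of $\Cyl^1(\G_0)$ in $\Dom(\EE)$ from Theorem~\ref{dir-form-cyl}(i) to produce cylinder approximations $u_n\in\Cyl^1(\G_0)$ with $\sup_{g\in\G_0}\|\D u_n(g)\|_{L^2}\le 1+\varepsilon_n$, $\varepsilon_n\to 0$, via a smoothing of $U_n$ on the convex body $K_n=\phi_n(\G_0)$ that preserves the uniform gradient bound; then each $u_n$ is $(1+\varepsilon_n)$-Lipschitz and the conclusion follows by continuity of $u$.
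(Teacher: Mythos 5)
Your proposal does not close the argument, and differs substantially from the paper's route, which avoids finite-dimensional reduction altogether.

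\textbf{Gap in Step 1.} Even for cylinder functions the passage from the $\Q_0$-a.e.\ bound $\Gamma(u,u)\le 1$ to a Lebesgue-a.e.\ bound on $|\nabla U|$ over $K=\phi(\G_0)$ is logically backwards. From $\Gamma(u,u)\le1$ $\Q_0$-a.e.\ you obtain $|\nabla U|\le1$ only $\phi_*\Q_0$-a.e.\ on $K$. To upgrade this to Lebesgue-a.e.\ you would need $\leb\ll\phi_*\Q_0$ on (the interior of) $K$, i.e.\ a \emph{strictly positive} density, not merely the absolute continuity $\phi_*\Q_0\ll\leb$ which you invoke. Neither direction of absolute continuity for the law of $\bigl(\langle f_i,\cdot\rangle_{L^2}\bigr)_i$ under $\Q_0$ is ``a direct consequence of the Dirichlet density of the finite-dimensional marginals''; those marginals concern pointwise evaluations $g(t_i)$, whereas $\phi$ is an $L^2$-integral projection of the whole path. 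The Dirichlet formula gives an explicit positive density on the simplex only for $\Syl$-type coordinates, and even then one has to be careful because the density degenerates near the boundary.

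\textbf{Gap in Step 2.} You openly flag the conditional Jensen inequality
$\Gamma(u_n,u_n)\le E_{\Q_0}[\Gamma(u,u)\mid\mathcal F_n]$
as the ``main obstacle,'' and rightly so: this commutation-type inequality is available in Gaussian settings but there is no reason it should hold for the (non-Gaussian, non-product) Dirichlet measure $\Q_0$, and the paper does not provide it. The alternative sketch --- extracting cylinder approximants with \emph{uniform pointwise} gradient bounds $\sup_g\|\D u_n(g)\|_{L^2}\le1+\varepsilon_n$ from mere form-norm convergence --- is likewise much stronger than what the core property of $\Cyl^1(\G_0)$ gives, and no mechanism to produce such approximants is offered.

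\textbf{How the paper avoids both problems.} The paper's proof exploits the convexity of $\G_0$ (so $g_t=(1-t)g_0+tg_1$ stays in $\G_0$) and the change-of-variables formula of Theorem~\ref{CoV-I} under composition $h\mapsto g_t\circ h$. From the line-integral identity
$u(g_1)-u(g_0)=\int_0^1\langle\D u(g_t),g_1-g_0\rangle\,dt$
for cylinder functions, one passes to a \emph{smeared} version: for smooth $g_0,g_1$ and any bounded $\Psi$,
$\int[u(g_1\circ h)-u(g_0\circ h)]\Psi(h)\,d\Q_0(h)
=\int_0^1\int\langle\DD u(g_t\circ h),(g_1-g_0)\circ h\rangle\Psi(h)\,d\Q_0(h)\,dt$,
the passage to the limit for general $u\in\Dom(\EE)$ being justified precisely via quasi-invariance of $\Q_0$ under right translation by $g_t^{-1}$. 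Applying $\|\DD u\|_{L^2}\le1$, Cauchy--Schwarz, and then letting $\Psi_k\,d\Q_0\to\delta_e$ (using continuity of both sides in $h$) yields $u(g_1)-u(g_0)\le\|g_1-g_0\|_{L^2}$ directly, with no finite-dimensional reduction, no absolute-continuity claim for $\phi_*\Q_0$, and no conditional Jensen. You would do well to adopt this quasi-invariance-plus-mollification device: it is exactly the mechanism that converts an a.e.\ square-field bound into a pointwise Lipschitz bound here.
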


\begin{lemma}
For each $u\in\Cyl^1(\G_0)\cup \Zyl^1(\G_0)$ and all $g_0,g_1\in\G_0$
\begin{equation}\label{line-integral}
u(g_1)-u(g_0)=\int_0^1 \langle \D u\left((1-t)g_0+tg_1\right),g_1-g_0\rangle_{L^2}dt.
\end{equation}
\end{lemma}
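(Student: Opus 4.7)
The plan is to verify this identity pointwise for each fixed pair $g_0, g_1 \in \G_0$ and each cylinder function $u$, using nothing more than the classical fundamental theorem of calculus applied to the scalar function
\[ \phi(t) := u\bigl((1-t)g_0 + tg_1\bigr), \quad t\in[0,1]. \]
The crucial preliminary observation is that $\G_0$, viewed as a subset of $L^2(\eI,\leb)$, is convex: if $g_0$ and $g_1$ are both nondecreasing and right continuous with values in $\eI$, so is every convex combination $(1-t)g_0 + tg_1$. Hence the whole segment lies in $\G_0$, and $\D u\bigl((1-t)g_0 + tg_1\bigr)\in L^2(\eI,\leb)$ is well defined for every $t\in[0,1]$ by the explicit formulas in Example \ref{ex-grad}.

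First I would treat $u\in\Cyl^1(\G_0)$, say $u(g)=U\bigl(\smint\vec f(s)\,g(s)\,ds\bigr)$ with $U\in\C^1(\R^m)$. Then
\[ \phi(t) = U\!\left(\smint\vec f(s)\bigl[(1-t)g_0(s)+tg_1(s)\bigr]ds\right) \]
is $\C^1$ on $[0,1]$ because its argument is affine in $t$ and $U$ is $\C^1$. The classical chain rule gives
\[ \phi'(t) = \sum_{i=1}^m \partial_i U\!\left(\smint\vec f(s)\bigl[(1-t)g_0(s)+tg_1(s)\bigr]ds\right)\cdot\int_0^1 f_i(s)\bigl(g_1(s)-g_0(s)\bigr)ds, \]
which, by Example \ref{ex-grad}(ii), is exactly $\langle \D u\bigl((1-t)g_0+tg_1\bigr),\,g_1-g_0\rangle_{L^2}$. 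The claimed identity then follows from $\phi(1)-\phi(0)=\int_0^1\phi'(t)\,dt$.

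For $u\in\Zyl^1(\G_0)$ of the form $u(g)=U\bigl(\smint\vec\alpha(g_s)\,ds\bigr)$ with $\vec\alpha\in\C^1(\eI,\R^m)$, the argument is analogous but the $t$-derivative passes under the $ds$-integral. Setting $g_t(s):=(1-t)g_0(s)+tg_1(s)$, one has $\partial_t \alpha_i(g_t(s))=\alpha_i'(g_t(s))\cdot(g_1(s)-g_0(s))$; since $|\alpha_i'|$ is bounded on $\eI$ and $|g_1-g_0|\le 1$, dominated convergence legitimates
\[ \frac{d}{dt}\smint\alpha_i(g_t(s))\,ds = \smint \alpha_i'(g_t(s))\bigl(g_1(s)-g_0(s)\bigr)ds. \]
Applying the chain rule to $U$ then yields
\[ \phi'(t) = \sum_{i=1}^m \partial_i U\!\left(\smint\vec\alpha(g_t(s))\,ds\right)\cdot\smint \alpha_i'(g_t(s))\bigl(g_1(s)-g_0(s)\bigr)ds, \]
which by Example \ref{ex-grad}(i) is again $\langle \D u(g_t),\,g_1-g_0\rangle_{L^2}$. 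A final application of the fundamental theorem of calculus completes the proof.

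There is no serious obstacle: the only points requiring attention are the convexity of $\G_0$ inside $L^2$ (so that the gradient formulas from Example \ref{ex-grad} are legitimately invoked along the whole segment) and the routine dominated convergence justifying differentiation under the integral in the $\Zyl^1$ case. Both are immediate from the boundedness of $\vec\alpha'$, $\vec f\in L^2$, and the uniform bound $\|g_1-g_0\|_\infty\le 1$.
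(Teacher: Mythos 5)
Your proof is correct and follows essentially the same route as the paper: the paper also sets $g_t=(1-t)g_0+tg_1$, observes that $\eta_t=u(g_t)$ is $\C^1$ with $\dot\eta_t=\D_{g_1-g_0}u(g_t)=\langle\D u(g_t),g_1-g_0\rangle_{L^2}$, and concludes by the fundamental theorem of calculus. You merely make explicit what the paper leaves implicit (convexity of $\G_0$ in $L^2$, and the chain-rule/dominated-convergence details for the two classes of cylinder functions), which is fine.
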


\begin{proof} Put $g_t=(1-t)g_0+tg_1$ and consider the $\C^1$ function $\eta:[0,1]\to\R$ defined by
$\eta_t=u(g_t)$. Then $$\dot\eta_t=\D_{g_1-g_0}u(g_t)=\langle\D u(g_t),g_1-g_0\rangle$$
and thus
$$\eta_1-\eta_0=\int_0^1\dot\eta_t dt=\int_0^1 \langle\D u(g_t),g_1-g_0\rangle dt.$$
\end{proof}

\begin{lemma}
Let $g_0,g_1\in\G_0\cap\C^3$ and put $g_t=(1-t)g_0+t g_1$. Then for each $u\in\Dom(\EE)$ and each bounded measurable
$\Psi:\G_0\to\R$
\begin{equation}
\int_{\G_0}[ u(g_1\circ h)-u(g_0\circ h)]\Psi(h)\, d\Q_0(h)=
\int_0^1
\int_{\G_0} \langle\DD u(g_t\circ h, (g_1-g_0)\circ h\rangle \Psi(h)\, \Q_0(h)dt.
\end{equation}

\end{lemma}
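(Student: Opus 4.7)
The plan is to first establish the identity for cylinder functions $u \in \Cyl^1(\G_0)$ by applying the preceding lemma pointwise in $h$ and then integrating against $\Psi \, d\Q_0$, and finally to extend to general $u \in \Dom(\EE)$ by an approximation argument based on the change of variables formula (Theorem \ref{CoV-I}).

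Step one: Fix $h \in \G_0$. Since $\G_0$ is a semigroup under composition, $g_0 \circ h$ and $g_1 \circ h$ both lie in $\G_0$, and the convex combination identity $g_t \circ h = (1-t)(g_0 \circ h) + t(g_1 \circ h)$ holds pointwise on $\eI$. Hence, for $u \in \Cyl^1(\G_0)$ the preceding line integral lemma applied to the pair $(g_0 \circ h, g_1 \circ h)$ gives
\begin{equation*}
u(g_1 \circ h) - u(g_0 \circ h) = \int_0^1 \langle \D u(g_t \circ h), (g_1 - g_0) \circ h\rangle_{L^2}\, dt.
\end{equation*}
Multiplying by $\Psi(h)$, integrating against $\Q_0(dh)$, and applying Fubini (which is justified since the integrand is bounded, as $\D u$ is bounded for cylinder $u$) yields the identity for $u \in \Cyl^1(\G_0)$.

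Step two: Given $u \in \Dom(\EE)$, Theorem \ref{dir-form-cyl} provides a sequence $u_n \in \Cyl^1(\G_0)$ with $u_n \to u$ in $L^2(\Q_0)$ and $\D u_n \to \DD u$ in $L^2(\G_0 \times \eI, \Q_0 \otimes \leb)$. Since $g_0, g_1 \in \C^3 \cap \G_0$ are increasing, $g_t$ is a $\C^3$-isomorphism of $\eI$ for every $t \in [0,1]$ with $g_t' = (1-t)g_0' + tg_1'$ bounded above and bounded away from zero uniformly in $t$. The change of variable formula (Theorem \ref{CoV-I}) combined with the two-sided bound on $Y^\beta_{g_t^{-1},0}$ from the finiteness lemma yields a constant $C$, independent of $t \in [0,1]$, such that for every measurable $F \ge 0$
\begin{equation*}
\int_{\G_0} F(g_t \circ h)\, d\Q_0(h) = \int_{\G_0} F(k)\, Y^\beta_{g_t^{-1},0}(k)\, d\Q_0(k) \le C \int_{\G_0} F(k)\, d\Q_0(k).
\end{equation*}

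Step three: Pass to the limit. For the LHS, $|u_n(g_i \circ h) - u(g_i \circ h)|$ integrated against the bounded $\Psi \, d\Q_0$ tends to zero by the above estimate applied to $F = |u_n - u|^2$ and Cauchy–Schwarz. For the RHS, Cauchy–Schwarz in $L^2(\eI, \leb)$ and the uniform bound $\|(g_1 - g_0) \circ h\|_{L^2} \le \|g_1 - g_0\|_\infty$ reduce matters to showing
\begin{equation*}
\int_0^1 \int_{\G_0} \|\D u_n(g_t \circ h) - \DD u(g_t \circ h)\|_{L^2(\eI,\leb)}^2\, d\Q_0(h)\, dt \longrightarrow 0,
\end{equation*}
which follows from the uniform change-of-variable bound above and $\D u_n \to \DD u$ in $L^2(\Q_0 \otimes \leb)$. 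Combining these convergences yields the claimed identity for arbitrary $u \in \Dom(\EE)$.

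The main technical obstacle is obtaining the uniform-in-$t$ bound on the Radon–Nikodym density $Y^\beta_{g_t^{-1},0}$. The finiteness lemma supplies a bound that depends on the $\C^2$-norm of $g_t^{-1}$ and a lower bound on $g_t'$; the needed uniformity then follows from the fact that the family $\{g_t\}_{t \in [0,1]}$ is compact in $\C^2(\eI,\eI)$ with $\inf_{t,x} g_t'(x) > 0$, so the constants in the explicit expression for $Y^\beta_{g_t^{-1},0}$ (involving $\log g_t'$ summed over jumps of $k$, and the boundary factors) can be dominated uniformly in $t$.
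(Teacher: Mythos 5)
Your proof is correct and follows essentially the same route as the paper's: apply the line-integral lemma to cylinder approximants $u_n$, integrate against $\Psi\,d\Q_0$, and pass to the limit using the quasi-invariance of $\Q_0$ under $h\mapsto g_t\circ h$ (Theorem \ref{CoV-I}). You are in fact slightly more careful than the paper in making the bound on the density $Y^\beta_{g_t^{-1},0}$ uniform in $t\in[0,1]$, which is what justifies interchanging the $n\to\infty$ limit with the $dt$-integration.
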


\begin{proof}
Given $g_0,g_1$, $\Psi$ and $u\in\Dom(\EE)$ as above, choose an approximating sequence in $\Zyl^1(\G_0)\cup\Cyl^1(\G_0)$ with
$u_{n}\to u$ in $\Dom(\EE)$
as $n\to\infty$.
According to the previous Lemma for each $n$
\begin{equation}\label{path-int}
\int_{\G_0}[ u_{n}(g_1\circ h)-u_{n}(g_0\circ h)]\Psi(h)\, d\Q_0(h)
=
\int_0^1\int_{\G_0}
 \langle \D u_n\left(g_t\circ h\right),(g_1-g_0)\circ h\rangle\Psi(h)\, d\Q_0(h)dt.
\end{equation}
By assumption $u_n\to u$ in $L^2(\G_0,\Q_0)$ and
$\D u_n\to \DD u$ in $L^2(\G_0\times\eI, \Q_0\otimes \leb)$ as $n\to\infty$.
Using the quasi-invariance of $\Q_0$ (Theorem \ref{CoV-I}) this implies
$$
\int_{\G_0}| u(g_t\circ h)-u_n(g_t\circ h)|\Psi(h)\, d\Q_0(h)=
\int_{\G_0}[ u(h)-u_n(h)|\Psi(g_t^{-1}\circ h)\cdot Y^\beta_{g_t^{-1}}(h)\, d\Q_0(h)\to0$$
as $n\to\infty$ as well as
\begin{eqnarray*}
\lefteqn{\int_{\G_0} \|\DD u(g_t\circ h) -\D u_n(g_t\circ h)|^2_{L^2} \Psi(h)\, \Q_0(h)}\\
&=&
\int_{\G_0} \|\DD u( h) -\D u_n( h)|^2_{L^2} \Psi(g_t^{-1}\circ h)\cdot Y^\beta_{g_t^{-1}}(h)\, \Q_0(h)\to0
\end{eqnarray*}
Hence, we may pass to the
limit $n\to\infty$ in (\ref{path-int}) which yields the claim.
\end{proof}

\begin{proof}[Proof of  Theorem \ref{invrademacher}]
Let a continuous  $u\in\Dom(\EE)$ be given with $\Gamma(u,u)\le 1$ $\Q_0$-a.e. on $\G_0$.
We want to prove that
$u(g_1)-u(g_0)\le\|g_1-g_0\|_{L^2}$ for all $g_0,g_1\in\G_0$. By density of $\G_0\cap\C^3$ in $\G_0$ and by continuity of $u$
it suffices to prove the claim for $g_0,g_1\in\G_0\cap\C^3$.

Choose a sequence of bounded measurable $\Psi_k:\G_0\to\R_+$ such that the probability measures $\Psi_kd\Q_0$ on $\G_0$ converge weakly to
$\delta_{e}$, the Dirac mass in the identity map $e\in\G_0$.
Then according to the previous Lemma and the assumption  $\|\DD u\|\le1$
\begin{eqnarray*}
\lefteqn{\int_{\G_0}[ u(g_1\circ h)-u(g_0\circ h)]\Psi_k((h)d\Q_0(h)}\\
&=&
\int_0^1
\int_{\G_0} \langle\DD u(g_t\circ h, (g_1-g_0)\circ h\rangle \Psi_k(h)\, d\Q_0(h)dt\\
&\le&
\int_0^1
\int_{\G_0} \|\DD u(g_t\circ h)\|_{L^2}\cdot \|(g_1-g_0)\circ h\|_{L^2}\cdot \Psi_k(h)\, d\Q_0(h)dt\\
&\le&
\int_{\G_0}  \|(g_1-g_0)\circ h\|_{L^2}\cdot \Psi_k(h)\, d\Q_0(h).
\end{eqnarray*}
Now the integrands on both sides, $h\mapsto u(g_1\circ h)-u(g_0\circ h)$ as well as $h\mapsto\|(g_1-g_0)\circ h\|_{L^2}$,
are continuous in $h\in\G_0$. Hence, as $k\to\infty$ by weak convergence $\Psi_kd\Q_0\to\delta_e$ we obtain
$$u(g_1)-u(g_0)\le\|g_1-g_0\|_{L^2}.$$
\end{proof}

\begin{corollary} \label{identintrinsicmetric}
 The intrinsic metric for the Dirichlet form $(\EE,\Dom(\EE))$ is the $L^2$-metric:
$$\|g_1-g_0\|_{L^2}=\sup\left\{ u(g_1)-u(g_0): \ u \in\C(\G_0)\cap\Dom(\EE), \ \Gamma(u,u)\le 1\, \Q_0\mbox{-a.e. on }\G_0\right\}$$
 for all $g_0,g_1\in\G_0$.
\end{corollary}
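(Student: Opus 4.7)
The plan is to obtain the identity as a direct two-sided application of the two preceding theorems, namely Theorem \ref{lip-is-diff} (Lipschitz implies bounded gradient) and Theorem \ref{invrademacher} (bounded gradient implies Lipschitz).

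First I would establish the lower bound $\|g_1-g_0\|_{L^2}\le\sup(\ldots)$ by exhibiting an explicit candidate in the admissible class. The natural choice is the distance function $u_{g_0}(g):=\|g-g_0\|_{L^2}$, which is obviously $1$-Lipschitz on $\G_0$ (by the triangle inequality) and continuous. Theorem \ref{lip-is-diff} then places $u_{g_0}$ in $\Dom(\EE)$ with $\Gamma(u_{g_0},u_{g_0})\le 1$ $\Q_0$-a.e., so $u_{g_0}$ is admissible. Evaluating the test function gives $u_{g_0}(g_1)-u_{g_0}(g_0)=\|g_1-g_0\|_{L^2}$, which provides the desired lower bound on the supremum.

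For the reverse inequality $\sup(\ldots)\le\|g_1-g_0\|_{L^2}$, I would take an arbitrary $u\in\C(\G_0)\cap\Dom(\EE)$ with $\Gamma(u,u)\le 1$ $\Q_0$-a.e., and apply Theorem \ref{invrademacher} directly to conclude that $u$ is $1$-Lipschitz on $\G_0$. Consequently $u(g_1)-u(g_0)\le\|g_1-g_0\|_{L^2}$, and taking the supremum over all such $u$ yields the inequality.

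Combining both directions gives the claimed identity. There is no real obstacle here: the work has already been done in Theorems \ref{lip-is-diff} and \ref{invrademacher}, whose combination is essentially a Rademacher/inverse-Rademacher duality characterising the intrinsic metric. The only subtle point to flag is that the admissibility class in the supremum restricts to \emph{continuous} representatives, which is why I stress continuity of $u_{g_0}$ in the lower bound and why Theorem \ref{invrademacher} is stated for continuous $u$; any $\Q_0$-a.e.\ modification would otherwise not be pointwise comparable at the two specific points $g_0,g_1$.
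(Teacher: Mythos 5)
Your proof is correct and is precisely the argument the paper intends: the corollary is stated immediately after Theorems \ref{lip-is-diff} and \ref{invrademacher} without a separate proof, and combining them as you do — using the distance function $g\mapsto\|g-g_0\|_{L^2}$ for the lower bound and the inverse-Rademacher theorem for the upper bound — is the only sensible reading.
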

\end{subsection}

\begin{subsection}{Finite Dimensional Noise Approximations}

The goal of this section is to present representations -- and finite dimensional approximations -- of the Dirichlet form
\begin{equation*}
\EE(u,v)=\int_{\G_0} \langle\D u(g),  \D v(g)\rangle_{L^2} \, d\Q_0(g)
\end{equation*}
in terms of globally defined vector fields.

If $(\varphi_i)_{i\in\N}$ is a complete orthonormal system in $T_g=L^2(\eI,g_*\leb)$ for a given $g\in\G_0$ then
obviously
\begin{equation}\label{on-repr}
\langle\D u(g),\D v(g)\rangle_{L^2}=\sum_{i=1}^\infty D_{\varphi_i}u(g)D_{\varphi_i}v(g).
\end{equation}
Unfortunately, however, there exists no family  $(\varphi_i)_{i\in\N}$ which is {\em simultaneously} orthonormal in all $T_g=L^2(\eI,g_*\leb)$, $g\in\G_0$.
For a general family,
the representation (\ref{on-repr}) should be replaced by
\begin{equation}\label{gen-repr}
\langle\D u(g),\D v(g)\rangle_{L^2}=\sum_{i,j=1}^\infty D_{\varphi_i}u(g)\cdot a_{ij}(g)\cdot D_{\varphi_j}v(g)
\end{equation}
where $a(g)=(a_{ij}(g))_{i,j\in\N}$ is the 'generalized inverse' to  $\Phi(g)=(\Phi_{ij}(g))_{i,j\in\N}$ with
$$\Phi_{ij}(g):=\langle\varphi_i,\varphi_j\rangle_{T_g}=\int_0^1\varphi_i(g_t)\varphi_j(g_t)dt.$$

In order to make these concepts rigorous, we have to introduce some notations.

\medskip

For fixed $n\in\N$ let $S_+(n)\subset\R^{n\times n}$ denote the set of symmetric nonnegative definite real $(n\times n)$-matrices. For each $A\in S_+(n)$ a unique element
$A^{-1}\in S_+(n)$, called {\em generalized inverse to $A$}, is defined by
$$A^{-1}x:=\left\{\begin{array}{ll}0&\ \mbox{ if }x\in \mbox{Ker}(A),\\
y&\ \mbox{ if }x\in\mbox{Ran}(A) \mbox{ with }x=Ay\end{array}\right.$$
 This definition makes sense since (by the symmetry of $A$) we have an orthogonal decomposition
$\R^n=\mbox{Ker}(A)\oplus\mbox{Ran}(A)$.
Obviously,
$$A^{-1}\cdot A=A\cdot A^{-1}=\pi_A$$
where $\pi_A$ denotes the projection onto $\mbox{Ran}(A)$.

Moreover, for each $A\in S_+(n)$ there exists a unique element
$A^{1/2}\in S_+(n)$, called {\em nonnegative square root of $A$}, satisfying $$A^{1/2}\cdot A^{1/2}=A.$$

Let $\Psi^{(n)}$ denote the map $A\mapsto A^{-1}$, regarded as a map from $S_+(n)\subset\R^{n\times n}$ to $\R^{n\times n}$, with
$\Psi^{(n)}_{ij}(A)=(A^{-1})_{ij}$ for $i,j=1,\ldots,n$. Similarly, put
$$\Xi^{(n)}: S_+(n)\to S_+(n), \ A\mapsto (A^{1/2})^{-1}=(A^{-1})^{1/2}.$$
Note that $\Psi^{(n)}(A)=\Xi^{(n)}(A)\cdot \Xi^{(n)}(A)$ for all $A\in S_+(n)$.

The maps $\Psi^{(n)}$ and $\Xi^{(n)}$ are smooth on the subset of positive definite matrices $A\in S_+(n)$ but unfortunately not on the whole set $S_+(n)$.
However, they can be approximated
from below (in the sense of quadratic forms) by smooth maps: there exists a sequence of $\C^\infty$ maps  $\Xi^{(n,l)}:\R^{n\times n}\to\R^{n\times n}$ with
$$\xi\cdot\Xi^{(n,k)}(A)\cdot\xi\le \xi\cdot\Xi^{(n,l)}(A)\cdot\xi$$
for all $A\in S_+(n), \xi\in\R^n$ and all $k,l\in\N$ with $k\le l$
and
$$\Xi_{ij}^{(n,l)}(A)\to\Xi_{ij}^{(n)}(A)=(A^{-{1/2}})_{ij}$$
for all $A\in S_+(n), i,j\in\{1,\ldots,n\}$ as $ l\to\infty$.
Put $\Psi^{(n,l)}(A)=\Xi^{(n,l)}(A)\cdot \Xi^{(n,l)}(A)$ for  $A\in \R^{n\times n}$. Then the sequence $(\Psi^{(n,l)})_{l\in\N}$ approximates $\Psi^{(n)}$ from below in the sense of quadratic forms.

\medskip

Now let us choose a family  $\{\varphi_i\}_{i\in\N}$ of smooth functions $\varphi_i:\eI\to\R$ which is total in $\C^0(\eI)$ w.r.t. uniform convergence (i.e. its linear hull is dense). Put
$$\Phi_{ij}(g):=\langle\varphi_i,\varphi_j\rangle_{T_g}=\int_0^1\varphi_i(g_x)\varphi_j(g_x)dx$$
and
$$a^{(n,l)}_{ij}(g)=\Psi^{(n,l)}_{ij}\left(\Phi(g)\right),\qquad \sigma^{(n,l)}_{ij}(g)=\Xi^{(n,l)}_{ij}(\Phi(g)).$$
Note that the maps $g\mapsto a^{(n,l)}_{ij}(g)$ and $g\mapsto \sigma^{(n,l)}_{ij}(g)$  (for each choice of $n,l,i,j$) belong to the class $\Zyl^\infty(\G_0)$.
Moreover, put
$$a^{(n)}_{ij}(g)=\Psi^{(n)}_{ij}\left(\Phi(g)\right).$$
Then obviously
the orthogonal projection $\pi_n$ onto the linear span of $\{\varphi_1,\ldots,\varphi_n\}\subset T_g=L^2(\eI,g_*\leb)$ is given by
$$\pi_nu=\sum_{i,j=1}^n a_{ij}^{(n)}(g)\cdot\langle u,\varphi_i\rangle_{T_g} \cdot\varphi_j$$
and
$$\langle \pi_nu,\pi_nv\rangle_{T_g}=
\sum_{i,j=1}^n \langle u,\varphi_i\rangle_{T_g} \cdot a_{ij}^{(n)}(g)\cdot\langle v,\varphi_j\rangle_{T_g}$$
for all $u,v\in T_g$.

\begin{theorem}
{\bf (i)}\
For each $n,l\in\N$
the form $(\EE^{(n,l)},\Zyl^1(\G_0))$ with
$$\EE^{(n,l)}(u,v)=
\sum_{i,j=1}^n \int_{\G_0} D_{\varphi_i}u(g)\cdot a_{ij}^{(n,l)}(g)\cdot D_{\varphi_j}v(g)\, d\Q_0(g)$$
is closable. Its closure is a Dirichlet form with generator being
 the Friedrichs extension of the symmetric operator
$(\LL^{(n,l)},\Zyl^2(\G_0))$ given by
\begin{equation}\label{fin-dim-op}
\LL^{(n,l)}=\sum_{i,j=1}^n a^{(n,l)}_{ij}\cdot D_{\varphi_i}D_{\varphi_j}+
\sum_{i,j=1}^n \left[D_{\varphi_i} a^{(n,l)}_{ij}+ a^{(n,l)}_{ij}\cdot V^\beta_{\varphi_i}\right]D_{\varphi_j}.
\end{equation}

{\bf (ii)}\
As $l\to\infty$
$$\EE^{(n,l)}\nearrow\EE^{(n)}$$
where
$$\EE^{(n)}(u,v)=
\sum_{i,j=1}^n \int_{\G_0} D_{\varphi_i}u(g)\cdot a^{(n)}_{ij}(g)\cdot D_{\varphi_j}v(g)\, d\Q_0(g).$$
for $u,v\in\Zyl^1(\G_0)$. Hence, in particular, $\EE^{(n)}$ is a Dirichlet form.

{\bf (iii)}\
As $n\to\infty$
$$\EE^{(n)}\nearrow\EE$$
(which provides an alternative proof for the closability of the form $(\EE,\Zyl^1(\G_0))$).
\end{theorem}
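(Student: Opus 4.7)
The plan for \textbf{(i)} is to compute $\EE^{(n,l)}(u,v)$ for $u\in\Zyl^2(\G_0)$, $v\in\Zyl^1(\G_0)$ by integration by parts in each summand and read off the generator. Since $\Psi^{(n,l)}$ is smooth on $\R^{n\times n}$ and each $\Phi_{ij}(g)=\int_0^1\varphi_i(g_t)\varphi_j(g_t)\,dt$ defines a function in $\Zyl^\infty(\G_0)$, the coefficients $a_{ij}^{(n,l)}=\Psi^{(n,l)}_{ij}\circ\Phi$ lie in $\Zyl^\infty(\G_0)$; in particular, $a_{ij}^{(n,l)}\cdot D_{\varphi_j}v$ still lies in $\Zyl^1(\G_0)$ and Proposition \ref{adjoint} applies. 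Using $D^*_{\varphi_i}=-D_{\varphi_i}-V^\beta_{\varphi_i}$ together with the Leibniz rule for $D_{\varphi_i}$ on products of cylinder functions yields
\[
\EE^{(n,l)}(u,v)=-\int_{\G_0} u\cdot\Big[\sum_{i,j=1}^n a_{ij}^{(n,l)} D_{\varphi_i}D_{\varphi_j}v+\sum_{i,j=1}^n\bigl(D_{\varphi_i}a_{ij}^{(n,l)}+a_{ij}^{(n,l)}V^\beta_{\varphi_i}\bigr)D_{\varphi_j}v\Big]\,d\Q_0,
\]
which is exactly $-\int u\,\LL^{(n,l)}v\,d\Q_0$. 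Symmetry of this bilinear form in $(u,v)$ follows from symmetry of $a^{(n,l)}(g)\in S_+(n)$, and the right-hand side is finite on $\Zyl^2(\G_0)\times\Zyl^1(\G_0)$ thanks to the pointwise bound $|V^\beta_\varphi|\le(1/2+\beta)\|\varphi''\|_{L^1}$ from Lemma \ref{drift-lemma}. Thus $(\LL^{(n,l)},\Zyl^2(\G_0))$ is symmetric and semibounded, so its Friedrichs extension exists and the associated closed form extends $(\EE^{(n,l)},\Zyl^1(\G_0))$; the Markov property is built in since $a^{(n,l)}(g)$ is nonnegative definite, so the extension is a Dirichlet form.

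For \textbf{(ii)}, the construction of the approximation $\Xi^{(n,l)}$ was designed so that $\xi\cdot\Psi^{(n,l)}(A)\xi\nearrow\xi\cdot\Psi^{(n)}(A)\xi$ for every $A\in S_+(n)$ and every $\xi\in\R^n$ as $l\to\infty$. Applying this pointwise with $A=\Phi(g)$ and $\xi_i=D_{\varphi_i}u(g)$, the integrand of $\EE^{(n,l)}(u,u)$ increases monotonically to the integrand of $\EE^{(n)}(u,u)$ for every $g\in\G_0$. Monotone convergence then gives $\EE^{(n,l)}(u,u)\nearrow\EE^{(n)}(u,u)$ on the common core $\Zyl^1(\G_0)$, and the standard theorem on monotone limits of closed quadratic forms (as used in Remark above Subsection 6.3, citing Reed/Simon, Thm.~S.16) shows that the limit $\EE^{(n)}$ extends to a closed form, inheriting Markovianity from the approximants.

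For \textbf{(iii)}, the key identification is that for $u\in\Zyl^1(\G_0)$ the matrix $a^{(n)}(g)=\Phi(g)^{-1}$ is the generalized inverse of the Gram matrix of $\{\varphi_i\circ g\}_{i=1}^n$ in $T_g=L^2(\eI,g_*\leb)$, and $\langle Du(g),\varphi_i\rangle_{T_g}=D_{\varphi_i}u(g)$. A direct computation gives
\[
\EE^{(n)}(u,u)=\int_{\G_0}\|\pi_n^g Du(g)\|_{T_g}^2\,d\Q_0(g),
\]
where $\pi_n^g$ is the orthogonal projection in $T_g$ onto the span of $\varphi_1,\dots,\varphi_n$. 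Since $\{\varphi_i\}_{i\in\N}$ is total in $\C^0(\eI)$ which in turn is dense in $L^2(\eI,g_*\leb)$ for every probability measure $g_*\leb$, we have $\pi_n^g\to I$ strongly in $T_g$ for every $g\in\G_0$, and hence $\|\pi_n^g Du(g)\|_{T_g}^2\nearrow\|Du(g)\|_{T_g}^2=\|\D u(g)\|_{L^2(\leb)}^2$ by the lemma of Section 7.1. Monotone convergence yields $\EE^{(n)}(u,u)\nearrow\EE(u,u)$ on $\Zyl^1(\G_0)$, and invoking Reed/Simon once more identifies the monotone limit with the closure $(\EE,\Dom(\EE))$.

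The main obstacle will be making sure that the smoothing in $\Xi^{(n,l)}$ is compatible with the cylinder-function calculus, i.e.\ keeping $a_{ij}^{(n,l)}$ inside $\Zyl^\infty(\G_0)$ so that the pointwise integration by parts from Chapter 5 actually applies; this is why the singular object $\Phi(g)^{-1}$ (which is needed only to identify the limit form $\EE^{(n)}$ as a Dirichlet form associated with a bona fide operator) is accessed only through monotone approximation rather than directly.
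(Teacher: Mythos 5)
Your parts (i) and (ii) follow the paper's proof essentially verbatim: integration by parts with $D^*_{\varphi_i}=-D_{\varphi_i}-V^\beta_{\varphi_i}$ applied to $a^{(n,l)}_{ij}D_{\varphi_j}v$ (which stays in the cylinder class because $a^{(n,l)}_{ij}=\Psi^{(n,l)}_{ij}\circ\Phi\in\Zyl^\infty(\G_0)$), then monotone matrix convergence $a^{(n,l)}(g)\nearrow a^{(n)}(g)$ plus the stability of Dirichlet forms under increasing limits. (One cosmetic slip in (i): you announce $u\in\Zyl^2$, $v\in\Zyl^1$ but your displayed identity puts two derivatives on $v$, so the roles should be swapped; this is immaterial since the form is symmetric.) In part (iii) you genuinely diverge from the paper. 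You identify $\EE^{(n)}(u,u)=\int\|\pi_n^g Du(g)\|^2_{T_g}\,d\Q_0(g)$ using the projection formula stated just before the theorem, and conclude by strong convergence of the nested orthogonal projections $\pi_n^g\to I$ in each $T_g=L^2(\eI,g_*\leb)$ (totality of $\{\varphi_i\}$ in $\C^0(\eI)$ plus density of $\C^0$ in $L^2(g_*\leb)$) together with monotone convergence. The paper instead proves the same convergence by a quantitative one-dimensional lower bound: it picks $\varphi\in\mathrm{span}(\varphi_1,\dots,\varphi_n)$ with $\|\alpha'-\varphi\|_{\sup}\le\delta$ and bounds $\EE^{(n)}(u,u)$ from below by the projection onto $\varphi$ alone, obtaining $\EE^{(n)}(u,u)\ge\frac{1}{1+\delta}\EE(u,u)-4\delta\|U'\|^2_{\sup}$. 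Your argument is cleaner and more conceptual; the paper's avoids invoking strong operator convergence of the projections and gives an explicit rate in $\delta$. Both arguments then appeal to the same monotone-form theorem (Reed--Simon, Thm.\ S.16) and to $\Zyl^1(\G_0)$ being a common core to identify the limit with the closure of $(\EE,\Zyl^1(\G_0))$, a point you state tersely but which is no weaker than the paper's own one-line assertion.
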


\begin{proof}
(i) The function $a^{(n,l)}_{i,j}$ on $\G_0$ is a cylinder function in the class $\Zyl^1(\G_0)$.
The integration by parts formula for the $D_{\varphi_i}$, therefore,  implies that
for all $u,v\in\Zyl^2(\G_0)$
\begin{eqnarray*}\EE^{(n,l)}(u,v)&=&\sum_{i,j}\int D_{\varphi_i}u(g)D_{\varphi_j}v(g)
a^{(n,l)}_{ij}(g)d\Q_0(g)\\
&=&\sum_{i,j}\int u(g)\cdot D^*_{\varphi_i}\left(a^{(n,l)}_{ij}D_{\varphi_j}v\right)(g)
\ d\Q_0(g)=-\int u(g)\cdot \LL^{(n,l)}v(g)\ d\Q_0(g).
\end{eqnarray*}
with
$$\LL^{(n,l)}=-\sum_{i,j=1}^nD^*_{\varphi_i}\left(a^{(n,l)}_{ij}D_{\varphi_j}\right).$$
Hence, $(\EE^{(n,l)}, \Zyl^2(\G_0))$ is closable and the generator of its closure is the Friedrichs extension of $(\LL^{(n,l)},\Zyl^2(\G_0))$.

(ii) The monotone convergence $\EE^{(n,l)}\nearrow\EE^{(n)}$ of the quadratic forms is an immediate consequence of the fact that $a^{(n,l)}(g)\nearrow a^{(n)}(g)$
(in the sense of symmetric matrices)
for each $g\in\G_0$ which in turn follows from  the defining properties of the approximations $\Psi^{(n,l)}$ of the generalized inverse $\Psi^{(n)}$.

The limit of an increasing sequence of Dirichlet forms is itself again a Dirichlet form provided it is densely defined which in our case is guaranteed since it is
finite on
$\Zyl^2(\G_0)$.

(iii)
Obviously, the $\EE^{n}, n\in\N$ constitute an increasing sequence of Dirichlet forms with $\EE^{n}\le\EE$ for all $n$. Moreover,
$\Zyl^1(\G_0)$ is a core for all the forms under consideration.
Hence, it suffices to prove that for each $u\in\Zyl^1(\G_0)$ and each $\epsilon>0$ there exists an $n\in\N$ such that
$$\left|\EE^{(n)}(u,u)-\EE(u,u)\right|\le\epsilon.$$
To simplify notation, assume that $u$ is of the form $u(g)=U(\int\alpha(g_t)dt)$ for some $U\in\C^1_c(\R)$ and some $\alpha\in\C^1(\eI)$.
By assumption, the set $\{\varphi_i,i\in\N\}$ is total in $\C^0(\eI)$ w.r.t. uniform convergence. Hence, for each $\delta>0$ there exist $n\in\N$ and
 $\varphi\in\mbox{span}(\varphi_1,\ldots,\varphi_n)$
with
$\|\alpha'-\varphi\|_{\sup}\le\delta$ which implies
$$\frac{\langle\alpha',\varphi\rangle_{T_g}}{\|\varphi\|_{T_g}}\ge{\|\varphi\|_{T_g}}-\delta\ge{\|\alpha'\|_{T_g}}-2\delta.$$
Thus
\begin{eqnarray*}
\EE(u,u)\ge\EE^{(n)}(u,u)&\ge&\int_{\G_0} U'(\smint\alpha(g_t)dt)^2\cdot\langle\alpha',\varphi\rangle^2_{T_g}\cdot\frac1{\|\varphi\|^2_{T_g}}d\Q_0(g)\\
&\ge&
\int_{\G_0} U'(\smint\alpha(g_t)dt)^2\cdot\left(\|\alpha'\|_{T_g}-2\delta\right)^2d\Q_0(g)\\
&\ge&
\int_{\G_0} U'(\smint\alpha(g_t)dt)^2\cdot\left(\frac1{1+\delta}\|\alpha'\|^2_{T_g}-4\delta\right)d\Q_0(g)\\
&\ge&
\frac1{1+\delta}\EE(u,u)-4\delta\|U'\|^2_{sup}.
\end{eqnarray*}
Hence, for $\delta$ sufficiently small, $\EE(u,u)$ and $\EE^{(n)}(u,u)$ are arbitrarily close to each other.
\end{proof}

\begin{remark} For any given $g_0\in\G_0$, let $(g_t)_{t\ge0}$ with $g_t: (x,\omega)\mapsto g_t^x(\omega)$ be the solution to the SDE
\begin{eqnarray*}
dg_t^x&=&\sum_{i,j=1}^n\sigma_{ij}^{(n,l)}(g_t)\cdot \varphi_j(g_t^x) \, dW_t^i\\
&&+\frac12\sum_{i,j=1}^n
a_{ij}^{(n,l)}(g_t)\cdot \varphi_j(g_t^x)\cdot\left(\varphi'_i(g_t^x)+V^\beta_{\varphi_i}(g_t)\right)dt\\
&&+\frac12\sum_{i,j=1}^n
\sum_{k,m=1}^n \partial_{km}\Psi_{ij}^{(n,l)}(\Phi(g_t))\cdot
\langle (\varphi_k\varphi_m)',\varphi_i\rangle_{T_g}\cdot\varphi_j(g_t^x)dt
\end{eqnarray*}
where $\partial_{km}\Psi_{ij}^{(n,l)}$ for $(k,m)\in \{1,\ldots,n\}^2$ denotes the 1st order partial derivative of the function $\Psi_{ij}^{(n,l)}:\R^{n\times n}\to\R$
with respect to the coordinate $x_{km}$.
Then the generator of the process coincides on $\Zyl^2(\G_0)$ with  the operator $\frac12\LL^{(n,l)}$ from (\ref{fin-dim-op}), the generator of the Dirichlet form $\EE^{(n,l)}$.

\medskip \rm Let us briefly comment on the various terms in the SDE from above:
\begin{itemize}
\item
The first one, $\sum_{i,j=1}^n\sigma_{ij}^{(n,l)}(g_t)\cdot \varphi_j(g_t^x) \, dW_t^i$ is the diffusion term, written in Ito form;
\item
the second one, $\frac12\sum_{i,j=1}^n
a_{ij}^{(n,l)}(g_t)\cdot \varphi_j(g_t^x)\cdot\varphi'_i(g_t^x)dt$ is a drift which comes from the transformation between Stratonovich and Ito form
(it would disappear if we wrote the diffusion term in Stratonovich form).
\item
The next one,
$\frac12\sum_{i,j=1}^n
a_{ij}^{(n,l)}(g_t)\cdot \varphi_j(g_t^x)\cdot V^\beta_{\varphi_i}(g_t)dt$ is
a drift which arises from our change of variable formula. Actually,
since
$$V^\beta_{\varphi_i}(g)=\beta\int_0^1\varphi'_i(g(y))dy+
\sum_{a\in J_g}\left[
\frac{\varphi'_i(g(a+))+\varphi'_i(g(a-))}2-\frac{\varphi_i(g(a+))-\varphi_i(g(a-))}{g(a+)-g(a-)}\right],
$$
it consists of two parts, one originates in the logarithmic derivative of the entropy of the $g$'s
(which finally will force the process to evolve as a stochastic perturbation of the heat equation),
the other one is created by the jumps of the $g$'s.
\item
The last term,
$\frac12\sum_{i,j=1}^n
\sum_{k,m=1}^n \partial_{km}\Psi_{ij}^{(n,l)}(\Phi(g_t))\cdot
\langle (\varphi_k\varphi_m)',\varphi_i\rangle_{T_g}\cdot\varphi_j(g_t^x)dt$
involves the derivative of the diffusion matrix. It
arises from the fact that the generator is originally given in divergence form.
\end{itemize}
\end{remark}
\end{subsection}
\begin{subsection}{The Wasserstein Diffusion $(\mu_t)$ on $\Pe_0$}

The objects considered previously -- derivative,
Dirichlet form and Markov process on $\G_0$ --
 have canonical counterparts on $\Pe_0$.
The key to these objects is the bijective map $\chi:\G_0\to\Pe_0$, $g\mapsto g_*\leb$.

\medskip

We denote by $\Zyl^k(\Pe_0)$ the set of all ('cylinder') functions $u: \Pe_0\to\R$ which can be written as
\begin{equation}
u(\mu)=U\left(\int_0^1 \alpha_1d\mu,\ldots,\int_0^1\alpha_md\mu\right)
\end{equation}
with some $m\in\N$, some $U\in\C^k(\R^m)$ and some $\vec\alpha=(\alpha_1,\ldots,\alpha_m)\in\C^k(\eI,\R^m)$ .
The subset of  $u\in\Zyl^k(\Pe_0)$ with
$\alpha'_i(0)=\alpha'_i(1)=0$ for all $i=1,\ldots,m$ will be denoted by
 $\Zyl^k_0(\Pe_0)$.
For $u\in\Zyl^1(\Pe_0)$ represented as above we define its gradient $D u(\mu)\in L^2(\eI,\mu)$ by
$$D u (\mu)=\sum_{i=1}^m \partial_i U(\smint \vec \alpha d\mu)\cdot \alpha_i'(.)$$
with norm
$$\|D u(\mu)\|_{L^2(\mu)}=\left[
\int_0^1\left|\sum_{i=1}^m \partial_i U(\smint \vec \alpha d\mu)\cdot \alpha_i'\right|^2d\mu\right]^{1/2}.$$
The tangent space at a given point $\mu\in\Pe_0$ can be identified with $L^2(\eI,\mu)$. The action of a tangent vector $\varphi\in L^2(\eI,\mu)$
on $\mu$ ('exponential map') is given by the push forward
$\varphi_*
\mu$.

\begin{theorem}
{\bf(i)} \
The image of the
Dirichlet form defined in (\ref{Wasserstein-dir-int}) under the map $\chi$ is the regular, strongly
local, recurrent  {\em Wasserstein Dirichlet form} $\EE$ on $L^2(\Pe_0,\Pp_0)$ defined on its core $\Zyl^1(\Pe_0)$ by
\begin{equation}\label{p-wasserstein-form}
\EE(u,v)=\int_{\Pe_0} \langle D u(\mu), D v(\mu)\rangle_{L^2(\mu)}^2 d\Pp_0(\mu).
\end{equation}
The Dirichlet form has a square field operator, defined on $\Dom(\EE)\cap L^\infty$, and given on $\Zyl^1(\Pe_0)$ by
$$\Gamma(u,v)(\mu)=
\langle D u(\mu), D v(\mu)\rangle_{L^2(\mu)}^2.$$
The intrinsic metric for the Dirichlet form is the $L^2$-Wasserstein distance $d_W$. More precisely, a continuous function $u:\Pe_0\to\R$ is 1-Lipschitz w.r.t. the $L^2$-Wasserstein distance if and only if it belongs to $\Dom(\EE)$ and $\Gamma(u,u)(\mu)\le1$ for
$\Pp_0$-a.e. $\mu\in\Pe_0$.

{\bf(ii)}\
The generator of the Dirichlet form is the Friedrichs extension of the symmetric operator $(\LL,\Zyl^2_0(\Pe_0)$ on $L^2(\Pe_0,\Pp_0)$ given as
$\LL=\LL_1+\LL_2+\beta\cdot\LL_3$ with
\begin{eqnarray*}
\LL_1 u(\mu)&=&
\sum_{i,j=1}^m{\partial_i\partial_j}U(\smint\vec\alpha d\mu)\cdot\int_0^1\alpha_i'\alpha_j'd\mu;
\\
\LL_2 u(\mu)&=&
\sum_{i=1}^m{\partial_i}U(\smint\vec\alpha d\mu)\cdot
\left(\sum_{I\in\Gaps(\mu)}\left[\frac{\alpha_i''(I_-)+\alpha_i''(I_+)}2-\frac{\alpha_i'(I_+)-\alpha_i'(I_-)}{|I|}\right] -\frac{\alpha_i''(0)+\alpha_i''(1)}2\right)
\\
\LL_3u(\mu)&=&
\sum_{i=1}^m{\partial_i}U(\smint\vec\alpha d\mu)\cdot\int_0^1 \alpha_i'' d\mu.
\end{eqnarray*}
Recall that $\Gaps(\mu)$ denotes the set of intervals
$I=\,]I_-,I_+[\,\subset \eI$ of maximal length with $\mu(I)=0$ and
$|I|$ denotes the length of such an interval.

{\bf (iii)}\
For $\Pp_0$-a.e. $\mu_0\in\Pe_0$, the associated {Markov process} $(\mu_t)_{t\ge0}$ on $\Pe_0$ starting in $\mu_0$, called {\em Wasserstein diffusion},
with generator $\frac12\LL$
 is given as
$$\mu_t(\omega)=g_t(\omega)_*\leb$$
where $(g_t)_{t\ge0}$ is the Markov process on $\G_0$ associated with the Dirichlet form of Theorem \ref{wasserstein-dir-form},
starting in $g_0:=\chi^{-1}(\mu_0)$.

For each $u\in\Zyl^2_0(\Pe_0)$ the process
$$u(\mu_t)-u(\mu_0)-\frac12\int_0^t \LL u(\mu_s)ds$$
is a martingale whenever the distribution of $\mu_0$ is chosen to be absolutely continuous w.r.t. the entropic measure $\Pp_0$.
Its quadratic variation process is
$$\int_0^t \Gamma(u,u)(\mu_s)ds.$$
\end{theorem}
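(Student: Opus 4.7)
The entire theorem reduces via the isometry $\chi:(\G_0,\|\cdot\|_{L^2})\to(\Pe_0,d_W)$ to the results already established for $(\EE,\Dom(\EE))$ on $L^2(\G_0,\Q_0)$. The first step is to check that cylinder functions correspond correctly: for $u\in\Zyl^k(\Pe_0)$ with $u(\mu)=U(\int\vec\alpha\, d\mu)$, the pullback $v:=u\circ\chi$ satisfies $v(g)=U(\int_0^1\vec\alpha(g_t)\,dt)$, which lies in $\Zyl^k(\G_0)$, and conversely every element of $\Zyl^k(\G_0)$ arises this way. By Example \ref{ex-grad} the $L^2(\leb)$-gradient of $v$ is $\D v(g)(t)=\sum_i\partial_iU\cdot\alpha_i'(g(t))$, and the change of variables $\mu=g_*\leb$ gives
\[
\|\D v(g)\|_{L^2(\leb)}^2=\int_0^1\Bigl|\sum_i\partial_iU(\smint\vec\alpha\,d\mu)\,\alpha_i'(g(t))\Bigr|^2\!dt=\int_0^1\Bigl|\sum_i\partial_iU\,\alpha_i'\Bigr|^2\!d\mu=\|Du(\mu)\|_{L^2(\mu)}^2.
\]
Thus $\chi$ intertwines the Wasserstein Dirichlet integral with $\EE$ on $\Zyl^1$, and by Theorem \ref{wasserstein-dir-form} the Wasserstein form is regular, strongly local, recurrent with square field $\Gamma$ given by the stated formula. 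The characterization of the intrinsic metric is then obtained by transporting the Rademacher type results (Theorems \ref{lip-is-diff} and \ref{invrademacher}, Corollary \ref{identintrinsicmetric}) through $\chi$, which is an isometry so it maps $1$-Lipschitz functions to $1$-Lipschitz functions and preserves the variational formula for the distance.

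For part (ii), I would apply the explicit form of the generator from Theorem \ref{wasserstein-dir-form}(iii) to $v=u\circ\chi$ with $u\in\Zyl^2_0(\Pe_0)$ (note the condition $\alpha_i'(0)=\alpha_i'(1)=0$, which is needed so that $\alpha_i'$ can play the role of a legitimate test direction $\varphi$ in Lemma \ref{drift-lemma-i}). The generator on $\G_0$ reads
\[
\LL v(g)=\sum_{i,j}\partial_i\partial_jU(\smint\vec\alpha(g_\cdot))\!\int_0^1\!\alpha_i'(g_s)\alpha_j'(g_s)\,ds\ +\ \sum_i\partial_iU(\smint\vec\alpha(g_\cdot))\cdot V^\beta_{\alpha_i',0}(g).
\]
The diffusion piece converts at once into $\LL_1u(\mu)$ by $\int_0^1\alpha_i'(g_s)\alpha_j'(g_s)\,ds=\int\alpha_i'\alpha_j'\,d\mu$. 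For the drift piece, recall from Section 5.1 that $V^\beta_{\varphi}(g)=\beta\int\varphi'(g(x))\,dx+\sum_{a\in J_g}\bigl[\tfrac{\varphi'(g(a+))+\varphi'(g(a-))}{2}-\tfrac{\varphi(g(a+))-\varphi(g(a-))}{g(a+)-g(a-)}\bigr]$, and from Lemma \ref{drift-lemma-i} that $V^\beta_{\varphi,0}(g)=V^\beta_\varphi(g)-\tfrac{\varphi'(0)+\varphi'(1)}{2}$. Substituting $\varphi=\alpha_i'$ and using the fundamental bijection between jumps of $g$ and gaps of $\mu=g_*\leb$ (with $I_-=g(a-)$, $I_+=g(a+)$, $|I|=g(a+)-g(a-)$), the Lebesgue-integral term becomes $\beta\int\alpha_i''\,d\mu=\beta\LL_3u(\mu)$, while the sum over jumps together with the $-\tfrac{\alpha_i''(0)+\alpha_i''(1)}{2}$ boundary correction becomes exactly $\LL_2u(\mu)$. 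This matches the decomposition $\LL=\LL_1+\LL_2+\beta\LL_3$ stated in the theorem. Symmetry of $\LL$ on $\Zyl^2_0(\Pe_0)$ is inherited from that of the generator on $\G_0$, and the Friedrichs extension on $\Pe_0$ is the pushforward of the Friedrichs extension on $\G_0$ along the unitary map $L^2(\Q_0)\to L^2(\Pp_0)$ induced by $\chi$.

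Part (iii) follows by transport of the Markov process. The bijectivity of $\chi:\G_0\to\Pe_0$, together with its bimeasurability in the relevant topology (Proposition \ref{g-top}), ensures that setting $\mu_t:=\chi(g_t)=(g_t)_*\leb$ produces a continuous, reversible, strong Markov process on $\Pe_0$ associated with the pushforward Dirichlet form, for $\Pp_0$-a.e. starting point. The martingale identity $u(\mu_t)-u(\mu_0)-\tfrac12\int_0^t\LL u(\mu_s)\,ds$ is then the Dynkin-type martingale associated to the generator $\tfrac12\LL$ for $u\in\Zyl^2_0(\Pe_0)\subset\Dom(\LL)$; its quadratic variation $\int_0^t\Gamma(u,u)(\mu_s)\,ds$ is the standard formula valid for any regular Dirichlet form possessing a square field operator (Fukushima decomposition applied to the strictly $\EE$-quasi-continuous version of $u$, combined with part (i)). The absolute continuity of the law of $\mu_0$ w.r.t. $\Pp_0$ is precisely what is needed so that the exceptional set from the Dirichlet form construction becomes negligible under the initial distribution.

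\textbf{Main obstacle.} The most delicate step is the identification in part (ii) of the jump/gap term $\LL_2$ with the combination of $V^\beta_{\alpha_i'}$ and the $-\tfrac{\alpha_i''(0)+\alpha_i''(1)}{2}$ correction. One must verify carefully that the dictionary $a\leftrightarrow I\in\Gaps(\mu)$ is exhaustive $\Pp_0$-a.s. (which follows from Corollary \ref{support-P}: $\mu$ has no absolutely continuous or discrete part and its support has Lebesgue measure zero, so $g$ increases only through jumps and its jump set is in bijective correspondence with $\Gaps(\mu)$), and that the absolute convergence from Lemma \ref{drift-lemma}(i) transfers to an absolutely convergent sum over gaps of $\mu$.
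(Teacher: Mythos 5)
Your proposal is correct and coincides with the paper's (implicit) argument: the paper states this theorem without a separate proof, presenting it exactly as you do — as the transport under the isometry $\chi:\G_0\to\Pe_0$ of Theorems \ref{wasserstein-dir-form}, \ref{dir-form-cyl}, \ref{lip-is-diff} and \ref{invrademacher}, with the generator identified via the integration by parts formula of Lemma \ref{drift-lemma-i} (so that the drift is $V^\beta_{\alpha_i',0}$, producing the boundary term $-\tfrac{\alpha_i''(0)+\alpha_i''(1)}{2}$ in $\LL_2$) and the jump-gap dictionary $a\leftrightarrow I$. Your handling of the delicate points — the boundary correction, the a.s. exhaustiveness of the gap correspondence via Corollary \ref{support-P}, and the Fukushima decomposition for the martingale problem — is exactly what is needed.
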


\begin{remark}  \rm  $\LL_1$ is the second order part ('diffusion part') of the generator $\LL$, $\LL_2$ and $\LL_3$ are first order operators ('drift parts').
The operator $\LL_1$ describes the diffusion on $\Pe_0$ in all directions of the respective tangent spaces. This means that the process $(\mu_t)$
at each time $t\geq 0$ experiences the full 'tangential' $L^2(\eI,\mu_t)$-noise.

  $\LL_3$ is the generator of the deterministic  semigroup ('Neumann heat flow') $(H_t)_{t\ge0}$ on $L^2(\Pe_0,\Pp_0)$ given by
$$H_tu(\mu)=u(h_t\mu).$$
Here $h_t$ is the heat kernel on $\eI$ with reflecting ('Neumann') boundary conditions and $h_t\mu(.)=\int_0^1 h_t(.,y)\mu(dy)$.
Indeed,
for each $u\in\Zyl^1_0(\Pe_0)$ given as $u(g)=U(\int\vec\alpha d\mu)$ we obtain
$H_tu(\mu)= U\left(\smint\smint\vec \alpha(x)h_t(x,y)\mu(dy)dx\right)$ and thus
\begin{eqnarray*}
\partial_t H_tu(\mu)&=&\sum_{i=1}^m \partial_iU(h_t\mu)\cdot\partial_t\smint\smint \alpha_i(x)h_t(x,y)\mu(dy)dx\\
&=&\sum_{i=1}^m \partial_iU(h_t\mu)\cdot\smint\smint \alpha_i(x)h_t''(x,y)\mu(dy)dx\\
&=&\sum_{i=1}^m \partial_iU(h_t\mu)\cdot\smint\smint \alpha_i''(x)h_t(x,y)\mu(dy)dx
= \LL_3 H_tu(\mu).
\end{eqnarray*}
Note that $\LL$ depends on $\beta$ only via the drift term $\LL_3$ and $\frac1\beta \LL\to\LL_3$
 as $\beta\to\infty$.
 \end{remark}

The following statement, which in the finite dimensional case is known as  Varadhan's formula, exhibits another  close relationship between $(\mu_t)$ and the geometry of $(\Pe(\eI), d_W)$.   The Gaussian short time asymptotics of the process $(\mu_t)_{t\ge0}$ are governed by the $L^2$-Wasserstein distance. 
\begin{corollary} \label{gaussianbehaviour}
For measurable sets $A,B\in\Pe_0$  with positive $\Pe_0$-measure,  let $d_W(A,B) = \inf \{ d_W(\nu,\tilde\nu)\,| \, \nu\in A , \tilde\nu\in B\}$
and  $p_t(A,B)= \int_A\int_B p_t(\nu,d\tilde\nu)\Pe_0(d\nu)$
where $p_t(\nu,d\tilde\nu)$ denotes the transition semigroup for the process $(\mu_t)_{t\ge0}$.

Then
\begin{equation}
  \lim _{t \to 0}  t \log p_t(A,B) =  - \frac {d_W(A,B)^2}{2}.
\label{varform}
\end{equation}
\end{corollary}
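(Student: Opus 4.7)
The plan is to reduce the statement to a general Varadhan-type short-time asymptotic formula for strongly local regular Dirichlet forms whose intrinsic metric generates the ambient topology. Such a formula, in the setting of locally compact separable metric spaces equipped with a Radon measure, is due to Sturm (\emph{Diffusion processes and heat kernels on metric spaces}, Ann.~Probab.~1998, Thm.~1.2, building on earlier work of Varadhan, Davies and Ram\'irez): whenever $(\EE,\Dom(\EE))$ is a regular strongly local Dirichlet form on $L^2(X,m)$ admitting a square field operator, and its intrinsic metric $d_\EE$ coincides with the original metric generating the topology of $X$, then for all measurable $A,B\subset X$ with $m(A),m(B)>0$ one has $\lim_{t\to0} t\log p_t(A,B)=-\tfrac12 d_\EE(A,B)^2$.

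The verification of the hypotheses simply assembles results already proved in the paper. The Wasserstein Dirichlet form $\EE$ on $L^2(\Pe_0,\Pp_0)$ is the push-forward under the isometry $\chi:(\G_0,\|\cdot\|_{L^2})\to(\Pe_0,d_W)$ of the form analysed in Theorems \ref{wasserstein-dir-form} and \ref{dir-form-cyl}, which establish regularity, strong locality, and the existence of the square field operator; each of these properties transfers verbatim through the homeomorphism $\chi$, and the class $\Zyl^1(\Pe_0)$ provides an explicit regular core. The identification of the intrinsic metric with $d_W$ is exactly Corollary~\ref{identintrinsicmetric} transported via $\chi$: by Theorems~\ref{lip-is-diff} and \ref{invrademacher}, a continuous function on $\G_0$ lies in $\Dom(\EE)$ with $\Gamma(u,u)\le 1$ if and only if it is $1$-Lipschitz for $\|\cdot\|_{L^2}$, and pulling back under $\chi$ gives the analogous characterisation on $\Pe_0$ with $d_W$ in place of $\|\cdot\|_{L^2}$. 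Finally, $(\Pe_0,d_W)$ is compact (hence locally compact and separable), so Sturm's theorem applies and yields the claim directly, with no restriction on $A,B$ beyond positivity of measure (closures are automatically compact).

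The main obstacle is really just the careful bookkeeping needed to confirm that the abstract Varadhan-type theorem applies in this infinite-dimensional setting. Regularity and strong locality of $\EE$ on $\Pe_0$ transfer transparently through $\chi$ because it is an isometric homeomorphism of compact metric spaces; the identification of the intrinsic metric, which is typically the delicate ingredient, is precisely what Corollary~\ref{identintrinsicmetric} accomplishes via the Kirszbraun-extension/orthonormal-basis argument. The compactness of $\Pe_0$ is crucial in that it obviates the local-compactness assumptions usually needed in the abstract theorem and guarantees that no non-trivial ``infinity'' has to be dealt with in the short-time analysis.
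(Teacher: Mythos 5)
Your reduction — verify strong locality, regularity, the square field operator and, crucially, the identification of the intrinsic metric with $d_W$ via Corollary \ref{identintrinsicmetric}, then invoke an abstract Varadhan-type short-time theorem — is exactly the strategy the paper uses; the proof really is just ``correct reference plus intrinsic-metric identification.'' The gap lies in the reference. The paper cites Hino and Ram\'irez, \emph{Small-time Gaussian behavior of symmetric diffusion semigroups} (Ann.\ Probab.\ 31 (2003), 1254--1295), whose theorem is stated for conservative strongly local symmetric Dirichlet forms on an arbitrary $\sigma$-finite measure space, with no local-compactness, doubling, or Poincar\'e hypothesis --- exactly the framework needed for an infinite-dimensional state space. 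The Sturm 1998 paper you cite (\emph{Diffusion processes and heat kernels on metric spaces}) does not contain a Varadhan-type formula in the generality you state; that paper concerns two-sided Gaussian heat-kernel estimates and parabolic Harnack inequalities under volume-doubling and weak Poincar\'e assumptions, which are neither verified in the present paper nor plausibly satisfiable for $(\Pe_0,d_W,\Pp_0)$ --- compactness of $\Pe_0$ gives local compactness but in no way yields doubling for $\Pp_0$, which is a genuinely infinite-dimensional Gibbs measure (push-forward of the Dirichlet process). So the step ``Sturm's theorem applies'' fails: no theorem with the stated hypotheses and conclusion appears in that reference. Replacing the citation with Hino--Ram\'irez (and noting that conservativeness of $\EE$, needed there, follows from recurrence established in Theorem \ref{wasserstein-dir-form}) closes the gap and makes your argument coincide with the paper's.
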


\begin{proof} This type of result is known as Varadhan's formula. Its respective form  for $(\EE,\Dom(\EE)$ on $L^2(\Pe_0,\Pp_0)$
holds true by the very general results of \cite{MR1988472} for conservative symmetric diffusions,
 and the identification of the intrinsic metric as $d_W$ in our previous Theorem.
 \end{proof}

Due to the sample path continuity of $(\mu_t)$ the Wasserstein diffusion is equivalently characterized by the following martingale problem. Here  we  use the notation $\langle\alpha,\mu_t\rangle=\int_0^1\alpha(x)\mu_t(dx)$.

\begin{corollary}\label{simplemartingaleproblem}
For each $\alpha\in\C^2(\eI)$ with $\alpha'(0)=\alpha'(1)=0$ the process
\begin{eqnarray*}M_t&=&\langle\alpha,\mu_t\rangle-\frac\beta2\int_0^t
\langle\alpha'',\mu_s\rangle ds\\
&&-\frac12\int_0^t\left(
\sum_{I\in\Gaps(\mu_s)}\left[\frac{\alpha''(I_-)+\alpha''(I_+)}2-\frac{\alpha'(I_+)-\alpha'(I_-)}{|I|}\right] -\frac{\alpha''(0)+\alpha''(1)}2\right)ds
\end{eqnarray*}
is a continuous martingale with quadratic variation process
$$[ M]_t=\int_0^t\langle(\alpha')^2,\mu_s\rangle ds.$$

\end{corollary}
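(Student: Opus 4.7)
The plan is to apply the martingale characterization from part (iii) of the preceding theorem to the specific cylinder function $u(\mu) := \langle \alpha, \mu \rangle$. First I would verify that $u \in \Zyl^2_0(\Pe_0)$: writing $u$ in the form $U(\int \alpha\,d\mu)$ with $m=1$ and $U = \mathrm{id} \in \C^\infty(\R)$, the condition $\alpha'(0) = \alpha'(1) = 0$ is exactly the defining boundary requirement of $\Zyl^2_0(\Pe_0)$. Thus $u$ lies in the domain of the symmetric operator $(\LL, \Zyl^2_0(\Pe_0))$ and the conclusion of part (iii) is applicable.

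Next I would compute the three pieces of $\LL u$ term by term using the explicit formulas of the theorem. Since $\partial_1 U = 1$ and all higher derivatives of $U$ vanish, one immediately obtains $\LL_1 u(\mu) = 0$; the expression for $\LL_2 u(\mu)$ reduces to
\[
\LL_2 u(\mu) = \sum_{I \in \Gaps(\mu)} \left[\frac{\alpha''(I_-)+\alpha''(I_+)}{2} - \frac{\alpha'(I_+)-\alpha'(I_-)}{|I|}\right] - \frac{\alpha''(0)+\alpha''(1)}{2};
\]
and $\LL_3 u(\mu) = \langle \alpha'', \mu\rangle$. Adding these with the weight $\beta$ on $\LL_3$ and multiplying by $\tfrac12$ gives precisely the compensator appearing in $M_t$ (the irrelevant constant $\langle \alpha, \mu_0\rangle$ differing from $u(\mu_0)$ does not affect the martingale property). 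Part (iii) of the theorem then asserts that $M_t = u(\mu_t) - u(\mu_0) - \tfrac12\int_0^t \LL u(\mu_s)\,ds$ is a martingale, and the sample-path continuity of the Wasserstein diffusion (established as part of the Dirichlet form construction) upgrades this to continuity of $M_t$.

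For the quadratic variation, I would invoke the identification of the square field operator from part (i) of the theorem: $\Gamma(u,u)(\mu) = \|Du(\mu)\|^2_{L^2(\mu)}$. Since $Du(\mu) = \alpha'$ for our choice of $u$, this square field evaluates to $\int_0^1 (\alpha')^2\,d\mu = \langle(\alpha')^2,\mu\rangle$, and part (iii) gives $[M]_t = \int_0^t \Gamma(u,u)(\mu_s)\,ds = \int_0^t \langle (\alpha')^2,\mu_s\rangle\,ds$ as claimed.

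The only mild obstacle is that the martingale statement in part (iii) is phrased for starting distributions absolutely continuous with respect to $\Pp_0$, whereas the corollary implicitly allows arbitrary initial laws in the canonical setup. This is routine: one lifts the identity back to $\G_0$ via $\chi^{-1}$ and uses the regularity of the associated Hunt process on the compact state space $\G_0$, or alternatively invokes the strong Markov property together with the fact that the compensator and quadratic-variation integrands are continuous functions of $\mu_s$, so that the martingale identity extends from almost every starting point to every starting point where the process is well-defined.
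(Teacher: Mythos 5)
Your proposal is correct and is exactly the derivation the paper intends: the corollary is an immediate specialization of part (iii) of the preceding theorem to the cylinder function $u(\mu)=\langle\alpha,\mu\rangle\in\Zyl^2_0(\Pe_0)$, with $\LL_1u=0$, $\LL_2u$ giving the gaps term, $\LL_3u=\langle\alpha'',\mu\rangle$, and $\Gamma(u,u)(\mu)=\langle(\alpha')^2,\mu\rangle$. Your remark about the initial distribution is a reasonable caveat, but the paper states the corollary under the same implicit hypotheses as the theorem, so no extra work is needed there.
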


\begin{remark}\rm
For illustration one may compare corollary \ref{simplemartingaleproblem}  for $(\mu_t)$  in the case  $\beta=1$
to the respective martingale problems for four other well-known measure valued process, say on the real line,
namely the so-called super-Brownian motion  or Dawson-Watanabe process $(\mu^{DW}_t)$, the Fleming-Viot process $(\mu^{FW})$,
both of which we can consider with the Laplacian as drift, the Dobrushin-Doob process $(\mu^{DD}_t)$ which is the empirical
measure of independent Brownian motions with locally finite Poissonian starting distribution,
 cf. \cite{MR1612725}, and finally simply the empirical measure process of a single Brownian motion $(\mu_t^{BM}= \delta_{X_{t}})$.
 For each $i \in \{ DW, FV, DD, BM\}$ and sufficiently regular $\alpha: \R \to \R$ the process
 $M^i_t := \langle \alpha , \mu^i_t\rangle - \frac12\int _0^t \langle  \alpha'', \mu^i_s\rangle ds $ is a continuous martingale with quadratic variation process
\begin{eqnarray*}
&[M^{DW}]_t &= \ \int_0^t \langle \alpha^2,\mu^{DW}_s\rangle ds,  \\
&[M^{FV}]_t &= \ \int_0^t [\langle \alpha^2,\mu^{FV}_s\rangle - (\langle \alpha,\mu^{FV}_s\rangle)^2]ds, \\
&[M^{DD}]_t &= \ \int_0^t \langle (\alpha')^2,\mu^{DD}_s\rangle ds, \\
&[M^{BM}]_t &= \ \int_0^t \langle (\alpha')^2,\mu^{BM}_s\rangle ds.
\end{eqnarray*}
\end{remark}
\end{subsection}
In view of corollary \ref{gaussianbehaviour} the apparent similarity of $\mu^{DD}$ and $\mu^{BM}$ to the Wasserstein diffusion $\mu$ is no suprise. However, the effective state spaces of $\mu^{DD}$, $\mu^{BM}$ and $\mu_t$ are as much different as their invariant measures.
\end{section}

\nocite{MR1457615, MR1242575,MR1612725,MR1457615,MR1989437,debzamb,MR1172940}

\bibliographystyle{alpha}

\def\cprime{$'$} \def\cprime{$'$}

\end{document}